\theoremstyle{plain}
\newcommand{\id}{\operatorname{id}}
\newcommand{\im}{\operatorname{im}}
\newcommand{\pr}{\operatorname{pr}}
\newcommand{\Hom}{\operatorname{Hom}}
\newcommand{\Rep}{\operatorname{Rep}}
\newcommand{\Aut}{\operatorname{Aut}}
\newcommand{\GL}{\operatorname{GL}}
\newcommand{\Gal}{\operatorname{Gal}}
\newcommand{\bA}{\mathbf{A}}
  \newcommand{\bE}{\mathbf{E}}
 \newcommand{\bQ}{\mathbf{Q}}
 \newcommand{\bZ}{\mathbf{Z}}
\newcommand{\rA}{\mathscr{A}}
 \newcommand{\vp}{\varphi}
\newtheorem{theorem}{Theorem}[section]
\newtheorem{corollary}[theorem]{Corollary}
\newtheorem{lemma}[theorem]{Lemma}
\newtheorem{remark}[theorem]{Remark}
\newtheorem{proposition}[theorem]{Proposition}
\newtheorem{definition}[theorem]{Definition}
\theoremstyle{remark}
\title[Coates-Wiles homomorphisms and Iwasawa cohomology]{\textbf{Coates-Wiles homomorphisms and Iwasawa cohomology for Lubin-Tate extensions}}
\author{Peter Schneider and Otmar Venjakob}
\address{Universit\"{a}t M\"{u}nster,  Mathematisches Institut,  Einsteinstr. 62,
48291 M\"{u}nster,  Germany,
 http://www.uni-muenster.de/math/u/schneider/ }%
\email{pschnei@uni-muenster.de }%
\address{Universit\"{a}t Heidelberg,  Mathematisches Institut,  Im Neuenheimer Feld 288,  69120
Heidelberg,  Germany,
 http://www.mathi.uni-heidelberg.de/$\,\tilde{}\,$venjakob/}
\email{venjakob@mathi.uni-heidelberg.de}
\subjclass{11Sxx}%
\keywords{ $p$-adic Hodge theory, explicit reciprocity law, Coates-Wiles homomorphisms, Lubin-Tate formal groups, ramified Witt vectors, Artin-Schreier pairing, Schmidt-Witt residue formula, Coleman power series}%
\dedicatory{To John Coates on the occasion of his 70th birthday}
\begin{document}

\begin{abstract}
 For the $p$-cyclotomic tower of $\mathbb{Q}_p$  Fontaine established a description of local Iwasawa cohomology with coefficients in a local Galois representation $V$ in terms of the $\psi$-operator  acting on the attached etale $(\varphi,\Gamma)$-module $D(V)$. In this article we generalize  Fontaine's result to the case of arbitratry Lubin-Tate towers $L_\infty$ over finite extensions $L$ of $\mathbb{Q}_p$ by using the Kisin-Ren/Fontaine equivalence of categories between Galois representations and $(\varphi_L,\Gamma_L)$-module and extending parts of \cite{Her}, \cite{Sch}. Moreover, we prove a kind of explicit reciprocity law which calculates the  Kummer map over $L_\infty$ for the multiplicative group twisted with the dual of the Tate module $T$ of the Lubin-Tate formal group in terms of Coleman power series and the attached $(\varphi_L,\Gamma_L)$-module. The proof is based on a generalized Schmid-Witt residue formula. Finally, we extend the explicit reciprocity law of Bloch and Kato \cite{BK} Thm.\ 2.1 to our situation  expressing the Bloch-Kato exponential map for $L(\chi_{LT}^r)$ in terms of generalized Coates-Wiles homomorphisms, where the Lubin-Tate characater $\chi_{LT}$ describes the Galois action on $T.$
\end{abstract}

\maketitle

\section{Introduction}

The invention of Coates-Wiles homomorphisms and Coleman \cite{Col} power series - practically at the same time, actually the Coleman power series for elliptic units show already up in \cite{CW} - were the starting point of a range of new and important developments in arithmetic and they have not lost their impact and fascination up to now. In order to recall it we shall first introduce some notation.

Consider a finite extension $L$ of $\mathbb{Q}_p$ and fix a Lubin-Tate formal group $LT$ over the integers $o_L$ (with uniformizer $\pi_L$). By $t_0=(t_{0,n})$ we denote a generator of the Tate module $T$ of $LT$ as $o_L$-module. The $\pi_L^n$-division points generate a tower of Galois extensions $L_n=L(LT[\pi_L^n])$ of $L$ the union of which we denote by $L_\infty$ with Galois group $\Gamma_L.$ Coleman assigned to any norm compatible unit $u=(u_n)\in \varprojlim_n L_n^\times$ a Laurent series $g_{u,t_0}\in o_L((Z))^\times$ such that $g_{u,t_0}(t_{0,n})=u_n$ for all $n.$ If $\partial_{\mathrm{inv}}$ denotes the invariant derivation with respect to $LT$, then, for $r\geq 1,$ the $r$th Coates-Wiles homomorphism is given by
\begin{equation*}
  \psi_{CW}^r:\varprojlim_n L_n^\times \to L(\chi_{LT}^r),\;\;\; u \mapsto  \frac{1}{r!}\partial_{\mathrm{inv}}^r \log g_{u,t_0}(Z)|_{Z=0} := \frac{1}{r!}\partial_{\mathrm{inv}}^{r-1}\frac{\partial_{\mathrm{inv}}g_{u,t_0}(Z)}{ g_{u,t_0}(Z)}|_{Z=0} \ ,
\end{equation*}
it is Galois invariant and satisfies at least heuristically - setting $t_{LT}=\log_{LT}(Z)$ - the equation
\begin{equation*}
  \log g_{u,t_0}(Z)=\sum_r \psi_{CW}^r(u)t_{LT}^r
\end{equation*}
the meaning of which in $p$-adic Hodge theory has been crucially exploited, e.g.\ \cite{Fo2}, \cite{Co5}, \cite{CC}.

Explicitly or implicitly this mysterious map plays  - classically  for the multiplicative group over $\mathbb{Q}_p$ - a crucial role in the Bloch-Kato (Tamagawa number) conjecture \cite{BK}, in the study of special $L$-values \cite{CS}, in explicit reciprocity laws \cite{Kat}, \cite{Wi} and even in the context of the cyclotomic trace map from $K$-theory into topological cyclic homology for $\mathbb{Z}_p$ \cite{BM}.

In this context one motivation for the present work is to understand Kato's (and hence Wiles's) explicit reciprocity law in terms of $(\varphi_L,\Gamma_L)$-modules. Since in the classical situation a successful study of explicit reciprocity laws has been achieved by  Colmez, Cherbonnier/Colmez, Benois and Berger using Fontaine's work on $(\varphi,\Gamma)$-modules and Herr's calculation of Galois cohomology by means of them, the plan for this article is to firstly use Kisin-Ren/Fontaine's equivalence of categories (recalled in section \ref{sec:KR}) to find a description of Iwasawa cohomology $H_{Iw}^i(L_\infty,V)$ for the tower $L_\infty$ and a (finitely generated $o_L$-module) representation $V$  of $G_L$ in terms of a $\psi$-operator acting on the etale  $(\varphi_L,\Gamma_L)$-module $D_{LT}(V)$. To this aim we have to generalize parts of \cite{Her} in section \ref{sec:etale}, in particular the residue pairing, which we relate to Pontrjagin duality. But instead of using Herr-complexes (which one also could define easily in this context) we use local Tate-duality $H_{Iw}^i(L_\infty,V)\cong H^{2-i}(L_\infty,V^\vee(1))$ for $V$ being an $o_L$-module of finite length and an explicit calculation of the latter groups in terms of $(\varphi_L,\Gamma_L)$-modules inspired by \cite{Sch} and \cite{Fo1}, \cite{Fo2}. Using the key observation that $D_{LT}(V)^\vee\cong D_{LT}(V^\vee(\chi_{LT}))$ (due to the residue pairing involving differentials $\Omega^1\cong D_{LT}(o_L)(\chi_{LT})$ and the compatibility of inner Homs under the category equivalence) we finally establish in Theorem \ref{psi-version} the following exact sequence
\begin{equation*}
  0 \longrightarrow H^1_{Iw}(L_\infty,V) \longrightarrow D_{LT}(V(\tau)) \xrightarrow{\psi_L -1} D_{LT}(V(\tau)) \longrightarrow H^2_{Iw}(L_\infty,V) \longrightarrow 0
\end{equation*}
as one main result of this article, where the twist by $\tau=\chi_{LT}\chi_{cyc}^{-1}$ is a new phenomenon (disappearing obviously in the cyclotomic case) arising from the joint use of Pontrjagin and local Tate duality. The second main result is the explicit calculation of the twisted (by the $o_L$-dual $T^*$ of $T$) Kummer map
\begin{equation*}
    \varprojlim_{n} L_n^\times \otimes_{\mathbb{Z} }T^* \xrightarrow{\kappa \otimes_{\mathbb{Z}_p} T^*} H^1_{Iw}(L_\infty,\mathbb{Z}_p(1)) \otimes_{\mathbb{Z}_p }T^* \cong H^1_{Iw}(L_\infty,o_L(\tau)) \
\end{equation*}
in terms of Coleman series (recalled in section \ref{sec:Coleman}) and $(\varphi_L,\Gamma_L)$-modules, see Theorem \ref{Kummer-commutative}, which generalizes the explicit reciprocity laws of Benois and Colmez. Inspired by \cite{Fo1}, \cite{Fo2} we reduce its proof to an explicit reciprocity law, Proposition \ref{reduction-to-char-p}, in characteristic $p$, which in turn is proved  by the Schmid-Witt residue formula which we generalize to our situation in section \ref{sec:Witt}, see Theorem \ref{schmid-witt}.
In section \ref{sec:CoatesWiles} we generalize the approach sketched in \cite{Fo2} to prove in Theorem \ref{BK} a generalization of the explicit reciprocity law of Bloch and Kato \cite{BK} Thm.\ 2.1: again in this context the Bloch-Kato exponential map is essentally given by the Coates-Wiles homomorphism. As a direct consequence we obtain as Corollary \ref{Kato} a new proof for a special case of Kato's explicit reciprocity law for Lubin-Tate formal groups.

It is a great honour and pleasure to dedicate this work to John Coates who has been a source of constant inspiration for both of us.

We thank L.\ Berger for pointing Lemma \ref{FilBmaxexact}.i out to us and for a discussion about Prop.\ \ref{id-component} and G.\ Kings for making available to us a copy of \cite{Koe}. We also acknowledge support by the DFG Sonderforschungsbereich 878 at M\"unster and by the DFG research unit 1920 ``Symmetrie, Geometrie und Arithmetik'' at Heidelberg/Darmstadt.

\section*{Notation}

Let $\mathbb{Q}_p \subseteq L \subset \mathbb{C}_p$ be a field of finite degree $d$ over $\mathbb{Q}_p$, $o_L$ the ring of integers of $L$, $\pi_L \in o_L$ a fixed prime element, $k_L = o_L/\pi_L o_L$ the residue field, and $q := |k_L|$. We always use the absolute value $|\ |$ on $\mathbb{C}_p$ which is normalized by $|\pi_L| = q^{-1}$.

We fix a Lubin-Tate formal $o_L$-module $LT = LT_{\pi_L}$ over $o_L$ corresponding to the prime element $\pi_L$. We always identify $LT$ with the open unit disk around zero, which gives us a global coordinate $Z$ on $LT$. The $o_L$-action then is given by formal power series $[a](Z) \in o_L[[Z]]$. For simplicity the formal group law will be denoted by $+_{LT}$.

The power series $\frac{\partial (X +_{LT} Y)}{\partial Y}_{|(X,Y) = (0,Z)}$ is a unit in $o_L[[Z]]$ and we let $g_{LT}(Z)$ denote its inverse. Then $g_{LT}(Z) dZ$ is, up to scalars, the unique invariant differential form on $LT$ (\cite{Haz} \S5.8). We also let $\log_{LT}(Z) = Z + \ldots$ denote the unique formal power series in $L[[Z]]$ whose formal derivative is $g_{LT}$. This $\log_{LT}$ is the logarithm of $LT$ (\cite{Lan} 8.6). In particular, $g_{LT}dZ = d\log_{LT}$. The invariant derivation $\partial_\mathrm{inv}$ corresponding to the form $d\log_{LT}$ is determined by
\begin{equation*}
  f' dZ = df = \partial_\mathrm{inv} (f) d\log_{LT} = \partial_\mathrm{inv} (f) g_{LT} dZ
\end{equation*}
and hence is given by
\begin{equation}\label{f:inv}
  \partial_\mathrm{inv}(f) = g_{LT}^{-1} f' \ .
\end{equation}
For any $a \in o_L$ we have
\begin{equation}\label{f:dlog}
  \log_{LT} ([a](Z)) = a \cdot \log_{LT} \qquad\text{and hence}\qquad ag_{LT}(Z) = g_{LT}([a](Z))\cdot [a]'(Z)
\end{equation}
(\cite{Lan} 8.6 Lemma 2).

Let $T$ be the Tate module of $LT$. Then $T$ is
a free $o_L$-module of rank one, and the action of
$G_L := \Gal(\overline{L}/L)$ on $T$ is given by a continuous character $\chi_{LT} :
 G_L \longrightarrow o_L^\times$. Let $T'$ denote the Tate module of the $p$-divisible group Cartier dual to $LT$, which again is a free $o_L$-module of rank one. The Galois action on $T'$ is given by the continuous character $\tau := \chi_{cyc}\cdot\chi_{LT}^{-1}$, where
$\chi_{cyc}$ is the cyclotomic character.

For $n \geq 0$ we let $L_n/L$ denote the extension (in $\mathbb{C}_p$) generated by the $\pi_L^n$-torsion points of $LT$, and we put $L_\infty := \bigcup_n L_n$. The extension $L_\infty/L$ is Galois. We let $\Gamma_L := \Gal(L_\infty/L)$ and $H_L := \Gal(\overline{L}/L_\infty)$. The Lubin-Tate character $\chi_{LT}$ induces an isomorphism $\Gamma_L \xrightarrow{\cong} o_L^\times$.

\section{Coleman power series}\label{sec:Coleman}

We recall the injective ring endomorphism
\begin{align*}
  \varphi_L : o_L[[Z]] & \longrightarrow o_L[[Z]] \\
                     f(Z) & \longmapsto f([\pi_L](Z)) \ .
\end{align*}
In order to characterize its image we let $LT_1$ denote the group of $\pi_L$-torsion points of $LT$. According to
\cite{Col} Lemma 3 we have
\begin{equation*}
  \im(\varphi_L) = \{f \in o_L[[Z]] : f(Z) = f(a +_{LT} Z)\ \text{for any $a \in LT_1$}\}.
\end{equation*}
This leads to the existence of a unique $o_L$-linear endomorphism $\psi_{Col}$ of $o_L[[Z]]$ such that
\begin{equation*}
  \varphi_L \circ \psi_{Col} (f)(Z) = \sum_{a \in LT_1} f(a +_{LT} Z)  \qquad\text{for any $f \in o_L[[Z]]$}
\end{equation*}
(\cite{Col} Thm.\ 4 and Cor.\ 5) as well as of a unique multiplicative map $\mathscr{N} : o_L[[Z]] \longrightarrow o_L[[Z]]$ such that
\begin{equation*}
  \varphi_L \circ \mathscr{N} (f)(Z) = \prod_{a \in LT_1} f(a +_{LT} Z)  \qquad\text{for any $f \in o_L[[Z]]$}
\end{equation*}
(\cite{Col} Thm.\ 11).

The group $\Gamma_L$ acts continuously on $o_L[[Z]]$ via
\begin{align}\label{f:Gamma-action}
  \Gamma_L \times o_L[[Z]] & \longrightarrow o_L[[Z]] \\
               (\gamma, f) & \longmapsto f([\chi_{LT}(\gamma)](Z))
\end{align}
(\cite{Col} Thm.\ 1).

\begin{remark}\phantomsection\label{Z-inverse}
\begin{itemize}
  \item[i.] $\psi_{Col} \circ \varphi_L = q$.
  \item[ii.] $\psi_{Col}([\pi_L] \cdot f) = Z \psi_{Col}(f)$ for any $f \in o_L[[Z]]$.
  \item[iii.] $\mathcal{N}([\pi_L]) = Z^q$.
\end{itemize}
\end{remark}
\begin{proof}
Because of the injectivity of $\varphi_L$ it suffices in all three cases to verify the asserted identity after applying $\varphi_L$. i. We compute
\begin{align*}
  \varphi_L \circ \psi_{Col} \circ \varphi_L (f) & = \sum_{a \in LT_1} (\varphi_L f)(a +_{LT} Z) = \sum_{a \in LT_1} f([\pi_L](a +_{LT} Z)) \\
   & = \sum_{a \in LT_1} f([\pi_L](Z)) = \varphi_L(qf) .
\end{align*}
ii. We compute
\begin{align*}
  (\varphi_L \circ \psi_{Col})([\pi_L]f) & = \sum_{a \in LT_1} [\pi_L](a +_{LT} Z) f(a +_{LT} Z) \\
   & = [\pi_L](Z) \sum_{a \in LT_1} f(a +_{LT} Z) = \varphi_L(Z) (\varphi_L \circ \psi_{Col})(f) \\
   & = \varphi_L(Z \psi_{Col}(f)) \ .
\end{align*}
ii. We omit the entirely analogous computation.
\end{proof}

We observe that for any $f \in o_L((Z)) = o_L[[Z]][Z^{-1}]$ there is an $n(f) \geq 1$ such that $[\pi_L]^{n(f)} \cdot f \in o_L[[Z]]$. The above remark therefore allows to extend $\psi_{Col}$ to an $o_L$-linear endomorphism
\begin{align*}
  \psi_{Col} : o_L((Z)) & \longrightarrow o_L((Z)) \\
   f & \longmapsto Z^{-n(f)} \psi_{Col}([\pi_L]^{n(f)} f)
\end{align*}
and to extend $\mathcal{N}$ to a multiplicative map
\begin{align*}
  \mathcal{N} : o_L((Z)) & \longrightarrow o_L((Z)) \\
   f & \longmapsto Z^{-qn(f)} \mathcal{N}([\pi_L]^{n(f)} f) \ .
\end{align*}

We choose an $o_L$-generator $t_0$ of $T$. This is a sequence of elements $t_{0,n} \in \pi_{L_n} o_{L_n}$ such that $[\pi_L](t_{0,n+1}) = t_{0,n}$ for $n \geq 1$, $[\pi_L](t_{0,1}) = 0$, and $t_{0,1}\neq 0$.

\begin{theorem}[Coleman]\label{Coleman}
For any norm-coherent sequence $u = (u_n)_n \in \varprojlim_n L_n^\times$ there is a unique Laurent series $g_{u,t_0} \in (o_L((Z))^\times)^{\mathcal{N} = 1}$ such that $g_{u,t_0}(t_{0,n}) = u_n$ for any $n \geq 1$. This defines a multiplicative isomorphism
\begin{align*}
  \varprojlim_n L_n^\times & \xrightarrow{\; \cong \;} (o_L((Z))^\times)^{\mathcal{N} = 1} \\
  u & \longmapsto g_{u,t_0} \ .
\end{align*}
\end{theorem}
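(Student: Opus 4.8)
The plan is to realise $u\mapsto g_{u,t_0}$ as the inverse of the evaluation map
\[
\ev\colon \bigl(o_L((Z))^\times\bigr)^{\mathcal{N}=1}\longrightarrow \varprojlim_n L_n^\times,\qquad g\longmapsto \bigl(g(t_{0,n})\bigr)_n .
\]
The first point is that $\ev$ is well defined: for $n\ge 1$ the group $\Gal(L_{n+1}/L_n)$ acts on $t_{0,n+1}$ through $\sigma\mapsto t_{0,n+1}+_{LT}a_\sigma$ with $a_\sigma$ running over $LT_1$, so that, using $\mathcal{N}(g)=g$ and $[\pi_L](t_{0,n+1})=t_{0,n}$,
\[
N_{L_{n+1}/L_n}\bigl(g(t_{0,n+1})\bigr)=\prod_{a\in LT_1}g(t_{0,n+1}+_{LT}a)=\varphi_L(\mathcal{N}g)(t_{0,n+1})=(\mathcal{N}g)(t_{0,n})=g(t_{0,n});
\]
the norm compatibility between $L_1$ and $L_0=L$ is the analogous direct check with $\Gal(L_1/L)\cong(o_L/\pi_L)^\times$. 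Since $\ev$ is visibly multiplicative, the theorem amounts to showing it is bijective, and then $g_{u,t_0}:=\ev^{-1}(u)$.

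Injectivity is the easy direction. If $g\in(o_L((Z))^\times)^{\mathcal{N}=1}$ has $g(t_{0,n})=1$ for all $n$, write $g=Z^mh$ with $h\in o_L[[Z]]^\times$; comparing normalised valuations at the uniformizers $t_{0,n}$ forces $m=0$, because a norm-coherent system of units has the same normalised valuation at every layer. Then $h-1$ vanishes at every $t_{0,n}$, hence lies in $\bigcap_n I_n$, where $I_n:=\ker\bigl(o_L[[Z]]\twoheadrightarrow o_{L_n},\ Z\mapsto t_{0,n}\bigr)$ is generated by the (Eisenstein, hence distinguished) minimal polynomial $P_n$ of $t_{0,n}$ over $L$. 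As $\deg P_n\to\infty$, Weierstrass preparation gives $\bigcap_n I_n=0$, so $h=1$.

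For surjectivity (the substance of the theorem) a short separate argument absorbs the common normalised valuation of a norm-coherent $u$ into a factor $Z^m$, reducing to $u=(u_n)\in\varprojlim_n o_{L_n}^\times$. Everything then rests on two properties of $\mathcal{N}$ on $o_L[[Z]]^\times$: (i) $\mathcal{N}(f)\equiv f\pmod{\pi_L}$, since modulo the maximal ideal $a+_{LT}Z\equiv Z$ for $a\in LT_1$, whence $\varphi_L(\mathcal{N}f)=\prod_{a\in LT_1}f(a+_{LT}Z)\equiv f^{q}\equiv\varphi_L(f)\pmod{\pi_L}$ and $\varphi_L$ is injective modulo $\pi_L$; and (ii) $\mathcal{N}\bigl(1+\pi_L^k o_L[[Z]]\bigr)\subseteq 1+\pi_L^{k+1}o_L[[Z]]$ for $k\ge 1$, because for $f=1+\pi_L^kx$ one has $\varphi_L(\mathcal{N}f)\equiv 1+\pi_L^k\varphi_L(\psi_{Col}x)\pmod{\pi_L^{k+1}}$ while $\psi_{Col}(x)\equiv 0\pmod{\pi_L}$ (indeed $\varphi_L(\psi_{Col}x)=\sum_{a\in LT_1}x(a+_{LT}Z)\equiv q\,x\equiv 0$). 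As $o_L[[Z]]$ is $\pi_L$-adically complete, (i) and (ii) show that for any $h\in o_L[[Z]]^\times$ the sequence $(\mathcal{N}^jh)_j$ is $\pi_L$-adically Cauchy --- one has $\mathcal{N}^{j+1}h/\mathcal{N}^jh=\mathcal{N}^j(\mathcal{N}h/h)\in 1+\pi_L^{j+1}o_L[[Z]]$ --- with $\mathcal{N}$-fixed limit $g_h:=\lim_j\mathcal{N}^jh$; moreover $\mathcal{N}^jh/\mathcal{N}^rh=\mathcal{N}^r(\mathcal{N}^{j-r}h/h)\in 1+\pi_L^{r+1}o_L[[Z]]$ for $j\ge r$, so $g_h\equiv\mathcal{N}^rh\pmod{\pi_L^{r+1}}$ for every $r\ge 0$.

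To build $g_{u,t_0}$ I would choose for each $m$ any $h_m\in o_L[[Z]]^\times$ with $h_m(t_{0,m})=u_m$. Iterating the identity $\mathcal{N}(f)(t_{0,r-1})=N_{L_r/L_{r-1}}(f(t_{0,r}))$ ($r\ge 2$) together with the norm-coherence of $u$ gives $\mathcal{N}^{m-n}(h_m)(t_{0,n})=u_n$ for $1\le n\le m$; combined with the estimate above (for $h_m$ with $r=m-n$) this yields
\[
g_{h_m}(t_{0,n})\equiv\mathcal{N}^{m-n}(h_m)(t_{0,n})=u_n\pmod{\pi_L^{m-n+1}o_{L_n}}\qquad(1\le n\le m),
\]
so that $g_{h_m}(t_{0,n})\to u_n$ as $m\to\infty$ for each fixed $n$. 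Since each $g_{h_m}$ is $\mathcal{N}$-fixed and $o_L[[Z]]$ is compact in the $(\pi_L,Z)$-adic topology, a subnet $(g_{h_{m_\alpha}})$ converges to some $g\in o_L[[Z]]^\times$, and since evaluation at $t_{0,n}$ is continuous for this topology we get $g(t_{0,n})=u_n$ for all $n$; one then checks that $g$ is again $\mathcal{N}$-fixed, so $g=g_{u,t_0}$ is the unique such series by the injectivity already proved, and $\ev$ is bijective. I expect this last step to be the genuinely delicate one: $\mathcal{N}$ is not literally the inverse system of the norm maps $N_{L_{m+1}/L_m}$ but only compatible with it, the kernels $I_n$ of the evaluation maps do not shrink $\pi_L$-adically, and $\mathcal{N}$ is $\pi_L$-adically --- but not $(\pi_L,Z)$-adically --- continuous, so extracting from the level-shifted iterates $\mathcal{N}^{m-n}(h_m)$ an honest $\mathcal{N}$-fixed series carrying all the prescribed values requires a careful interplay of the $\pi_L$-adic estimates of (i)--(ii) with the layerwise behaviour of $\mathcal{N}$; everything else is essentially formal once (i), (ii) and $\bigcap_n I_n=0$ are in hand.
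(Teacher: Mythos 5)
The paper itself gives no proof of this theorem; it simply cites \cite{Col} Thm.\ A and Cor.\ 17, so you were in effect reproving Coleman's theorem from scratch. The preliminary parts of your proposal are sound: the computation showing $\ev$ is well defined via $\varphi_L\circ\mathcal{N}$ and $\mathcal{N}g=g$ is correct (note that the inverse limit is over $n\ge 1$, so you do not actually need any norm compatibility down to $L_0=L$); the injectivity argument via Weierstrass preparation and $\bigcap_n(P_n)=0$ works; and your estimates (i), (ii) are correct — they are precisely the paper's Remark~\ref{psi}.v/vi transported to $o_L[[Z]]$ — and do yield the $\pi_L$-adically contracting fixed-point construction $g_h=\lim_j\mathcal{N}^j h$ with $g_h\equiv\mathcal{N}^r h\pmod{\pi_L^{r+1}}$.

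The genuine gap is exactly where you anticipate it, and the subnet/compactness device does not close it. A $(\pi_L,Z)$-adically convergent subnet of the $\mathcal{N}$-fixed elements $g_{h_m}$ has a limit $g$, but there is no reason for $g$ to be $\mathcal{N}$-fixed, because $\mathcal{N}$ is $\pi_L$-adically continuous (by (ii)) but not $(\pi_L,Z)$-adically continuous — the substitutions $Z\mapsto a+_{LT}Z$ with $a\in LT_1\setminus\{0\}$ destroy $Z$-adic order, since $a$ is a nonzero element of $\mathfrak{m}_{L_1}$. Nor do your congruences show that $(g_{h_m})_m$ is $\pi_L$-adically Cauchy: $g_{h_m}(t_{0,n})\equiv g_{h_{m'}}(t_{0,n})\pmod{\pi_L^{m-n+1}o_{L_n}}$ controls the value only at $t_{0,n}$, and reading this back into a congruence between the power series $g_{h_m}$ and $g_{h_{m'}}$ costs information of order $[L_n:L]$ in the $Z$-variable, so no uniform $\pi_L$-power drops out. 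The missing ingredient is essentially what the paper records separately as Remark~\ref{change-generator}.i (Coleman's Cor.\ 18): reduction modulo $\pi_L$ is a bijection $(o_L((Z))^\times)^{\mathcal{N}=1}\xrightarrow{\sim}k_L((Z))^\times$, and one must then identify $\varprojlim_n L_n^\times$ with $k_L((Z))^\times$ layer by layer — in effect the field-of-norms comparison $\varprojlim_n o_{L_n}/\pi_L o_{L_n}\cong k_L[[Z]]$ under the norm transition maps. That identification is the substance of Coleman's argument and does not follow formally from (i), (ii), and $\bigcap_n(P_n)=0$. One smaller point: the reduction from $\varprojlim_n L_n^\times$ to $\varprojlim_n o_{L_n}^\times$ by ``absorbing the valuation into $Z^m$'' is not immediate, because $(t_{0,n})_n$ is not itself norm-coherent in general — one has $N_{L_{n+1}/L_n}(t_{0,n+1})=\mathcal{N}(Z)(t_{0,n})$, and $\mathcal{N}(Z)\ne Z$ for a general Lubin-Tate group — so that step too requires a separate argument.
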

\begin{proof}
See \cite{Col} Thm.\ A and Cor.\ 17.
\end{proof}

\begin{remark}\phantomsection\label{change-generator}
\begin{itemize}
  \item[i.] The map $(o_L((Z))^\times)^{\mathcal{N} = 1} \xrightarrow{\cong} k_L((Z))^\times$ given by reduction modulo $\pi_L$ is an isomorphism; hence
\begin{align*}
  \varprojlim_n L_n^\times & \xrightarrow{\;\cong\;} k_L((Z))^\times \\
  u & \longmapsto g_{u,t_0} \bmod \pi_L
\end{align*}
      is an isomorphism of groups.
  \item[ii.] If $t_1 = c t_0$ is a second $o_L$-generator of $T$ then $g_{u,t_1}([c](Z)) = g_{u,t_0}(Z)$ for any $u \in \varprojlim_n L_n^\times$.
\end{itemize}
\end{remark}
\begin{proof}
i. \cite{Col} Cor.\ 18. ii. This is immediate from the characterizing property of $g_{u,t_0}$ in the theorem.
\end{proof}

We now introduce the ``logarithmic'' homomorphism
\begin{align*}
  \Delta_{LT} : o_L[[Z]]^\times & \longrightarrow o_L[[Z]] \\
                  f & \longmapsto \frac{\partial_{\mathrm{inv}}(f)}{f} = g_{LT}^{-1} \frac{f'}{f} \ ,
\end{align*}
whose kernel is $o_L^\times$.

\begin{lemma}\phantomsection\label{Delta-N}
\begin{itemize}
  \item[i.]  $\Delta_{LT} \circ \varphi_L = \pi_L \varphi_L \circ \Delta_{LT}$.
  \item[ii.] $\psi_{Col} \circ \Delta_{LT} = \pi_L \Delta_{LT} \circ \mathcal{N}$.
\end{itemize}
\end{lemma}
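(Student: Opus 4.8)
The statement to prove is Lemma \ref{Delta-N}, parts i and ii. The plan is to exploit the defining relations of $\varphi_L$, $\psi_{Col}$ and $\mathcal{N}$ together with the chain rule and the transformation behaviour \eqref{f:dlog} of $g_{LT}$, and to use injectivity of $\varphi_L$ so that part ii can be checked after composing with $\varphi_L$. Throughout I write $\partial := \partial_{\mathrm{inv}} = g_{LT}^{-1}\tfrac{d}{dZ}$.

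\emph{Part i.} First I compute $\Delta_{LT}(\varphi_L f)$ directly. By the chain rule $(\varphi_L f)' = f'([\pi_L](Z))\cdot[\pi_L]'(Z)$, so
\[
  \Delta_{LT}(\varphi_L f) = g_{LT}(Z)^{-1}\,\frac{f'([\pi_L](Z))\,[\pi_L]'(Z)}{f([\pi_L](Z))}\ .
\]
Now apply \eqref{f:dlog} with $a = \pi_L$: it gives $\pi_L g_{LT}(Z) = g_{LT}([\pi_L](Z))\cdot[\pi_L]'(Z)$, hence $g_{LT}(Z)^{-1}[\pi_L]'(Z) = \pi_L\, g_{LT}([\pi_L](Z))^{-1}$. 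Substituting,
\[
  \Delta_{LT}(\varphi_L f) = \pi_L\, g_{LT}([\pi_L](Z))^{-1}\frac{f'([\pi_L](Z))}{f([\pi_L](Z))} = \pi_L\,\varphi_L\!\left(g_{LT}^{-1}\frac{f'}{f}\right) = \pi_L\,\varphi_L(\Delta_{LT}(f))\ .
\]
This is exactly i.

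\emph{Part ii.} It suffices to verify the identity after applying the injective map $\varphi_L$ (first on $o_L[[Z]]$, then extended to $o_L((Z))$ as in the excerpt). On the right-hand side, using i,
\[
  \varphi_L\bigl(\pi_L\,\Delta_{LT}(\mathcal{N} f)\bigr) = \pi_L\,\varphi_L(\Delta_{LT}(\mathcal{N} f)) = \Delta_{LT}\bigl(\varphi_L(\mathcal{N} f)\bigr) = \Delta_{LT}\Bigl(\prod_{a\in LT_1} f(a +_{LT} Z)\Bigr)\ ,
\]
by the defining property of $\mathcal{N}$. Since $\Delta_{LT}$ is a homomorphism from the multiplicative group to the additive group, this equals $\sum_{a\in LT_1}\Delta_{LT}\bigl(f(a +_{LT} Z)\bigr)$. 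On the left-hand side,
\[
  \varphi_L\bigl(\psi_{Col}(\Delta_{LT} f)\bigr) = \sum_{a\in LT_1}(\Delta_{LT} f)(a +_{LT} Z)
\]
by the defining property of $\psi_{Col}$. So the lemma reduces to the pointwise identity
\[
  \Delta_{LT}\bigl(f(a +_{LT} Z)\bigr) = (\Delta_{LT} f)(a +_{LT} Z)\qquad\text{for each }a\in LT_1\ ,
\]
i.e.\ to the fact that $\partial_{\mathrm{inv}}$ is invariant under the translation $Z\mapsto a +_{LT} Z$. This is precisely the translation-invariance of the differential form $d\log_{LT}$ (equivalently of $\partial_{\mathrm{inv}}$) by points of $LT$, which I would record as follows: writing $h_a(Z) = a +_{LT} Z$, invariance of $g_{LT}\,dZ$ means $g_{LT}(h_a(Z))\,h_a'(Z) = g_{LT}(Z)$, and then for any $f$,
\[
  \partial_{\mathrm{inv}}(f\circ h_a)(Z) = g_{LT}(Z)^{-1}f'(h_a(Z))h_a'(Z) = g_{LT}(h_a(Z))^{-1}f'(h_a(Z)) = (\partial_{\mathrm{inv}} f)(h_a(Z))\ ,
\]
and dividing by $(f\circ h_a)(Z) = f(h_a(Z))$ gives the claim. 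The only point needing care --- and the main (minor) obstacle --- is to justify translation-invariance of $g_{LT}\,dZ$ for torsion (rather than formal) translation parameters $a\in LT_1$; this follows since $g_{LT}\,dZ = d\log_{LT}$ is the invariant differential, $\log_{LT}(a +_{LT} Z) = \log_{LT}(a) + \log_{LT}(Z)$ as power series (the universal property of $\log_{LT}$, extended to the coefficient ring obtained by adjoining $a$), and $\log_{LT}(a)$ is a constant, so $d\log_{LT}(a +_{LT} Z) = d\log_{LT}(Z)$. Once this is in place, parts i and ii follow by the above formal manipulations, using Remark \ref{Z-inverse} only implicitly through the extension of $\psi_{Col}$ and $\mathcal{N}$ to $o_L((Z))$.
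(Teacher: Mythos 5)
Your proof is correct and follows essentially the same route as the paper's: part i by the chain rule together with \eqref{f:dlog} applied to $a=\pi_L$, and part ii by applying the injective map $\varphi_L$, using part i, the multiplicativity of $\Delta_{LT}$, and the translation-invariance of $d\log_{LT}$ under $Z\mapsto a+_{LT}Z$ for $a\in LT_1$ (which the paper records as its preliminary observation \eqref{f:pre3}). The extra care you take to justify invariance for torsion parameters $a$, via $\log_{LT}(a+_{LT}Z)=\log_{LT}(a)+\log_{LT}(Z)$, is a small but welcome explicitation of a point the paper leaves implicit.
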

\begin{proof}
We begin with a few preliminary observations. From \eqref{f:dlog} we deduce
\begin{equation}\label{f:pre1}
  g_{LT} = \frac{[\pi_L]'}{\pi_L} \varphi_L(g_{LT}) \ .
\end{equation}
Secondly we have
\begin{equation}\label{f:pre2}
  \tfrac{d}{dZ} \varphi_L(f(Z)) = \tfrac{d}{dZ}f([\pi_L](Z)) = f'([\pi_L](Z)) [\pi_L]'(Z) = \varphi_L(f') [\pi_L]'
\end{equation}
for any $f \in o_L[[Z]]$. Finally, the fact that $g_{LT}(Z)dZ$ is an invariant differential form implies that
\begin{equation}\label{f:pre3}
  g_{LT} = \tfrac{d}{dZ} \log_{LT} (a +_{LT} Z) \qquad\text{for any $a \in LT_1$}.
\end{equation}
For i. we now compute
\begin{equation*}
  \Delta_{LT} \circ \varphi_L (f) = \tfrac{1}{g_{LT}} \tfrac{\tfrac{d}{dZ} \varphi_L(f(Z))}{\varphi_L(f)} = \tfrac{[\pi_L]'}{g_{LT}} \tfrac{\varphi_L(f')}{\varphi_L(f)}
   = \tfrac{\pi_L}{\varphi_L(g_{LT})} \tfrac{\varphi_L(f')}{\varphi_L(f)} = \pi_L \varphi_L \circ \Delta_{LT}(f) \ ,
\end{equation*}
where the second, resp.\ the third, identity uses \eqref{f:pre2}, resp.\ \eqref{f:pre1}. For ii. we compute
\begin{align*}
  \varphi_L \circ \psi_{Col} \circ \Delta_{LT}(f) & = \sum_{a \in LT_1} \tfrac{1}{g_{LT}(a +_{LT} Z)} \tfrac{f'}{f}(a +_{LT} Z) \\
   & = \sum_{a \in LT_1} \tfrac{1}{g_{LT}(a +_{LT} Z)} \tfrac{\tfrac{d}{dZ} f(a +_{LT} Z)}{f(a +_{LT} Z)} \tfrac{1}{\tfrac{d}{dZ} (a +_{LT} Z)}  \\
   & =  \sum_{a \in LT_1} \tfrac{1}{\tfrac{d}{dZ} \log_{LT}(a +_{LT} Z)} \tfrac{\tfrac{d}{dZ} f(a +_{LT} Z)}{f(a +_{LT} Z)} \\
   & = \sum_{a \in LT_1} \Delta_{LT}(f(a +_{LT} Z)) = \Delta_{LT} (\prod_{a \in LT_1} f(a +_{LT} Z)) \\
   & = \Delta_{LT} \circ \varphi_L \circ \mathcal{N}(f) = \pi_L \varphi_L \circ \Delta_{LT} \circ \mathcal{N}(f) \\
   & = \varphi_L(\pi_L \Delta_{LT} \circ \mathcal{N}(f)) \ ,
\end{align*}
where the fourth, resp. the seventh, identity uses \eqref{f:pre3}, resp. part i. of the assertion.
\end{proof}

It follows that $\Delta_{LT}$ restricts to a homomorphism
\begin{equation*}
  \Delta_{LT} : (o_L[[Z]]^\times)^{\mathcal{N} = 1} \longrightarrow o_L[[Z]]^{\psi_{Col} = \pi_L} \ .
\end{equation*}
Its kernel is the subgroup $\mu_{q-1}(L)$ of $(q-1)$th roots of unity in $o_L^\times$.
%\footnote{Frage f\"ur die Zukunft: What is the cokernel of this map?}

On the other hand $\Delta_{LT}$ obviously extends to the homomorphism
\begin{align*}
  \Delta_{LT} : o_L((Z))^\times & \longrightarrow o_L((Z)) \\
                  f & \longmapsto g_{LT}^{-1} \frac{f'}{f} \ ,
\end{align*}
with the same kernel $o_L^\times$.

\begin{lemma}\label{Delta-N-extended}
The identity $\psi_{Col} \circ \Delta_{LT} = \pi_L \Delta_{LT} \circ \mathcal{N}$ holds true on $o_L((Z))^\times$.
\end{lemma}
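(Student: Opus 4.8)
The plan is to show that the chain of equalities proving Lemma~\ref{Delta-N}.ii goes through for Laurent series essentially word for word; the one point requiring care is that $\varphi_L$, $\psi_{Col}$, $\mathcal{N}$ and the substitutions $Z\mapsto a+_{LT}Z$ no longer preserve $o_L[[Z]]$, so I would first enlarge the ground ring and check that the relevant identities still hold there.

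Concretely I would work inside $L_1((Z))$. As $[\pi_L](Z)=\pi_L Z+\ldots$ has $Z$-adic order $1$, it equals $Z$ times a unit of $L_1[[Z]]$, so $Z\mapsto[\pi_L](Z)$ extends $\varphi_L$ to an injective ring endomorphism of $L_1((Z))$ restricting to the original one on $o_L[[Z]]$. Likewise, for $0\neq a\in LT_1$ the power series $a+_{LT}Z$ (which has coefficients in $o_{L_1}$) has constant term $a\in L_1^\times$, hence is a unit of $L_1[[Z]]$, so $g\mapsto g(a+_{LT}Z)$ is a well-defined ring homomorphism $o_L((Z))\to L_1((Z))$ sending $Z^{-1}$ to $(a+_{LT}Z)^{-1}$, while for $a=0$ it is the identity. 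I then claim that the defining relations
\[
 \varphi_L(\psi_{Col}(f))=\sum_{a\in LT_1}f(a+_{LT}Z) \qquad\text{and}\qquad \varphi_L(\mathcal{N}(f))=\prod_{a\in LT_1}f(a+_{LT}Z)
\]
remain valid for all $f\in o_L((Z))$, now as identities in $L_1((Z))$. For the first one I would pick $n$ with $h:=[\pi_L]^n f\in o_L[[Z]]$, so that $\psi_{Col}(f)=Z^{-n}\psi_{Col}(h)$ by the construction of the extended $\psi_{Col}$, apply the known relation to $h\in o_L[[Z]]$, and use that evaluation at $a+_{LT}Z$ is multiplicative and that $[\pi_L](a+_{LT}Z)=[\pi_L](a)+_{LT}[\pi_L](Z)=[\pi_L](Z)$ for $a\in LT_1$: the outer factor $\varphi_L(Z)^{-n}=[\pi_L](Z)^{-n}$ then cancels the factor $[\pi_L](Z)^{n}$ produced by evaluating $[\pi_L]^n$ at $a+_{LT}Z$, and the claim drops out. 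The relation for $\mathcal{N}$ follows the same way, with $n$ replaced by $qn$ since $|LT_1|=q$. I would also note that Lemma~\ref{Delta-N}.i extends verbatim to $f\in L_1((Z))^\times$, its proof being a purely formal manipulation of derivatives using only \eqref{f:pre1} and \eqref{f:pre2}.

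Granting all this, for $f\in o_L((Z))^\times$ the argument would run inside $L_1((Z))$ exactly as in the proof of Lemma~\ref{Delta-N}.ii:
\begin{align*}
 \varphi_L(\psi_{Col}(\Delta_{LT}(f))) &= \sum_{a\in LT_1}\Delta_{LT}(f)(a+_{LT}Z) = \sum_{a\in LT_1}\Delta_{LT}\bigl(f(a+_{LT}Z)\bigr) \\
 &= \Delta_{LT}\Bigl(\prod_{a\in LT_1}f(a+_{LT}Z)\Bigr) = \Delta_{LT}(\varphi_L(\mathcal{N}(f))) = \varphi_L\bigl(\pi_L\Delta_{LT}(\mathcal{N}(f))\bigr),
\end{align*}
where the second equality uses only the invariance of $g_{LT}\,dZ$, i.e.\ \eqref{f:pre3} (exactly the step appearing in the original proof, with $f(a+_{LT}Z)$ a unit of $L_1((Z))$ since $f$ is a unit of $o_L((Z))$), the fourth uses the extended distribution relation for $\mathcal{N}$, and the last uses the extended Lemma~\ref{Delta-N}.i applied to $\mathcal{N}(f)\in o_L((Z))^\times$. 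Since $\psi_{Col}(\Delta_{LT}(f))$ and $\pi_L\Delta_{LT}(\mathcal{N}(f))$ both lie in $o_L((Z))$ and $\varphi_L$ is injective on $L_1((Z))$, the assertion follows.

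I expect the only real obstacle to be this bookkeeping: pinning down in which ring each expression lives and verifying that the distribution relations for $\psi_{Col}$ and $\mathcal{N}$ survive the passage to Laurent series. Once that is done no new idea beyond the proof of Lemma~\ref{Delta-N}.ii is needed, since clearing denominators by a power of $[\pi_L]$ always reduces matters to the power-series case.
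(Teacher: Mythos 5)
Your proof is correct but follows a genuinely different path from the paper's. The paper exploits the additivity of both sides together with Lemma \ref{Delta-N}.ii: writing $f = Z^{-n}f_0$ with $f_0 \in o_L[[Z]]^\times$, the $f_0$-part is already handled, so the whole statement reduces to the single identity $\psi_{Col}(\tfrac{1}{Z g_{LT}}) = \pi_L\Delta_{LT}(\mathcal{N}(Z))$, which the paper then verifies by applying $\varphi_L$ and carrying out an explicit computation with the distribution sums. You instead front-load the structural work: you first extend the distribution relations $\varphi_L\circ\psi_{Col}(f)=\sum_a f(a+_{LT}Z)$ and $\varphi_L\circ\mathcal{N}(f)=\prod_a f(a+_{LT}Z)$ to all of $o_L((Z))$ (as identities in $L_1((Z))$) by the clearing-denominators argument, observe that Lemma \ref{Delta-N}.i and the $\Delta_{LT}$-commutation with $\sigma_a$ are formal and persist, and then literally re-run the chain of equalities from the proof of Lemma \ref{Delta-N}.ii. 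The two approaches buy different things: the paper's is self-contained within $o_L[[Z]]$ and avoids discussing substitution of units of $L_1[[Z]]$ into Laurent series, at the cost of an ad hoc residual computation; yours is more systematic and makes transparent that the same formal identities drive both lemmas, at the cost of the bookkeeping you flag (verifying that $\sigma_a$ extends to Laurent series with image in $L_1((Z))$, and that $\varphi_L$ remains injective there). Incidentally, your extended distribution relation for $\psi_{Col}$ is exactly the computation that reappears later in the paper's proof of Remark \ref{psi}.ii, so your route anticipates machinery the authors set up anyway.
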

\begin{proof}
Let $f \in o_L((Z)^\times$ be any element. It can be written $f = Z^{- n} f_0$ with $f_0 \in o_L[[Z]]^\times$. Then
\begin{equation*}
  \psi_{Col} \circ \Delta_{LT} (f) = - n\psi_{Col}(\frac{1}{Z g_{LT}}) + \psi_{Col} \circ \Delta_{LT} (f_0)
\end{equation*}
and
\begin{equation*}
  \pi_L \Delta_{LT} \circ \mathcal{N} (f) = - n\pi_L \Delta_{LT}(\mathcal{N}(Z)) + \pi_L \Delta_{LT} \circ \mathcal{N} (f_0) \ .
\end{equation*}
The second summands being equal by Lemma \ref{Delta-N}.ii we see that we have to establish that
\begin{equation*}
  \psi_{Col}(\frac{1}{Z g_{LT}}) = \pi_L \Delta_{LT}(\mathcal{N}(Z)) \ .
\end{equation*}
By definition the left hand side is $Z^{-1} \psi_{Col}(\frac{[\pi_L]}{Z g_{LT}})$ and the right hand side is $\frac{\pi_L}{g_{LT}} \frac{\frac{d}{dZ} \mathcal{N}(Z)}{\mathcal{N}(Z)}$. Hence we are reduced to proving the identity
\begin{equation*}
  g_{LT} \mathcal{N}(Z) \psi_{Col}(\frac{[\pi_L]}{Z g_{LT}}) = \pi_L Z \frac{d}{dZ} \mathcal{N}(Z) \ ,
\end{equation*}
which is an identity in $o_L[[Z]]$ and therefore can be checked after applying $\varphi_L$. On the left hand side we obtain
\begin{align*}
  & \varphi_L(g_{LT}) \prod_{a \in LT_1} (a +_{LT} Z) \sum_{b \in LT_1} \frac{[\pi_L](b +_{LT} Z)}{(b +_{LT} Z)g_{LT}(b +_{LT} Z)} \\
  & \qquad
  = \varphi_L(g_{LT}) \varphi_L(Z) \prod_{a \in LT_1} (a +_{LT} Z) \sum_{b \in LT_1} \frac{1}{(b +_{LT} Z)g_{LT}(b +_{LT} Z)}  \\
  & \qquad
  = g_{LT} \frac{\pi_L}{[\pi_L]'} \varphi_L(Z) \prod_{a \in LT_1} (a +_{LT} Z) \sum_{b \in LT_1} \frac{1}{(b +_{LT} Z)g_{LT}(b +_{LT} Z)} \ ,
\end{align*}
where the second equality uses \eqref{f:pre1}. On the right hand side, using \eqref{f:pre2}, we have
\begin{equation*}
  \pi_L \varphi_L(Z) \varphi_L(\tfrac{d}{dZ} \mathcal{N}(Z)) = \frac{\pi_L}{[\pi_L]'} \varphi_L(Z)  \tfrac{d}{dZ} \varphi_L( \mathcal{N}(Z))
   = \frac{\pi_L}{[\pi_L]'} \varphi_L(Z)  \tfrac{d}{dZ} \prod_{a \in LT_1} (a +_{LT} Z) \ .
\end{equation*}
This further reduces us to proving that
\begin{equation*}
  g_{LT} \sum_{b \in LT_1} \frac{1}{(b +_{LT} Z)g_{LT}(b +_{LT} Z)} = \frac{ \frac{d}{dZ} \prod_{a \in LT_1} (a +_{LT} Z)}{\prod_{a \in LT_1} (a +_{LT} Z)} \ .
\end{equation*}
The invariance of $g_{LT}(Z)dZ$ implies
\begin{equation}\label{f:pre4}
  \tfrac{d}{dZ} (a +_{LT} Z) = \frac{g_{LT}(Z)}{g_{LT}(a +_{LT} Z)} \ .
\end{equation}
We see that the above right hand side, indeed, is equal to
\begin{equation*}
  \frac{ \frac{d}{dZ} \prod_{a \in LT_1} (a +_{LT} Z)}{\prod_{a \in LT_1} (a +_{LT} Z)} = \sum_{a \in LT_1} \frac{ \frac{d}{dZ} (a +_{LT} Z)}{a +_{LT} Z} = g_{LT} \sum_{a \in LT_1} \frac{1}{(a +_{LT} Z)g_{LT}(a +_{LT} Z)} \ .
\end{equation*}
\end{proof}

Hence we even have the homomorphism $\Delta_{LT} : (o_L((Z))^\times)^{\mathcal{N} = 1} \longrightarrow o_L((Z))^{\psi_{Col} = \pi_L}$ with kernel $\mu_{q-1}(L)$.

\section{Etale $(\varphi_L,\Gamma_L)$-modules}\label{sec:etale}

We define the ring $\mathscr{A}_L$ to be the $\pi_L$-adic completion of $o_L[[Z]][Z^{-1}]$ and we let $\mathscr{B}_L := \mathscr{A}_L[\pi_L^{-1}]$ denote the field of fractions of $\mathscr{A}_L$. The ring endomorphism $\varphi_L$ of $o_L[[Z]]$ maps $Z$ to $[\pi_L](Z)$. Since $[\pi_L](Z) \equiv Z^q \bmod \pi_L$ the power series $[\pi_L](Z)$ is a unit in $\mathscr{A}_L$. Hence $\varphi_L$ extends to a homomorphism $o_L[[Z]][Z^{-1}] \longrightarrow \mathscr{A}_L$ and then by continuity to a ring endomorphism $\varphi_L$ of $\mathscr{A}_L$ and finally to an embedding of fields $\varphi_L : \mathscr{B}_L \longrightarrow \mathscr{B}_L$. Similarly the invariant derivation $\partial_{\mathrm{inv}}$ first extends algebraically to $o_L[[Z]][Z^{-1}]$, then by continuity to $\mathscr{A}_L$, and finally by linearity to $\mathscr{B}_L$. Evidently we still have \eqref{f:inv} for any $f \in \mathscr{B}_L$.

\begin{remark}\label{basis}
$1, Z, \ldots, Z^{q-1}$ is a basis of $\mathscr{B}_L$ as a $\varphi_L(\mathscr{B}_L)$-vector space.
\end{remark}
\begin{proof}
See \cite{FX} Remark before Lemma 2.1 or \cite{GAL} Prop.\ \ref{GAL-phi-free}.
\end{proof}

This remark allows us to introduce the unique additive endomorphism $\psi_L$ of $\mathscr{B}_L$ which satisfies
\begin{equation*}
  \varphi_L \circ \psi_L = \pi_L^{-1} \cdot trace_{\mathscr{B}_L/\varphi_L(\mathscr{B}_L)} \ .
\end{equation*}
By the injectivity of $\varphi_L$ and the linearity of the field trace we have the projection formula
\begin{equation*}
  \psi_L(\varphi_L(f_1)f_2) = f_1 \psi_L(f_2) \qquad\text{for any $f_i \in \mathscr{B}_L$}.
\end{equation*}
as well as the formula
\begin{equation*}
  \psi_L \circ \varphi_L = \frac{q}{\pi_L} \cdot \id \ .
\end{equation*}

Correspondingly, we consider the unique multiplicative map $\mathcal{N}_L : \mathscr{B}_L \longrightarrow \mathscr{B}_L$ which satisfies
\begin{equation}\label{f:normoperator}
  \varphi_L \circ \mathcal{N}_L  =   \mathrm{Norm}_{\mathscr{B}_L/\varphi_L(\mathscr{B}_L)} \ .
\end{equation}

\begin{remark}\phantomsection\label{psi}
\begin{itemize}
  \item[i.] $\psi_L(\mathscr{A}_L) \subseteq \mathscr{A}_L$ and $\mathcal{N}_L(\mathscr{A}_L) \subseteq \mathscr{A}_L$.
  \item[ii.] On $o_L((Z))$ we have $\psi_L = \pi_L^{-1} \cdot \psi_{Col}$ and $\mathcal{N}_L =\mathcal{N}$.
  \item[iii.] { $\varphi_{L}\circ \psi_{L}\circ \partial_\mathrm{inv}=\partial_\mathrm{inv} \circ \varphi_{L}\circ \psi_{L}$ on $\mathscr{B}_L$.}
  \item[iv.] $\mathcal{N}_L(f)([c](Z)) =  \mathcal{N}_L(f([c](Z)))$ for any $c\in o_L^\times$ and $f\in \mathscr{B}_L$.
     % \footnote{For $c = \pi_L$ we have $\mathcal{N}_L(\varphi_L(f)) = f^q \neq \varphi_L(\mathcal{N}_L(f))$.}
  \item[v.]  $\mathcal{N}_L(f) \equiv f \bmod \pi_L \mathscr{A}_L$ for any $f \in \mathscr{A}_L$.
  \item[vi.]   If $f \in \mathscr{A}_L$ satisfies $f \equiv 1 \bmod \pi_L^m \mathscr{A}_L$ for some $m \geq 1$ then $\mathcal{N}_L(f) \equiv 1 \bmod \pi_L^{m+1} \mathscr{A}_L$.
      \item[vii.]   $(o_L((Z))^\times)^{\mathcal{N}=1} = (\mathscr{A}_L^\times)^{\mathcal{N}_L =1}$.
\end{itemize}
\end{remark}
\begin{proof}
i. The homomorphism $\varphi_L$ induces on $\mathscr{A}_L/\pi_L \mathscr{A}_L = k_L((Z))$ the injective $q$-Frobenius map. It follows that $\varphi_L^{-1}(\mathscr{A}_L) = \mathscr{A}_L$. Hence the assertion reduces to the claim that
\begin{equation}\label{f:tracemod}
  trace_{\mathscr{B}_L/\varphi_L(\mathscr{B}_L)}(\mathscr{A}_L) \subseteq \pi_L \mathscr{A}_L \ .
\end{equation}
But the trace map $trace_{\mathscr{B}_L/\varphi_L(\mathscr{B}_L)}$ induces the trace map for the purely inseparable extension $k_L((Z))/k_L((Z^q))$, which is the zero map.

ii. For any $a \in LT_1$ we have the ring homomorphism
\begin{align*}
  \sigma_a : o_L[[Z]] & \longrightarrow o_{L_1}[[Z]] \subseteq \mathscr{A}_{L_1} \\
                 f(Z) & \mapsto f(a +_{LT} Z) \ .
\end{align*}
Since $\sigma_a(Z) = a +_{LT} Z \equiv a + Z \bmod \deg 2$ we have $\sigma_a(Z) \in \mathscr{A}_{L_1}^\times$, so that $\sigma_a$ extends to $o_L[[Z]][Z^{-1}]$. By continuity $\sigma_a$ further extends to $\mathscr{A}_L$ and then by linearity to an embedding of fields
\begin{equation*}
  \sigma_a : \mathscr{B}_L \longrightarrow \mathscr{B}_{L_1} = \mathscr{B}_L L_1 \ .
\end{equation*}
Clearly these $\sigma_a$ are pairwise different. Moreover, for any $f \in o_L[[Z]]$, we have
\begin{equation*}
  \sigma_a \circ \varphi_L(f) (Z) = f([\pi_L](a +_{LT} Z)) = f(Z) \ .
\end{equation*}
We conclude, by continuity, that $\sigma_a | \varphi_L(\mathscr{B}_L) = \id$. It follows that $\prod_{a \in LT_1} (X - \sigma_a(f))$, for any $f \in \mathscr{B}_L$, is the characteristic polynomial of $f$ over $\varphi_L(\mathscr{B}_L)$. Hence
\begin{equation}\label{f:trace}
    trace_{\mathscr{B}_L/\varphi_L(\mathscr{B}_L)}(f) = \sum_{a \in LT_1} \sigma_a(f) \ ,
\end{equation}
which proves the assertion for $f \in o_L[[Z]]$. For general $f \in o_L((Z))$, using the notation and definition before Thm.\ \ref{Coleman} we compute
\begin{align*}
  \varphi_L \circ \psi_{Col}(f) & = \varphi_L\left( Z^{-n(f)} \psi_{Col}( [\pi_L]^{n(f)}f) \right)  \\
   & = \varphi_L(Z)^{-n(f)} \sum_{a \in LT_1} \sigma_a([\pi_L]^{n(f)}f)  \\
   & = \sum_{a \in LT_1} \sigma_a(f) = trace_{\mathscr{B}_L/\varphi_L(\mathscr{B}_L)}(f) \\
   & = \varphi_L \circ \pi_L \psi_L(f) \ .
\end{align*}
The proof for $\mathcal{N}_L$ is completely analogous.

iii. By the invariance of $\partial_\mathrm{inv}$ we have $ \partial_\mathrm{inv}\circ \sigma_a =\sigma_a \circ \partial_\mathrm{inv}$ on $o_L[[Z]][Z^{-1}]$, whence on $\mathscr{B}_L$ by continuity and linearity. Therefore, the claim follows from \eqref{f:trace}.

 iv. We compute
\begin{align*}
  \varphi_L(\mathcal{N}_L(f)([c](Z))) & = \mathcal{N}_L(f)([c]([\pi_L](Z))) = \mathcal{N}_L(f)([\pi_L]([c](Z))) \\
  & = \varphi_L(\mathcal{N}_L(f))([c](Z)) =  \prod_{a \in LT_1} \sigma_a(f)([c](Z)) \\
  & = \prod_{a \in LT_1} \sigma_{[c^{-1}](a)}(f([c](Z))) = \prod_{a \in LT_1} \sigma_a(f([c](Z))) \\
  & = \varphi_L(\mathcal{N}_L(f([c](Z)))) \ .
\end{align*}

v. We have
\begin{align*}
  \varphi_L \circ \mathcal{N}_L(f) \bmod \pi_L \mathscr{A}_L & = \mathrm{Norm}_{k_L((Z))/k_L((Z^q))} (f \bmod \pi_L \mathscr{A}_L) \equiv f^q \bmod \pi_L \mathscr{A}_L \\
   & = \varphi_L(f) \bmod \pi_L \mathscr{A}_L \ .
\end{align*}

vi. Let $f = 1 + \pi_L^m g$ with $g \in \mathscr{A}_L$. We compute
\begin{align*}
  \varphi_L(\mathcal{N}_L(1 + \pi_L^m g)) & = \prod_{a \in LT_1} 1 + \pi_L^m \sigma_a(g) \equiv 1 + \pi_L^m(\sum_{a \in LT_1} \sigma_a(g)) \bmod \pi_L^{m+1} \mathscr{A}_L  \\
   & \equiv 1 \bmod \pi_L^{m+1} \mathscr{A}_L  \equiv \varphi_L(1) \bmod \pi_L^{m+1} \mathscr{A}_L \
\end{align*} where the third identity uses \eqref{f:tracemod}.
The assertion follows since $\varphi_L$ remains injective modulo $\pi_L^j$ for any $j \geq 1$.

vii. We have the commutative diagram
\begin{equation*}
  \xymatrix{
  (o_L((Z))^\times)^{\mathcal{N}=1} \ar[rr]^{\subseteq} \ar[dr]_{\cong}
                &  &    (\mathscr{A}_L^\times)^{\mathcal{N}_L =1} \ar[dl]    \\
                & k_L((Z))^\times                 }
\end{equation*}
where the oblique arrows are given by reduction modulo $\pi_L$. The left one is an isomorphism by Remark \ref{change-generator}.i. The right one is injective as a consequence of the assertion vi. Hence all three maps must be bijective.
\end{proof}

  Due to Remark \ref{psi}.vii we may view the Coleman isomorphism in Thm.\ \ref{Coleman} as an isomorphism
\begin{equation}\label{f:Coleman}
  \varprojlim_n L_n^\times \xrightarrow{\; \cong \;} (\mathscr{A}_L^\times)^{\mathcal{N}_L =1} \ .
\end{equation}

We always equip $\mathscr{A}_L$ with the weak topology, for which the $o_L$-submodules $\pi_L^m\mathscr{A}_L + Z^m o_L[[Z]]$, for $m \geq 1$, form a fundamental system of open neighbourhoods of zero. The weak topology on any finitely generated $\mathscr{A}_L$-module $M$ is defined to be the quotient topology, with respect to any surjective homomorphism $\mathscr{A}_L^n \twoheadrightarrow M$, of the product topology on $\mathscr{A}_L^n$; this is independent of the choice of this homomorphism. We have the following properties (cf.\ \cite{SV} Lemmas 8.2 and 8.22 for a detailed discussion of weak topologies):
\begin{itemize}
  \item[--] $\mathscr{A}_L$ is a complete Hausdorff topological $o_L$-algebra (with jointly continuous multiplication).
  \item[--] $\mathscr{A}_L$ induces on $o_L[[Z]]$ its compact topology.
  \item[--] $M$ is a complete Hausdorff topological module (with jointly continuous scalar multiplication).
  \item[--] $M/\pi_L^m M$, for any $m \geq 1$, is locally compact.
\end{itemize}

\begin{remark}\label{phi-weakcont}
The endomorphisms $\varphi_L$ and $\psi_L$ of $\mathscr{A}_L$ are continuous for the weak topology.
\end{remark}
\begin{proof}
For $\varphi_L$ see \cite{GAL} Prop.\ \ref{GAL-weak-phi-Gamma}.i. For $\psi_L$ see \cite{FX} Prop.\ 2.4(b) (note that their $\psi$ is our $\frac{\pi_L}{q} \psi_L$).
%\footnote{They state the continuity of their $\psi$ on $\mathscr{B}_L$ without ever defining the weak topology on $\mathscr{B}_L$.}.
\end{proof}

Let $\Omega^1 = \Omega^1_{\mathscr{A}_L} = \mathscr{A}_L dZ$ denote the free rank one $\mathscr{A}_L$-module of differential forms. Obviously the residue map
\begin{align*}
  \mathrm{Res} : \qquad\qquad\  \Omega^1 & \longrightarrow o_L \\
       (\sum_i a_i Z^i)dZ & \longmapsto a_{-1}
\end{align*}
is continuous. Later on in section \ref{sec:Witt} it will be a very important fact that this map does not depend on the choice of the variable $Z$. For the convenience of the reader we explain the argument (cf.\ \cite{Fo1} A2.2.3). First of all we have to extend the maps $d$ and $\mathrm{Res}$ by linearity to maps
\begin{equation*}
  \mathscr{B}_L \xrightarrow{\; d \;} \Omega^1_{\mathscr{B}_L} := L \otimes_{o_L} \Omega^1_{\mathscr{A}_L} \xrightarrow{\mathrm{Res}} L \ .
\end{equation*}
Only for the purposes of the subsequent remark we topologize $\mathscr{B}_L$ by taking as a fundamental system of open neighbourhoods of zero the $o_L[[Z]]$-submodules
\begin{equation*}
  \pi_L^m \mathscr{A}_L + L \otimes_{o_L} Z^m o_L[[Z]])  \qquad\text{for $m \geq 1$}.
\end{equation*}
Using the isomorphism $\Omega^1_{\mathscr{B}_L} = \mathscr{B}_L dZ \cong \mathscr{B}_L$ we also make $\Omega^1_{\mathscr{B}_L}$ into a topological $o_L$-module. It is easy to see that the maps $d$ and $\mathrm{Res}$ are continuous.

\begin{remark}\phantomsection\label{Res-variable}
\begin{itemize}
  \item[i.] $d(\mathscr{B}_L)$ is dense in $\ker(\mathrm{Res})$.
  \item[ii.] $\mathrm{Res} = \mathrm{Res}_Z$ does not depend on the choice of the variable $Z$, i.e., if $Z'$ is any element in $\mathscr{A}_L$ whose reduction modulo $\pi_L$ is a uniformizing element in $k((Z))$, then $\mathrm{Res}_Z(\omega) = \mathrm{Res}_{Z'}(\omega)$ for all $\omega \in \Omega^1_{\mathscr{B}_L}$.
\end{itemize}
 \end{remark}
\begin{proof}
i. On the one hand $L[Z,Z^{-1}] \cap \ker(\mathrm{Res})$ is dense in $\ker(\mathrm{Res})$. On the other hand we have $L[Z,Z^{-1}] \cap \ker(\mathrm{Res}) \subseteq d(\mathscr{B}_L)$. ii. As a consequence of i. both maps $\mathrm{Res}_Z$ and $\mathrm{Res}_{Z'}$ have the same kernel. It therefore suffices to show that $\mathrm{Res}_Z(\frac{dZ'}{Z'}) = 1$. We have $Z' = cZ \eta (1+\pi_L\alpha)$ with $c \in o_L^\times$, $\eta \in 1 +Zo_L[[Z]]$, and $\alpha \in \mathscr{A}_L$. Hence
\begin{equation*}
  \tfrac{dZ'}{Z'} = \tfrac{dZ}{Z} + \tfrac{d\eta}{\eta} + \tfrac{d(1+\pi_L\alpha)}{1+\pi_L\alpha} \ .
\end{equation*}
Clearly $\mathrm{Res}_Z(\frac{d\eta}{\eta}) = 0$. Furthermore, if $m \geq 1$ is sufficiently big, then $\log(1 + \pi_L^m \beta)$, for any $\beta \in \mathscr{A}_L$, converges in $\mathscr{A}_L$. Since $(1 + \pi_L^j \mathscr{A}_L)/(1 + \pi_L^{j+1} \mathscr{A}_L) \cong \mathscr{A}_L / \pi_L \mathscr{A}_L$, for any $j \geq 1$, we have $(1+\pi_L\alpha)^{p^m} = 1+\pi_L^m \beta$ for some $\beta \in \mathscr{A}_L$. It follows that $p^m\frac{d(1+\pi_L\alpha)}{1+\pi_L\alpha} = \frac{d(1+\pi_L^m \beta)}{1+\pi_L^m \beta} =  d(\log (1 + \pi_L^m \beta))$ and therefore that $\mathrm{Res}_Z(\frac{d(1+\pi_L\alpha)}{1+\pi_L\alpha}) = 0$.
\end{proof}

Since $\Omega^1$ is a topological $\mathscr{A}_L$-module it follows that the residue pairing
\begin{align}\label{f:residue-pairing}
  \mathscr{A}_L \times \Omega^1 & \longrightarrow o_L \\
                     (f,\omega) & \longmapsto \mathrm{Res}(f\omega)    \nonumber
\end{align}
is jointly continuous. It induces, for any $m \geq 1$, the continuous pairing
\begin{align*}
  \mathscr{A}_L/\pi_L^m \mathscr{A}_L \times \Omega^1/\pi_L^m \Omega^1 & \longrightarrow L/o_L \\
                     (f,\omega) & \longmapsto \pi_L^{-m}\mathrm{Res}(f\omega) \bmod o_L
\end{align*}
and hence (cf.\ \cite{B-TG} X.28 Thm.\ 3) the continuous $o_L$-linear map
\begin{align}\label{f:dual}
   \Omega^1/\pi_L^m \Omega^1 & \longrightarrow \Hom_{o_L}^c(\mathscr{A}_L/\pi_L^m \mathscr{A}_L, L/o_L) \\
                     \omega & \longmapsto [f \mapsto \pi_L^{-m}\mathrm{Res}(f\omega) \bmod o_L] \ ,  \nonumber
\end{align}
where $\Hom_{o_L}^c$ denotes the module of continuous $o_L$-linear maps equipped with the compact-open topology. For the convenience of the reader we recall the following well known fact.

\begin{lemma}\label{dual}
The map \eqref{f:dual} is an isomorphism of topological $o_L$-modules.
\end{lemma}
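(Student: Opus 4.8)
The plan is to exhibit the map \eqref{f:dual} as an isomorphism by constructing an explicit inverse on the level of the $\pi_L$-adically complete rings, using the fact that the Laurent polynomials are dense. First I would reduce to the case $m=1$, i.e.\ to the pairing
\begin{equation*}
  k_L((Z)) \times k_L((Z))\,dZ \longrightarrow k_L,\qquad (f,\omega)\longmapsto \mathrm{Res}(f\omega)\bmod \pi_L,
\end{equation*}
by a straightforward dévissage: if one knows the statement modulo $\pi_L$ then an induction along the short exact sequences $0\to \pi_L^{m-1}\mathscr{A}_L/\pi_L^m\mathscr{A}_L \to \mathscr{A}_L/\pi_L^m\mathscr{A}_L \to \mathscr{A}_L/\pi_L^{m-1}\mathscr{A}_L \to 0$ together with the snake lemma gives the general case, provided one checks that $\Hom^c_{o_L}(-,L/o_L)$ is exact on these locally compact modules (Pontrjagin duality) and that the topologies match. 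The continuity of \eqref{f:dual} was already noted; what needs proof is bijectivity and bicontinuity of the inverse.

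For the residue field computation, the key point is the classical one: for $f=\sum a_i Z^i\in k_L((Z))$ the functional $g\mapsto \mathrm{Res}(fg\,dZ)$ depends only on finitely many coefficients of $g$ in any bounded range, and conversely every continuous $k_L$-linear functional $\lambda:k_L((Z))\to k_L$ (continuous for the topology induced from $\mathscr{A}_L$, i.e.\ vanishing on $Z^N k_L[[Z]]$ for $N\gg 0$) is of this shape: one reads off the coefficients of the putative $f$ from the values $\lambda(Z^{-1-j})$, and the continuity of $\lambda$ forces $a_i=0$ for $i\ll 0$, so $f\in k_L((Z))$. This identifies $\Omega^1\otimes k_L \cong k_L((Z))\,dZ$ with the continuous $k_L$-dual of $k_L((Z)) = \mathscr{A}_L/\pi_L\mathscr{A}_L$, and the induced topology on the dual (compact-open) corresponds to the $Z$-adic topology on $k_L((Z))\,dZ$ because the pairing is non-degenerate with both factors being an increasing union of compact pieces matched against their discrete quotients. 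Bicontinuity then follows formally since both sides are complete, and a continuous bijection of such objects whose inverse is continuous on each locally compact layer is a topological isomorphism.

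Assembling the two halves: the general $m$ statement now follows by the dévissage above, the five lemma giving bijectivity and the open mapping / compactness argument giving that the inverse is continuous. The main obstacle I anticipate is purely topological bookkeeping rather than anything deep: one must be careful that the quotient (weak) topology on $\mathscr{A}_L/\pi_L^m\mathscr{A}_L$ is exactly the one making it locally compact with the stated compact-open dual, and that the compact-open topology on $\Hom^c_{o_L}(\mathscr{A}_L/\pi_L^m\mathscr{A}_L, L/o_L)$ is correctly identified with the weak topology on $\Omega^1/\pi_L^m\Omega^1$ under the pairing — this is where the reference to \cite{B-TG} X.28 Thm.\ 3 does the real work, and where one should invoke that $\mathscr{A}_L$ induces on $o_L[[Z]]$ its compact topology so that $\mathscr{A}_L/\pi_L^m\mathscr{A}_L$ is an increasing union of compact open $o_L$-submodules. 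Given that, there is no genuine difficulty, which is presumably why the authors call it "well known".
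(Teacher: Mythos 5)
Your proposal is correct, but it takes a detour the paper avoids. You reduce to the case $m=1$ over the residue field $k_L$ by dévissage along $0\to \pi_L^{m-1}\mathscr{A}_L/\pi_L^m\mathscr{A}_L \to \mathscr{A}_L/\pi_L^m\mathscr{A}_L \to \mathscr{A}_L/\pi_L^{m-1}\mathscr{A}_L \to 0$, which requires invoking exactness of $\Hom_{o_L}^c(-,L/o_L)$ (i.e.\ Pontrjagin duality for locally compact $o_L$-modules) and then a five-lemma argument. But this reduction is unnecessary: the paper sets $R := o_L/\pi_L^m o_L$ and runs the exact same coefficient-reading argument you describe for $m=1$, verbatim over $R$ — injectivity from $\omega = \sum_i \ell_\omega(Z^{-i-1}) Z^i dZ$, surjectivity by defining the preimage $\omega := \sum_i \ell(Z^{-i-1}) Z^i dZ$ and noting continuity of $\ell$ forces $\ell(Z^i)=0$ for $i\gg 0$, and openness by the concrete computation that $\{f \in R((Z)) : \mathrm{Res}(fZ^n R[[Z]]\,dZ) = 0\} = Z^{-n}R[[Z]]$ is compact. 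Nothing in that argument uses that $R$ is a field, so working over $R$ directly is both simpler and more self-contained; in particular it avoids any reliance on Pontrjagin duality in the form the paper only establishes later (Proposition \ref{LPontrjagin-duality}). Your approach works and the core coefficient argument is the same, but you should notice that the dévissage buys nothing — the uniformity over $R$ is the cleaner observation — and your treatment of the openness/bicontinuity is vaguer than the paper's explicit compactness computation, which is the step that actually needs writing out.
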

\begin{proof}
Let $R := o_L/\pi_L^m o_L$. It is convenient to view the map in question as the map
\begin{align*}
   R((Z))dZ & \longrightarrow \Hom_R^c(R((Z)), R) \\
                     \omega & \longmapsto \ell_\omega(f) := \mathrm{Res}(f\omega) \ .
\end{align*}
One easily checks that $\omega = \sum_i \ell_\omega(Z^{-i-1})Z^i dZ$. Hence injectivity is clear. If $\ell$ is an arbitrary element in the right hand side we put $\omega := \sum_i \ell(Z^{-i-1})Z^i dZ$. The continuity of $\ell$ guarantees that $\ell(Z^i) = 0$ for any sufficiently big $i$. Hence $\omega$ is a well defined preimage of $\ell$ in the left hand side. Finally, the map is open since
\begin{equation*}
  \{f \in R((Z)) : \mathrm{Res} (fZ^n R[[Z]]dZ) = 0 \} = Z^{-n} R[[Z]]
\end{equation*}
is compact for any $n \geq 1$.
\end{proof}

For an arbitrary $\mathscr{A}_L$-module $N$ we have the adjunction isomorphism
\begin{align}\label{f:adjunction}
  \Hom_{\mathscr{A}_L}(N,\Hom_{o_L}(\mathscr{A}_L,L/o_L)) & \xrightarrow{\;\cong\;} \Hom_{o_L}(N,L/o_L) \\
                                                        F & \longmapsto F(.)(1) \ .    \nonumber
\end{align}

\begin{lemma}\label{top-adjunction}
For any finitely generated $\mathscr{A}_L/\pi_L^m \mathscr{A}_L$-module $M$ the adjunction \eqref{f:adjunction} together with \eqref{f:dual} induces the topological isomorphism
\begin{align*}
  \Hom_{\mathscr{A}_L}(M,\Omega^1/\pi_L^m \Omega^1) & \xrightarrow{\;\cong\;} \Hom_{o_L}^c(M,L/o_L) \\
                                                        F & \longmapsto \pi_L^{-m} \mathrm{Res}(F(.)) \bmod o_L \ .
\end{align*}
\end{lemma}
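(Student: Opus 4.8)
The proof has an algebraic half, which is essentially a matter of composing the two isomorphisms already at hand, and a topological half, which is where the minor subtleties lie.

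First I would observe that the map \eqref{f:dual} is in fact $\mathscr{A}_L$-linear, not merely $o_L$-linear. Indeed, the residue pairing \eqref{f:residue-pairing} is $\mathscr{A}_L$-bilinear, so that $\mathrm{Res}(f(g\omega)) = \mathrm{Res}((fg)\omega)$; this says precisely that \eqref{f:dual} intertwines the $\mathscr{A}_L$-action on $\Omega^1/\pi_L^m\Omega^1$ with the $\mathscr{A}_L$-action on $\Hom_{o_L}^c(\mathscr{A}_L/\pi_L^m\mathscr{A}_L, L/o_L)$ given by $(g\cdot\ell)(f):=\ell(gf)$. Hence Lemma \ref{dual} upgrades to an isomorphism of topological $\mathscr{A}_L$-modules $\Omega^1/\pi_L^m\Omega^1 \xrightarrow{\cong} \Hom_{o_L}^c(\mathscr{A}_L/\pi_L^m\mathscr{A}_L, L/o_L)$. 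Applying $\Hom_{\mathscr{A}_L}(M,-)$ and then the adjunction $F\mapsto F(\cdot)(1)$ of \eqref{f:adjunction} --- which makes sense here because $\pi_L^m M = 0$ forces every $\mathscr{A}_L$-linear $F\colon M \to \Hom_{o_L}(\mathscr{A}_L,L/o_L)$ to take values in the $\pi_L^m$-torsion submodule $\Hom_{o_L}^c(\mathscr{A}_L/\pi_L^m\mathscr{A}_L, L/o_L)$ --- produces a bijection from $\Hom_{\mathscr{A}_L}(M,\Omega^1/\pi_L^m\Omega^1)$ onto $\Hom_{o_L}(M,L/o_L)$. Tracing through the definitions, $F$ is first sent to $m\mapsto[f\mapsto \pi_L^{-m}\mathrm{Res}(fF(m))\bmod o_L]$ and then, via evaluation at $f=1$, to $m\mapsto \pi_L^{-m}\mathrm{Res}(F(m))\bmod o_L$, which is the claimed formula.

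Next I would pin down that this bijection sends $\Hom_{\mathscr{A}_L}(M,\Omega^1/\pi_L^m\Omega^1)$ exactly onto the submodule $\Hom_{o_L}^c(M,L/o_L)$ of continuous maps. Since $M$ is finitely generated over $\mathscr{A}_L$, every $\mathscr{A}_L$-linear map out of $M$ into a topological $\mathscr{A}_L$-module is automatically continuous (cf.\ \cite{SV}); in particular $F$ is continuous, and composing with the continuous evaluation-at-$1$ map on $\Hom_{o_L}^c(\mathscr{A}_L/\pi_L^m\mathscr{A}_L, L/o_L)$ shows $m\mapsto \pi_L^{-m}\mathrm{Res}(F(m))\bmod o_L$ is continuous. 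Conversely, given continuous $o_L$-linear $g\colon M\to L/o_L$, the assignment $F(m)\colon a\mapsto g(am)$ is continuous in $a$ by joint continuity of scalar multiplication and, because $\pi_L^m M = 0$, factors through $\mathscr{A}_L/\pi_L^m\mathscr{A}_L$; thus $F\in\Hom_{\mathscr{A}_L}(M,\Hom_{o_L}^c(\mathscr{A}_L/\pi_L^m\mathscr{A}_L, L/o_L))$ maps to $g$, so $g$ lies in the image. Hence the bijection restricts to a bijection $\Hom_{\mathscr{A}_L}(M,\Omega^1/\pi_L^m\Omega^1)\xrightarrow{\cong}\Hom_{o_L}^c(M,L/o_L)$.

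Finally I would verify bicontinuity. Here I would use that $M$, being finitely generated over $\mathscr{A}_L/\pi_L^m\mathscr{A}_L$, is locally compact and that $L/o_L$ is discrete, so that the compact-open topology on $\Hom_{o_L}^c(M,L/o_L)$ has the subgroups $\{g : g(C)=0\}$, for $C\subseteq M$ a compact open submodule, as a basis of open neighbourhoods of $0$; the same description applies to $\Hom_{o_L}^c(M,\Omega^1/\pi_L^m\Omega^1)$, with the subspace of $\mathscr{A}_L$-linear maps carrying the induced topology. Since Lemma \ref{dual} is a topological isomorphism, evaluation-at-$1$ is continuous and open onto its image, and the construction $g\mapsto F$ above visibly matches these neighbourhood bases, the map of the lemma is a homeomorphism. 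The only real point of care in the whole argument is this last bookkeeping with the three different $\mathscr{A}_L$-module structures and their topologies; everything of substance is already contained in Lemma \ref{dual} and the automatic continuity of homomorphisms between finitely generated $\mathscr{A}_L$-modules.
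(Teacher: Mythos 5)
Your algebraic argument --- that \eqref{f:dual} is $\mathscr{A}_L$-linear, that $\Hom_{\mathscr{A}_L}(M,-)$ composed with the adjunction \eqref{f:adjunction} produces a bijection, that this bijection restricts to the continuous submodules, and the explicit formula --- is correct and agrees with the paper's. The place where the argument comes apart is the final verification that the bijection is a homeomorphism.

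You assert that the compact-open topology on $\Hom_{o_L}^c(M,L/o_L)$ has a neighbourhood basis of $0$ of the form $\{g : g(C)=0\}$ for $C$ a compact open submodule, that ``the same description applies'' to the left-hand side ``with the subspace of $\mathscr{A}_L$-linear maps carrying the induced topology,'' and that the bijection ``visibly matches these neighbourhood bases.'' But the topology on $\Hom_{\mathscr{A}_L}(M,\Omega^1/\pi_L^m\Omega^1)$ is \emph{defined} (as the paper points out in its proof) to be the weak topology of a finitely generated $\mathscr{A}_L$-module, via a free presentation --- not a subspace topology inside some larger $\Hom_{o_L}^c$ with the compact-open topology. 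That these two topologies coincide is precisely the content that has to be proved; it is not visible. (Indeed, on $o_L$-linear $\Hom$ the compact-open topology is strictly finer than pointwise convergence when the domain is non-compact, so one cannot shortcut via the later Remark \ref{pointwise-convergence}.i either; the $\mathscr{A}_L$-linearity of the maps is being used in an essential way.) So the step ``visibly matches'' hides the actual mathematical issue.

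The paper settles this by invoking the elementary divisor theorem over the DVR $\mathscr{A}_L$: $M$ is a finite direct sum of cyclic modules $\mathscr{A}_L/\pi_L^n\mathscr{A}_L$ ($1\le n\le m$), so one may assume $M=\mathscr{A}_L/\pi_L^n\mathscr{A}_L$. Then $\Hom_{\mathscr{A}_L}(M,\Omega^1/\pi_L^m\Omega^1)$ is explicitly identified with $\pi_L^{m-n}\Omega^1/\pi_L^m\Omega^1 \cong \Omega^1/\pi_L^n\Omega^1$ (as topological $\mathscr{A}_L$-modules, since all maps in the relevant commutative square are $\mathscr{A}_L$-linear between finitely generated modules), and Lemma \ref{dual} then closes the argument. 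This reduction step is not a convenience --- it is what turns the hand-wave into a proof --- and your proposal is missing it.

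A minor point: the automatic continuity of $\mathscr{A}_L$-linear maps out of a finitely generated $\mathscr{A}_L$-module that you invoke is the content of Remark \ref{semilinear-cont} (applied with $\alpha=\id$), not something one needs to fetch from \cite{SV}; citing the paper's own result would be cleaner.
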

\begin{proof}
It is clear that \eqref{f:adjunction} restricts to an injective homomorphism
\begin{equation}\label{f:top-adjunction}
  \Hom_{\mathscr{A}_L}(M,\Hom_{o_L}^c(\mathscr{A}_L/\pi_L^m \mathscr{A}_L,L/o_L)) \longrightarrow \Hom_{o_L}^c(M,L/o_L) \ .
\end{equation}
The inverse of \eqref{f:adjunction} sends $\ell \in \Hom_{o_L}(M,L/o_L)$ to $F(m)(f) := \ell(fm)$ and visibly restricts to an inverse of \eqref{f:top-adjunction}. By inserting \eqref{f:dual} we obtain the asserted algebraic isomorphism. To check that it also is a homeomorphism we first clarify that on the left hand side we consider the weak topology of $\Hom_{\mathscr{A}_L}(M,\Omega^1/\pi_L^m \Omega^1)$ as a finitely generated $\mathscr{A}_L$-module. The elementary divisor theorem for the discrete valuation ring $\mathscr{A}_L$ implies that $M$ is isomorphic to a finite direct product of modules of the form $\mathscr{A}_L/\pi_L^n \mathscr{A}_L$ with $1 \leq n \leq m$. It therefore suffices to consider the case $M = \mathscr{A}_L/\pi_L^n \mathscr{A}_L$. We then have the commutative diagram of isomorphisms
\begin{equation*}
  \xymatrix@R=0.5cm{
  \Hom_{\mathscr{A}_L}(\mathscr{A}_L/\pi_L^n \mathscr{A}_L,\Omega^1/\pi_L^m \Omega^1) \ar[d]_{=} \ar[dr]^{}             \\
      \pi_L^{m-n} \Omega^1/\pi_L^m \Omega^1  &  \Hom_{o_L}^c(\mathscr{A}_L/\pi_L^n \mathscr{A}_L,L/o_L)        \\
  \Omega^1/\pi_L^n \Omega^1 \ar[u]^{\pi_L^{m-n} \cdot} \ar[ur]_-{\eqref{f:dual}}                 }
\end{equation*}
By Lemma \ref{dual} all maps in this diagram except possibly the upper oblique arrow, which is the map in the assertion, are homeomorphisms. Hence the oblique arrow must be a homeomorphism as well.
\end{proof}

The $\Gamma_L$-action \eqref{f:Gamma-action} on $o_L[[Z]]$ extends, by the same formula, to a $\Gamma_L$-action on $\mathscr{A}_L$ which, moreover, is continuous for the weak topology (see \cite{GAL} Prop.\ \ref{GAL-weak-phi-Gamma}.ii).

\begin{definition}\label{def:modR}
A $(\varphi_L,\Gamma_L)$-module $M$ (over $\mathscr{A}_L$) is a finitely generated $\mathscr{A}_L$-module $M$ together with
\begin{itemize}
  \item[--] a $\Gamma_L$-action on $M$ by semilinear automorphisms which is continuous for the weak topology
\footnote{In case $L = \mathbb{Q}_p$ we have automatic continuity. The simplest instance of this is the fact that any abstract group homomorphism $\Gamma_{\mathbb{Q}_p} \longrightarrow \mathbb{Z}_p^\times$ is continuous. By restricting to sufficiently small open subgroups this reduces to the claim that any abstract group homomorphism $\mathbb{Z}_p \longrightarrow \mathbb{Z}_p$ is continuous, i.e., is determined by its value in $1$. This follows from the triviality of any group homomorphism $\mathbb{Z}_p/\mathbb{Z} \longrightarrow \mathbb{Z}_p$. The latter holds because, by the surjectivity of the projection map $\mathbb{Z} \longrightarrow \mathbb{Z}_p/p \mathbb{Z}_p$, the group $\mathbb{Z}_p/\mathbb{Z}$ is $p$-divisible.}, and
  \item[--] a $\varphi_L$-linear endomorphism $\varphi_M$ of $M$ which commutes with the $\Gamma_L$-action.
\end{itemize}
It is called etale if the linearized map
\begin{align*}
     \varphi_M^{lin} : \mathscr{A}_L \otimes_{\mathscr{A}_L,\varphi_L} M & \xrightarrow{\; \cong \;} M \\
                                           f \otimes m & \longmapsto f \varphi_M (m)
\end{align*}
is bijective. We let $\mathfrak{M}^{et}(\mathscr{A}_L)$ denote the category of etale $(\varphi_L,\Gamma_L)$-modules $M$ over $\mathscr{A}_L$.
\end{definition}

\begin{remark}\label{semilinear-cont}
Let $\alpha : \mathscr{A}_L \longrightarrow \mathscr{A}_L$ be a continuous ring homomorphism, and let $\beta : M \longrightarrow M$ be any $\alpha$-linear endomorphism of a finitely generated $\mathscr{A}_L$-module $M$; then $\beta$ is continuous for the weak topology on $M$.
\end{remark}
\begin{proof}
The map
\begin{align*}
  \beta^{lin} : \mathscr{A}_L \otimes_{\mathscr{A}_L,\alpha} M & \longrightarrow M \\
  f \otimes m & \longmapsto f \beta(m)
\end{align*}
is $\mathscr{A}_L$-linear. We pick a free presentation $\lambda : \mathscr{A}_L^n \twoheadrightarrow M$. Then we find an $\mathscr{A}_L$-linear map $\widetilde{\beta}$ such that the diagram
\begin{equation*}
  \xymatrix{
    \mathscr{A}_L^n \ar@{>>}[d]_{\lambda} \ar[r]^-{\alpha^n} & \mathscr{A}_L^n = \mathscr{A}_L \otimes_{\mathscr{A}_L,\alpha} \mathscr{A}_L^n \ar@{>>}[d]_{\id \otimes \lambda} \ar[r]^-{\widetilde{\beta}} & \mathscr{A}_L^n \ar@{>>}[d]^{\lambda} \\
    M \ar@/_2pc/[rr]^-{\beta} \ar[r]^-{m \mapsto 1 \otimes m} & \mathscr{A}_L \otimes_{\mathscr{A}_L,\alpha} M \ar[r]^-{\beta^{lin}} & M   }
\end{equation*}
is commutative. All maps except possibly the lower left horizontal arrow are continuous. The universal property of the quotient topology then implies that $\beta$ must be continuous as well.
\end{proof}

Remarks \ref{phi-weakcont} and \ref{semilinear-cont} imply that the endomorphism $\varphi_M$ of a $(\varphi_L,\Gamma_L)$-module $M$ is continuous.

On any etale $(\varphi_L,\Gamma_L)$-module $M$ we have the $o_L$-linear endomorphism
\begin{align*}
  \psi_M : M \xrightarrow{(\varphi_M^{lin})^{-1}} \mathscr{A}_L \otimes_{\mathscr{A}_L,\varphi_L} M & \longrightarrow M \\
  f \otimes m & \longmapsto \psi_L (f) m \ ,
\end{align*}
which, by construction, satisfies the projection formulas
\begin{equation*}
  \psi_M(\varphi_L(f)m) = f \psi_M(m) \qquad \text{and}\qquad  \psi_M(f\varphi_M(m)) = \psi_L(f) m \ ,
\end{equation*}
for any $f \in \mathscr{A}_L$ and $m \in M$, as well as the formula
\begin{equation*}
  \psi_M \circ \varphi_M = \frac{q}{\pi} \cdot \id_M \ .
\end{equation*}
Remark \ref{phi-weakcont} is easily seen to imply that $\psi_M$ is continuous for the weak topology.

For technical purposes later on we need to adapt part of Colmez's theory of treillis to our situation. We will do  this in the following setting. Let $M$ be a finitely generated $\mathscr{A}_L$-module (always equipped with its weak topology) such that $\pi_L^n M = 0$ for some $n \geq 1$; we also assume that $M$ is equipped with a $\varphi_L$-linear endomorphism $\varphi_M$ which is etale, i.e., such that $\varphi_M^{lin}$ is bijective.

\begin{definition}
A treillis $N$ in $M$ is an $o_L[[Z]]$-submodule $N \subseteq M$ which is compact and such that its image in $M/\pi_L M$ generates this $k_L((Z))$-vector space.
\end{definition}

\begin{remark}\phantomsection\label{treillis}
\begin{itemize}
  \item[i.] If $e_1, \ldots, e_d$ are $\mathscr{A}_L$-generators of $M$ then $o_L[[Z]]e_1 + \ldots + o_L[[Z]]e_d$ is a treillis in $M$.
  \item[ii.] A compact $o_L[[Z]]$-submodule $N$ of $M$ is a treillis if and only if it is open.
  \item[iii.] For any two treillis $N_0 \subseteq N_1$ in $M$ the quotient $N_1/N_0$ is finite; in particular, any intermediate $o_L[[Z]]$-submodule $N_0 \subseteq N \subseteq N_1$ is a treillis as well.
\end{itemize}
\end{remark}
\begin{proof}
Part i. is obvious from the compactness of $o_L[[Z]]$. For ii. and iii. see \cite{Co3} Prop.\ I.1.2(i).
\end{proof}

Following Colmez we define
\begin{equation*}
  M^{++} := \{m \in M : \varphi_M^i(m) \xrightarrow{i \rightarrow \infty} 0 \}.
\end{equation*}
Since $o_L[[Z]]$ is compact it is easily seen that $M^{++}$ is an $o_L[[Z]]$-submodule of $M$. Obviously $M^{++}$ is $\varphi_M$-invariant.

\begin{lemma}\phantomsection\label{M++}
\begin{itemize}
  \item[i.] $M^{++}$ is a treillis.
  \item[ii.] $\varphi_M - 1$ is an automorphism of $M^{++}$.
\end{itemize}
\end{lemma}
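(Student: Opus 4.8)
The plan is to deduce part (ii) from part (i), and to establish (i) by proving that $M^{++}$ is at the same time open and compact, so that it is a treillis by Remark \ref{treillis}.ii. Recall first that $M^{++}$ is an $o_L[[Z]]$-submodule of $M$ (since $o_L[[Z]]$ is compact and $\varphi_M$ is continuous) and is $\varphi_M$-stable. As $\pi_L^nM=0$, the matrix of $\varphi_M$ in a system of $\mathscr{A}_L$-generators $e_1,\dots ,e_d$ of $M$ has entries in $\mathscr{A}_L/\pi_L^n\mathscr{A}_L$ and thus bounded pole order in $Z$; after replacing the $e_j$ by suitable multiples $Z^{s_j}e_j$ we may assume $\varphi_M(N_0)\subseteq N_0$ for $N_0:=\sum_j o_L[[Z]]e_j$. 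Writing $[\pi_L](Z)=Z\,h(Z)$ with $h\in o_L[[Z]]$ and using $[\pi_L](Z)\equiv Z^q \bmod\pi_L$ one checks $\varphi_M(Z^rN_0)\subseteq Z^rN_0$ for all $r\ge 0$, and modulo $\pi_L$ the sharper inclusion $\varphi_{\bar M}(Z^r\bar N_0)\subseteq Z^{qr}\bar N_0$, where $\bar M:=M/\pi_LM$, $\bar N_0$ is the image of $N_0$ and $\varphi_{\bar M}$ the induced operator; dually, since $\varphi_{\bar M}$ is étale its matrix is invertible over $k_L((Z))$, and with $t\ge 0$ a bound on the pole orders of the inverse matrix one gets on $\bar M$ the two-sided estimate $q\,w(x)\le w(\varphi_{\bar M}(x))\le q\,w(x)+t$, where $w$ denotes the least $Z$-order of the coordinates.

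For openness I would prove $ZN_0\subseteq M^{++}$ by induction on $n$. For $n=1$ this is immediate from $\varphi_M^i(ZN_0)\subseteq Z^{q^i}N_0\to 0$. For $n\ge 2$ and $m\in ZN_0$, the case $n=1$ applied to $\bar M$ gives $\varphi_{\bar M}^i(\bar m)\to 0$, so one can write $\varphi_M^i(m)=z_i+w_i$ with $z_i\in Z^{r_i}N_0$ for an increasing sequence $r_i\to\infty$ (with $r_i\ge 1$), $w_i\in\pi_LN_0$, and $w_0\in Z\pi_LN_0$. Now $\pi_LM$ is again an étale module (because $\varphi_L$ fixes $\pi_L$, and $\mathscr{A}_L$ is flat over itself via $\varphi_L$, being torsion-free over the discrete valuation ring $\mathscr{A}_L$), and $\pi_LN_0$ is a treillis in it stable under $\varphi_M$, so the induction hypothesis yields $Z\pi_LN_0\subseteq(\pi_LM)^{++}$; hence $w_0\in(\pi_LM)^{++}$, and so does each $\delta_i:=\varphi_M(z_i)-z_{i+1}\in Z^{r_i}\pi_LN_0\subseteq Z\pi_LN_0$. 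Unrolling the recursion $w_{i+1}=\varphi_M(w_i)+\delta_i$ into $w_i=\varphi_M^i(w_0)+\sum_{l=0}^{i-1}\varphi_M^l(\delta_{i-1-l})$ and splitting the sum according as $i-1-l$ is large (those terms lying in any prescribed neighbourhood of $0$ since $\delta_j\in Z^{r_j}\pi_LN_0$ with $r_j\to\infty$) or bounded (leaving finitely many terms $\varphi_M^{i-1-j}(\delta_j)$, each $\to 0$ as $i\to\infty$ because $\delta_j\in(\pi_LM)^{++}$), one obtains $w_i\to 0$, hence $\varphi_M^i(m)=z_i+w_i\to 0$. Thus $M^{++}$ contains the open treillis $ZN_0$, so it is open (hence closed).

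For compactness I would again induct on $n$. The reverse estimate forces $w(x)\ge -N$ for all $x\in(\bar M)^{++}$, where $N:=\lceil t/(q-1)\rceil$, so $(\bar M)^{++}$ lies between the treillis $Z\bar N_0$ and $Z^{-N}\bar N_0$ and is itself a treillis by Remark \ref{treillis}.iii, in particular compact; this covers $n=1$. For $n\ge 2$, the subgroup $M^{++}\cap\pi_LM$ equals $(\pi_LM)^{++}$ (the subspace topology on the submodule $\pi_LM$ being its weak topology) and is compact by induction, while the image of the open subgroup $M^{++}$ in $\bar M$ is a closed subset of the compact set $(\bar M)^{++}$, hence compact; being an extension of a compact group by a compact group, $M^{++}$ is compact. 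A compact open $o_L[[Z]]$-submodule is a treillis by Remark \ref{treillis}.ii, which proves (i).

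Part (ii) is then formal. Injectivity of $\varphi_M-1$ on $M^{++}$ holds because $(\varphi_M-1)m=0$ forces $m=\varphi_M^i(m)\to 0$, so $m=0$. For surjectivity, given $u\in M^{++}$ the series $v:=-\sum_{i\ge 0}\varphi_M^i(u)$ converges in the complete module $M$ (whose topology has a fundamental system of open subgroups) because $\varphi_M^i(u)\to 0$; it lies in the closed set $M^{++}$, satisfies $\varphi_M(v)=-\sum_{i\ge 1}\varphi_M^i(u)=v+u$, and $\varphi_M^k(v)=-\sum_{i\ge k}\varphi_M^i(u)\to 0$ confirms $v\in M^{++}$. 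The step I expect to be the main obstacle is the inductive descent along the $\pi_L$-adic filtration in the openness argument, where one must simultaneously control the interaction of $\varphi_M$ with the two filtrations by powers of $Z$ and by powers of $\pi_L$.
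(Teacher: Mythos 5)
Your part (ii) is essentially identical to the paper's proof. For part (i) the paper simply cites Colmez's Lemma II.2.3 in \cite{Co3} and observes that the only property of $\varphi_L$ used there, besides \'etaleness, is $\varphi_L(Z)\in Z^2o_L[[Z]]+\pi_LZo_L[[Z]]$, so the cyclotomic argument transfers verbatim. You instead work out a self-contained proof; your outline (pick a $\varphi_M$-stable treillis $N_0$, show $M^{++}$ is open by exhibiting a treillis inside it, bound $M^{++}$ above via the \'etale estimate, then invoke Remark \ref{treillis}.ii/iii) is exactly the spirit of Colmez's argument, just spelled out. A few points deserve more care: the splitting $\varphi_M^i(m)=z_i+w_i$ with $w_i\in\pi_LN_0$ tacitly uses $N_0\cap\pi_LM=\pi_LN_0$, which holds for $N_0=\bigoplus_jo_L[[Z]]e_j$ with $e_j$ an elementary-divisor basis but is not automatic for an arbitrary treillis; since you replace $e_j$ by $Z^{s_j}e_j$ you should take all $s_j$ equal to preserve this. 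More to the point, the inductive-on-$n$ openness argument is avoidable: for $s$ large enough one can arrange $\varphi_M(Z^sN_0)\subseteq Z^{s+1}N_0$ directly (expand $[\pi_L](Z)^s=(Z^q+\pi_LZh)^s$ and use $\pi_L^n=0$ to force high $Z$-order in the surviving binomial terms), whence $\varphi_M^i(Z^sN_0)\subseteq Z^{s+i}N_0\to 0$ with no induction on $n$ at all. In sum, you have essentially reproved the lemma the paper cites; the paper buys brevity by citation, your version buys self-containedness at the cost of a somewhat heavier double-filtration bookkeeping than necessary.
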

\begin{proof}
i. Using Remark \ref{treillis}.i/iii this follows from \cite{Co3} Lemma II.2.3. This lemma is stated and proved there in the cyclotomic situation. But the only property of $\varphi_L$, besides being etale, which is used is that $\varphi_L(Z) \in Z^2 o_L[[Z]] + \pi_L Z o_L[[Z]]$.

ii. Obviously $m=0$ is the only element in $M^{++}$ which satisfies $\varphi_M (m) = m$. Now let $m \in M^{++}$ be an arbitrary element. Since $M$ is complete the series $m' := \sum_i \varphi^i_M(m)$ converges and satisfies $(\varphi_M - 1)(-m') = m$. But $M^{++}$ is open and hence closed in $M$ so that $-m' \in M^{++}$.
\end{proof}

The following lemma is a slight generalization of a result of Fontaine (cf.\ \cite{Her} Prop.\ 2.4.1).

\begin{lemma}\label{top-strict}
On any etale $(\varphi_L,\Gamma_L)$-module $M$ such that $\pi_L^n M = 0$ for some $n \geq 1$ the map $\varphi_M-1$ is open and, in particular, is topologically strict.
\end{lemma}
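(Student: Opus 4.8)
The plan is to reduce the statement to the case where $M$ is killed by $\pi_L$ itself, using the $\pi_L$-adic filtration, and then to exhibit $\varphi_M-1$ as an open map on each graded piece by means of the treillis $M^{++}$. The key point is Lemma \ref{M++}: since $\varphi_M-1$ restricts to an automorphism of the open compact submodule $M^{++}$, its restriction there is automatically a homeomorphism (a continuous bijection between compact Hausdorff spaces), hence open onto $M^{++}$; so $\varphi_M-1$ maps the open set $M^{++}$ onto the open set $M^{++}$, which already shows that the image of $\varphi_M-1$ contains an open neighbourhood of $0$ in $M$. To upgrade this to openness of $\varphi_M-1$ as a map $M\to M$ one has to know that $\varphi_M-1$ has closed image (or, equivalently, that $M/(\varphi_M-1)M$ is Hausdorff), and then invoke the general fact that a continuous module homomorphism between complete Hausdorff topological $o_L$-modules whose image is closed and contains an open subgroup is itself open.

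More precisely, here are the steps in order. First I would record that $\varphi_M-1$ is continuous (Remarks \ref{phi-weakcont} and \ref{semilinear-cont} give continuity of $\varphi_M$). Second, by Lemma \ref{M++} the submodule $M^{++}$ is a treillis, in particular open (Remark \ref{treillis}.ii), and $\varphi_M-1$ carries it bijectively and continuously onto itself; since $M^{++}$ is compact Hausdorff, this restriction is a homeomorphism, so $(\varphi_M-1)(M^{++})=M^{++}$ is an open subgroup contained in $\im(\varphi_M-1)$. Third, I would show $\im(\varphi_M-1)$ is open in $M$: because $M^{++}\subseteq\im(\varphi_M-1)$ is open, the image is a union of cosets of the open subgroup $M^{++}$, hence open. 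Fourth, openness of the surjection $M\twoheadrightarrow\im(\varphi_M-1)$ (with the subspace topology on the target): given an open $o_L$-submodule $U\subseteq M$, we have $U\supseteq \pi_L^a M + Z^a o_L[[Z]]\cdot(\text{generators})$ for some $a$, and one checks $(\varphi_M-1)(U)$ contains an open neighbourhood of $0$ in $\im(\varphi_M-1)$ by combining the homeomorphism property on $M^{++}$ with the fact that $U\cap M^{++}$ is open in $M^{++}$; translating by the (finitely many, since $M/M^{++}$ is finite by Remark \ref{treillis}.iii — note $M$ itself is a treillis in $M$ as $\pi_L^n M=0$, or one argues on $o_L[[Z]]$-generators) coset representatives then gives openness of $(\varphi_M-1)(U)$ in all of $\im(\varphi_M-1)$. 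Finally, "topologically strict" is then immediate: $\varphi_M-1$ factors as $M\twoheadrightarrow \im(\varphi_M-1)\hookrightarrow M$ with the first map open and continuous and the second a topological embedding with closed (because open-subgroup-containing subgroups of a topological group are closed) image.

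The main obstacle I expect is the bookkeeping in the fourth step: passing from "$\varphi_M-1$ is a homeomorphism on the open compact treillis $M^{++}$" to "$\varphi_M-1$ is open on all of $M$". The subtlety is that $M$ is not compact (only $M/\pi_L^m M$ is locally compact, and $M$ is an extension of $k_L((Z))$-type pieces), so one cannot argue purely by compactness; instead one uses that $M$ is covered by finitely many $M^{++}$-cosets together with the fact that $\varphi_M-1$ is continuous and $o_L$-linear, so that a basic open neighbourhood $U$ of $0$ contains $U\cap M^{++}$, which is open in $M^{++}$, whose image under the homeomorphism $\varphi_M-1|_{M^{++}}$ is open in $M^{++}=( \varphi_M-1)(M^{++})$, hence open in $M$. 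I would also remark, as the authors surely intend, that this is exactly the argument of Fontaine–Herr (\cite{Her} Prop.\ 2.4.1) in the cyclotomic setting, the only inputs being that $\varphi_L$ is etale and that $\varphi_L(Z)\in Z^2 o_L[[Z]]+\pi_L Z o_L[[Z]]$, both of which hold here; so one could alternatively just cite \cite{Her} with these two verifications.
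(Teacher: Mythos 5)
Your proof follows the same route as the paper: by Lemma~\ref{M++} and Remark~\ref{treillis}.ii the submodule $M^{++}$ is a compact open treillis, $\varphi_M-1$ restricts to a homeomorphism of $M^{++}$ onto itself (continuous bijection between compact Hausdorff spaces), and openness of the additive map $\varphi_M-1$ on all of $M$ then follows. One small slip in your fourth step: $M$ itself is \emph{not} a treillis (a treillis is by definition compact, while $M$ is only locally compact), and $M/M^{++}$ is in general infinite --- for instance $M=\mathscr{A}_L/\pi_L\mathscr{A}_L=k_L((Z))$ with $\varphi_M(f)=f(Z^q)$ has $M^{++}=Zk_L[[Z]]$. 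Fortunately this is not load-bearing: since $\varphi_M-1$ is a continuous additive map, to prove it open one only needs to show that the image of every open neighbourhood $U$ of $0$ is again a neighbourhood of $0$, and this is immediate because $(\varphi_M-1)(U\cap M^{++})$ is open in $M^{++}$ by the homeomorphism property, hence open in $M$ since $M^{++}$ is open --- no coset counting is required.
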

\begin{proof}
As $M^{++}$, by Lemma \ref{M++}.i and Remark \ref{treillis}.ii, is compact and open in $M$ we first see, using Lemma \ref{M++}.ii, that $\varphi_M - 1$ is a homeomorphism on $M^{++}$ and then that $\varphi_M - 1$ is an open map.
\end{proof}

The category $\mathfrak{M}^{et}(\mathscr{A}_L)$ has an internal Hom-functor. For any two modules $M$ and $N$ in $\mathfrak{M}^{et}(\mathscr{A}_L)$ the $\mathscr{A}_L$-module  $\Hom_{\mathscr{A}_L}(M,N)$ is finitely generated. It is a $(\varphi_L,\Gamma_L)$-module with respect to
\begin{equation*}
  \gamma(\alpha) := \gamma \circ \alpha \circ \gamma^{-1} \qquad\text{and}\qquad \varphi_{\Hom_{\mathscr{A}_L}(M,N)}(\alpha) := \varphi_N^{lin} \circ (\id_{\mathscr{A}_L} \otimes\, \alpha) \circ (\varphi_M^{lin})^{-1}
\end{equation*}
for any $\gamma \in \Gamma_L$ and any $\alpha \in \Hom_{\mathscr{A}_L}(M,N)$. We need to verify that the $\Gamma_L$-action, indeed, is continuous. This is a consequence of the following general facts.

\begin{remark}\label{pointwise-convergence}
For any two finitely generated $\mathscr{A}_L$-modules $M$ and $N$ we have:
\begin{itemize}
  \item[i.] The weak topology on $\Hom_{\mathscr{A}_L}(M,N)$ coincides with the topology of pointwise convergence.
  \item[ii.] The bilinear map
\begin{align*}
  \Hom_{\mathscr{A}_L}(M,N) \times M & \longrightarrow N \\
  (\alpha,m) & \longmapsto \alpha(m)
\end{align*}
is continuous for the weak topology on all three terms.
\end{itemize}
\end{remark}
\begin{proof}
Since any finitely generated module over the discrete valuation ring $\mathscr{A}_L$ is a direct sum of modules of the form $\mathscr{A}_L$ or $\mathscr{A}_L/\pi_L^j \mathscr{A}_L$ for some $j \geq 1$, it suffices to consider the case that $M$ and $N$ both are such cyclic modules. In fact, we may even assume that $M = \mathscr{A}_L$ and $N = \mathscr{A}_L =: \mathscr{A}_L/\pi_L^\infty \mathscr{A}_L$ or $N = \mathscr{A}_L/\pi_L^j \mathscr{A}_L$. We then have the isomorphism of $\mathscr{A}_L$-modules
\begin{align*}
  \mathrm{ev}_1 : \Hom_{\mathscr{A}_L}(\mathscr{A}_L,\mathscr{A}_L/\pi_L^j \mathscr{A}_L) & \xrightarrow{\;\cong\;} \mathscr{A}_L/\pi_L^j \mathscr{A}_L \\
  \alpha & \longmapsto \alpha(1) \ .
\end{align*}
For i. we have to show that this map is a homeomorphism for the topology of pointwise convergence and the weak topology on the left and right term, respectively. The topology of pointwise convergence is generated by the subsets $C(f,U) := \{\alpha \in \Hom_{\mathscr{A}_L}(\mathscr{A}_L,\mathscr{A}_L/\pi_L^j \mathscr{A}_L) : \alpha(f) \in U\}$, where $f \in \mathscr{A}_L$ and where $U$ runs over open subsets $U \subseteq \mathscr{A}_L/\pi_L^j \mathscr{A}_L$ (for the weak topology). For the open subset $U_f := \{n \in \mathscr{A}_L/\pi_L^j \mathscr{A}_L : fn \in U\}$ we have $C(f,U) = C(1,U_f)$. We see that $\mathrm{ev}_1(C(f,U)) = U_f$.

Using the topological isomorphism $\mathrm{ev}_1$ the bilinear map in ii. becomes the multiplication map $\mathscr{A}_L/\pi_L^j \mathscr{A}_L \times \mathscr{A}_L \longrightarrow \mathscr{A}_L/\pi_L^j \mathscr{A}_L$. It is continuous since, as noted earlier, $\mathscr{A}_L$ is a topological algebra for the weak topology.
\end{proof}

Let $(\gamma_i)_{i \in \mathbb{N}}$ in $\Gamma_L$, resp.\ $(\alpha_i)_{i \in \mathbb{N}}$ in $\Hom_{\mathscr{A}_L}(M,N)$, be a sequence which converges to $\gamma \in \Gamma_L$, resp.\ to $\alpha \in \Hom_{\mathscr{A}_L}(M,N)$ for the weak topology. We have to show that the sequence $(\gamma_i(\alpha_i))_i$ converges to $\gamma(\alpha)$ for the weak topology. By Remark \ref{pointwise-convergence}.i it, in fact, suffices to check pointwise convergence. Let therefore $m \in M$ be an arbitrary element. As $\Gamma_L$ acts continuously on $M$, we have $\lim_{i \mapsto \infty} \gamma_i^{-1}(m) = \gamma^{-1}(m)$. The Remark \ref{pointwise-convergence}.ii then implies that $\lim_{i \mapsto \infty} \alpha_i(\gamma_i^{-1}(m)) = \alpha(\gamma^{-1}(m)$. By the continuity of the $\Gamma_L$-action on $N$ we finally obtain that $\lim_{i \mapsto \infty} \gamma_i(\alpha_i(\gamma_i^{-1}(m))) = \gamma(\alpha(\gamma^{-1}(m)))$.

In order to check etaleness we use the linear isomorphisms $\varphi_M^{lin}$ and $\varphi_N^{lin}$ to identify $\Hom_{\mathscr{A}_L}(M,N)$ and $\Hom_{\mathscr{A}_L}(\mathscr{A}_L \otimes_{\mathscr{A}_L,\varphi_L} M, \mathscr{A}_L \otimes_{\mathscr{A}_L,\varphi_L} N) = \Hom_{\mathscr{A}_L}(M, \mathscr{A}_L \otimes_{\mathscr{A}_L,\varphi_L} N)$. Then the linearized map $\varphi_{\Hom_{\mathscr{A}_L}(M,N)}^{lin}$ becomes the map
\begin{align*}
  \mathscr{A}_L \otimes_{\mathscr{A}_L,\varphi_L} \Hom_{\mathscr{A}_L}(M,N) & \longrightarrow \Hom_{\mathscr{A}_L}(M, \mathscr{A}_L \otimes_{\mathscr{A}_L,\varphi_L} N) \\
  f \otimes \alpha & \longmapsto [m \mapsto f \otimes \alpha(m)] \ .
\end{align*}
To see that the latter map is bijective we may use, because of the flatness of $\varphi_L$ as an injective ring homomorphism between discrete valuation rings, a finite presentation of the module $M$ in order to reduce  to the case $M = \mathscr{A}_L$, in which the bijectivity is obvious. Hence $\Hom_{\mathscr{A}_L}(M,N)$ is an etale $(\varphi_L,\Gamma_L)$-module (cf.\ \cite{Fo1} A.1.1.7). One easily checks the validity of the formula
\begin{equation}\label{f:inner}
\varphi_{\Hom_{\mathscr{A}_L}(M,N)}(\alpha)(\varphi_M(m))=\varphi_N( \alpha(m)) \ .
\end{equation}

As a basic example we point out that $\Omega^1$ naturally is an etale $(\varphi_L,\Gamma_L)$-module via
\begin{equation*}
  \gamma(dZ) := [\chi_{LT}(\gamma)]'(Z)dZ = d[\chi_{LT}(\gamma)](Z) \ \text{and}\ \varphi_{\Omega^1}(dZ) := \pi_L^{-1} [\pi_L]'(Z)dZ = \pi_L^{-1} d[\pi_L](Z) \ .
\end{equation*}
Note that the congruence $[\pi_L](Z) \equiv \pi_L Z +Z^q \bmod \pi_L$ indeed implies that the derivative $[\pi_L]'(Z)$ is divisible by $\pi_L$. The simplest way to see that $\Omega^1$ is etale is to identify it with another obviously etale $(\varphi_L,\Gamma_L)$-module.

If $\chi : \Gamma_L \longrightarrow o_L^\times$ is any continuous character with representation module $W_\chi=o_Lt_\chi$ then, for any $M$ in $\mathfrak{M}^{et}(\mathscr{A}_L)$, we have the twisted module $M(\chi)$  in $\mathfrak{M}^{et}(\mathscr{A}_L)$ where $M(\chi) := M\otimes_{o_L}W_\chi$ as $\mathscr{A}_L$-module, $\varphi_{M(\chi)}(m\otimes w) := \varphi_M(m)\otimes w$, and $\gamma | M(\chi)(m\otimes w) :=  \gamma|M (m)\otimes  \gamma|W_\chi(w)=\chi(\gamma) \cdot\gamma|M(m)\otimes w$ for $\gamma \in \Gamma_L$.  It follows that $\psi_{M(\chi)}(m\otimes w) = \psi_M(m)\otimes w$. For the character $\chi_{LT}$ we take $W_{\chi_{LT}}=T=o_L t_0$ and $W_{\chi_{LT}^{-1}}=T^*=o_Lt_0^*$ as representation module, where $T^*$ denotes the $o_L$-dual with dual basis $t_0^*$ of $t_0$.

\begin{lemma}\label{Omega-as-twist}
The map
\begin{align*}
  \mathscr{A}_L(\chi_{LT}) & \xrightarrow{\;\cong\;} \Omega^1 \\
                        f\otimes t_0 & \longmapsto fd\log_{LT} = f g_{LT} dZ
\end{align*}
is an isomorphism of $(\varphi_L,\Gamma_L)$-modules.
\end{lemma}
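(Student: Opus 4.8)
The plan is to verify by hand that the stated $\mathscr{A}_L$-linear bijection intertwines the $\varphi_L$- and $\Gamma_L$-structures on the two sides; as a byproduct this is also what shows that $\Omega^1$ is an etale $(\varphi_L,\Gamma_L)$-module.

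First I would record that the map is an isomorphism of $\mathscr{A}_L$-modules. Identifying $\mathscr{A}_L(\chi_{LT}) = \mathscr{A}_L \otimes_{o_L} o_L t_0$ with $\mathscr{A}_L$ via $f \otimes t_0 \mapsto f$, and $\Omega^1 = \mathscr{A}_L\, dZ$ with $\mathscr{A}_L$ via $h\, dZ \mapsto h$, the map in question becomes multiplication by $g_{LT}$; since $g_{LT}$ is by definition a unit in $o_L[[Z]] \subseteq \mathscr{A}_L$, this is an $\mathscr{A}_L$-module isomorphism.

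Next I would check $\varphi$-equivariance. The image of $\varphi_{\mathscr{A}_L(\chi_{LT})}(f\otimes t_0) = \varphi_L(f)\otimes t_0$ is $\varphi_L(f)\,g_{LT}\,dZ$, whereas applying $\varphi_{\Omega^1}$ to the image $f g_{LT}\,dZ$ of $f\otimes t_0$ gives, by $\varphi_L$-linearity, $\varphi_L(f g_{LT})\,\pi_L^{-1}[\pi_L]'(Z)\,dZ = \varphi_L(f)\,\bigl(\pi_L^{-1}[\pi_L]'(Z)\,\varphi_L(g_{LT})\bigr)\,dZ$. These two coincide exactly by \eqref{f:pre1}, which asserts $g_{LT} = \pi_L^{-1}[\pi_L]'\,\varphi_L(g_{LT})$. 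Similarly for $\Gamma_L$-equivariance: for $\gamma \in \Gamma_L$ the image of $\gamma(f\otimes t_0) = \chi_{LT}(\gamma)\,\gamma(f)\otimes t_0$ is $\chi_{LT}(\gamma)\,\gamma(f)\,g_{LT}\,dZ$, whereas applying $\gamma$ to $f g_{LT}\,dZ$ gives $\gamma(f)\,\gamma(g_{LT})\,[\chi_{LT}(\gamma)]'(Z)\,dZ$; and these agree by the second identity in \eqref{f:dlog} taken with $a = \chi_{LT}(\gamma)$, namely $\chi_{LT}(\gamma)\,g_{LT}(Z) = g_{LT}([\chi_{LT}(\gamma)](Z))\,[\chi_{LT}(\gamma)]'(Z) = \gamma(g_{LT})\,[\chi_{LT}(\gamma)]'(Z)$, recalling that $\gamma$ acts on $\mathscr{A}_L$ by $h(Z) \mapsto h([\chi_{LT}(\gamma)](Z))$.

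Finally, since $\mathscr{A}_L$ is an etale $(\varphi_L,\Gamma_L)$-module and the twist of an etale module by a continuous character is again etale, $\mathscr{A}_L(\chi_{LT})$ is etale, hence so is $\Omega^1$ via the isomorphism just constructed. I do not expect any real obstacle here; the only point demanding a little care is that the normalizations built into $\varphi_{\Omega^1}(dZ) = \pi_L^{-1}[\pi_L]'(Z)\,dZ$ and $\gamma(dZ) = [\chi_{LT}(\gamma)]'(Z)\,dZ$ are precisely the ones that make the two compatibility checks collapse to \eqref{f:pre1} and \eqref{f:dlog} respectively.
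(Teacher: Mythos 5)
Your proof is correct and follows essentially the same route as the paper's: bijectivity from $g_{LT}$ being a unit, and equivariance from \eqref{f:dlog} (your use of \eqref{f:pre1} for the $\varphi_L$-compatibility is just the special case $a = \pi_L$ of \eqref{f:dlog}, as the paper itself notes when deriving \eqref{f:pre1}). You merely spell out the two compatibility computations that the paper leaves to the reader.
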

\begin{proof}
Since $g_{LT}$ is a unit in $o_L[[Z]]$ it is immediately clear that the map under consideration is well defined and bijective. The equivariance follows from \eqref{f:dlog}.
\end{proof}

\begin{remark}\label{psi-invariance}
 For later applications  we want to point out that for $\hat{u} \in   (o_L((Z))^\times)^{\mathcal{N} = 1}$ the differential form $\frac{d\hat{u}}{\hat{u}}$ is $\psi_{\Omega^1}$-invariant: In fact using Lemma \ref{Omega-as-twist}, Remark \ref{psi}.ii, and Lemma \ref{Delta-N-extended} for the second, fourth, and fifth identity, respectively, we compute
\begin{align*}
  \psi_{\Omega^1}(\frac{d\hat{u}}{\hat{u}}) & =  \psi_{\Omega^1}(\Delta_{LT}(\hat{u})d\log_{LT})  =
     \psi_{\Omega^1}(\Delta _{LT}(\hat{u}) \varphi_{\Omega^1}(d\log_{LT})) \\
   & = \psi_L(\Delta _{LT}(\hat{u}))d\log_{LT}
    = \pi_L^{-1} \psi_{Col}(\Delta _{LT}(\hat{u}))d\log_{LT}   \\
   & = \Delta _{LT}(\hat{u})d\log_{LT}
    = \frac{d\hat{u}}{\hat{u}} \ .
\end{align*}
\end{remark}

\begin{lemma}\label{d-phi-psi}
The map $d : \mathscr{A}_L \longrightarrow \Omega^1$ satisfies:
\begin{itemize}
  \item[i.] $\pi_L \cdot \varphi_{\Omega^1} \circ d = d \circ \varphi_L$;
  \item[ii.] $\gamma \circ d = d \circ \gamma$ for any $\gamma \in \Gamma_L$;
  \item[iii.] $\pi_L^{-1} \cdot \psi_{\Omega^1} \circ d = d \circ \psi_L$.
\end{itemize}
\end{lemma}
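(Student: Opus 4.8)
The plan is to treat the three parts in order; (i) and (ii) are routine chain-rule identities, while (iii) carries the real content and will be reduced to Remark \ref{psi}.iii. For (i), using that $\varphi_{\Omega^1}$ is $\varphi_L$-linear together with the defining formula $\varphi_{\Omega^1}(dZ) = \pi_L^{-1}[\pi_L]'(Z)\,dZ$, I compute $\pi_L\varphi_{\Omega^1}(df) = \pi_L\varphi_{\Omega^1}(f'\,dZ) = \varphi_L(f')\,[\pi_L]'(Z)\,dZ$, whereas $d(\varphi_L f) = (\varphi_L f)'\,dZ = \varphi_L(f')\,[\pi_L]'(Z)\,dZ$ by \eqref{f:pre2} (which extends from $o_L[[Z]]$ to $\mathscr{A}_L$ by continuity and linearity: both sides are continuous in $f$ and agree on the dense subring $o_L[[Z]][Z^{-1}]$). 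Part (ii) is the identical computation with $[\pi_L]$ replaced by $[\chi_{LT}(\gamma)]$, using the $\mathscr{A}_L$-semilinearity of the $\Gamma_L$-action on $\Omega^1$ and $\gamma(dZ) = [\chi_{LT}(\gamma)]'(Z)\,dZ$.

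For (iii) I would first make $\psi_{\Omega^1}$ explicit through the isomorphism $\mathscr{A}_L(\chi_{LT})\xrightarrow{\cong}\Omega^1$ of Lemma \ref{Omega-as-twist}, under which $\psi_{\Omega^1}$ corresponds to $\psi_L$ acting on the coefficient (as recorded just before that lemma): writing $h\,dZ = (g_{LT}^{-1}h)\,d\log_{LT}$ yields $\psi_{\Omega^1}(h\,dZ) = \psi_L(g_{LT}^{-1}h)\,d\log_{LT}$. Taking $h\,dZ = df = f'\,dZ$ and using \eqref{f:inv} on both sides, the asserted identity $\pi_L^{-1}\psi_{\Omega^1}(df) = d(\psi_L f)$ collapses to the scalar identity
\begin{equation*}
  \psi_L\circ\partial_{\mathrm{inv}} = \pi_L\,\partial_{\mathrm{inv}}\circ\psi_L \qquad\text{on }\mathscr{B}_L .
\end{equation*}
To establish this, I would, in exact analogy with the proof of Lemma \ref{Delta-N}.i, first deduce $\partial_{\mathrm{inv}}\circ\varphi_L = \pi_L\,\varphi_L\circ\partial_{\mathrm{inv}}$ from \eqref{f:pre1} and \eqref{f:pre2}, equivalently $\varphi_L\circ\partial_{\mathrm{inv}} = \pi_L^{-1}\,\partial_{\mathrm{inv}}\circ\varphi_L$. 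Since $\varphi_L$ is injective, it then suffices to verify the displayed identity after composing with $\varphi_L$ on the left: the right-hand side becomes $\pi_L\,\varphi_L\partial_{\mathrm{inv}}\psi_L = \partial_{\mathrm{inv}}\varphi_L\psi_L$ by the previous relation, while the left-hand side is $\varphi_L\psi_L\partial_{\mathrm{inv}}$, so the claim is precisely the identity $\varphi_L\circ\psi_L\circ\partial_{\mathrm{inv}} = \partial_{\mathrm{inv}}\circ\varphi_L\circ\psi_L$ of Remark \ref{psi}.iii.

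The only real obstacle is part (iii); once one has passed to the twist and eliminated $\partial_{\mathrm{inv}}\circ\varphi_L$ in favour of $\varphi_L\circ\partial_{\mathrm{inv}}$, nothing beyond Remark \ref{psi}.iii is required, and there is no circularity, since that remark rests only on the trace formula \eqref{f:trace} and the invariance $\partial_{\mathrm{inv}}\circ\sigma_a = \sigma_a\circ\partial_{\mathrm{inv}}$, not on the present lemma.
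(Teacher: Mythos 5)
Your proof is correct and follows essentially the same route as the paper: parts (i) and (ii) are the same chain-rule computations, and part (iii) is reduced, after composing with $\varphi_L$ and passing through the twist of Lemma \ref{Omega-as-twist} together with \eqref{f:inv}, to Remark \ref{psi}.iii. Your intermediate scalar relation $\partial_{\mathrm{inv}}\circ\varphi_L = \pi_L\,\varphi_L\circ\partial_{\mathrm{inv}}$ is just a reformulation of part (i) seen through the twist, so there is no real divergence from the paper's argument.
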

\begin{proof}
i. For $f \in \mathscr{A}_L$ we compute
\begin{equation*}
  \varphi_{\Omega^1}(df) = \varphi_{\Omega^1}(f'dZ) = \pi_L^{-1} f'([\pi_L](Z))[\pi_L]'(Z) dZ
    = \pi_L^{-1} d(f([\pi_L](Z))) = \pi_L^{-1} d(\varphi_L(f)) \ .
\end{equation*}

ii. The computation is completely analogous to the one for i.

iii. Since $\varphi_{\Omega^1}$ is injective, the asserted identity is equivalent to $\varphi_{\Omega^1}\circ \psi_{\Omega^1} \circ d= d \circ \varphi_{L}\circ \psi_{L}$ by i. Lemma \ref{Omega-as-twist} implies that $(\varphi_{L}\circ \psi_{L}(f)) g_{LT} dZ = \varphi_{\Omega^1}\circ \psi_{\Omega^1} (f g_{LT} dZ)$. Using this, \eqref{f:inv}, and Remark \ref{psi}.iii in the second, first and fourth, and third identity, respectively, we compute
\begin{align*}
  \varphi_{\Omega^1}\circ \psi_{\Omega^1} (df) & = \varphi_{\Omega^1}\circ \psi_{\Omega^1} (\partial_{\mathrm{inv}}(f) g_{LT} dZ) \\
   & = (\varphi_{L}\circ \psi_{L}(\partial_\mathrm{inv}(f))g_{LT}dZ = \partial_\mathrm{inv}(\varphi_{L}\circ \psi_{L}(f))g_{LT}dZ \\
   & = d(\varphi_{L}\circ \psi_{L}(f)) \ .
\end{align*}
\end{proof}

\begin{proposition}\label{Res-phi-psi}
The residue map $\mathrm{Res} : \Omega^1 \longrightarrow L$ satisfies:
\begin{itemize}
  \item[i.] $\mathrm{Res} \circ \varphi_{\Omega^1} = \pi_L^{-1}q \cdot \mathrm{Res}$;
  \item[ii.] $\mathrm{Res} \circ \gamma = \mathrm{Res}$ for any $\gamma \in \Gamma_L$;
  \item[iii.] $\mathrm{Res} \circ \psi_{\Omega^1} = \mathrm{Res}$.
\end{itemize}
\end{proposition}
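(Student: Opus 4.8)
The plan is to establish all three identities at once by first showing that each of $\mathrm{Res}\circ\varphi_{\Omega^1}$, $\mathrm{Res}\circ\gamma$, $\mathrm{Res}\circ\psi_{\Omega^1}$ is a scalar multiple of $\mathrm{Res}$, and then computing the three scalars. I would work throughout on $\Omega^1_{\mathscr{B}_L}$ equipped with the topology introduced before Remark~\ref{Res-variable}; restricting the resulting identities back to $\Omega^1$ yields the proposition. The general principle is this: if $T$ is a continuous $o_L$-linear endomorphism of $\Omega^1_{\mathscr{B}_L}$ with $T(d\mathscr{B}_L)\subseteq d\mathscr{B}_L$, then, since $\ker(\mathrm{Res}\colon\Omega^1_{\mathscr{B}_L}\to L)$ equals the closure $\overline{d\mathscr{B}_L}$ by Remark~\ref{Res-variable}.i, continuity of $T$ forces $T(\ker\mathrm{Res})\subseteq\ker\mathrm{Res}$; hence $T$ descends to an $L$-linear endomorphism of the one-dimensional space $\Omega^1_{\mathscr{B}_L}/\ker(\mathrm{Res})\xrightarrow[\cong]{\,\mathrm{Res}\,}L$, that is, $\mathrm{Res}\circ T=\lambda_T\cdot\mathrm{Res}$ with $\lambda_T=\mathrm{Res}\bigl(T(\tfrac{dZ}{Z})\bigr)$, using $\mathrm{Res}(\tfrac{dZ}{Z})=1$. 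The hypotheses hold for $T=\varphi_{\Omega^1}$, $T=\gamma$ and $T=\psi_{\Omega^1}$: the inclusion $T(d\mathscr{B}_L)\subseteq d\mathscr{B}_L$ is precisely Lemma~\ref{d-phi-psi}.i, ii, iii respectively, and the required continuity reduces, via $\Omega^1_{\mathscr{B}_L}\cong\mathscr{B}_L$ and Remark~\ref{phi-weakcont}, to the fact that $\varphi_L$, each $\gamma$, and $\psi_L$ carry a basic open neighbourhood of $0$ in $\mathscr{B}_L$ into another one (for $\psi_L$ one uses the projection formula together with $\psi_L(Z^qo_L[[Z]])\subseteq Z\,o_L[[Z]]$, which follows from $Z^qo_L[[Z]]\subseteq[\pi_L](Z)\,o_L[[Z]]$ and $\psi_{Col}([\pi_L]\cdot{-})=Z\,\psi_{Col}({-})$).

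It then remains to compute $\lambda_T$ in each case. For $T=\varphi_{\Omega^1}$ one has $\varphi_{\Omega^1}(\tfrac{dZ}{Z})=\pi_L^{-1}\tfrac{d[\pi_L](Z)}{[\pi_L](Z)}$, so $\lambda_{\varphi_{\Omega^1}}=\pi_L^{-1}\mathrm{Res}\bigl(\tfrac{d[\pi_L](Z)}{[\pi_L](Z)}\bigr)$; since $[\pi_L](Z)\equiv Z^q\bmod\pi_L$ one writes $[\pi_L](Z)=Z^q(1+\pi_L v(Z))$ with $v\in\mathscr{A}_L$, whence $\tfrac{d[\pi_L](Z)}{[\pi_L](Z)}=q\,\tfrac{dZ}{Z}+d\log(1+\pi_L v(Z))$, the logarithm converging $\pi_L$-adically in $\mathscr{A}_L$; the second term is exact, hence of residue $0$, so $\lambda_{\varphi_{\Omega^1}}=\pi_L^{-1}q$, which is i. For $T=\gamma$ one has $\gamma(\tfrac{dZ}{Z})=\tfrac{d[\chi_{LT}(\gamma)](Z)}{[\chi_{LT}(\gamma)](Z)}$, and writing $[\chi_{LT}(\gamma)](Z)=\chi_{LT}(\gamma)\,Z\,\eta(Z)$ with $\eta\in 1+Z\,o_L[[Z]]$ a unit in $o_L[[Z]]$ gives $\tfrac{dZ}{Z}+\tfrac{d\eta}{\eta}$ with $\tfrac{d\eta}{\eta}\in o_L[[Z]]\,dZ$ of residue $0$, so $\lambda_\gamma=1$, which is ii. Finally, for $T=\psi_{\Omega^1}$ the principle gives $\mathrm{Res}\circ\psi_{\Omega^1}=\lambda\cdot\mathrm{Res}$ for some $\lambda$, and I would determine $\lambda$ by composing on the right with $\varphi_{\Omega^1}$: using $\psi_{\Omega^1}\circ\varphi_{\Omega^1}=\tfrac{q}{\pi_L}\,\mathrm{id}$ and part i one gets $\tfrac{q}{\pi_L}\mathrm{Res}=\mathrm{Res}\circ\psi_{\Omega^1}\circ\varphi_{\Omega^1}=\lambda\cdot\mathrm{Res}\circ\varphi_{\Omega^1}=\lambda\,\tfrac{q}{\pi_L}\mathrm{Res}$, forcing $\lambda=1$, which is iii.

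I expect the real obstacle to be the residue computation $\mathrm{Res}\bigl(d[\pi_L](Z)/[\pi_L](Z)\bigr)=q$ in part i: this is the arithmetically substantive step, the factor $q$ arising precisely because $[\pi_L](Z)$ has a ``zero of order $q$'' modulo $\pi_L$ while being a simple zero over $L$, and one must check carefully that $v$ genuinely lies in $\mathscr{A}_L$ and that $\log(1+\pi_L v(Z))$ converges there, so that the correction term is really exact — this is the same mechanism as in the proof of Remark~\ref{Res-variable}.ii. The continuity of $\psi_{\Omega^1}$ on $\Omega^1_{\mathscr{B}_L}$ for the ad hoc topology of that section is a secondary, purely technical point; if one wishes to avoid it one can instead organize part iii via the decomposition $\Omega^1=\bigoplus_{i=0}^{q-1}Z^i\varphi_{\Omega^1}(\Omega^1)$ and the projection formula $\psi_{\Omega^1}(f\varphi_{\Omega^1}(\omega))=\psi_L(f)\omega$, reducing the claim to a finite list of residue identities $\mathrm{Res}(\psi_L(Z^i)\,\omega)=\mathrm{Res}(Z^i\varphi_{\Omega^1}(\omega))$ for $0\le i\le q-1$, the case $i=0$ being exactly part i.
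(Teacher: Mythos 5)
Your overall strategy is the same as the paper's: use Lemma~\ref{d-phi-psi} to see that each operator $\alpha \in \{\varphi_{\Omega^1},\gamma,\psi_{\Omega^1}\}$ preserves (a dense subspace of) $\ker(\mathrm{Res})$, invoke continuity to conclude that $\alpha$ preserves $\ker(\mathrm{Res})$ itself, and then evaluate on $Z^{-1}dZ$ to pin down the scalar. Your trick for part iii (compose $\mathrm{Res}\circ\psi_{\Omega^1}=\lambda\,\mathrm{Res}$ with $\varphi_{\Omega^1}$ and use $\psi_{\Omega^1}\circ\varphi_{\Omega^1}=\tfrac{q}{\pi_L}\,\mathrm{id}$) is a mild variant of what the paper does, but essentially equivalent.

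There is, however, a genuine gap in the continuity step for $\psi_L$. You work on $\Omega^1_{\mathscr{B}_L}$ with the ad hoc topology (basic neighbourhoods $\pi_L^m\mathscr{A}_L + L\otimes_{o_L}Z^m o_L[[Z]]$), and you justify continuity of $\psi_L$ there via the inclusion $Z^q o_L[[Z]]\subseteq [\pi_L](Z)\,o_L[[Z]]$. This inclusion is false: since $[\pi_L](Z)=\pi_L Z + \cdots + Z^q + \cdots$, the quotient $Z^q/[\pi_L](Z)=Z^{q-1}/(\pi_L + \cdots)$ has a coefficient $\pi_L^{-1}\notin o_L$, so $Z^q\notin[\pi_L](Z)\,o_L[[Z]]$. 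Worse, $\psi_L$ genuinely fails to be continuous for this topology. Indeed $\big(\varphi_L\psi_{Col}(Z^N)\big)(0)=\sum_{a\in LT_1}a^N$, so $\psi_{Col}(Z^N)$ generically has a nonzero constant term; consequently $\psi_L\big(L\otimes_{o_L}Z^N o_L[[Z]]\big)\ni \pi_L^{-k}\psi_L(Z^N)$ has unbounded constant terms as $k\to\infty$, and so this image is not contained in $\pi_L^m\mathscr{A}_L + L\otimes_{o_L}Z\,o_L[[Z]]$ for any $m,N$. The scalar $\lambda_{\psi_{\Omega^1}}$ cannot be extracted by this route as written.

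The fix is to do what the paper does: work on $\Omega^1_{\mathscr{A}_L}$ with the weak topology, where $\psi_{\Omega^1}$ is continuous (this is recorded just after Lemma~\ref{top-strict}, using Remark~\ref{phi-weakcont}), and observe that $\mathrm{span}_{o_L}\{Z^m dZ : m\neq -1\}$ is dense in $\ker(\mathrm{Res})\cap\Omega^1_{\mathscr{A}_L}$ for the weak topology; one never needs to topologize $\mathscr{B}_L$. Your proposed fallback for part iii does not rescue the argument either: the reduction to $\mathrm{Res}(\psi_L(Z^i)\omega)=\mathrm{Res}(Z^i\varphi_{\Omega^1}(\omega))$, $0\le i\le q-1$, is only dealt with for $i=0$, and for $i\ge1$ these identities are precisely special cases of Corollary~\ref{phi-psi-dual}, which the paper deduces from part iii — so without an independent proof of them you would be going in a circle.
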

\begin{proof}
Of course, exact differential forms have zero residue. Let now $\alpha$ denote any of the endomorphisms $\varphi_{\Omega^1}$, $\gamma$, or $\psi_{\Omega^1}$ of $\Omega^1$. Using Lemma \ref{d-phi-psi} we have $(m+1)\mathrm{Res}(\alpha(Z^m dZ)) = \mathrm{Res}(\alpha(d(Z^{m+1}))) = \pi_L^\epsilon \mathrm{Res}(d\alpha(Z^{m+1})) = 0$ with $\epsilon \in \{-1, 0, 1\}$ and hence $\mathrm{Res}(\alpha(Z^m dZ)) = 0$ for any $m \neq -1$. Since $\mathrm{Res}$ is continuous it follows that $\alpha$ preserves the kernel of $\mathrm{Res}$. This reduces us to showing the asserted identities on the differential form $Z^{-1}dZ$. In other words we have to check that $\mathrm{Res}(\alpha(Z^{-1}dZ)) = \pi_L^{-1}q, 1, 1$, respectively, in the three cases.

i. We have $\varphi_{\Omega^1}(Z^{-1}dZ) = \pi_L^{-1}\frac{[\pi_L]'(Z)}{[\pi_L](Z)}dZ$. But $[\pi_L](Z) = Z^q(1 + \pi_L v(Z))$ with $v \in \mathscr{A}_L$. Hence $\varphi_{\Omega^1}(Z^{-1}dZ) = \pi_L^{-1}qZ^{-1}dZ + \pi_L^{-1}\frac{d(1+\pi_L v)}{1+\pi_L v}$. In the proof of Remark \ref{Res-variable}.ii we have seen that $\frac{d(1+\pi_L v)}{1+\pi_L v}$ has zero residue.

ii. Here we have $\gamma(Z^{-1}dZ) = \frac{[\chi_{LT}(\gamma)]'(Z)}{[\chi_{LT}(\gamma)](Z)} dZ$. But $[\chi_{LT}(\gamma)](Z) = Z u(Z)$ with a unit $u \in o_L[[Z]]^\times$. It follows that $\gamma(Z^{-1}dZ) = Z^{-1}dZ + \frac{u'}{u}dZ$. The second summand has zero residue, of course.

iii. The identity in i. implies that $\mathrm{Res} \circ \varphi_{\Omega^1} = \pi_L^{-1}q \cdot \mathrm{Res} = \mathrm{Res} \circ \psi_{\Omega^1} \circ \varphi_{\Omega^1}$. Hence the identity in iii. holds on the image of $\varphi_{\Omega^1}$ (as well as on the kernel of $\mathrm{Res}$). But in the course of the proof of  i.  we have seen that $Z^{-1}dZ \in \im(\varphi_{\Omega^1}) + \ker(\mathrm{Res})$.
\end{proof}

\begin{corollary}\label{phi-psi-dual}
The residue pairing satisfies
\begin{equation*}
  \mathrm{Res}(f\psi_{\Omega^1}(\omega)) = \mathrm{Res}(\varphi_L(f)\omega) \qquad\text{for any $f \in \mathscr{A}_L$ and $\omega \in \Omega^1$}.
\end{equation*}
\end{corollary}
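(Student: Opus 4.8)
The plan is to deduce the adjunction formula $\mathrm{Res}(f\psi_{\Omega^1}(\omega)) = \mathrm{Res}(\varphi_L(f)\omega)$ from Proposition \ref{Res-phi-psi} together with the projection formula for $\psi_L$ and $\psi_{\Omega^1}$. First I would recall that, by the very construction of $\psi_{\Omega^1}$ on the etale $(\varphi_L,\Gamma_L)$-module $\Omega^1$, we have the projection formula $\psi_{\Omega^1}(\varphi_L(f)\omega) = f\,\psi_{\Omega^1}(\omega)$ for all $f \in \mathscr{A}_L$ and $\omega \in \Omega^1$ (this is the general projection formula stated just after the definition of $\psi_M$ for an arbitrary etale module $M$, applied to $M = \Omega^1$).

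Granting that, the computation is a one-line chain: starting from the right-hand side, apply Proposition \ref{Res-phi-psi}.iii to the differential form $\varphi_L(f)\omega$, namely $\mathrm{Res}(\varphi_L(f)\omega) = \mathrm{Res}\big(\psi_{\Omega^1}(\varphi_L(f)\omega)\big)$, and then use the projection formula to rewrite $\psi_{\Omega^1}(\varphi_L(f)\omega) = f\,\psi_{\Omega^1}(\omega)$, giving $\mathrm{Res}(\varphi_L(f)\omega) = \mathrm{Res}(f\,\psi_{\Omega^1}(\omega))$, which is exactly the claim. So the corollary is immediate once one has Proposition \ref{Res-phi-psi}.iii and the projection formula in hand.

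There is really no serious obstacle here; the only point that requires a moment's care is to make sure the projection formula for $\psi_{\Omega^1}$ is legitimately available in the form needed. This is fine because $\Omega^1$ is an etale $(\varphi_L,\Gamma_L)$-module (shown via Lemma \ref{Omega-as-twist}, identifying it with $\mathscr{A}_L(\chi_{LT})$), so the operator $\psi_{\Omega^1} = \psi_{M}$ for $M = \Omega^1$ is defined exactly as in the general discussion and satisfies $\psi_M(\varphi_L(f)m) = f\psi_M(m)$. One should also note that $\mathrm{Res}$ has been extended $L$-linearly to $\Omega^1_{\mathscr{B}_L}$, but since the statement only concerns $f \in \mathscr{A}_L$ and $\omega \in \Omega^1$ it suffices to work integrally. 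Thus the proof consists simply of invoking Proposition \ref{Res-phi-psi}.iii and the projection formula, in that order.
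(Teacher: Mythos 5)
Your proof is correct and is essentially the paper's own argument, merely read from right to left: you apply Proposition \ref{Res-phi-psi}.iii first and then the projection formula $\psi_{\Omega^1}(\varphi_L(f)\omega) = f\psi_{\Omega^1}(\omega)$, whereas the paper applies the projection formula to the left-hand side and then invokes Proposition \ref{Res-phi-psi}.iii. The ingredients and the logic are identical.
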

\begin{proof}
By the projection formula the left hand side of the asserted equality is equal to $\mathrm{Res}(\psi_{\Omega^1}(\varphi_L(f)\omega))$. Hence the assertion follows from Prop.\ \ref{Res-phi-psi}.iii.
\end{proof}

For a finitely generated  $\mathscr{A}_L/\pi_L^n \mathscr{A}_L$-module $M$ the isomorphism in Lemma \ref{top-adjunction} induces the following pairing
\begin{align*}
[\;,\;]=[\;,\;]_M: M\times \Hom_{\mathscr{A}_L}(M,\Omega^1/\pi_L^n \Omega^1 ) & \longrightarrow  L/o_L \\
(m,F) & \longmapsto \pi_L^{-n} \mathrm{Res}(F(m)) \bmod o_L \ .
\end{align*}
Since $M$ is locally compact it is (jointly) continuous by \cite{B-TG} X.28 Thm.\ 3. Note that this pairing (and hence also the isomorphism in Lemma \ref{top-adjunction} ) is $\Gamma_L$-invariant by Prop.\ \ref{Res-phi-psi}.ii.

The map $\Hom_{\mathscr{A}_L}(\mathscr{A}_L/\pi_L^n \mathscr{A}_L,\Omega^1/\pi_L^n \Omega^1) \xrightarrow{\cong} \Omega^1/\pi_L^n \Omega^1$ which sends $F$ to $F(1)$ is an isomorphism of (etale) $(\varphi_L,\Gamma_L)$-modules. Cor.\  \ref{phi-psi-dual} then implies that
\begin{equation*}
  [ \varphi_{\mathscr{A}_L/\pi_L^n \mathscr{A}_L}(f),F]_{\mathscr{A}_L/\pi_L^n \mathscr{A}_L}= [f,\psi_{\Hom_{\mathscr{A}_L}(\mathscr{A}_L/\pi_L^n \mathscr{A}_L,\Omega^1/\pi_L^n \Omega^1 )}(F) ]_{\mathscr{A}_L/\pi_L^n \mathscr{A}_L}
\end{equation*}
for all $f\in \mathscr{A}_L/\pi_L^n \mathscr{A}_L$ and $F\in \Hom_{\mathscr{A}_L}(\mathscr{A}_L/\pi_L^n \mathscr{A}_L,\Omega^1/\pi_L^n \Omega^1 ) .$ More generally, we show:

\begin{proposition}\label{pairing]}
Let $M$ be an etale $(\varphi_L,\Gamma_L)$-module such that $\pi_L^n M = 0$ for some $n \geq 1$; we have:
\begin{itemize}
\item[i.] The operator $\psi_M$ is left adjoint to $\varphi_{\Hom_{\mathscr{A}_L}(M,\Omega^1/\pi_L^n \Omega^1 )}$ under the pairing $[\;,\;]$, i.e.,
\begin{equation*}
  [\psi_M(m),F]=[m,\varphi_{\Hom_{\mathscr{A}_L}(M,\Omega^1/\pi_L^n \Omega^1 )}(F)]
\end{equation*}
for all $m \in M$ and all $F \in {\Hom_{\mathscr{A}_L}(M,\Omega^1/\pi_L^n \Omega^1 )}$;
\item[ii.] the operator $\varphi_M$ is left adjoint to $\psi_{\Hom_{\mathscr{A}_L}(M,\Omega^1/\pi_L^n \Omega^1 )}$ under the pairing $[\;,\;]$, i.e.,
\begin{equation*}
  [\varphi_M(m),F]=[m,\psi_{\Hom_{\mathscr{A}_L}(M,\Omega^1/\pi_L^n \Omega^1 )}(F)]
\end{equation*}
for all $m \in M$ and all $F \in {\Hom_{\mathscr{A}_L}(M,\Omega^1/\pi_L^n \Omega^1 )}$.
\end{itemize}
\end{proposition}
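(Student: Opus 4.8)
The plan is to derive both adjointness statements from the single relation $\mathrm{Res}\circ\psi_{\Omega^1}=\mathrm{Res}$ of Proposition \ref{Res-phi-psi}.iii, combined with the defining formula \eqref{f:inner} for the $\varphi$-operator on an inner Hom and the projection formula $\psi_M(f\varphi_M(m))=\psi_L(f)m$ valid on any etale module. Write $N:=\Omega^1/\pi_L^n\Omega^1$; it is an etale $(\varphi_L,\Gamma_L)$-module killed by $\pi_L^n$, and since $\mathrm{Res}$, $\varphi_{\Omega^1}$, $\psi_{\Omega^1}$ are all $o_L$-linear they descend to $N$, so that $\mathrm{Res}\circ\psi_N=\mathrm{Res}$ holds on $N$ with values in $o_L/\pi_L^n o_L$, where $\varphi_N,\psi_N$ denote the reductions of $\varphi_{\Omega^1},\psi_{\Omega^1}$. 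Recall $[m,F]=\pi_L^{-n}\mathrm{Res}(F(m))\bmod o_L$, and abbreviate $\varphi_{\Hom}:=\varphi_{\Hom_{\mathscr{A}_L}(M,N)}$, $\psi_{\Hom}:=\psi_{\Hom_{\mathscr{A}_L}(M,N)}$.

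For i.\ the key step is the pointwise identity $F(\psi_M(m))=\psi_N(\varphi_{\Hom}(F)(m))$ in $N$. To prove it, write $(\varphi_M^{lin})^{-1}(m)=\sum_k f_k\otimes m_k$, so that $m=\sum_k f_k\varphi_M(m_k)$ and $\psi_M(m)=\sum_k\psi_L(f_k)m_k$; then on one side $F(\psi_M(m))=\sum_k\psi_L(f_k)F(m_k)$, while on the other side, using $\mathscr{A}_L$-linearity of $\varphi_{\Hom}(F)$ and then \eqref{f:inner}, $\varphi_{\Hom}(F)(m)=\sum_k f_k\,\varphi_{\Hom}(F)(\varphi_M(m_k))=\sum_k f_k\,\varphi_N(F(m_k))$, and applying $\psi_N$ together with its projection formula yields the same sum. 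Feeding this into $\mathrm{Res}$ and invoking Proposition \ref{Res-phi-psi}.iii gives $\mathrm{Res}(F(\psi_M(m)))=\mathrm{Res}(\psi_N(\varphi_{\Hom}(F)(m)))=\mathrm{Res}(\varphi_{\Hom}(F)(m))$, which after multiplying by $\pi_L^{-n}$ and reducing mod $o_L$ is precisely $[\psi_M(m),F]=[m,\varphi_{\Hom}(F)]$.

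For ii.\ I would argue symmetrically, now with the identity $\psi_N(F(\varphi_M(m)))=\psi_{\Hom}(F)(m)$ in $N$. Write $(\varphi_{\Hom}^{lin})^{-1}(F)=\sum_k f_k\otimes G_k$, so $F=\sum_k f_k\varphi_{\Hom}(G_k)$ and $\psi_{\Hom}(F)=\sum_k\psi_L(f_k)G_k$; then $F(\varphi_M(m))=\sum_k f_k\,\varphi_{\Hom}(G_k)(\varphi_M(m))=\sum_k f_k\,\varphi_N(G_k(m))$ by \eqref{f:inner}, and the projection formula for $\psi_N$ gives $\psi_N(F(\varphi_M(m)))=\sum_k\psi_L(f_k)G_k(m)=\psi_{\Hom}(F)(m)$. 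Therefore, by Proposition \ref{Res-phi-psi}.iii, $\mathrm{Res}(F(\varphi_M(m)))=\mathrm{Res}(\psi_N(F(\varphi_M(m))))=\mathrm{Res}(\psi_{\Hom}(F)(m))$, i.e.\ $[\varphi_M(m),F]=[m,\psi_{\Hom}(F)]$. This recovers, for $M=\mathscr{A}_L/\pi_L^n\mathscr{A}_L$, the identity already noted before the Proposition.

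I do not expect a genuine obstacle here; the two points that need care are the reduction modulo $\pi_L^n$ (all maps involved are $o_L$-linear, so Proposition \ref{Res-phi-psi}.iii descends to $N$, and the factor $\pi_L^{-n}$ then turns an equality in $o_L/\pi_L^n o_L$ into one in $L/o_L$) and keeping straight which projection formula and which defining formula for $\varphi_{\Hom}$, $\psi_{\Hom}$ one is invoking at each step. As an alternative to the direct computation, ii.\ could also be obtained from i.\ by applying i.\ to the module $\Hom_{\mathscr{A}_L}(M,N)$ and transporting along the reflexivity isomorphism $M\xrightarrow{\cong}\Hom_{\mathscr{A}_L}(\Hom_{\mathscr{A}_L}(M,N),N)$, $m\mapsto(F\mapsto F(m))$; but that route requires separately checking that this isomorphism intertwines $\varphi_M$ with $\varphi$ on the bidual and matches the two pairings, so the computational proof above is the more economical one.
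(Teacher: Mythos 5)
Your proof is correct and uses essentially the same ingredients as the paper's (formula \eqref{f:inner}, the projection formulas, and $\mathrm{Res}\circ\psi_{\Omega^1}=\mathrm{Res}$), with the only stylistic difference that you first isolate the pointwise identities $F(\psi_M(m))=\psi_N(\varphi_{\Hom}(F)(m))$ and $\psi_N(F(\varphi_M(m)))=\psi_{\Hom}(F)(m)$ in $N$ before applying $\mathrm{Res}$, whereas the paper folds those same manipulations into a single chain of congruences on the pairing after reducing to elements of the form $f\varphi_M(m)$ (respectively $f\varphi_{\Hom}(F)$). The two packagings are interchangeable, and your observation that $\mathrm{Res}\circ\psi_{\Omega^1}=\mathrm{Res}$ descends cleanly modulo $\pi_L^n$ is exactly the point that makes the reduction harmless.
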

\begin{proof}
For notational simplicity we abbreviate the subscript $\Hom_{\mathscr{A}_L}(M,\Omega^1/\pi_L^n \Omega^1)$ to $\Hom$.

i. Since $M$ is etale it suffices to check the asserted identity on elements of the form $f \varphi_M(m)$ with $f \in \mathscr{A}_L$ and $m \in M$. By the projection formula for $\psi_M$ the left hand side then becomes $[\psi_L(f)m,F]$. We compute the right hand side:
\begin{align*}
[f\varphi_M(m), \varphi_{\Hom}(F)] & \equiv \pi_L^{-n} \mathrm{Res}(\varphi_{\Hom}(F)(f\varphi_M(m))) \\
& \equiv \pi_L^{-n} \mathrm{Res} (f \varphi_{\Hom}(F)(\varphi_M(m))) \\
& \equiv \pi_L^{-n} \mathrm{Res} (f \varphi_{\Omega^1/\pi_L^n \Omega^1}(F(m))) \\
& \equiv \pi_L^{-n} \mathrm{Res} (\psi_{\Omega^1/\pi_L^n \Omega^1}(f\varphi_{\Omega^1/\pi_L^n \Omega^1}(F(m)))) \\
& \equiv \pi_L^{-n} \mathrm{Res} (\psi_L(f)F(m)) \\
& \equiv \pi_L^{-n} \mathrm{Res} (F(\psi_L(f)m)) \\
& \equiv [\psi_{L}(f)m,F] \mod o_L \ ;
\end{align*}
here the first and last identities are just the definition of $[\;,\;]$, the second and sixth use $\mathscr{A}_L$-linearity, the third the formula \eqref{f:inner}, the fourth Prop.\ \ref{Res-phi-psi} iii., and the fifth the projection formula for $\psi_{\Omega^1/\pi_L^n \Omega^1}$.

ii. Correspondingly we compute
\begin{align*}
[\varphi_M(m),f\varphi_{\Hom}(F)]
& \equiv \pi_L^{-n} \mathrm{Res} (f\varphi_{\Hom}(F)(\varphi_M(m))) \\
& \equiv \pi_L^{-n} \mathrm{Res} (f\varphi_{\Omega^1/\pi_L^n \Omega^1}(F(m))) \\
& \equiv \pi_L^{-n} \mathrm{Res} (\psi_{\Omega^1/\pi_L^n \Omega^1}(f\varphi_{\Omega^1/\pi_L^n \Omega^1}(F(m)))) \\
& \equiv \pi_L^{-n} \mathrm{Res} (\psi_L(f)F(m)) \\
& \equiv \pi_L^{-n} \mathrm{Res} ((\psi_{\Hom} (f \varphi_{\Hom} (F)))(m)) \\
& \equiv [m,\psi_{\Hom} (f \varphi_{\Hom} (F))] \mod o_L \ .
\end{align*}
\end{proof}

\begin{remark}\label{pairing-rangle}
Similarly, for any etale $(\varphi_L,\Gamma_L)$-module $M$ such that $\pi_L^n M = 0$ one can consider the $\Gamma_L$-invariant (jointly) continuous pairing
\begin{align*}
[\;,\;\rangle=[\;,\;\rangle_M: M \times \Hom_{\mathscr{A}_L}(M,\mathscr{A}_L(\chi_{LT})/\pi_L^n \mathscr{A}_L(\chi_{LT}) ) & \longrightarrow  L/o_L \\
(m,F) & \longmapsto \pi_L^{-n} \mathrm{Res} (F(m)d\log_{LT}) \bmod o_L
\end{align*}
which arises from $[\;,\;]_M$  by plugging in the isomorphism from Lemma \ref{Omega-as-twist}. Clearly, it has adjointness properties analogous to the ones in Prop.\ \ref{pairing]}.
\end{remark}

\section{The Kisin-Ren equivalence}\label{sec:KR}

Let $\widetilde{\mathbf{E}}^+ := \varprojlim o_{\mathbb{C}_p}/p o_{\mathbb{C}_p}$ with the transition maps being given by the Frobenius $\phi(a) = a^p$. We may also identify $\widetilde{\mathbf{E}}^+$ with
$\varprojlim o_{\mathbb{C}_p}/\pi_L o_{\mathbb{C}_p}$ with
the transition maps being given by the $q$-Frobenius
$\phi_q (a) = a^q$. Recall that $\widetilde{\mathbf{E}}^+$ is a complete valuation ring with residue field $\overline{\mathbb{F}_p}$ and its field of fractions $\widetilde{\mathbf{E}} = \varprojlim \mathbb{C}_p$ being algebraically closed of characteristic $p$. Let $\mathfrak{m}_{\widetilde{\mathbf{E}}}$ denote the maximal ideal in $\widetilde{\mathbf{E}}^+$.

The $q$-Frobenius $\phi_q$ first extends by functoriality to the rings of the Witt vectors $W(\widetilde{\mathbf{E}}^+) \subseteq W(\widetilde{\mathbf{E}})$ and then $o_L$-linearly to $W(\widetilde{\mathbf{E}}^+)_L := W(\widetilde{\mathbf{E}}^+) \otimes_{o_{L_0}} o_L \subseteq W(\widetilde{\mathbf{E}})_L := W(\widetilde{\mathbf{E}}) \otimes_{o_{L_0}} o_L$, where $L_0$ is the maximal unramified subextension of $L$. The Galois group $G_L$ obviously acts on $\widetilde{\mathbf{E}}$ and $W(\widetilde{\mathbf{E}})_L$ by automorphisms commuting with $\phi_q$.  This $G_L$-action is continuous for the weak topology on $W(\widetilde{\mathbf{E}})_L$ (cf.\ \cite{GAL} Lemma \ref{GAL-continuous-action}). Let $\mathbb{M}_L$ denote the ideal in $W(\widetilde{\mathbf{E}}^+)_L$ which is the preimage of $\mathfrak{m}_{\widetilde{\mathbf{E}}}$ under the residue class map.

Evaluation of the global coordinate $Z$ of $LT$ at $\pi_L$-power torsion points induces a map (not a homomorphism in the naive sense) $\iota: T \longrightarrow \widetilde{\mathbf{E}}^+$. Namely, if $t =  (z_n)_{n\geq 1} \in T$ with $[\pi_L](z_{n+1}) = z_n$ and $[\pi_L](z_1) = 0$, then $z_{n+1}^q \equiv z_n \bmod \pi_L$ and hence $\iota(t) := (z_n \bmod \pi_L)_n \in \widetilde{\mathbf{E}}^+$.

\begin{lemma}\label{iota}
The image of the map $\iota$ is contained in $\mathfrak{m}_{\widetilde{\mathbf{E}}}$. The map
\begin{align*}
   \iota_{LT} : T & \longrightarrow \mathbb{M}_L \\
    t & \longmapsto \lim_{n \rightarrow \infty} ([\pi_L] \circ \phi_q^{-1})^n([\iota(t)])) \ ,
\end{align*}
where $[\iota(t)]$ denotes the Teichm\"uller representative of $\iota(t)$, is well defined and satisfies:
\begin{itemize}
  \item[a.] $[a](\iota_{LT}(t)) = \iota_{LT}(at)$ for any $a \in o_L$;
  \item[b.] $\phi_q(\iota_{LT}(t)) = \iota_{LT}(\pi_L t) = [\pi_L](\iota_{LT}(t))$;
  \item[c.] $\sigma(\iota_{LT}(t))=[\chi_{LT}(\sigma)](\iota_{LT}(t))$ for any $\sigma \in G_L$.
\end{itemize}
\end{lemma}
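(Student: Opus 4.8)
The plan is to prove the statement in three stages: first the inclusion $\im(\iota)\subseteq\mathfrak{m}_{\widetilde{\mathbf{E}}}$, then the convergence of the defining limit together with the fact that it lands in $\mathbb{M}_L$, and finally the three relations a.--c. Throughout, set $W:=W(\widetilde{\mathbf{E}}^+)_L$ and $F:=[\pi_L]\circ\phi_q^{-1}$; since $\widetilde{\mathbf{E}}^+$ is perfect, $\phi_q$ is an automorphism of $W$, the map $F$ takes $\mathbb{M}_L$ into itself, and, because the power series $[\pi_L]$ has $o_L$-coefficients and hence commutes with $\phi_q$, one has $F^n=[\pi_L^n]\circ\phi_q^{-n}$.

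For the first stage one recalls that the $\pi_L^n$-torsion points of $LT$ are topologically nilpotent: $z_1$ is a root of $[\pi_L](Z)/Z$, which is an Eisenstein polynomial up to a unit (its constant term is $\pi_L$, and $[\pi_L](Z)\equiv Z^q\bmod\pi_L$ forces the intermediate coefficients into $\pi_L o_L$ and the leading one to be a unit), so $v(z_1)=\frac{1}{q-1}v(\pi_L)>0$ and hence $v(z_n)>0$ for all $n$. Thus each component $z_n\bmod\pi_L$ of $\iota(t)$ lies in the maximal ideal of $o_{\Cp}/\pi_L$, which gives $\iota(t)\in\mathfrak{m}_{\widetilde{\mathbf{E}}}$ and $[\iota(t)]\in\mathbb{M}_L$.

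The core of the argument consists of two congruences, both checked by reducing modulo $\pi_L$ and using $W/\pi_L W=\widetilde{\mathbf{E}}^+$ and $\overline{[\pi_L]}(Y)=Y^q$: (i) $F(x)\equiv x\bmod\pi_L W$ for all $x\in\mathbb{M}_L$, because modulo $\pi_L$ the map $F$ becomes $\overline{x}\mapsto(\phi_q^{-1}(\overline{x}))^q=\phi_q(\phi_q^{-1}(\overline{x}))=\overline{x}$; and (ii) if $Y\equiv Y'\bmod\pi_L^k W$ with $k\geq 1$ then $[\pi_L](Y)-[\pi_L](Y')=(Y-Y')E$ with $E\in\pi_L W$, since $\overline{E}=\overline{[\pi_L]}'(\overline{Y})=q\,\overline{Y}^{q-1}=0$, and therefore $F(Y)\equiv F(Y')\bmod\pi_L^{k+1}W$. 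Feeding (i) into (ii) and iterating yields $F^{n+1}(x)\equiv F^n(x)\bmod\pi_L^{n+1}W$, so $(F^n([\iota(t)]))_n$ is Cauchy and converges since $W$ is $\pi_L$-adically complete; as all its terms are $\equiv[\iota(t)]\bmod\pi_L W$, the limit $\iota_{LT}(t)$ reduces to $\iota(t)\in\mathfrak{m}_{\widetilde{\mathbf{E}}}$ and so lies in $\mathbb{M}_L$. The same iteration of (ii) gives the uniqueness principle we shall need: if $x\equiv x'\bmod\pi_L W$ with $x,x'\in\mathbb{M}_L$, then $\lim_n F^n(x)=\lim_n F^n(x')$.

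For a.--c. one uses that $F$ commutes with $[a]$ (as $[a]\circ[\pi_L]=[\pi_L]\circ[a]=[a\pi_L]$ and $[a]$ commutes with $\phi_q$) and with each $\sigma\in G_L$ (which fixes $o_L\subseteq W$, commutes with $\phi_q$, and satisfies $\sigma([y])=[\sigma(y)]$), together with the equivariance of $\iota$ itself: $\iota(\pi_L t)=\phi_q(\iota(t))$ and $\iota(\chi_{LT}(\sigma)t)=\sigma(\iota(t))$ (both componentwise), and $\iota(at)=\overline{[a]}(\iota(t))$ in $\widetilde{\mathbf{E}}^+$ --- the last because the coefficients of $\overline{[a]}$ lie in $k_L$ and thus satisfy $c^q=c$, so the power series may be evaluated componentwise. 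From $[\iota(\pi_L t)]=\phi_q([\iota(t)])$ and the identity $F^n\circ\phi_q=[\pi_L]\circ F^{n-1}$ one gets $\iota_{LT}(\pi_L t)=\lim_n[\pi_L](F^{n-1}([\iota(t)]))=[\pi_L](\iota_{LT}(t))$ by continuity of $[\pi_L]$, and applying $\phi_q$ to $F^n([\iota(t)])$ and using continuity of $\phi_q$ gives $\phi_q(\iota_{LT}(t))=[\pi_L](\iota_{LT}(t))$ as well, which is b. For a., $[a](\iota_{LT}(t))=\lim_n[a](F^n([\iota(t)]))=\lim_n F^n([a]([\iota(t)]))$, and since $[a]([\iota(t)])\equiv[\overline{[a]}(\iota(t))]=[\iota(at)]\bmod\pi_L W$ the uniqueness principle identifies this with $\lim_n F^n([\iota(at)])=\iota_{LT}(at)$. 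Property c. is then formal: $\sigma(\iota_{LT}(t))=\lim_n F^n(\sigma([\iota(t)]))=\lim_n F^n([\iota(\chi_{LT}(\sigma)t)])=\iota_{LT}(\chi_{LT}(\sigma)t)=[\chi_{LT}(\sigma)](\iota_{LT}(t))$ by a. The point demanding most care is the estimate (ii) --- the contraction property of $[\pi_L]$ modulo powers of $\pi_L$ --- together with the continuity of $F$, of the $[a]$, and of $\phi_q$ and $\sigma$ for the ($\pi_L$-adic, hence also weak) topology on $W$, which is what lets one move limits through these maps.
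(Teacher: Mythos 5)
Your proof is correct, and it supplies in full the details that the paper itself delegates to the references ([KR] Lemma 1.2, [Co1] Lemma 9.3, and the lecture notes [GAL]); the argument you give is the standard one. The driving estimate — that $Y\equiv Y'\bmod\pi_L^k W$ forces $F(Y)\equiv F(Y')\bmod\pi_L^{k+1}W$, because the reduction $\overline{[\pi_L]}(Y)=Y^q$ has vanishing derivative in characteristic $p$ — simultaneously yields convergence of $F^n([\iota(t)])$ and the uniqueness principle, and your functoriality arguments for a.--c.\ (commuting $F$ past $[a]$, $\phi_q$, $\sigma$ and using the componentwise equivariance of $\iota$) are exactly right. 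Two small caveats are worth noting. First, $[\pi_L](Z)/Z$ is a power series, not literally an Eisenstein polynomial; to make your valuation claim precise one should invoke Weierstrass preparation, or more simply observe that the paper already stipulates $t_{0,n}\in\pi_{L_n}o_{L_n}$, so the torsion points $z_n$ are topologically nilpotent by convention and $\iota(t)\in\mathfrak{m}_{\widetilde{\mathbf{E}}}$ is immediate. Second, the whole argument tacitly assumes that the Lubin--Tate series $[a]$, and in particular $[\pi_L]$, can be evaluated at arbitrary elements of $\mathbb{M}_L$; this is trivial when $[\pi_L]$ is a polynomial (the hypothesis under which [Co1] works) but for a genuine power series it requires a convergence argument in the weak topology on $W(\widetilde{\mathbf{E}}^+)_L$, which is precisely the ``additional convergence consideration'' alluded to in Section~\ref{sec:CoatesWiles}. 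Neither point affects the structure of your argument, but in a self-contained write-up they should be addressed.
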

\begin{proof}
This is \cite{KR} Lemma 1.2, which refers to \cite{Co1} Lemma 9.3. For full details see \cite{GAL} \S\ref{GAL-sec:coeff}.
\end{proof}

As before we fix an $o_L$-generator $t_0$ of $T$ and put $\omega_{LT} := \iota_{LT}(t_0)$. By sending $Z$ to $\omega_{LT}$ we obtain an embedding of rings
\begin{equation*}
     o_L[[Z]] \longrightarrow W(\widetilde{\mathbf{E}}^+)_L \ .
\end{equation*}
As explained in \cite{KR} (1.3) it extends to embeddings of rings
\begin{equation*}
    \mathscr{A}_L \longrightarrow W(\widetilde{\mathbf{E}})_L  \qquad\text{and}\qquad   \mathscr{B}_L \longrightarrow L \otimes_{o_L} W(\widetilde{\mathbf{E}})_L \ .
\end{equation*}
The left map, in fact, is a topological embedding for the weak topologies on both sides (\cite{GAL} Prop.\ \ref{GAL-properties-j}.i).
The Galois group $G_L$ acts through its quotient $\Gamma_L$ on $\mathscr{B}_L$ by $(\sigma,f) \longmapsto f([\chi_{LT}(\sigma)](Z))$. Then, by Lemma \ref{iota}.c, the above embeddings are $G_L$-equivariant. Moreover, the $q$-Frobenius $\phi_q$ on $L \otimes_{o_L} W(\widetilde{\mathbf{E}})_L$, by Lemma \ref{iota}.b, restricts to the endomorphism $f \longmapsto f \circ [\pi_L]$ of $\mathscr{B}_L$ which we earlier denoted by $\varphi_L$.

We define $\mathbf{A}_L$ to be the image of $\mathscr{A}_L$ in $W(\widetilde{\mathbf{E}})_L$. It is a complete discrete valuation ring with prime element $\pi_L$ and residue field the image $\mathbf{E}_L$ of $k_L ((Z)) \hookrightarrow \widetilde{\mathbf{E}}$. As a consequence of Lemma \ref{iota}.a this subring $\mathbf{A}_L$ is independent of the choice of $t_0$.  As explained above each choice of $t_0$ gives rise to an isomorphism
\begin{equation}\label{f:isoVariousA}
  (\mathscr{A}_L, \varphi_L,\Gamma_L, \text{weak topology}) \xrightarrow{\; \cong \;} (\mathbf{A}_L, \phi_q,\Gamma_L, \text{weak topology})
\end{equation}
between the $o$-algebras $\mathscr{A}_L$ and $\mathbf{A}_L$ together with their additional structures. By literally repeating the Def.\ \ref{def:modR} we have the notion of (etale) $(\phi_q,\Gamma_L)$-modules over $\mathbf{A}_L$ as well as the category $\mathfrak{M}^{et}(\mathbf{A}_L)$. In the same way as for Remark \ref{psi}.i we may define the operator $\psi_L$ on $\mathbf{A}_L$ and then on any etale $(\phi_q,\Gamma_L)$-module over $\mathbf{A}_L$. The above algebra isomorphism gives rise to an equivalence of categories
\begin{equation}\label{f:equCatvariable}
  \mathfrak{M}^{et}(\mathscr{A}_L) \xrightarrow{\; \simeq \;} \mathfrak{M}^{et}(\mathbf{A}_L) \ ,
\end{equation}
which also respects the $\psi_L$-operators.  Using the norm map for the extension $\mathbf{A}_L/\phi_q(\mathbf{A}_L)$ we define, completely analogously as in \eqref{f:normoperator}, a multiplicative norm operator $\mathcal{N} : \mathbf{A}_L \longrightarrow \mathbf{A}_L$. Then,  using also Lemma \ref{iota}.b, $\mathcal{N}_L$ and $\mathcal{N}$ correspond to each other under the isomorphism \eqref{f:isoVariousA}. In particular, \eqref{f:isoVariousA} (for any choice of $t_0$) induces an isomorphism
\begin{equation}\label{f:N-iso}
  (\mathscr{A}_L^\times)^{\mathcal{N}_L =1} \xrightarrow{\;\cong\;} (\mathbf{A}_L^\times)^{\mathcal{N}=1} \ .
\end{equation}

We form the maximal integral unramified extension ($=$ strict Henselization) of $\mathbf{A}_L$  inside $W(\widetilde{\mathbf{E}})_L$. Its $p$-adic completion $\mathbf{A}$ still is contained in $W(\widetilde{\mathbf{E}})_L$. Note that $\mathbf{A}$ is a complete discrete valuation ring with prime element $\pi_L$ and residue field the separable algebraic closure $\mathbf{E}_L^{sep}$ of $\mathbf{E}_L$ in $\widetilde{\mathbf{E}}$.  By the functoriality properties of strict Henselizations the $q$-Frobenius $\phi_q$ preserves $\mathbf{A}$. According to \cite{KR} Lemma 1.4 the $G_L$-action on $W(\widetilde{\mathbf{E}})_L$ respects $\mathbf{A}$ and induces an isomorphism $H_L = \ker(\chi_{LT}) \xrightarrow{\cong} \Aut^{cont}(\mathbf{A}/\mathbf{A}_L)$.

Let $\Rep_{o_L}(G_L)$ denote the abelian category of finitely generated $o_L$-modules equipped with a continuous linear $G_L$-action. The following result is established in \cite{KR} Thm.\ 1.6.

\begin{theorem}\label{KR-equiv}
The functors
\begin{equation*}
    V \longmapsto D_{LT}(V) := (\mathbf{A} \otimes_{o_L} V)^{\ker(\chi_{LT})} \qquad\text{and}\qquad M \longmapsto (\mathbf{A} \otimes_{\mathbf{A}_L} M)^{\phi_q \otimes \varphi_M =1}
\end{equation*}
are exact quasi-inverse equivalences of categories between $\Rep_{o_L}(G_L)$ and $\mathfrak{M}^{et}(\mathbf{A}_L)$.
\end{theorem}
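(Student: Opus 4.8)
The plan is to follow Fontaine's classical strategy for $(\varphi,\Gamma)$-modules, transported to the Lubin--Tate coefficient ring $\mathbf{A}_L$; this is the argument of \cite{KR}. Write $V_{LT}(M) := (\mathbf{A} \otimes_{\mathbf{A}_L} M)^{\phi_q \otimes \varphi_M = 1}$ for the candidate quasi-inverse functor. Both $D_{LT}$ and $V_{LT}$ are visibly $o_L$-linear and additive, and the first step is to check that both are exact: for $D_{LT}$ this rests on the flatness of $\mathbf{A}$ over $o_L$ together with the vanishing of $H^1(H_L,\mathbf{A}\otimes_{o_L}V')$ for the $\pi_L$-torsion subquotients $V'$ (reducing mod $\pi_L$ to additive Hilbert~90, $H^1(H_L,\mathbf{E}_L^{sep}\otimes_{k_L}W')=0$, via the normal basis theorem), and for $V_{LT}$ on the surjectivity of $\phi_q\otimes\varphi_M-1$, established first for $\pi_L$-torsion $M$. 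Next I would write down the two natural comparison maps
\begin{equation*}
  \eta_V : \mathbf{A} \otimes_{\mathbf{A}_L} D_{LT}(V) \longrightarrow \mathbf{A} \otimes_{o_L} V
  \qquad\text{and}\qquad
  \varepsilon_M : \mathbf{A} \otimes_{o_L} V_{LT}(M) \longrightarrow \mathbf{A} \otimes_{\mathbf{A}_L} M ,
\end{equation*}
both given by multiplication on the first tensor factor; they are $\phi_q$- and $G_L$-equivariant.

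The heart of the matter is that $\eta_V$ and $\varepsilon_M$ are isomorphisms. Granting this, applying $(\,\cdot\,)^{\phi_q=1}$ to $\eta_V$ and using $\mathbf{A}^{\phi_q=1}=o_L$ (which follows from $\widetilde{\mathbf{E}}^{\phi_q=1}=k_L$ and $\pi_L$-adic completeness, once again via exactness of $\phi_q-1$ on $\mathbf{A}$) yields a natural isomorphism $V_{LT}(D_{LT}(V))\xrightarrow{\cong}V$; applying $(\,\cdot\,)^{H_L}$ to $\varepsilon_M$ and using $\mathbf{A}^{H_L}=\mathbf{A}_L$ (part of the cited isomorphism $H_L\xrightarrow{\cong}\Aut^{cont}(\mathbf{A}/\mathbf{A}_L)$) together with faithfully flat descent along $\mathbf{A}_L\to\mathbf{A}$ yields $D_{LT}(V_{LT}(M))\xrightarrow{\cong}M$. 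Finite generation of $D_{LT}(V)$ over $\mathbf{A}_L$ and its etaleness are then forced by $\eta_V$ being an isomorphism of $\mathbf{A}$-modules intertwining $\phi_q\otimes\varphi_{D_{LT}(V)}$ with $\phi_q\otimes\id$; the $\Gamma_L$-action on $D_{LT}(V)$ is the residual $G_L/H_L$-action, which is continuous for the weak topology because the $G_L$-action on $W(\widetilde{\mathbf{E}})_L$ is, and one checks directly that $D_{LT}$ and $V_{LT}$ also respect the operators $\psi_L$.

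It remains to see that $\eta_V$ and $\varepsilon_M$ are isomorphisms, which is where the real work lies. By d\'evissage along $0\to\pi_L^{n-1}V\to V\to V/\pi_L^{n-1}V\to 0$ (and, for the torsion-free part, a limit argument reducing to the free case) one is reduced to $V$, resp.\ $M$, killed by $\pi_L$, i.e.\ to the ``characteristic $p$'' statements over the residue rings $\mathbf{E}_L=\mathbf{A}_L/\pi_L$ and $\mathbf{E}_L^{sep}=\mathbf{A}/\pi_L$: that for a $k_L$-representation $W$ of $H_L$ the natural map $\mathbf{E}_L^{sep}\otimes_{\mathbf{E}_L}(\mathbf{E}_L^{sep}\otimes_{k_L}W)^{H_L}\to\mathbf{E}_L^{sep}\otimes_{k_L}W$ is an isomorphism (\'etale descent, i.e.\ triviality of $H^1(H_L,\GL_r(\mathbf{E}_L^{sep}))$), and dually that every etale $\phi_q$-module over the separably closed field $\mathbf{E}_L^{sep}$ is trivial --- equivalently, surjectivity of $\phi_q-1$ on $\GL_r(\mathbf{E}_L^{sep})$, a Lang-type statement exploiting that $\phi_q$ is the $q$-power Frobenius on $\mathbf{E}_L^{sep}$ and that this field is separably closed, together with $(\mathbf{E}_L^{sep})^{\phi_q=1}=k_L$. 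Lifting these two isomorphisms back up the $\pi_L$-adic filtration is then formal, the obstruction groups over $\mathbf{A}/\pi_L^n$ being controlled by the characteristic-$p$ case. I expect this reduction to characteristic $p$ --- and the simultaneous handling of the two kinds of descent (Galois descent for $\eta$, Frobenius descent for $\varepsilon$) --- to be the main obstacle; once it is in place, all compatibilities with $\Gamma_L$ and the weak topology transfer verbatim through the isomorphism \eqref{f:isoVariousA}.
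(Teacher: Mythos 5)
The paper does not actually prove this theorem but simply cites \cite{KR} Thm.\ 1.6, so there is no ``paper's own proof'' to compare against directly. Your sketch correctly reproduces the Fontaine-style strategy that \cite{KR} follows, and it agrees with the fragments of that argument which the present paper does spell out elsewhere: the d\'evissage to characteristic $p$, the vanishing $H^1(H_L,\mathbf{A}/\pi_L^n\mathbf{A})=0$, and the Hilbert~90 argument showing $\eta_V$ is an isomorphism all reappear in Prop.~\ref{exact} and Lemma~\ref{phi-version}. The one point worth making explicit is that at the bottom of the $\pi_L$-adic filtration you only have the \emph{separable} closure $\mathbf{E}_L^{sep}$, not an algebraic closure, so ``Lang-type surjectivity'' must be justified directly: the relevant equations (scalar case $X^{q-1}=a$ with $a\neq 0$, and its triangular/inductive matrix versions) are separable since $\gcd(q-1,p)=1$, so they do have solutions in $\mathbf{E}_L^{sep}$; you flag the separably-closed hypothesis, but this separability check is the actual reason it suffices. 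Similarly, the ``limit argument reducing to the free case'' for torsion-free $V$ should be pinned down to the identity $D_{LT}(V)=\varprojlim_n D_{LT}(V/\pi_L^n V)$ (observation 3) in the proof of Prop.~\ref{exact}), after which the compatible isomorphisms $\eta_{V/\pi_L^n}$ pass to the limit. With those two points filled in, your outline is sound.
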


 For the convenience of the reader we discuss a few properties, which will be used later on, of the functors in the above theorem.

First of all we recall that the tensor product $M \otimes_{o_L} N$ of two linear-topological $o$-modules $M$ and $N$ is equipped with the linear topology for which the $o_L$-submodules
\begin{equation*}
  \im(U_M \otimes_{o_L} N \rightarrow M \otimes_{o_L} N) + \im(M \otimes_{o_L} U_N \rightarrow M \otimes_{o_L} N) \subseteq M \otimes_{o_L} N \ ,
\end{equation*}
where $U_M$ and $U_N$ run over the open submodules of $M$ and $N$, respectively, form a fundamental system of open neighbourhoods of zero. One checks that, if a profinite group $H$ acts linearly and continuously on $M$ and $N$, then its diagonal action on $M \otimes_{o_L} N$ is continuous as well.

In our situation we consider $M = \mathbf{A}$ with its weak topology induced by the weak topology of $W(\widetilde{\mathbf{E}})_L$ and $N = V$ in $\Rep_{o_L}(G_L)$ equipped with its $\pi_L$-adic topology. The diagonal action of $G_L$ on $\mathbf{A} \otimes_{o_L} V$ then, indeed, is continuous. In addition, since the $\pi_L$-adic topology on $\mathbf{A}$ is finer than the weak topology any open $o_L$-submodule of $\mathbf{A}$ contains $\pi_L^j \mathbf{A}$ for a sufficiently big $j$. Hence $\{\im(U \otimes_{o_L} V \rightarrow \mathbf{A} \otimes_{o_L} V) : U \subseteq \mathbf{A}\ \text{any open $o_L$-submodule}\}$ is a fundamental system of open neighbourhoods of zero in $\mathbf{A} \otimes_{o_L} V$. This implies that the tensor product topology on $\mathbf{A} \otimes_{o_L} V$ is nothing else than its weak topology as a finitely generated $\mathbf{A}$-module.

\begin{remark}\label{induces-weak}
For any $V$ in $\Rep_{o_L}(G_L)$ the tensor product topology on $\mathbf{A} \otimes_{o_L} V$ induces the weak topology on $D_{LT}(V)$. In particular, the residual $\Gamma_L$-action on $D_{LT}(V)$ is continuous.
\end{remark}
\begin{proof}
The finitely generated $\mathbf{A}_L$-module $D_{LT}(V)$ is of the form $D_{LT}(V) \cong \oplus_{i=1}^r \mathbf{A}_L/\pi_L^{n_i} \mathbf{A}_L$ with $1 \leq n_i \leq \infty$. Using the isomorphism in the subsequent Prop.\ \ref{exact}.ii we obtain that $\mathbf{A} \otimes_{o_L} V \cong \oplus_{i=1}^r \mathbf{A}/\pi_L^{n_i} \mathbf{A}$. We see that the inclusion $D_{LT}(V) \subseteq \mathbf{A} \otimes_{o_L} V$ is isomorphic to the direct product of the inclusions $\mathbf{A}_L/\pi_L^{n_i} \mathbf{A}_L \subseteq \mathbf{A}/\pi_L^{n_i} \mathbf{A}$, which clearly are compatible with the weak topologies.
%\footnote{{\color{red} I guess the last claim needs that the $\pi_L$-adic topology on $W(\widetilde{\mathbf{E}})_L$ induces the $\pi_L$-adic topology of $\mathbf{A}$. Again this will eventually be in my course.}}
\end{proof}

\begin{proposition}\phantomsection\label{exact}
\begin{itemize}
  \item[i.] The functor $D_{LT}$ is exact.
  \item[ii.] For any $V$ in $\Rep_{o_L}(G_L)$ the natural map $\mathbf{A} \otimes_{\mathbf{A}_L} D_{LT}(V) \xrightarrow{\; \cong \;} \mathbf{A} \otimes_{o_L} V$ is  an isomorphism (compatible with the $G_L$-action and the Frobenius on both sides).
\end{itemize}
\end{proposition}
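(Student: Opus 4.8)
The plan is to deduce both statements from Galois descent along the residue extension $\mathbf{E}_L^{sep}/\mathbf{E}_L$. Reducing the isomorphism $H_L\xrightarrow{\cong}\Aut^{cont}(\mathbf{A}/\mathbf{A}_L)$ of \cite{KR} Lemma~1.4 modulo $\pi_L$, and using $\mathbf{A}/\pi_L\mathbf{A}=\mathbf{E}_L^{sep}$ together with $\mathbf{A}_L/\pi_L\mathbf{A}_L=\mathbf{E}_L$, one sees that $\mathbf{E}_L^{sep}/\mathbf{E}_L$ is a Galois extension with group $H_L$. Hilbert's Theorem~90 in the form $H^1(H_L,\GL_d(\mathbf{E}_L^{sep}))=\{1\}$, together with the normal basis theorem (so that $\mathbf{E}_L^{sep}$, and hence every finite-dimensional semilinear $H_L$-representation, has vanishing higher continuous $H_L$-cohomology), yields the standard consequence that $W\mapsto W^{H_L}$ is a dimension-preserving exact equivalence from finite-dimensional $\mathbf{E}_L^{sep}$-vector spaces with continuous semilinear $H_L$-action to finite-dimensional $\mathbf{E}_L$-vector spaces, with quasi-inverse $W_0\mapsto\mathbf{E}_L^{sep}\otimes_{\mathbf{E}_L}W_0$. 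The rest of the argument only bookkeeps the reduction of (i) and (ii) to this fact.

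For (i), I would note that $V\mapsto\mathbf{A}\otimes_{o_L}V$ is exact since $\mathbf{A}$, being $\pi_L$-torsion free, is flat over the discrete valuation ring $o_L$, and that the continuous-$H_L$-invariants functor is always left exact; so it remains to show $H^1(H_L,\mathbf{A}\otimes_{o_L}V')=0$ for all $V'$ in $\Rep_{o_L}(G_L)$. Filtering $\mathbf{A}\otimes_{o_L}V'$ by the submodules $\pi_L^j(\mathbf{A}\otimes_{o_L}V')$, each finite-length quotient admits a finite filtration whose graded pieces are of the form $\mathbf{E}_L^{sep}\otimes_{k_L}(\text{subquotient of }V')$ and hence are $H_L$-acyclic by the previous paragraph; so $H^1(H_L,-)$ of every finite quotient vanishes, the transition maps are surjective with acyclic kernels so there is no $\varprojlim^{1}$ contribution, and passing to the limit over $j$ yields the required vanishing. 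Thus $D_{LT}$ is exact.

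For (ii), when $\pi_L V=0$ one has $\mathbf{A}\otimes_{o_L}V=\mathbf{E}_L^{sep}\otimes_{k_L}V$ with its diagonal semilinear $H_L$-action, $D_{LT}(V)=(\mathbf{E}_L^{sep}\otimes_{k_L}V)^{H_L}$, and the map in question is literally the descent isomorphism $\mathbf{E}_L^{sep}\otimes_{\mathbf{E}_L}(\mathbf{E}_L^{sep}\otimes_{k_L}V)^{H_L}\xrightarrow{\cong}\mathbf{E}_L^{sep}\otimes_{k_L}V$, so (ii) holds here. For $V$ killed by $\pi_L^n$ I would induct on $n$ via $0\to\pi_L^{n-1}V\to V\to V/\pi_L^{n-1}V\to 0$: applying the exact functor $D_{LT}$ and then the exact functor $\mathbf{A}\otimes_{\mathbf{A}_L}-$ (exact because $\mathbf{A}$ is $\mathbf{A}_L$-flat) produces a short exact sequence mapping, via the natural maps, to the short exact sequence obtained by applying $\mathbf{A}\otimes_{o_L}-$ to the original one, and the five lemma completes the step since the outer vertical maps are isomorphisms by the inductive hypothesis. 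For general finitely generated $V$ one passes to the inverse limit over the $V/\pi_L^m V$: since $D_{LT}(V)$ is finitely generated over $\mathbf{A}_L$ by Theorem~\ref{KR-equiv}, both sides of (ii) are $\pi_L$-adically complete, exactness of $D_{LT}$ identifies $D_{LT}(V)/\pi_L^m D_{LT}(V)$ with $D_{LT}(V/\pi_L^m V)$, and the map in (ii) is the inverse limit of the isomorphisms already obtained at finite level. Its compatibility with the $G_L$- and $\phi_q$-actions is immediate from the construction, all of whose ingredients are transported from $\mathbf{A}$.

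I expect the only genuinely non-formal point to be the residue-field descent itself — Hilbert~90 and the normal basis theorem for $\mathbf{E}_L^{sep}/\mathbf{E}_L$, together with the identification of $H_L$ with its Galois group, and the attendant (self-bootstrapping) fact that $H_L$-invariants commute with reduction modulo powers of $\pi_L$. Flatness of $\mathbf{A}$, the five-lemma induction, and the inverse-limit argument for a $V$ with a free part are routine, and the assertion that $D_{LT}(V)$ carries its weak topology has already been settled in Remark~\ref{induces-weak}.
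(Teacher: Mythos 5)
Your proposal is correct and rests on the same two pillars as the paper's proof: flatness of $\mathbf{A}$ over $o_L$ and $\mathbf{A}_L$, and Hilbert 90 / Galois descent for $\mathbf{E}_L^{sep}/\mathbf{E}_L$ with Galois group $H_L$. Part (ii) is organized almost exactly as in the paper (base case $\pi_L V=0$ via Hilbert 90, devissage, then a $\pi_L$-adic completeness / inverse-limit argument for the free part), just with more of the bookkeeping written out.

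For part (i) you take a genuinely different route. The paper first establishes exactness of $D_{LT}$ only on finite-length objects (via $H^1(H_L,\mathbf{A}\otimes_{o_L}V)=0$ for finite-length $V$), records that $D_{LT}(V)=\varprojlim_n D_{LT}(V/\pi_L^n V)$, and then for a general short exact sequence reduces modulo $\pi_L^n$, applies $D_{LT}$, and passes to the projective limit inside the category of finitely generated modules over the noetherian pseudocompact ring $\mathbf{A}_L$ — where $\varprojlim$ is exact for purely algebraic reasons — taking care to kill the extra $\varprojlim_n D_{LT}(\ker(\pi_L^n|V_3))$ term. You instead prove the stronger statement $H^1(H_L,\mathbf{A}\otimes_{o_L}V)=0$ for \emph{all} $V\in\Rep_{o_L}(G_L)$, from which exactness of $D_{LT}$ is immediate by the long exact sequence of $H_L$-cohomology. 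That stronger vanishing requires a Milnor $\varprojlim^1$ argument in continuous group cohomology: one needs the exact sequence $0\to\varprojlim^1 H^0(H_L,M_j)\to H^1(H_L,\varprojlim_j M_j)\to\varprojlim_j H^1(H_L,M_j)\to 0$, which holds here because the $M_j=(\mathbf{A}\otimes_{o_L}V)/\pi_L^j$ are discrete and the transition maps are surjective (so the cochain complexes $C^\bullet(H_L,M_j)$ form a projective system with surjective transition maps), and then the acyclicity of the kernels forces the $H^0$-system to be Mittag-Leffler. You compress this into one clause; it is a genuine (if standard) ingredient that deserves to be stated, since the point of the paper's route is precisely to trade this continuous-cohomology $\varprojlim^1$ issue for the algebraic exactness of $\varprojlim$ over a pseudocompact ring. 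Both approaches work; yours has the advantage of delivering the sharper vanishing $H^1(H_L,\mathbf{A}\otimes_{o_L}V)=0$ for all $V$, while the paper's stays within results ($D_{LT}$ on finite-length modules, Jensen's $\varprojlim^1$ theorem for noetherian pseudocompact rings) it needs elsewhere anyway.

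One small caution: you invoke the normal basis theorem to get acyclicity of $\mathbf{E}_L^{sep}$ as an $H_L$-module; this is fine, but what you actually use for (i) is only $H^1(H_L,\mathbf{E}_L^{sep})=0$ and its consequence for finite-length modules, which is how the paper phrases it (in the proof of Lemma~\ref{phi-version}), so the full equivalence of categories in your opening paragraph is more than is needed for (i) and is really only exploited in (ii).
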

\begin{proof}
We begin with three preliminary observations.

1) As $\bA$ is $o_L$-torsion free, the functor $\bA\otimes_{o_L}-$ is exact.

2) The functor $D_{LT}$ restricted to the full subcategory of finite length objects $V$ in $\Rep_{o_L}(G_L)$ is exact. This follows immediately from 1) and the vanishing of $H^1(H_L,\bA\otimes_{o_L} V)$ in Lemma \ref{phi-version} below.

3) For any $V$ in $\Rep_{o_L}(G_L)$ we have $D_{LT}(V) = \varprojlim_n D_{LT}(V/\pi_L^n V)$. To see this we compute
\begin{align*}
  \varprojlim_n D_{LT}(V/\pi_L^n V) & = \varprojlim_n (\mathbf{A} \otimes_{o_L} V/\pi_L^n V)^{H_L} = (\varprojlim_n (\mathbf{A} \otimes_{o_L} V/\pi_L^n V))^{H_L} \\
      & = (\mathbf{A} \otimes_{o_L} \varprojlim_n V/\pi_L^n V)^{H_L} = (\mathbf{A} \otimes_{o_L} V)^{H_L} \\
      & = D_{LT}(V) \ .
\end{align*}
here the third identity becomes obvious if one notes that $V$ as an $o_L$-module is a finite direct sum of modules of the form $o_L /\pi_L^j o_L$ with $1 \leq j \leq \infty$. In this case $\varprojlim_n (\mathbf{A} \otimes_{o_L} o_L/\pi_L^{j+n} o_L) = \varprojlim_n \mathbf{A}/\pi_L^{j+n} \mathbf{A} = \mathbf{A}/\pi_L^j \mathbf{A} = \mathbf{A} \otimes_{o_L} o_L/\pi_L^j o_L$.

i. Let
\begin{equation*}
    0 \longrightarrow V_1 \longrightarrow V_2 \longrightarrow V_3 \longrightarrow 0
\end{equation*}
be an exact sequence in $\Rep_{o_L}(G_L)$. By 2) we obtain the exact sequences of projective systems of finitely generated $\mathbf{A}_L$-modules
\begin{equation*}
   D_{LT}(\ker(\pi_L^n|V_3)) \longrightarrow D_{LT}(V_1/\pi_L^n V_1) \longrightarrow D_{LT}(V_2/\pi_L^nV_2)\longrightarrow D_{LT}(V_3/\pi_L^nV_3) \longrightarrow 0 \ .
\end{equation*}
Since $\bA_L$ is a noetherian pseudocompact ring taking projective limits is exact. By 3) the resulting exact sequence is
\begin{equation*}
  \varprojlim_n D_{LT}(\ker(\pi_L^n|V_3)) \longrightarrow D_{LT}(V_1) \longrightarrow D_{LT}(V_2) \longrightarrow D_{LT}(V_3) \longrightarrow 0 \ .
\end{equation*}
But since the torsion subgroup of $V_3$ is finite and the transition maps in the projective system $(\ker(\pi_L^n|V_3))_n$ are multiplication by $\pi_L$, any composite of sufficiently many transition maps in this projective system and hence also in the projective system $(D_{LT}(\ker(\pi_L^n|V_3)))_n$ is zero. It follows that $\varprojlim_n D_{LT}(\ker(\pi_L^n|V_3)) = 0$.

ii.  The compatibility properties are obvious from the definition of the map. To show its bijectivity we may assume, by devissage and 3), that $\pi_L V = 0$. In this case our assertion reduces to the bijectivity of the natural map $\mathbf{E}_L^{sep} \otimes_{\mathbf{E}_L} (\mathbf{E}_L^{sep} \otimes_k V)^{\mathrm{Gal}(\mathbf{E}_L^{sep}/\mathbf{E}_L)} \longrightarrow \mathbf{E}_L^{sep} \otimes_k V$. But this is a well known consequence of the vanishing of the Galois cohomology group $H^1(\mathrm{Gal}(\mathbf{E}_L^{sep}/\mathbf{E}_L),\GL_d(\mathbf{E}_L^{sep}))$ where $d := \mathrm{dim}_k V$.
\end{proof}

\begin{lemma}\label{innerhom}
The above equivalence of categories $D_{LT}$ is compatible with the formation of inner Hom-objects, i.e., there are canonical isomorphisms
\begin{equation*}
  \Hom_{\bA_L}(D_{LT}(V_1),D_{LT}(V_2)) = D_{LT}(\Hom_{o_L}(V_1,V_2))
\end{equation*}
for every $V_1,V_2$ in $\Rep_{o_L}(G_L)$. We also have
\begin{equation*}
   \psi_{\Hom_{\bA_L}(D_{LT}(V_1),D_{LT}(V_2))} = \psi_{D_{LT}(\Hom_{o_L}(V_1,V_2))} \ .
\end{equation*}
\end{lemma}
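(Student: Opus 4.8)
The plan is to build the isomorphism from two applications of flat base change for Hom-modules together with Prop.\ \ref{exact}.ii. Since $\mathbf{A}$ is a complete discrete valuation ring, it is in particular a domain, hence flat over the discrete valuation ring $o_L$ and over the discrete valuation ring $\mathbf{A}_L$; moreover $V_1$ is finitely presented over the noetherian ring $o_L$ and $D_{LT}(V_1)$ is finitely presented over the noetherian ring $\mathbf{A}_L$. Hence the canonical base-change homomorphisms
\begin{equation*}
  \mathbf{A} \otimes_{o_L} \Hom_{o_L}(V_1,V_2) \xrightarrow{\;\cong\;} \Hom_{\mathbf{A}}(\mathbf{A}\otimes_{o_L}V_1,\mathbf{A}\otimes_{o_L}V_2)
\end{equation*}
and
\begin{equation*}
  \mathbf{A} \otimes_{\mathbf{A}_L}\Hom_{\mathbf{A}_L}(D_{LT}(V_1),D_{LT}(V_2)) \xrightarrow{\;\cong\;} \Hom_{\mathbf{A}}(\mathbf{A}\otimes_{\mathbf{A}_L}D_{LT}(V_1),\mathbf{A}\otimes_{\mathbf{A}_L}D_{LT}(V_2))
\end{equation*}
are isomorphisms. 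Both are $G_L$-equivariant (all actions being diagonal) and intertwine the Frobenii: on a tensor product $\mathbf{A}\otimes_?(-)$ the Frobenius is $\phi_q$ on the left factor tensored with the given Frobenius on the right, which under the base-change map is carried exactly to the conjugation formula $\varphi_{\Hom}(\alpha)=\varphi^{lin}\circ(\id\otimes\alpha)\circ(\varphi^{lin})^{-1}$ defining the inner-Hom Frobenius, cf.\ \eqref{f:inner}.

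Next I would glue the two displays using Prop.\ \ref{exact}.ii, which provides $G_L$- and $\phi_q$-equivariant identifications $\mathbf{A}\otimes_{o_L}V_i=\mathbf{A}\otimes_{\mathbf{A}_L}D_{LT}(V_i)$. This yields a $G_L$- and Frobenius-equivariant isomorphism
\begin{equation*}
  \mathbf{A}\otimes_{o_L}\Hom_{o_L}(V_1,V_2)\;\xrightarrow{\;\cong\;}\;\mathbf{A}\otimes_{\mathbf{A}_L}N,\qquad N:=\Hom_{\mathbf{A}_L}(D_{LT}(V_1),D_{LT}(V_2)) \ .
\end{equation*}
Now take $H_L=\ker(\chi_{LT})$-invariants. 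On the left this gives, by definition, $D_{LT}(\Hom_{o_L}(V_1,V_2))$. On the right, recall that $N$ is an etale $(\phi_q,\Gamma_L)$-module by the discussion preceding Remark \ref{pointwise-convergence}; since by Theorem \ref{KR-equiv} the functor $D_{LT}$ is essentially surjective, there is $W$ in $\Rep_{o_L}(G_L)$ with $N\cong D_{LT}(W)$, whence by Prop.\ \ref{exact}.ii the natural map $N\to(\mathbf{A}\otimes_{\mathbf{A}_L}N)^{H_L}$ is an isomorphism. Composing, we obtain the asserted canonical isomorphism $D_{LT}(\Hom_{o_L}(V_1,V_2))\cong\Hom_{\mathbf{A}_L}(D_{LT}(V_1),D_{LT}(V_2))$, which by the equivariance recorded above is an isomorphism of etale $(\phi_q,\Gamma_L)$-modules.

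The equality of $\psi$-operators is then formal: on any etale $(\phi_q,\Gamma_L)$-module the operator $\psi$ is, by definition, $(\varphi^{lin})^{-1}$ followed by $\psi_L\otimes\id$, so it is uniquely determined by the $\mathbf{A}_L$-module structure together with the (etale) Frobenius; hence any isomorphism of etale $(\phi_q,\Gamma_L)$-modules automatically commutes with $\psi$. Applied to the isomorphism just constructed, this gives $\psi_{D_{LT}(\Hom_{o_L}(V_1,V_2))}=\psi_{\Hom_{\mathbf{A}_L}(D_{LT}(V_1),D_{LT}(V_2))}$.

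As for where the real work lies: the base-change isomorphisms and Prop.\ \ref{exact}.ii are standard, so the only delicate point is the bookkeeping that both base-change maps and the gluing isomorphism send the diagonal $G_L$-action and the twisted Frobenius on the source precisely to the conjugation-type formulas $\gamma(\alpha)=\gamma\circ\alpha\circ\gamma^{-1}$ and $\varphi_{\Hom}(\alpha)=\varphi^{lin}_N\circ(\id\otimes\alpha)\circ(\varphi^{lin}_M)^{-1}$ defining the inner-Hom structure on $\mathfrak{M}^{et}(\mathbf{A}_L)$; once that is in place, passing to $H_L$-invariants is harmless thanks to the identity $(\mathbf{A}\otimes_{\mathbf{A}_L}N)^{H_L}=N$ for etale $N$ obtained above from essential surjectivity of $D_{LT}$.
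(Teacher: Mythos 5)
Your proposal is correct and runs parallel to the paper's argument for most of its length, but it departs at the last step. The paper applies Prop.\ \ref{exact}.ii only to $V_1$, then uses the extension-restriction adjunction $\Hom_{\mathbf{A}}(\mathbf{A}\otimes_{\mathbf{A}_L}D_{LT}(V_1),\,-)\cong\Hom_{\mathbf{A}_L}(D_{LT}(V_1),\,-)$ to land in an $\mathbf{A}_L$-linear Hom with a mixed second argument, and finally commutes the $H_L$-invariants past the Hom, using that $H_L$ acts trivially on $D_{LT}(V_1)$, to recover $D_{LT}(V_2)$. You instead apply Prop.\ \ref{exact}.ii symmetrically to both $V_1$ and $V_2$, perform a second flat base change along $\mathbf{A}_L\to\mathbf{A}$, and then conclude $(\mathbf{A}\otimes_{\mathbf{A}_L}N)^{H_L}=N$ for $N=\Hom_{\mathbf{A}_L}(D_{LT}(V_1),D_{LT}(V_2))$ by appealing to essential surjectivity of $D_{LT}$ (Theorem \ref{KR-equiv}). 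Both routes are valid; yours is more symmetric and perhaps more transparent about why the two inner Homs match, at the cost of invoking the full strength of the equivalence where the paper gets by with the elementary adjunction and the triviality of the $H_L$-action on the first argument. Your reduction of the $\psi$-compatibility to the fact that $\psi$ on an etale module is determined by the $\mathbf{A}_L$-structure and the (etale) Frobenius is exactly what the paper means by calling it a formal consequence of $\varphi$-compatibility. One mild caveat: you flag the $\varphi$- and $\Gamma_L$-equivariance of the base-change maps as "bookkeeping," which is fair, but the paper does carry out that verification explicitly (a computation on decomposable tensors followed by an appeal to etaleness); a full write-up should do the same.
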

\begin{proof}
We have
\begin{align*}
   D_{LT}(\Hom_{o_L}(V_1,V_2)) & = (\bA\otimes_{o_L}\Hom_{o_L}(V_1,V_2))^{H_L}   \\
   & = \Hom_{\bA}(\bA \otimes_{o_L} V_1, \bA \otimes_{o_L} V_2)^{H_L} \\
   & = \Hom_{\bA}(\bA\otimes_{\bA_L}D_{LT}(V_1), \bA \otimes_{o_L} V_2)^{H_L}   \\
   & = \Hom_{\bA_L}( D_{LT}(V_1),\bA \otimes_{o_L} V_2)^{H_L} \\
   & = \Hom_{\bA_L}( D_{LT}(V_1),(\bA \otimes_{\bA_L} V_2)^{H_L}) \\
   & = \Hom_{\bA_L}( D_{LT}(V_1),D_{LT}(V_2)) \ .
\end{align*}
Here the second identity is clear for $V_1$ being free, the general case follows by choosing a finite presentation of $V_1$ (as $o_L$-module neglecting the group action). The third identity uses Prop.\ \ref{exact}.ii, while the fourth one comes from the adjointness of base extension and restriction. The fifth one uses the fact that $H_L$ acts trivially on $D_{LT}(V_1)$.

One easily checks that the above sequence of identities is compatible with the $\Gamma_L$-actions (which are induced by the diagonal $G_L$-action on $\mathbf{A} \otimes_{o_L} -$).  The compatibility with Frobenius can be seen as follows.
First of all we abbreviate $\varphi_{D_{LT}(\Hom)} := \varphi_{D_{LT}(\Hom_{o_L}(V_1,V_2))}$ and $\varphi_{\Hom} := \varphi_{\Hom_{\bA_L}( D_{LT}(V_1),D_{LT}(V_2))}$. An element $\beta = \sum_i a_i \otimes \alpha_i \in (\bA \otimes_{o_L}\Hom_{o_L}(V_1,V_2))^{H_L}$ becomes, under the above identifications, the map $\iota_\beta : D_{LT}(V_1) \to (\bA \otimes_{o_L} V_2)^{H_L} $ which sends $\sum_j c_j\otimes v_j$ to $\sum_{i,j} a_ic_j\otimes \alpha_i(v_j )$. Assuming that $c_j = \phi_q(c_j')$ we compute
\begin{align*}
    \iota_{\varphi_{D_{LT}(\Hom)}(\beta)}(\varphi_{D_{ LT}(V_1)}(\sum_j c_j' \otimes v_j) & = \iota_{\sum_i \phi_q(a_i) \otimes \alpha_i}(\sum_j \phi_q(c_j') \otimes v_j) \\
    & = \sum_{i,j} \phi_q(a_i)\phi_q(c_j') \otimes \alpha_i(v_j) \\
    & = \varphi_{D_{ LT}(V_2)}(\sum_{i,j} a_ic_j' \otimes \alpha_i(v_j)) \\
    & = \varphi_{D_{ LT}(V_2)}(\iota_\beta(\sum_j c_j' \otimes v_j))  \\
    & = \varphi_{\Hom}(\iota_\beta)(\varphi_{D_{ LT}(V_1)}(\sum_j c_j' \otimes v_j)) \ ,
\end{align*}
where the last identity comes from \ref{f:inner}. Using the etaleness of $D_{LT}(V_1)$ we deduce that $\iota_{\varphi_{D_{LT}(\Hom)}(\beta)} = \varphi_{\Hom}(\iota_\beta)$ for any $\beta \in D_{LT}(\Hom_{o_L}(V_1,V_2))$. The additional formula for the $\psi$-operators is a formal consequence of the compatibility of the $\varphi$-operators.
\end{proof}

\begin{remark}\label{twist}
For any $V$ in $\Rep_{o_L}(G_L)$ and any continuous character $\chi : \Gamma_L \longrightarrow o_L^\times$ with representation module $W_\chi = o_Lt_\chi$ the twisted $G_L$-representation $V(\chi)$ is defined to be $V(\chi) =  V \otimes W_\chi$ as $o_L$-module and $\sigma_{| V(\chi)}(v \otimes w) = \sigma_{|V}(v) \otimes \sigma_{|W_\chi}(w) = \chi(\sigma) \cdot \sigma_{| V}(v)\otimes w$. One easily checks that $D_{LT}(V(\chi)) = D_{LT}(V)(\chi)$. If $V=o_L/\pi_L^no_L$, $1\leq n\leq \infty$ is the trivial representation, we usually identify $V(\chi)$ and $W_\chi$. Recall that for the character $\chi_{LT}$ we take $W_{\chi_{LT}}=T=o_L t_0$ and $W_{\chi_{LT}^{-1}}=T^*=o_Lt_0^*$ as representation module, where $T^*$ denotes the $o_L$-dual with dual basis $t_0^*$ of $t_0$.
\end{remark}

Defining $\Omega^1 := \Omega^1_{\bA_L} = \bA_L d\omega_{LT}$ any choice of $t_0$ defines an isomorphism $\Omega^1_{\mathscr{A}_L} \cong \Omega^1_{\bA_L}$ by sending $f(Z)dZ$ to $f(\omega_{LT})d \omega_{LT}$; moreover $\Omega^1_{\mathscr{A}_L}$ and $\Omega^1_{\bA_L}$ correspond to each other under the equivalence of categories \eqref{f:equCatvariable}. Due to the isomorphism \eqref{f:isoVariousA} we obtain a residue pairing
\begin{align}\label{f:residue-pairingbA}
  \bA_L \times \Omega^1_{\bA_L} & \longrightarrow o_L \\
                     (f(\omega_{LT}),g(\omega_{LT})d\omega_{LT}) & \longmapsto \mathrm{Res}(f(Z)g(Z)dZ)    \nonumber
\end{align}
which satisfies
\begin{equation}\label{f:adjunctionbA}
  \mathrm{Res}(f\psi_{\Omega^1}(\omega)) = \mathrm{Res}(\phi_q(f)\omega) \qquad\text{for any $f \in \bA_L$ and $\omega \in \Omega^1_{\bA_L}$}
\end{equation}
(by Lemma  \ref{phi-psi-dual}) and which is independent of the choice of $t_0$, i.e., $\omega_{LT}$, by Remark \ref{Res-variable}.ii  and Lemma \ref{iota}.a. In particular, we have a well defined map $\mathrm{Res} : \Omega^1_{\bA_L} \to o_L$.
In this context Remark \ref{psi-invariance}  together with Remark \ref{psi}.vii tell us that
\begin{equation}\label{f:psi-invariancebA}
   \psi_{\Omega^1}(\frac{d\hat{u}}{\hat{u}}) = \frac{d\hat{u}}{\hat{u}} \
\end{equation}
holds true for every $\hat{u} \in (\mathbf{A}_L^\times)^{\mathcal{N}=1}$.

Moreover, Lemma \ref{top-adjunction} translates into the (existence of the) topological isomorphism
\begin{align}\label{f:Lemma3.6bA-Version}
  \Hom_{\bA_L}(M,\Omega_{\bA_L}^1/\pi_L^n \Omega_{\bA_L}^1) & \xrightarrow{\;\cong\;} \Hom_{o_L}^c(M,L/o_L) \\
                                                   \nonumber     F & \longmapsto \pi_L^{-n} \mathrm{Res}(F(.)) \bmod o_L \ ,
\end{align}
for any $M$ annihilated by $\pi_L^n$.   Lemma \ref{Omega-as-twist} implies the  isomorphism
\begin{align*}
    \bA_L(\chi_{LT}) = \bA_L \otimes_{o_L} T & \xrightarrow{\;\cong\;} \Omega_{\bA_L}^1   \\
    f(\iota_{LT}(t_0)) \otimes t_0 & \longmapsto f(\iota_{LT}(t_0)) g_{LT}(\iota_{LT}(t_0)) d\iota_{LT}(t_0) \ .
\end{align*}
Using Lemma \ref{iota}.a as well as the second identity in \eqref{f:dlog} one verifies that this isomorphism (unlike its origin in Lemma \ref{Omega-as-twist}) does not depend on the choice of $t_0$.  We use it in order to transform \eqref{f:Lemma3.6bA-Version} into the
topological isomorphism
\begin{align}\label{f:dualitybA-Version}
  \Hom_{\bA_L}(M,\bA_L/\pi_L^n\bA_L(\chi_{LT})  ) & \xrightarrow{\;\cong\;} \Hom_{o_L}^c(M,L/o_L) .
\end{align}
Finally we obtain the following analogues of Prop.\ \ref{pairing]} and Remark  \ref{pairing-rangle}.

\begin{remark}\label{pairing-ranglebAVersion}
For any etale $(\varphi_L,\Gamma_L)$-module $M$ such that $\pi_L^n M = 0$ one has the $\Gamma_L$-invariant (jointly) continuous pairing
\begin{align*}
[\;,\;\rangle=[\;,\;\rangle_M: M \times \Hom_{\bA_L}(M,\bA_L/\pi_L^n \bA_L(\chi_{LT}) ) & \longrightarrow  L/o_L \\
(m,F) & \longmapsto \pi_L^{-n} \mathrm{Res} (F(m)d\log_{LT}(\omega_{LT})) \bmod o_L
\end{align*}
with adjointness properties analogous to the ones in Prop.\ \ref{pairing]}. Again, it is independent of the choice of $t_0$.
\end{remark}

\section{Iwasawa cohomology}

For any $V$ in $\Rep_{o_L}(G_L)$ we also write $H^i(K,V )= H^i(G_K,V)$, for any algebraic extension $K$ of $L$, and we often abbreviate $\varphi := \varphi_{D_{LT}(V)}$.

\begin{remark}\label{phi-1}
For any $V$ in $\Rep_{o_L}(G_L)$ the sequence
\begin{equation}\label{f:phi-1}
  0 \longrightarrow V \xrightarrow{\; \subseteq \;} \mathbf{A} \otimes_{o_L} V \xrightarrow{\phi_q \otimes \id - 1} \mathbf{A} \otimes_{o_L} V \longrightarrow 0
\end{equation}
is exact.
\end{remark}
\begin{proof}
We clearly have the exact sequence $0 \rightarrow k_L \rightarrow \mathbf{E}_L^{sep} \xrightarrow{x \mapsto x^q - x} \mathbf{E}_L^{sep} \rightarrow 0$. By devissage we deduce the exact sequence $0 \rightarrow o_L/\pi_L^n o_L \rightarrow \mathbf{A}/\pi_L^n \mathbf{A} \xrightarrow{\phi_q - 1} \mathbf{A}/\pi_L^n \mathbf{A} \rightarrow 0$ for any $n \geq 1$. Since the projective system $\{o_L/\pi_L^n o_L\}_n$ has surjective transition maps, passing to the projective limit is exact and gives the exact sequence
\begin{equation}
\label{f:Artin-Schreier-sequence}
0 \rightarrow o_L \rightarrow \mathbf{A} \xrightarrow{\phi_q - 1} \mathbf{A} \rightarrow 0.\end{equation} Finally, $\mathbf{A}$ is $o_L$-torsion free and hence flat over $o_L$. It follows that tensoring by $V$ is exact.
\end{proof}

Since $\mathbf{A}$ is the $\pi_L$-adic completion of an unramified extension of $\mathbf{A}_L$ with Galois group $H_L$ the $H_L$-action on $\mathbf{A}/\pi_L^n \mathbf{A}$, for any $n \geq 1$, and hence on $\mathbf{A} \otimes_{o_L} V$, whenever $\pi_L^n V = 0$ for some $n \geq 1$, is continuous for the discrete topology. We therefore may, in the latter case, pass from \eqref{f:phi-1} to the associated long exact Galois cohomology sequence with respect to $H_L$.

\begin{lemma}\label{phi-version}
Suppose that $\pi_L^n V = 0$ for some $n \geq 1$; we then have:
\begin{itemize}
  \item[i.] $H^i(H_L, \mathbf{A} \otimes_{o_L} V) = 0$ for any $i \geq 1$;
  \item[ii.] the long exact cohomology sequence for \eqref{f:phi-1} gives rise to an exact sequence
\begin{equation}\label{f:partial-phi}
  0 \longrightarrow H^0(L_\infty,V) \longrightarrow D_{LT}(V) \xrightarrow{\;\varphi - 1\;} D_{LT}(V) \xrightarrow{\;\partial_\varphi\;} H^1(L_\infty,V) \longrightarrow 0 \ .
\end{equation}
\end{itemize}
\end{lemma}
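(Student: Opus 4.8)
The plan is to obtain part~ii as the long exact $H^\ast(H_L,-)$-sequence of the short exact sequence \eqref{f:phi-1} of Remark~\ref{phi-1}, so the substantive point is part~i, the $H_L$-acyclicity of $\mathbf{A}\otimes_{o_L}V$ in positive degrees. For part~i I would first reduce to the case $\pi_L V = 0$: filtering $V$ by the $G_L$-submodules $\pi_L^j V$ produces short exact sequences in $\Rep_{o_L}(G_L)$ whose graded pieces are killed by $\pi_L$, and since $\mathbf{A}$ is $o_L$-torsion free the functor $\mathbf{A}\otimes_{o_L}-$ is exact, so after applying it the associated long exact cohomology sequences reduce the claim, by induction on the length of $V$, to the case $\pi_L V=0$. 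There $\mathbf{A}\otimes_{o_L}V = (\mathbf{A}/\pi_L\mathbf{A})\otimes_{k_L}V = \mathbf{E}_L^{sep}\otimes_{k_L}V$, with $H_L\cong\Gal(\mathbf{E}_L^{sep}/\mathbf{E}_L)$ acting diagonally. Since $V$ is finite and the $G_L$-action continuous, choose an open normal subgroup $H'\trianglelefteq H_L$ acting trivially on $V$ and set $F:=(\mathbf{E}_L^{sep})^{H'}$, a finite Galois extension of $\mathbf{E}_L$ having $\mathbf{E}_L^{sep}$ as separable closure and $\Gal(F/\mathbf{E}_L)=H_L/H'$.

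Now I would run the Hochschild--Serre spectral sequence $H^p(H_L/H',H^q(H',\mathbf{E}_L^{sep}\otimes_{k_L}V))\Rightarrow H^{p+q}(H_L,\mathbf{E}_L^{sep}\otimes_{k_L}V)$. Because $H'$ acts trivially on $V$, the coefficient module is, as an $H'$-module, a finite direct sum of copies of $\mathbf{E}_L^{sep}$; hence $H^q(H',\mathbf{E}_L^{sep}\otimes_{k_L}V)$ vanishes for $q\ge1$ --- each finite layer $F'/F$ of $\mathbf{E}_L^{sep}/F$ makes $F'$ a free module over $k_L[\Gal(F'/F)]$ by the normal basis theorem, so the Galois cohomology of $\mathbf{E}_L^{sep}$ over $F$ is trivial in positive degrees --- and equals $F\otimes_{k_L}V$ for $q=0$. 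The spectral sequence therefore collapses to $H^i(H_L,\mathbf{A}\otimes_{o_L}V)\cong H^i(\Gal(F/\mathbf{E}_L),F\otimes_{k_L}V)$, which vanishes for $i\ge1$: by the normal basis theorem $F$ is free over $k_L[\Gal(F/\mathbf{E}_L)]$, so $F\otimes_{k_L}V$ with its diagonal action is again free over $k_L[\Gal(F/\mathbf{E}_L)]$ (untwisting the action on the $V$-factor), in particular cohomologically trivial. This proves part~i.

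For part~ii, \eqref{f:phi-1} is a short exact sequence of discrete $H_L$-modules, and its long exact $H^\ast(H_L,-)$-sequence begins
\[
0 \to V^{H_L} \to (\mathbf{A}\otimes_{o_L}V)^{H_L} \xrightarrow{\ \phi_q\otimes\id-1\ } (\mathbf{A}\otimes_{o_L}V)^{H_L} \to H^1(H_L,V) \to H^1(H_L,\mathbf{A}\otimes_{o_L}V),
\]
whose last term vanishes by part~i. Since $H_L=\Gal(\overline{L}/L_\infty)$ one identifies $H^i(H_L,V)=H^i(L_\infty,V)$ and $(\mathbf{A}\otimes_{o_L}V)^{H_L}=D_{LT}(V)$; moreover $\phi_q\otimes\id$ commutes with the $G_L$-action, hence preserves $D_{LT}(V)$, and its restriction there is by construction $\varphi=\varphi_{D_{LT}(V)}$. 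This gives precisely \eqref{f:partial-phi}, with $\partial_\varphi$ the connecting homomorphism. The one genuine obstacle is the acyclicity in part~i: the nontrivial $H_L$-action on $V$ rules out a direct appeal to the triviality of the Galois cohomology of $\mathbf{E}_L^{sep}$, so one must pass through Hochschild--Serre and the normal basis theorem to absorb the twist; everything else is formal manipulation of long exact sequences.
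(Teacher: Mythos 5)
Your proof is correct, and it takes a genuinely different route from the paper's. The paper invokes Prop.~\ref{exact}.ii, the descent isomorphism $\mathbf{A}\otimes_{o_L}V \cong \mathbf{A}\otimes_{\mathbf{A}_L} D_{LT}(V)$ (itself a consequence of Hilbert 90 plus devissage), which trades the diagonal $H_L$-action on $\mathbf{A}\otimes_{o_L}V$ for an action concentrated on the $\mathbf{A}$-factor, since $H_L$ acts trivially on $D_{LT}(V)$. As $D_{LT}(V)$ is a finite direct sum of cyclic modules $\mathbf{A}_L/\pi_L^j\mathbf{A}_L$ by the elementary divisor theorem, part~i then reduces, again by devissage, to the standard vanishing $H^i(H_L,\mathbf{E}_L^{sep})=0$. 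You instead keep the diagonal action throughout: you reduce to $\pi_L V=0$ by devissage on $V$ itself, pass to a trivializing open normal subgroup $H'\subseteq H_L$, and untwist via Hochschild--Serre together with the normal basis theorem, which makes $F\otimes_{k_L}V$ cohomologically trivial as a module over the finite quotient. Both arguments are at bottom the same Hilbert-90 mechanism, but yours has the structural advantage of not depending on Prop.~\ref{exact}.ii, which in the paper is stated earlier but whose part~i is proved using the present lemma (only part~ii is logically prior). Your route therefore makes the non-circularity transparent, at the price of a slightly longer argument; the paper's is shorter because it reuses the descent isomorphism as a black box. Part~ii you derive exactly as the paper does, as the long exact $H^\ast(H_L,-)$-sequence of \eqref{f:phi-1} truncated by part~i.
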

\begin{proof}
i. Since $\mathbf{A} \otimes_{o_L} V \cong \mathbf{A} \otimes_{\mathbf{A}_L} D_{LT}(V)$ by Prop.\ \ref{exact}.ii, it suffices to show the vanishing of $H^i(H_L, \mathbf{A}/\pi_L^n \mathbf{A}) = 0$ for $i \geq 1$. This reduces, by devissage, to the case $n = 1$, i.e., to $H^i(H_L, \mathbf{E}_L^{sep}) = 0$, which is a standard fact of Galois cohomology. ii. follows immediately from i.
\end{proof}

In order to derive from this a computation of Iwasawa cohomology in terms of $(\varphi_L,\Gamma_L)$-modules we first have to recall Pontrjagin duality and local Tate duality in our setting. The trace pairing
\begin{align*}
L \times L & \longrightarrow  \mathbb{Q}_p  \\
(x,y) & \longmapsto \mathrm{Tr}_{L/\mathbb{Q}_p}(xy)
\end{align*}
gives rise to the inverse different
\begin{align*}
  \mathfrak{D}_{L/\mathbb{Q}_p}^{-1} = \{ x \in L : \mathrm{Tr}_{L/\mathbb{Q}_p}(x o_L) \subseteq \mathbb{Z}_p \} & \xrightarrow{\;\cong\;} \Hom_{\mathbb{Z}_p}(o_L,\mathbb{Z}_p) \\
  y & \longmapsto [x \mapsto \mathrm{Tr}_{L/\mathbb{Q}_p}(yx)] \ .
\end{align*}
Let $\mathfrak{D}_{L/\mathbb{Q}_p} = \pi_L^s o_L$. we fix once and for all the $o_L$-linear  isomorphism
\begin{align}\label{f:different}
o_L & \xrightarrow{\;\cong\;} \Hom_{\mathbb{Z}_p}(o_L,\mathbb{Z}_p) \\
y & \longmapsto [x \mapsto \mathrm{Tr}_{L/\mathbb{Q}_p}(\pi_L^{-s}xy)] \ . \nonumber
\end{align}
By tensoring with $\mathbb{Q}_p/\mathbb{Z}_p$ it induces the isomorphism of torsion $o_L$-modules
\begin{equation*}
  \Xi: L/o_L\cong \Hom_{\mathbb{Z}_p}(o_L,\mathbb{Z}_p)\otimes_{\mathbb{Z}_p}\mathbb{Q}_p/\mathbb{Z}_p\cong \Hom_{\mathbb{Z}_p}(o_L,\mathbb{Q}_p/\mathbb{Z}_p) \ .
\end{equation*}

Now let $M$ be any topological $o_L$-module. Since $\Hom_{\mathbb{Z}_p}(o_L,-)$ is right adjoint to scalar restriction from $o_L$ to $\mathbb{Z}_p$ and by using $\Xi^{-1}$ in the second step, we have a natural isomorphism
\begin{equation}\label{f:LPontrjagin}
  \Hom_{\mathbb{Z}_p}(M,\mathbb{Q}_p/\mathbb{Z}_p) \cong \Hom_{o_L}(M,\Hom_{\mathbb{Z}_p}(o_L,\mathbb{Q}_p/\mathbb{Z}_p)) \cong \Hom_{o_L}(M,L/o_L) \ .
\end{equation}

\begin{lemma}\label{LPontrjagin}
The isomorphism \eqref{f:LPontrjagin} restricts to an isomorphism
\begin{equation*}
\Hom_{\mathbb{Z}_p}^c(M,\mathbb{Q}_p/\mathbb{Z}_p) \cong \Hom_{o_L}^c(M,L/o_L)
\end{equation*}
of topological groups between the subgroups of continuous homomorphisms endowed with the compact-open topology.
\end{lemma}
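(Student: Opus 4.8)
The plan is to treat the two isomorphisms composing \eqref{f:LPontrjagin} separately. Write $P := \Hom_{\Zp}(o_L,\Qp/\Zp)$, so that \eqref{f:LPontrjagin} is the composite of the adjunction isomorphism
\[
\alpha : \Hom_{\Zp}(M,\Qp/\Zp) \xrightarrow{\;\cong\;} \Hom_{o_L}(M,P), \qquad \ell \longmapsto \bigl[m \mapsto (a \mapsto \ell(am))\bigr],
\]
with the isomorphism induced by post-composition with $\Xi^{-1}$. Before anything else I would record three elementary facts: a choice of $\Zp$-basis of $o_L$ identifies $P$ with $(\Qp/\Zp)^{\oplus d}$, so $P$ is discrete and every $\Zp$-linear map $o_L \to \Qp/\Zp$ is continuous; $L/o_L$ is discrete; and $\Xi$ is an isomorphism of discrete groups, in particular a homeomorphism.

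For $\alpha$, I would first check that it preserves continuity in both directions. For $\ell \in \Hom_{\Zp}(M,\Qp/\Zp)$ one has $\ker\alpha(\ell) = \{m \in M : \ell(am) = 0 \text{ for all } a \in o_L\}$. Fixing $\Zp$-module generators $a_1,\dots,a_d$ of $o_L$ and using $\Zp$-linearity of $\ell$, this equals $\bigcap_{i=1}^d (a_i\cdot)^{-1}(\ker\ell)$, which is open as soon as $\ker\ell$ is; hence $\alpha(\ell)$ is continuous whenever $\ell$ is. Conversely $\ker\alpha(\ell)$ is an open subgroup of $M$ contained in $\ker\ell$ (take $a=1$), so $\ker\ell$ is open whenever $\ker\alpha(\ell)$ is; hence $\ell$ is continuous whenever $\alpha(\ell)$ is. Thus $\alpha$ restricts to a group isomorphism between the respective subgroups of continuous homomorphisms.

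Next, since $\Qp/\Zp$ and $P$ are discrete, a neighbourhood basis of $0$ in each of the two Hom-groups is given by the open subgroups $W(K) := \{f : K \subseteq \ker f\}$ with $K$ running over the compact subsets of $M$. The kernel computation above gives $\alpha^{-1}(W(K)) = W(o_L\cdot K)$, where $o_L\cdot K$ is the image of $o_L \times K$ under scalar multiplication, hence compact; this shows $\alpha$ is continuous. Moreover $K \subseteq o_L\cdot K$, so the subgroups $W(o_L\cdot K)$ are already cofinal among all $W(K)$ on the source side, and $\alpha$ maps each of them isomorphically onto the corresponding $W(K)$ on the target side; hence $\alpha$ is also open, i.e. a topological isomorphism $\Hom^c_{\Zp}(M,\Qp/\Zp) \xrightarrow{\;\cong\;} \Hom^c_{o_L}(M,P)$. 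Finally, post-composition with the homeomorphism $\Xi^{-1} : P \xrightarrow{\;\cong\;} L/o_L$ is visibly compatible with continuity and carries the neighbourhood basis $W(K)$ on the $P$-side to the one on the $L/o_L$-side, so it induces a topological isomorphism $\Hom^c_{o_L}(M,P) \xrightarrow{\;\cong\;} \Hom^c_{o_L}(M,L/o_L)$. Composing the two yields the assertion.

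The only genuinely non-formal bookkeeping is the identification of neighbourhood bases under $\alpha$, which rests on the compactness of $o_L$ (so that $o_L\cdot K$ stays compact); everything else reduces to the observation that the target modules $\Qp/\Zp$, $P$ and $L/o_L$ are all discrete, which turns ``continuous'' into ``kernel is open'' and a basic compact-open neighbourhood of $0$ into ``vanishing on a prescribed compact set''.
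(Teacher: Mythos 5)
Your proof is correct. It follows the same decomposition as the paper's (the adjunction isomorphism $\alpha$, then post-composition with $\Xi^{-1}$), and in both cases the real content is the homeomorphism property of $\alpha$, which hinges on compactness of $o_L$. The route by which that is verified, however, is genuinely different in flavor. You exploit discreteness of the targets $\mathbb{Q}_p/\mathbb{Z}_p$, $P$ and $L/o_L$ from the outset: this reduces continuity to openness of kernels and lets you replace general sub-basic open sets $C(A,V)$ by the neighbourhood basis $W(K)=C(K,\{0\})$ of $0$, after which $\alpha$ is matched up by the clean identity $\alpha^{-1}(W_P(K))=W_{\mathbb{Q}_p/\mathbb{Z}_p}(o_L\cdot K)$, with $o_L\cdot K$ compact as the continuous image of $o_L\times K$. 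The paper instead works with arbitrary $C(A,V)$ and constructs a ``dual'' compact set $B=\{a\in o_L: aA\subseteq A\}$ together with $\widetilde V=\{\lambda: \lambda(B)\subseteq V\}$, then shows $\ell(A)\subseteq V \Leftrightarrow F_\ell(A)\subseteq \widetilde V$, invoking \cite{B-TG} X.28 Thm.~3 for the intermediate continuity. Your version is the more elementary one in this specific situation and makes the role of discreteness explicit; the paper's version is phrased so that the key step (shrinking a compact set inside $o_L$) would generalize more readily to non-discrete targets, but in the present setting both are equally rigorous. One small imprecision in your write-up: the discreteness of $P$ is better justified by noting that $o_L$ is compact, so $C(o_L,\{0\})=\{0\}$ is open in the compact-open topology, rather than by the abstract identification $P\cong(\mathbb{Q}_p/\mathbb{Z}_p)^{\oplus d}$ alone, which by itself only speaks to the abstract group structure; your later use of discreteness is consistent with the correct reading, so this is cosmetic.
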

\begin{proof}
Coming from an isomorphism between the targets the second isomorphism in \eqref{f:LPontrjagin} obviously restricts to a topological isomorphism
\begin{equation*}
  \Hom_{o_L}^c(M,\Hom_{\mathbb{Z}_p}(o_L,\mathbb{Q}_p/\mathbb{Z}_p)) \cong \Hom_{o_L}^c(M,L/o_L) \ .
\end{equation*}
The first isomorphism is induced by the homomorphism $\lambda \mapsto \lambda(1)$ between the targets and therefore, at least restricts to a continuous injective map
\begin{equation*}
  \Hom_{o_L}^c(M,\Hom_{\mathbb{Z}_p}(o_L,\mathbb{Q}_p/\mathbb{Z}_p)) \longrightarrow \Hom_{\mathbb{Z}_p}^c(M,\mathbb{Q}_p/\mathbb{Z}_p) \ .
\end{equation*}
Let $\ell : M \rightarrow \mathbb{Q}_p/\mathbb{Z}_p$ be a continuous homomorphism. Then the composite map
\begin{equation*}
  M \times o_L \xrightarrow{(m,a) \mapsto am} M \xrightarrow{\;\ell\;} \mathbb{Q}_p/\mathbb{Z}_p
\end{equation*}
is continuous. Therefore the preimage $F_\ell \in \Hom_{o_L}(M,\Hom_{\mathbb{Z}_p}(o_L,\mathbb{Q}_p/\mathbb{Z}_p))$ of $\ell$, which is given by $F_\ell(m)(a) := \ell(am)$, is continuous by \cite{B-TG} X.28 Thm.\ 3. Finally let $A \subseteq M$ be any compact subset and $V \subseteq \mathbb{Q}_p/\mathbb{Z}_p$ be any subset. Define $B := \{a \in o_L : aA \subseteq A\}$. Then $1 \in B$, and, since $A$ is closed, also $B$ is closed and hence compact. Put $\widetilde{V} := \{\lambda \in \Hom_{\mathbb{Z}_p}(o_L,\mathbb{Q}_p/\mathbb{Z}_p) : \lambda(B) \subseteq V\}$. One easily checks that $\ell(A) \subseteq V$ if and only if $F_\ell(A) \subseteq \widetilde{V}$. This means that the inverse bijection $\ell \mapsto F_\ell$ is continuous as well.
\end{proof}

In the following we shall use the notation
\begin{equation*}
  M^\vee:=\Hom_{o_L}^c(M,L/o_L) \ ,
\end{equation*}
always equipped with the compact-open topology. The following version of Pontrjagin duality should be well known. Since we could not find a reference we will sketch a proof for the convenience of the reader.

\begin{proposition}[Pontrjagin duality]\label{LPontrjagin-duality}
The functor $-^\vee$ defines an involutory contravariant autoequivalence of the category of (Hausdorff) locally compact linear-topological $o_L$-modules. In particular, for such a module $M$, the canonical map
\begin{equation*}
  M \xrightarrow{\ \cong\ } (M^\vee)^\vee
\end{equation*}
is an isomorphism of topological $o_L$-modules.
\end{proposition}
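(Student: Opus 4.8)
The plan is to reduce everything to the two ``pure'' cases of a profinite module and of a discrete module, for which the statement is essentially Matlis duality over $o_L$ carried through a limit, and then to glue these along a single short exact sequence. Throughout I would use that $L/o_L$ is the injective hull of the residue field $k_L$ as an $o_L$-module, so that $\Hom_{o_L}(-,L/o_L)$ is exact and faithful on $o_L$-modules, and that by Lemma \ref{LPontrjagin} one may replace $-^\vee$ by $\Hom_{\ZZ_p}^c(-,\QQ_p/\ZZ_p)$ whenever only the underlying topological group matters. Since a contravariant functor which is naturally its own quasi-inverse is automatically an autoequivalence, it suffices to prove two things: that $M^\vee$ again lies in the category, and that the evaluation map $M \to (M^\vee)^\vee$ is an isomorphism of topological $o_L$-modules, naturally in $M$.

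First I would record the structural lemma: a Hausdorff locally compact linear-topological $o_L$-module $M$ possesses a compact open submodule $M_0$ (a compact neighbourhood of $0$ contains a basic open submodule, which is then closed in it, hence compact), $M_0$ is profinite, and $M/M_0$ is discrete. Dualizing the short exact sequence $0 \to M_0 \to M \to M/M_0 \to 0$ --- in which $M_0 \into M$ is a closed embedding and $M \to M/M_0$ a topological quotient map --- and using the injectivity of $L/o_L$ together with the fact that $\{\chi : \chi|_{M_0}=0\}$ is a basic compact-open neighbourhood of $0$, yields a short exact sequence
\begin{equation*}
  0 \longrightarrow (M/M_0)^\vee \longrightarrow M^\vee \longrightarrow M_0^\vee \longrightarrow 0
\end{equation*}
of topological $o_L$-modules in which $(M/M_0)^\vee$ is a compact open submodule and $M_0^\vee$ is discrete. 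This already shows that $M^\vee$ lies in the category, and, once biduality is known in the two extreme cases, the naturality of the evaluation map together with the five lemma reduces the general biduality statement to those cases.

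For $M$ profinite I would write $M = \varprojlim_i M_i$ with the $M_i$ finite $o_L$-modules; then $M^\vee = \varinjlim_i \Hom_{o_L}(M_i,L/o_L)$ carries the discrete topology, each $\Hom_{o_L}(M_i,L/o_L)$ is finite (Matlis duality: $\Hom_{o_L}(o_L/\pi_L^a o_L,L/o_L)\cong o_L/\pi_L^a o_L$), and hence $(M^\vee)^\vee = \varprojlim_i \Hom_{o_L}(M_i^\vee,L/o_L) = \varprojlim_i M_i = M$, where the middle step is the naturality of finite-length Matlis double-duality and one checks directly that the compact-open topology on the dual of the discrete module $M^\vee$ is exactly the inverse-limit (profinite) topology on $M$. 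The case of $M$ discrete (hence, in the relevant category, a union of its finite submodules) is the mirror image: with $M=\varinjlim_i M_i$, $M_i$ finite, one gets $M^\vee = \varprojlim_i \Hom_{o_L}(M_i,L/o_L)$ profinite and $(M^\vee)^\vee = \varinjlim_i M_i = M$, again by finite-length Matlis duality plus a check that the compact-open topology of the dual of a profinite module is the discrete one. In both cases the identifications are manifestly functorial, which supplies the required naturality.

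The main obstacle is topological rather than algebraic. Bijectivity of $M \to (M^\vee)^\vee$ on underlying modules is formal once one has the short exact sequence above and the two extreme cases (it only uses injectivity and faithfulness of $\Hom_{o_L}(-,L/o_L)$ together with a devissage to finite length). What requires care is (a) that the dualized sequence is genuinely exact \emph{with the subspace and quotient topologies} --- this rests on $M_0\into M$ being a topological embedding and $M\to M/M_0$ a topological quotient, so that restriction and inflation of characters become a quotient map resp. an embedding for the compact-open topologies --- and (b) that the continuous bijection $M \to (M^\vee)^\vee$ is open, i.e.\ a homeomorphism; here one either invokes an open-mapping argument for the locally compact linearly topologized groups in play, or transports the explicit descriptions of the profinite and discrete cases through the short exact sequence. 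Keeping track of compact-open topologies through the limits $\varprojlim$ and $\varinjlim$ is the principal bookkeeping nuisance, but no idea beyond finite-length Matlis duality and the structural lemma is needed.
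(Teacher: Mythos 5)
Your argument is correct, and it follows a genuinely different route from the paper's. You prove biduality directly: decompose $M$ along a compact open submodule $M_0$, dualize $0 \to M_0 \to M \to M/M_0 \to 0$ (using that $L/o_L$ is injective as an $o_L$-module and that $(M/M_0)^\vee = C(M_0,\{0\})$ is a basic compact-open neighbourhood of zero), reduce biduality to the profinite and the discrete torsion cases by the five lemma, and settle those by passing finite-length Matlis duality $\Hom_{o_L}(o_L/\pi_L^a o_L,L/o_L)\cong o_L/\pi_L^a o_L$ through $\varprojlim$ and $\varinjlim$. The paper instead identifies $M^\vee$ with the \emph{classical} Pontrjagin dual $\Hom^c(M,\mathbb{R}/\mathbb{Z})$ and then quotes classical duality of locally compact abelian groups; the bulk of its proof (Step~2) is a delicate neighbourhood argument in $\mathbb{R}/\mathbb{Z}$ showing that this comparison map is open. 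Your route is more self-contained (no harmonic analysis enters) and replaces that $\epsilon$-chase by the transparent computation of compact-open topologies on duals of profinite and discrete torsion modules, at the cost of the structure-theoretic set-up and the topological bookkeeping through limits and the five lemma that you sketch but do not fully carry out. One point you should make explicit --- the paper leaves it equally tacit when it asserts in Step~1 that ``$M/U$ is discrete and $\mathbb{Z}_p$-torsion'' --- is that a \emph{discrete} object of the category is automatically $\pi_L$-power torsion: joint continuity of the scalar multiplication $o_L\times M\to M$ for the $\pi_L$-adic topology on $o_L$ forces the annihilator of each element of a discrete $M$ to be open, hence to contain some $\pi_L^n o_L$. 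Without this observation your parenthetical ``hence \dots a union of its finite submodules'' and the asserted compactness of $(M/M_0)^\vee$ both fail; for instance $o_L$ with the discrete topology would then be a counterexample to the whole proposition, but it is excluded from the category precisely by this continuity requirement.
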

\begin{proof}
We recall that a topological $o_L$-module $M$ is called linear-topological if it has a fundamental system of open zero neighbourhoods consisting of $o_L$-submodules. If $M$ is linear-topological and locally compact one easily checks that it has a fundamental system of open zero neighbourhoods consisting of compact open $o_L$-submodules.

Classical Pontrjagin duality $M \longmapsto \Hom^c(M, \mathbb{R}/\mathbb{Z})$, the right hand side being the group of all continuous group homomorphisms equipped with the compact-open topology, is an autoequivalence of the category of locally compact abelian groups $M$. We first compare this, for any locally compact linear-topological $\mathbb{Z}_p$-module $M$, with the group $\Hom^c_{\mathbb{Z}_p}(M, \mathbb{Q}_p/\mathbb{Z}_p)$, as always equipped with the compact-open topology. There is an obvious injective and continuous map
\begin{equation}\label{f:Pontrjagin}
  \Hom^c_{\mathbb{Z}_p}(M, \mathbb{Q}_p/\mathbb{Z}_p) \longrightarrow \Hom^c(M, \mathbb{R}/\mathbb{Z}) \ .
\end{equation}

\textit{Step 1: The map \eqref{f:Pontrjagin} is bijective.} Let $\ell : M \rightarrow \mathbb{R}/\mathbb{Z}$ be any continuous group homomorphism. We have to show that $\im (\ell) \subseteq \mathbb{Q}_p/\mathbb{Z}_p$ and that $\ell$ is continuous for the discrete topology on $\mathbb{Q}_p/\mathbb{Z}_p$. We fix a compact-open $\mathbb{Z}_p$-submodule $U \subseteq M$. Then $\ell(U)$ is a compact subgroup of $\mathbb{R}/\mathbb{Z}$, and hence is either equal to $\mathbb{R}/\mathbb{Z}$ or is finite. Since $U$ is profinite the former cannot occur. We conclude that $\ell (U)$ is a finite subgroup of $\mathbb{Q}_p/\mathbb{Z}_p$. In particular, there is an $r \in \mathbb{N}$ such that $p^r \cdot \ell | U = 0$. The quotient module $M/U$ is discrete and $\mathbb{Z}_p$-torsion. It follows that $p^r \cdot \ell (M) \subseteq \mathbb{Q}_p/\mathbb{Z}_p$ and hence that $\ell (M) \subseteq \mathbb{Q}_p/\mathbb{Z}_p$. Since $\mathbb{R}/\mathbb{Z}$ induces the discrete topology on the finite subgroup $\ell (U)$ the restricted homomorphism $\ell : U \rightarrow \mathbb{Q}_p/\mathbb{Z}_p$ is continuous. Since $U$ is open in $M$ this suffices for the continuity of $\ell : M \rightarrow \mathbb{Q}_p/\mathbb{Z}_p$. That $\ell$ then is a homomorphism of $\mathbb{Z}_p$-modules is automatic.

\textit{Step 2: The map \eqref{f:Pontrjagin} is open.} As usual, $C(A,V)$, for a compact subset $A \subseteq M$ and an arbitrary subset $V \subseteq \mathbb{Q}_p/\mathbb{Z}_p$, denotes the open subset of all $\ell \in \Hom^c_{\mathbb{Z}_p}(M, \mathbb{Q}_p/\mathbb{Z}_p)$ such that $\ell (A) \subseteq V$. First one checks that the $C(m + U,V)$, for $m \in M$, $U \subseteq M$ a compact-open $\mathbb{Z}_p$-submodule, and $V$ arbitrary, form a subbase of the compact-open topology. For any $\ell_0 \in C(m + U,V)$ we have $\ell_0 \in C(m + U, \ell_0(m) + \ell_0(U)) \subseteq C(m + U,V)$, where $\ell_0(U)$ is a finite subgroup of $\mathbb{Q}_p/\mathbb{Z}_p$. These observations reduce us to showing that the sets $C(m + U, v + \frac{1}{p^n}\mathbb{Z}/\mathbb{Z})$ are open in $\Hom^c(M, \mathbb{R}/\mathbb{Z})$. We fix a point $\ell_0 \in C(m + U, v + \frac{1}{p^n}\mathbb{Z}/\mathbb{Z})$, and we let $p^t$ be the order of $m$ modulo $U$. Note that we have $\ell_0(m) \in v + \frac{1}{p^n} \mathbb{Z}/\mathbb{Z}$ and hence
\begin{equation}\label{f:ell0}
  \ell_0(U) \subseteq \tfrac{1}{p^n} \mathbb{Z}/\mathbb{Z} \qquad\text{and}\qquad \ell_0(m) \in \tfrac{1}{p^{n+t}} \mathbb{Z}/\mathbb{Z} \ .
\end{equation}
We use the open subsets
\begin{equation*}
  V_1(\ell_0) := \big( (-\tfrac{1}{2p^{n+t+1}},\tfrac{1}{2p^{n+t+1}}) + \tfrac{1}{p^n}\mathbb{Z} \big)/\mathbb{Z} \subseteq V_2(\ell_0) := V_1(\ell_0) - V_1(\ell_0)  \subseteq \mathbb{R}/\mathbb{Z} \ .
\end{equation*}
They satisfy:
\begin{itemize}
  \item[a)] $V_2(\ell_0) \cap \frac{1}{p^{n+t+1}} \mathbb{Z}/\mathbb{Z} = V_1(\ell_0) \cap \frac{1}{p^{n+t+1}} \mathbb{Z}/\mathbb{Z} = \frac{1}{p^n} \mathbb{Z}/\mathbb{Z}$.
  \item[b)] $V_1(\ell_0) +  \frac{1}{p^n} \mathbb{Z}/\mathbb{Z} \subseteq V_1(\ell_0)$.
\end{itemize}
We claim that $\ell_0 \in C(m + U, v + V_1(\ell_0)) \subseteq C(m + U, v + \frac{1}{p^n} \mathbb{Z}/\mathbb{Z})$. This means that in $\Hom^c(M, \mathbb{R}/\mathbb{Z})$ we have found an open neighbourhood of $\ell_0$ which is contained in $C(m + U, v + \frac{1}{p^n} \mathbb{Z}/\mathbb{Z})$. Hence $C(m + U, v + \frac{1}{p^n}\mathbb{Z}/\mathbb{Z})$ is open in $\Hom^c(M, \mathbb{R}/\mathbb{Z})$. Since $\frac{1}{p^n} \mathbb{Z}/\mathbb{Z} \subseteq V_1(\ell_0)$ we certainly have $\ell_0 \in C(m + U, v + V_1(\ell_0))$. Let now $\ell \in C(m + U, v + V_1(\ell_0))$ be an arbitrary element. We have $\ell (U) = \frac{1}{p^j} \mathbb{Z}/\mathbb{Z}$ for some $j \geq 0$ and consequently $\ell(m) \in \frac{1}{p^{j+t}} \mathbb{Z}/\mathbb{Z}$.

\textit{Case 1: $j \leq n$.} Using \eqref{f:ell0} we then have $\ell_0(m) - v \in \frac{1}{p^n} \mathbb{Z}/\mathbb{Z}$ and $\ell(m) - \ell_0(m) \in \frac{1}{p^{n+t}} \mathbb{Z}/\mathbb{Z}$ and hence
\begin{equation*}
  \ell(m) - v = \ell(m) - \ell_0(m) + \ell_0(m) - v \in \tfrac{1}{p^{n+t}} \mathbb{Z}/\mathbb{Z} \ .
\end{equation*}
Since also $\ell(m) - v \in V_1(\ell_0)$ we deduce from a) that $\ell(m) - v \in \frac{1}{p^n} \mathbb{Z}/\mathbb{Z}$ and therefore
\begin{equation*}
  \ell (m + U) = \ell(m) + \ell(U) \subseteq v + \tfrac{1}{p^n} \mathbb{Z}/\mathbb{Z} + \tfrac{1}{p^j} \mathbb{Z}/\mathbb{Z} = v + \tfrac{1}{p^n} \mathbb{Z}/\mathbb{Z} \ .
\end{equation*}

\textit{Case 2: $j > n$.} We obtain
\begin{align*}
  \tfrac{1}{p^{n+1}} \mathbb{Z}/\mathbb{Z} & \subseteq \tfrac{1}{p^j} \mathbb{Z}/\mathbb{Z} = \ell(U) \subseteq - \ell(m) + v + V_1(\ell_0) \\
   & = - (\ell(m) - \ell_0(m)) -\ell_0(m) + v + V_1(\ell_0) \\
   & \subseteq - (\ell(m) - \ell_0(m)) + \tfrac{1}{p^n} \mathbb{Z}/\mathbb{Z} + V_1(\ell_0) \\
   & \subseteq - (\ell(m) - \ell_0(m)) + V_1(\ell_0) \\
   & \subseteq - (V_1(\ell_0) + \tfrac{1}{p^n} \mathbb{Z}/\mathbb{Z}) + V_1(\ell_0) \\
   & \subseteq V_1(\ell_0) - V_1(\ell_0) \\
   & \subseteq V_2(\ell_0) \ ,
\end{align*}
where the fourth and the sixth inclusion use b). This is in contradiction to b). We deduce that this case, in fact, cannot occur.

At this point we have shown that \eqref{f:Pontrjagin} is a topological isomorphism of locally compact abelian groups. From now on we assume that $M$ is a locally compact linear-topological $o_L$-module. By combining this latter isomorphism with the isomorphism in Lemma \ref{LPontrjagin} we obtain a topological isomorphism of locally compact abelian groups
\begin{equation}\label{f:topiso}
  \Hom_{o_L}^c(M,L/o_L) \cong \Hom^c(M, \mathbb{R}/\mathbb{Z}) \ ,
\end{equation}
which is natural in $M$. Of course, $\Hom_{o_L}^c(M,L/o_L)$ naturally is an $o_L$-module again.

\textit{Step 3: The $o_L$-module $\Hom_{o_L}^c(M,L/o_L)$ is linear-topological.} It is straightforward to check that the $C(U,\{0\})$ with $U$ running over all compact open $o_L$-submodules of $M$ form a fundamental system of open zero neighbourhoods in $M^\vee$. Each such $C(U,\{0\})$ evidently is an $o_L$-submodule.

Hence the topological isomorphism \eqref{f:topiso} also applies to $M^\vee$ instead of $M$. One checks that under this isomorphism the natural map $M \rightarrow \Hom^c(\Hom^c(M, \mathbb{R}/\mathbb{Z}), \mathbb{R}/\mathbb{Z})$ corresponds to the natural map $M \rightarrow (M^\vee)^\vee$. We finally see that the classical Pontrjagin duality implies that $M \xrightarrow{\cong} (M^\vee)^\vee$ is a topological isomorphism; it, of course, is $o_L$-linear.
\end{proof}

\begin{remark}\label{LPontrjagin-exactness}
Let $M_0 \xrightarrow{\alpha} M \xrightarrow{\beta} M_1$ be a sequence of locally compact linear-topological $o_L$-modules such that $\im (\alpha) = \ker (\beta)$ and $\beta$ is topologically strict with closed image; then the dual sequence $M_1^\vee \xrightarrow{\beta^\vee} M^\vee \xrightarrow{\alpha^\vee} M_0^\vee$ is exact as well, i.e., we have $\im (\beta^\vee) = \ker (\alpha^\vee)$.
\end{remark}
\begin{proof}
We have $\ker (\alpha^\vee) = (M/\im (\alpha))^\vee \cong \im (\beta)^\vee$, where the second isomorphism uses the assumption that $\beta$ is topologically strict. The assertion therefore reduces to the claim that the closed immersion $\im (\beta) \subseteq M_1$ induces a surjection between the corresponding Pontrjagin duals. For this see, for example, \cite{HR} Thm.\ 24.11.
\end{proof}

We recall that the weak topology on a finitely generated $\bA_L$-module $M$ is $o_L$-linear; moreover, it is locally compact if $M$ is annihilated by some power of $\pi_L$. Suppose that $M$ is a finitely generated $\bA_L/\pi_L^n \bA_L$-module. From \eqref{f:Lemma3.6bA-Version} and \eqref{f:dualitybA-Version} we have   topological isomorphisms $M^\vee \cong \Hom_{\bA_L}(M, \Omega^1/\pi_L^n \Omega^1)\cong\Hom_{\bA_L}(M, \mathbf{A}_L/ \pi_L^n \mathbf{A}_L(\chi_{LT}))$. By Prop.\ \ref{LPontrjagin-duality} they dualize into  topological isomorphisms $M \cong \Hom_{\bA_L}(M, \Omega^1/\pi_L^n\Omega^1)^\vee\cong \Hom_{\bA_L}(M, \mathbf{A}_L/ \pi_L^n \mathbf{A}_L(\chi_{LT}))^\vee$. If $M$ actually is an etale $(\varphi_q,\Gamma_L)$-module then we see that in the adjoint pairs of maps $(\psi_M, \varphi_{\Hom_{\bA_L}(M, \Omega^1/\pi_L^n \Omega^1)})$ and $(\varphi_M, \psi_{\Hom_{\bA_L}(M, \Omega^1/\pi_L^n \Omega^1)})$ from Remark \ref{pairing-ranglebAVersion} each map determines the other uniquely.

\begin{remark}\label{Pontrjagin-dual-inner}
Let $V$ be an object in $\Rep_{o_L}(G_L)$  of finite length. Then, the pairing $[\;,\;\rangle_{D_{LT}(V)}$ from Remark \ref{pairing-ranglebAVersion}, the Remark \ref{twist},  and the compatibility of the functor $D_{LT}(-)$ with internal Hom's by Lemma \ref{innerhom} induce, for $n$ sufficiently large, a natural isomorphism of topological groups
\begin{align*}
 D_{LT}(V)^\vee & \cong \Hom_{\mathbf{A}_L}(D_{LT}(V), \mathbf{A}_L/ \pi_L^n \mathbf{A}_L(\chi_{LT})) \\
 & \cong \Hom_{\mathbf{A}_L}(D_{LT}(V), D_{LT}((o_L/ \pi_L^n o_L)(\chi_{LT}))) \\
  & = D_{LT}(\Hom_{o_L}(V, (o_L/\pi_L^n o_L) (\chi_{LT}))) = D_{LT}(V^\vee(\chi_{LT})) \ ,
\end{align*}
which is independent of the choice of $n$ and under which $\psi_{D_{LT}(V^\vee(\chi_{LT}))}$ identifies with $\varphi_{D_{LT}(V)}^\vee$ by Remark \ref{pairing-ranglebAVersion}.
\end{remark}

\begin{proposition}[Local Tate duality]\label{Tate-local}
Let $V$ be an object in $\Rep_{o_L}(G_L)$  of finite length, and $K$ any finite extension of $L$. Then the cup product and the local invariant map induce perfect pairings of finite $o_L$-modules
\begin{equation*}
  H^i(K,V) \times  H^{2-i}(K,\Hom_{\mathbb{Z}_p}(V,\mathbb{Q}_p/\mathbb{Z}_p(1)) ) \longrightarrow H^2(K, \mathbb{Q}_p/\mathbb{Z}_p(1)) = \mathbb{Q}_p/\mathbb{Z}_p
  \end{equation*}
and
\begin{equation*}
  H^i(K,V) \times H^{2-i}(K,\Hom_{o_L}(V,L/o_L(1)) ) \longrightarrow H^2(K, L/o_L(1)) = L/o_L
\end{equation*}
where $-(1)$ denotes the Galois twist by the cyclotomic character. In other words, there are canonical isomorphisms
\begin{equation*}
  H^i(K,V)\cong H^{2-i}(K,V^\vee(1))^\vee \ .
\end{equation*}
\end{proposition}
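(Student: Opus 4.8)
The plan is to reduce the statement to the classical form of local Tate duality for finite Galois modules and then transport it along the fixed $o_L$-linear identification $\Xi$ between $\mathbb{Z}_p$-duality and $o_L$-duality.

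First I would observe that, having finite length over $o_L$, the module $V$ is a finite abelian group annihilated by some $p^m$; hence $\Hom_{\mathbb{Z}_p}(V,\mathbb{Q}_p/\mathbb{Z}_p(1)) = \Hom_{\mathbb{Z}}(V,\mu_{p^m})$ is the usual Tate dual of $V$, and the local invariant map identifies $H^2(K,\mathbb{Q}_p/\mathbb{Z}_p(1)) = \varinjlim_m H^2(K,\mu_{p^m})$ with $\mathbb{Q}_p/\mathbb{Z}_p$ sitting inside $H^2(K,\mathbb{G}_m) = \mathrm{Br}(K) \cong \mathbb{Q}/\mathbb{Z}$. With these identifications the first displayed pairing is precisely the cup-product pairing of classical local Tate duality for the $p$-adic local field $K$, which is a perfect pairing of the finite groups $H^i(K,V)$ and $H^{2-i}(K,\Hom_{\mathbb{Z}}(V,\mu_{p^m}))$; so the first assertion is nothing but that classical theorem, which I would simply quote.

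Next I would deduce the $o_L$-linear version. Recall the $o_L$-linear isomorphism $\Xi\colon L/o_L \xrightarrow{\cong} \Hom_{\mathbb{Z}_p}(o_L,\mathbb{Q}_p/\mathbb{Z}_p)$ and the resulting natural identification \eqref{f:LPontrjagin}, $\Hom_{\mathbb{Z}_p}(W,\mathbb{Q}_p/\mathbb{Z}_p) \cong \Hom_{o_L}(W,L/o_L)$, valid for every $o_L$-module $W$. Applied to $W = V$, with $G_K$ acting on both sides through functoriality in $V$, and twisted by $\mathbb{Z}_p(1)$, it yields a $G_K$-equivariant isomorphism $\Hom_{\mathbb{Z}_p}(V,\mathbb{Q}_p/\mathbb{Z}_p(1)) \cong \Hom_{o_L}(V,L/o_L(1)) = V^\vee(1)$, so the two cohomology groups $H^{2-i}(K,-)$ coincide. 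Applied to $W = o_L$ with trivial $G_K$-action, and using that $\Hom_{\mathbb{Z}_p}(o_L,-)$ is exact because $o_L$ is $\mathbb{Z}_p$-free, it produces a canonical isomorphism $H^2(K,L/o_L(1)) \cong \Hom_{\mathbb{Z}_p}(o_L,H^2(K,\mathbb{Q}_p/\mathbb{Z}_p(1))) \cong \Hom_{\mathbb{Z}_p}(o_L,\mathbb{Q}_p/\mathbb{Z}_p) \cong L/o_L$. The point then to verify is that the $o_L$-bilinear cup-product pairing $H^i(K,V) \times H^{2-i}(K,V^\vee(1)) \to H^2(K,L/o_L(1)) \cong L/o_L$, after composing with $\Xi$ and evaluating at $1 \in o_L$, recovers exactly the classical $\mathbb{Q}_p/\mathbb{Z}_p$-valued pairing; this holds because $\Xi$ intertwines the evaluation maps $V \otimes_{o_L} V^\vee(1) \to L/o_L(1)$ and $V \otimes_{\mathbb{Z}} \Hom_{\mathbb{Z}}(V,\mu_{p^m}) \to \mu_{p^m}$, and because cup product is bifunctorial in the coefficient modules. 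Granting this, the perfectness of the classical pairing together with the description of \eqref{f:LPontrjagin} as ``post-composition with $\Xi$ followed by evaluation at $1$'' shows that the map $H^i(K,V) \to \Hom_{o_L}(H^{2-i}(K,V^\vee(1)),L/o_L)$ induced by the $o_L$-pairing is bijective, and symmetrically in the other variable; this gives the second perfect pairing and the canonical isomorphism $H^i(K,V) \cong H^{2-i}(K,V^\vee(1))^\vee$.

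I do not expect a genuine obstacle here: the mathematical substance is the classical local duality theorem. The one place demanding care is the compatibility bookkeeping of the previous paragraph — the $G_K$-equivariance and evaluation-compatibility of $\Xi$, and the canonicity and cup-product-compatibility of $H^2(K,L/o_L(1)) \cong L/o_L$ — but this is purely formal, since \eqref{f:LPontrjagin} is a natural transformation of additive functors commuting with the Tate twist.
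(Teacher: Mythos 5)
Your proposal is correct and follows essentially the same route as the paper: quote classical local Tate duality for the $\mathbb{Z}_p$-valued pairing, transport to the $o_L$-valued pairing via the natural isomorphism \eqref{f:LPontrjagin}/Lemma \ref{LPontrjagin}, and then verify that the induced composite is again the cup-product pairing by functoriality. The one place where the paper is slightly more explicit than you is the final compatibility check: rather than appealing to ``$\Xi$ intertwines the evaluation maps,'' the paper isolates the key commutative square for an arbitrary homomorphism $\xi:L/o_L \to \mathbb{Q}_p/\mathbb{Z}_p$ and proves it by writing $\xi$ as an $o_L$-indexed family of $\mathbb{Z}_p$-multiples of the identity and invoking the $\mathbb{Z}_p$-linearity of the local invariant map; your version glosses this as ``purely formal,'' which is fair but is exactly the point the paper nails down.
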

\begin{proof}
Note that the isomorphism $H^2(K, L/o_L(1)) = L/o_L$ arises from $H^2(K, \mathbb{Q}_p/ \mathbb{Z}_p(1)) = \mathbb{Q}_p/ \mathbb{Z}_p$ by tensoring with $o_L$ over $\mathbb{Z}_p$. The first pairing is the usual version of local Tate duality (cf.\ \cite{Ser} II.5.2 Thm.\ 2). It induces the first isomorphism in
\begin{align*}
H^i(K,V) & \cong \Hom_{\mathbb{Z}_p}(H^{2-i}(K,\Hom_{\mathbb{Z}_p}(V,\mathbb{Q}_p/\mathbb{Z}_p(1)) ),\mathbb{Q}_p/\mathbb{Z}_p) \\
& \cong \Hom_{o_L}(H^{2-i}(K,\Hom_{\mathbb{Z}_p}(V,\mathbb{Q}_p/\mathbb{Z}_p(1)) ),L/o_L) \\
& \cong \Hom_{o_L}(H^{2-i}(K,\Hom_{o_L}(V,L/o_L(1)) ) ),L/o_L) \ ,
\end{align*}
while the second and third are induced by Lemma \ref{LPontrjagin}. To obtain the second pairing it remains to check that the above composite isomorphism is given by the cup product again. By the functoriality properties of the cup product this reduces to the following formal fact. Let $\xi : L/o_L \longrightarrow \mathbb{Q}_p/\mathbb{Z}_p$ be any group homomorphism. Then the diagram
\begin{equation*}
  \xymatrix{
    H^2(K,L/o_L(1))  \ar[d]_{H^2(K,\xi(1))} \ar[r]^-{=} & L/o_L \ar[d]^{\xi} \\
    H^2(K,\mathbb{Q}_p/\mathbb{Z}_p(1)) \ar[r]^-{=} & \mathbb{Q}_p/\mathbb{Z}_p   }
\end{equation*}
commutes, where the horizontal maps are the local invariant maps. This in turn is an easy consequence of the $\mathbb{Z}_p$-linearity of the local invariant map if one uses the following description of $\xi$ viewed as a map $L/o_L = \mathbb{Q}_p/\mathbb{Z}_p \otimes_{\mathbb{Z}_p} o_L \xrightarrow{\xi} \mathbb{Q}_p/\mathbb{Z}_p$. Let $\zeta : o_L \longrightarrow \Hom_{\mathbb{Z}_p}(\mathbb{Q}_p/\mathbb{Z}_p, \mathbb{Q}_p/\mathbb{Z}_p) = \mathbb{Z}_p$ be the homomorphism which sends $a$ to $c \mapsto \xi(c \otimes a)$. Then $\xi(c \otimes a) = \zeta(a)c$.
\end{proof}

For any $V$ in $\Rep_{o_L}(G_L)$ we define the generalized Iwasawa cohomology of $V$ by
\begin{equation*}
    H^*_{Iw}(L_\infty/L,V) := \varprojlim_K H^*(K,V)
\end{equation*}
where $K$ runs through the finite Galois extensions of $L$ contained in $L_\infty$ and the transition maps in the projective system are the cohomological corestriction maps. \footnote{Note that, for any finite extension $K/L$ contained in $L_\infty$ the definition $H^*_{Iw}(L_\infty/K,V) := \varprojlim_{K \subseteq K' \subseteq L_\infty} H^*(K',V)$ produces the same $o_L$-modules. Our notation indicates that we always consider these groups as $\Gamma_L$-modules.}

Shapiro's lemma for cohomology gives natural isomorphisms
\begin{equation*}
    H^*(K,V) = H^*(G_{L}, o_L[G_{L}/G_K] \otimes_{o_L} V)
\end{equation*}
where, on the right hand side, $G_{L}$ acts diagonally on the coefficients. In this picture the corestriction map, for $K \subseteq K'$,  becomes the map induced on cohomology by the map $\pr \otimes \id_V : o_L[G_{L}/G_{K'}] \otimes_{o_L} V \longrightarrow o_L[G_{L}/G_K] \otimes_{o_L} V$.

\begin{lemma}\label{limit}
$H^*_{Iw}(L_\infty/L,V) = H^*(G_{L}, o_L[[\Gamma_L]] \otimes_{o_L} V)$ (where the right hand side refers to cohomology with continuous cochains).
\end{lemma}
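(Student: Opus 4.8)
The plan is to obtain the statement from Shapiro's lemma together with the fact that the continuous cochain cohomology of a profinite group commutes with projective limits of coefficient modules along surjective transition maps.

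First I would use the Shapiro isomorphisms recalled above, $H^*(K,V) = H^*(G_L, M_K)$ with $M_K := o_L[G_L/G_K] \otimes_{o_L} V$ (all cohomology being continuous cochain cohomology), under which the corestriction for $K \subseteq K'$ is the map induced by $\pr \otimes \id_V : M_{K'} \to M_K$; hence by definition
\begin{equation*}
  H^*_{Iw}(L_\infty/L,V) = \varprojlim_K H^*(G_L, M_K) \ ,
\end{equation*}
the limit running over the finite Galois extensions $K/L$ inside $L_\infty$. Each $G_L/G_K = \Gal(K/L)$ is finite, so $o_L[G_L/G_K]$ is finitely generated free over $o_L$, $M_K$ is a finitely generated (hence profinite) $o_L$-module, and the transition maps $\pr \otimes \id_V$ are surjective. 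Since $o_L$ is noetherian, $-\otimes_{o_L}V$ commutes with this projective limit (clear for $V$ free; the general case follows from a finite presentation of $V$ together with the vanishing of $\varprojlim^1$ for projective systems of finitely generated $o_L$-modules with surjective transition maps), so
\begin{equation*}
  \varprojlim_K M_K = \bigl(\varprojlim_K o_L[G_L/G_K]\bigr) \otimes_{o_L} V = o_L[[\Gamma_L]] \otimes_{o_L} V \ ,
\end{equation*}
since the subgroups $\Gal(L_\infty/K)$, as $K$ varies, run over the open normal subgroups of $\Gamma_L$; this module carries the diagonal continuous $G_L$-action, with $G_L$ acting on $o_L[[\Gamma_L]]$ through $\Gamma_L$ by left translation, and is again profinite.

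It remains to interchange $\varprojlim_K$ with $H^*(G_L,-)$. At the level of continuous cochains one has $C^\bullet(G_L, \varprojlim_K M_K) = \varprojlim_K C^\bullet(G_L, M_K)$ termwise, because a continuous map from the space $G_L^n$ into a projective limit of topological spaces is exactly a compatible system of continuous maps into the $M_K$; and in each degree the transition maps $C^n(G_L, M_{K'}) \to C^n(G_L, M_K)$ are surjective, by a standard compactness argument using that $G_L^n$ is compact and $M_{K'} \twoheadrightarrow M_K$ a surjection of profinite groups. Hence there are short exact sequences
\begin{equation*}
  0 \longrightarrow {\varprojlim}^1_K H^{n-1}(G_L, M_K) \longrightarrow H^n\bigl(G_L, \varprojlim_K M_K\bigr) \longrightarrow \varprojlim_K H^n(G_L, M_K) \longrightarrow 0 \ .
\end{equation*}
Now $H^{n-1}(G_L, M_K) = H^{n-1}(K,V)$ is a finitely generated $o_L$-module (continuous cohomology of the absolute Galois group of a local field with coefficients in a finitely generated $o_L$-module), so $(H^{n-1}(K,V))_K$ is a projective system of noetherian $o_L$-modules and therefore satisfies the Mittag-Leffler condition; thus its $\varprojlim^1$ vanishes and we conclude
\begin{equation*}
  H^*_{Iw}(L_\infty/L,V) = \varprojlim_K H^*(G_L, M_K) = H^*\bigl(G_L, \varprojlim_K M_K\bigr) = H^*\bigl(G_L, o_L[[\Gamma_L]] \otimes_{o_L} V\bigr) \ .
\end{equation*}

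The genuinely technical input is this last interchange of $\varprojlim$ and continuous cohomology — concretely, the surjectivity of the cochain transition maps and the vanishing of the $\varprojlim^1$-terms; everything else is bookkeeping with Shapiro's lemma and the definition of the completed group ring. One should also keep in mind throughout that every cohomology group is taken in the continuous-cochain sense and that $o_L[[\Gamma_L]] \otimes_{o_L} V$ carries its profinite topology, which is what makes the limit exact sequence above valid.
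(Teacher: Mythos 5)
Your proposal follows the paper's argument in all essentials: Shapiro, the cochain-level identification $\varprojlim_K C^\bullet(G_L, M_K) = C^\bullet(G_L, \varprojlim_K M_K)$, termwise surjectivity of the transition maps of cochains, and the resulting ``$\varprojlim^1$'' short exact sequences whose error terms you then kill. The paper phrases the interchange of $\varprojlim$ and $H^*(G_L,-)$ via the two hypercohomology spectral sequences for $\varprojlim \circ H^0(G_L,\,\cdot\,)$, using the surjective transition maps to degenerate the first and countability to kill $R^i\varprojlim$ for $i\geq 2$, whereas you write out the resulting short exact sequences directly; these are the same mechanism. Your identification of $\varprojlim_K (o_L[G_L/G_K]\otimes_{o_L}V)$ with $o_L[[\Gamma_L]]\otimes_{o_L}V$ via a finite presentation is also the paper's argument.

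There is, however, one step whose justification is wrong as stated: you claim that $(H^{n-1}(K,V))_K$ ``is a projective system of noetherian $o_L$-modules and therefore satisfies the Mittag--Leffler condition.'' Finite generation over $o_L$ does \emph{not} imply Mittag--Leffler: the system $M_n = o_L$ with transition maps given by multiplication by $\pi_L$ has strictly shrinking images $\pi_L^m o_L$ in $M_0$, so the images never stabilize. Noetherian gives ACC, not the DCC one would need for such a stabilization argument. What actually saves you is one of the following. Either appeal to the fact that finitely generated $o_L$-modules are compact (profinite) in their $\pi_L$-adic topology and a \emph{countable} projective system of compact Hausdorff abelian groups always has vanishing $\varprojlim^1$ (this has nothing to do with Mittag--Leffler --- it uses Baire/compactness, not image stabilization); or cite the result the paper actually invokes, namely Jensen's Theorem 8.1, which asserts the vanishing of $\varprojlim^1$ (and higher $\varprojlim^i$) for countable projective systems of finitely generated modules over a complete local noetherian ring such as $o_L$. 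With the justification for the $\varprojlim^1$-vanishing repaired in one of these two ways, your proof is correct and coincides with the paper's.
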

\begin{proof}
On the level of continuous cochain complexes we compute
\begin{align*}
    \varprojlim_K C^\bullet(G_{L}, o_L[G_{L}/G_K] \otimes_{o_L} V) & = C^\bullet_{}(G_{L}, \varprojlim_K (o_L[G_{L}/G_K] \otimes_{o_L} V)) \\
    & = C^\bullet_{}(G_{L}, o_L[[\Gamma_L]] \otimes_{o_L} V) \ .
\end{align*}
The second identity comes from the isomorphism $\varprojlim_K (o_L[G_{L}/G_K] \otimes_{o_L} V) \cong o_L[[\Gamma_L]] \otimes_{o_L} V$ which is easily seen by using a presentation of the form $0 \rightarrow o_L^s \rightarrow o_L^r \rightarrow V \rightarrow 0$. Since the transition maps in this projective system of complexes are surjective the first hypercohomology spectral sequence for the composite functor $\varprojlim \circ H^0(G_{L},.)$ degenerates so that the second hypercohomology spectral sequence becomes
\begin{equation*}
    R^i \varprojlim_K H^j_{}(G_{L}, o_L[G_{L}/G_K] \otimes_{o_L} V) \Longrightarrow H^{i+j}_{}(G_{L}, o_L[[\Gamma_L]] \otimes_{o_L} V) \ .
\end{equation*}
Due to the countability of our projective system we have $R^i \varprojlim_K = 0$ for $i \geq 2$. Hence this spectral sequence degenerates into short exact sequences
\begin{multline*}
    0 \longrightarrow R^1 \varprojlim_K H^{i-1}_{}(G_{L}, o_L[G_{L}/G_K] \otimes_{o_L} V) \longrightarrow \\
    H^*_{}(G_{L}, o_L[[\Gamma_L]] \otimes_{o_L} V) \longrightarrow \varprojlim_K H^i_{}(G_{L}, o_L[G_{L}/G_K] \otimes_{o_L} V) \longrightarrow 0 \ .
\end{multline*}
It is well known that the Galois cohomology groups $H^*_{}(G_{L}, o_L[G_{L}/G_K] \otimes_{o_L} V)$ are finitely generated $o_L$-modules (cf.\ \cite{Ser} II.5.2 Prop.\ 14). It therefore follows from \cite{Jen} Thm.\ 8.1 that the above $R^1 \varprojlim_K$-terms vanish.
\end{proof}

\begin{lemma}\label{delta-functor}
$H^*_{Iw}(L_\infty/L,V)$ is a $\delta$-functor on $\Rep_{o_L}(G_L)$.
\end{lemma}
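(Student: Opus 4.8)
By Lemma \ref{limit} it suffices to prove that the functor $V \longmapsto H^*(G_L, o_L[[\Gamma_L]] \otimes_{o_L} V)$, with the right hand side denoting continuous cochain cohomology, is a $\delta$-functor on $\Rep_{o_L}(G_L)$. Put $\mathcal{F}(V) := o_L[[\Gamma_L]] \otimes_{o_L} V$, which by the proof of Lemma \ref{limit} is identified with $\varprojlim_K (o_L[G_L/G_K] \otimes_{o_L} V)$ and which we equip with the corresponding inverse limit topology; since $o_L$ is compact, each $o_L[G_L/G_K] \otimes_{o_L} V$ is a finitely generated, hence compact, $o_L$-module, so that $\mathcal{F}(V)$ is a profinite (in particular compact Hausdorff) topological $o_L$-module carrying a continuous $G_L$-action. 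Moreover $o_L[[\Gamma_L]] = \varprojlim_{\Gamma'} o_L[\Gamma_L/\Gamma']$ is an inverse limit of free, hence $o_L$-torsion free, $o_L$-modules, so it is $o_L$-torsion free and therefore flat over the discrete valuation ring $o_L$; consequently $\mathcal{F}$ is exact on underlying $o_L$-modules.

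Let $0 \to V_1 \to V_2 \to V_3 \to 0$ be exact in $\Rep_{o_L}(G_L)$. Applying $\mathcal{F}$ yields a sequence
\begin{equation*}
  0 \longrightarrow \mathcal{F}(V_1) \xrightarrow{\;\alpha\;} \mathcal{F}(V_2) \xrightarrow{\;\beta\;} \mathcal{F}(V_3) \longrightarrow 0
\end{equation*}
which is exact on underlying $o_L$-modules and consists of continuous $G_L$-equivariant maps between profinite modules. In particular $\alpha$ is a closed topological embedding (a continuous injection of a compact space into a Hausdorff space) and $\beta$ is a continuous surjection onto a profinite space; by a standard fact such a $\beta$ admits a continuous set theoretic section $s : \mathcal{F}(V_3) \to \mathcal{F}(V_2)$ (choose, by Zorn's lemma, a closed subset of $\mathcal{F}(V_2)$ mapping onto $\mathcal{F}(V_3)$ that is minimal with this property; a clopen separation argument together with compactness shows that it is mapped bijectively, hence homeomorphically, onto $\mathcal{F}(V_3)$, and one takes the inverse). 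For every $n \geq 0$ the induced maps on continuous cochains therefore form a short exact sequence
\begin{equation*}
  0 \longrightarrow C^n(G_L,\mathcal{F}(V_1)) \longrightarrow C^n(G_L,\mathcal{F}(V_2)) \longrightarrow C^n(G_L,\mathcal{F}(V_3)) \longrightarrow 0 \ ,
\end{equation*}
where surjectivity on the right is witnessed by $g \mapsto s \circ g$ and exactness in the middle holds because $\alpha$ is a topological embedding, so that a cochain into $\mathcal{F}(V_2)$ whose composite with $\beta$ vanishes factors continuously through $\mathcal{F}(V_1)$. Hence we obtain a short exact sequence of cochain complexes $0 \to C^\bullet(G_L,\mathcal{F}(V_1)) \to C^\bullet(G_L,\mathcal{F}(V_2)) \to C^\bullet(G_L,\mathcal{F}(V_3)) \to 0$, which is canonical: the section $s$ serves only to verify surjectivity.

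Passing to the long exact cohomology sequence provides connecting homomorphisms $\delta^i : H^i_{Iw}(L_\infty/L,V_3) \to H^{i+1}_{Iw}(L_\infty/L,V_1)$. Since $\mathcal{F}$ and $C^\bullet(G_L,-)$ are functorial, a morphism of short exact sequences in $\Rep_{o_L}(G_L)$ is carried to a morphism of short exact sequences of cochain complexes, and the connecting homomorphism attached to a short exact sequence of complexes is compatible with such morphisms; therefore the $\delta^i$ are natural. This exhibits $H^*_{Iw}(L_\infty/L,-)$ as a $\delta$-functor. The only non-formal input is the reduction to a short exact sequence of cochain complexes, which rests on the compactness of $o_L$ (making each $\mathcal{F}(V)$ profinite) and on the topological splitting of continuous surjections onto profinite spaces.
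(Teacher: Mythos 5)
Your reduction via Lemma \ref{limit} and the observation that $o_L[[\Gamma_L]]$ is $o_L$-flat (so $\mathcal{F} = o_L[[\Gamma_L]] \otimes_{o_L} -$ is exact on underlying modules) match the paper, as does the recognition that everything hinges on promoting the algebraically exact sequence $0 \to \mathcal{F}(V_1) \to \mathcal{F}(V_2) \to \mathcal{F}(V_3) \to 0$ to a short exact sequence of continuous cochain complexes, i.e.\ on a topological embedding on the left and a continuous set-theoretic section on the right. The embedding claim (continuous injection from compact into Hausdorff) is fine.

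The gap is in the section. The ``standard fact'' you invoke --- that a continuous surjection of profinite spaces admits a continuous set-theoretic section --- is \emph{false} without a countability hypothesis, and the Zorn argument you sketch does not prove it. Take $Y$ the Cantor set and $p\colon E \to Y$ its Gleason (projective) cover; $E$ is extremally disconnected, hence profinite, and $p$ is irreducible, so $E$ itself is the unique minimal closed subset of $E$ surjecting onto $Y$. Yet $p$ is not injective (an injective $p$ would be a homeomorphism, forcing $Y$ to be extremally disconnected, which the Cantor set is not), and $p$ has no section at all (a section would exhibit $Y$ as a retract of the projective object $E$, making $Y$ projective). So ``minimal closed subset mapping onto $Y$ $\Rightarrow$ $\beta$ injective on it'' is simply not a valid implication for profinite spaces in general. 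What does hold, and what the paper actually uses, is that each $o_L[[\Gamma_L]] \otimes_{o_L} V_i$ is not merely profinite but has a \emph{countable} base of the topology (being a countable inverse limit of finite $o_L$-modules), hence is compact metrizable; the paper then gets condition 1 and openness of the quotient map from metrizability, and the continuous section from Michael's selection theorem \cite{Mic}. Your proof needs this countability input somewhere: as written, you never use it, and the resulting claim is too strong to be true.
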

\begin{proof}
Let $0 \rightarrow V_1 \rightarrow V_2 \rightarrow V_3 \rightarrow 0$ be a short exact sequence in $\Rep_{o_L}(G_L)$. Then the sequence of topological $G_L$-modules $0 \rightarrow o_L[[\Gamma_L]] \otimes_{o_L} V_1 \rightarrow o_L[[\Gamma_L]] \otimes_{o_L} V_2 \rightarrow o_L[[\Gamma_L]] \otimes_{o_L} V_3 \rightarrow 0$ is short exact as well. In view of Lemma \ref{limit} our assertion therefore follows from \cite{NSW} Lemma 2.7.2 once we show that
\begin{itemize}
  \item[1.] the topology of $o_L[[\Gamma_L]] \otimes_{o_L} V_1$ is induced by the topology of $o_L[[\Gamma_L]] \otimes_{o_L} V_2$ and
  \item[2.] the (surjective continuous) map $o_L[[\Gamma_L]] \otimes_{o_L} V_2 \rightarrow o_L[[\Gamma_L]] \otimes_{o_L} V_3$ has a continuous section as a map of topological spaces.
\end{itemize}
Each $o_L[[\Gamma_L]] \otimes_{o_L} V_i$ is a profinite (hence compact) abelian group with a countable base of the topology, which therefore is metrizable by \cite{B-TG} IX.21 Prop.\ 16. One easily deduces 1. and that the map in 2. is open. The property 2. then follows from \cite{Mic} Cor.\ 1.4.
\end{proof}

\begin{remark}\label{cohomology-twist}
For any $V_0$ in $\Rep_{o_L}(G_L)$ which is $o_L$-free and on which $G_L$ acts through its factor $\Gamma_L$ there is a natural isomorphism $H^*_{Iw}(L_\infty/L,V \otimes_{o_L} V_0) \cong H^*_{Iw}(L_\infty/L,V) \otimes_{o_L} V_0$.
\end{remark}
\begin{proof}
In view of Lemma \ref{limit} the asserted isomorphism is induced by the $G_L$-equivariant isomorphism on coefficients
\begin{align*}
  o_L[[\Gamma_L]] \otimes_{o_L} V \otimes_{o_L} V_0 & \xrightarrow{\;\cong\;} o_L[[\Gamma_L]] \otimes_{o_L} V \otimes_{o_L} V_0 \\
  \gamma \otimes v \otimes v_0 & \longmapsto \gamma \otimes v \otimes \gamma^{-1} v_0 \ ;
\end{align*}
on the left $G_L$ acts diagonally on all three factors, whereas on the right it acts trivially on the third factor.
\end{proof}

\begin{remark}\label{TatelocalIw}
Let $V$ be in $\Rep_{o_L}(G_L)$ of finite length; in particular $V$ is discrete. Then local Tate duality (Prop.\ \ref{Tate-local}) induces an isomorphism
\begin{equation*}
  H^i_{Iw}(L_\infty/L,V)\cong H^{2-i}(L_\infty,V^\vee(1))^\vee \ .
\end{equation*}
\end{remark}
\begin{proof}
Use Prop.\ \ref{Tate-local} over the layers $L_n$ and take limits.
\end{proof}

We point out that $H^*_{Iw}(L_\infty/L,V) = H^*_{}(G_{L}, o_L[[\Gamma_L]] \otimes_{o_L} V)$ is a left $o_L[[\Gamma_L]]$-module through the action of $\gamma \in \Gamma_L$ by right multiplication with $\gamma^{-1}$ on the factor $o_L[[\Gamma_L]]$.
%\footnote{Fukaya and Kato use here the second alternative for the action which arises by applying the involution to $o_L[[\Gamma_L]]$. The latter has the strong advantage that the Iwasawa-module structure is the natural one while the $G_{L}$-structure is apparently more complicated, which does not matter as this is hidden behind taking cohomology. Thus Otmar strongly recommends to follow Fukaya and Kato's convention here in the end.}

\begin{lemma}\phantomsection\label{vanishing}
\begin{itemize}
  \item[i.] $H^*_{Iw}(L_\infty/L,V) = 0$ for $\ast \neq 1,2$.
  \item[ii.] $H^2_{Iw}(L_\infty/L,V)$ is finitely generated as $o_L$-module.
  \item[iii.] $H^1_{Iw}(L_\infty/L,V)$ is finitely generated as $o_L[[\Gamma_L]]$-module.
\end{itemize}
\end{lemma}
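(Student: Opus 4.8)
The plan is to prove (i), (ii), (iii) in that order, in each case reducing to coefficients of finite length by a dévissage based on the $\delta$-functor property (Lemma~\ref{delta-functor}) and the continuous-cochain description $H^\ast_{Iw}(L_\infty/L,V)=H^\ast(G_L,o_L[[\Gamma_L]]\otimes_{o_L}V)$ of Lemma~\ref{limit}. For (i) with $\ast\geq 3$ I would use $\mathrm{cd}_p(G_{L_n})\leq 2$: this kills $H^\ast(L_n,V)$ for $V$ of finite length, and for general $V$ it follows by writing $V=\varprojlim_m V/\pi_L^m V$ and observing that the attached $\varprojlim$ and $\varprojlim^1$ terms vanish because the $H^2(L_n,V/\pi_L^m V)$ are finite; hence $\varprojlim_n H^\ast(L_n,V)=0$. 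For (i) with $\ast=0$ one has $H^0_{Iw}(L_\infty/L,V)=\varprojlim_n V^{G_{L_n}}$ with corestriction transition maps; the submodules $V^{G_{L_n}}$ form an ascending chain in the noetherian $o_L$-module $V$, hence stabilise to a finitely generated $W_0$, on which the transition maps are multiplication by $[L_{n+1}:L_n]=q$ for $n$ large, so the limit is $\bigcap_m q^m W_0=0$ since $q\in\pi_L o_L$.

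For (ii) I would first treat $V$ of finite length: Remark~\ref{TatelocalIw} gives $H^2_{Iw}(L_\infty/L,V)\cong H^0(L_\infty,V^\vee(1))^\vee=\bigl((V^\vee(1))^{H_L}\bigr)^\vee$, the Pontrjagin dual of a submodule of the finite module $V^\vee(1)$, hence finite. For general $V$ the system $\bigl(H^1_{Iw}(L_\infty/L,V/\pi_L^m V)\bigr)_m$ consists of profinite groups, so its $\varprojlim^1$ vanishes; combined with Lemma~\ref{limit} and $H^3_{Iw}=0$ (from (i)) this yields $H^2_{Iw}(L_\infty/L,V)=\varprojlim_m H^2_{Iw}(L_\infty/L,V/\pi_L^m V)$, while the $\delta$-functor sequence for $0\to V\xrightarrow{\pi_L^m}V\to V/\pi_L^m V\to 0$, again using $H^3_{Iw}=0$, gives $H^2_{Iw}(L_\infty/L,V)/\pi_L^m\cong H^2_{Iw}(L_\infty/L,V/\pi_L^m V)$. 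Thus $H^2_{Iw}(L_\infty/L,V)$ is $\pi_L$-adically complete and separated with finite reduction modulo $\pi_L$ (the finite-length case), so lifting a $k_L$-basis of that reduction and using completeness of $o_L$ shows it is finitely generated over $o_L$.

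For (iii), set $\Lambda:=o_L[[\Gamma_L]]$. Since $H^0_{Iw}=0$ by (i), the same limit argument shows $H^1_{Iw}(L_\infty/L,V)=\varprojlim_m H^1_{Iw}(L_\infty/L,V/\pi_L^m V)$ is a compact $\Lambda$-module, and the $\delta$-functor sequence for $0\to V\xrightarrow{\pi_L}V\to V/\pi_L V\to 0$ yields an injection $H^1_{Iw}(L_\infty/L,V)/\pi_L\hookrightarrow H^1_{Iw}(L_\infty/L,V/\pi_L V)$ with cokernel $H^2_{Iw}(L_\infty/L,V)[\pi_L]$, which is finite by (ii). As $\Lambda$ is noetherian with $\pi_L\in\mathrm{Jac}(\Lambda)$ and $\Lambda/\mathrm{Jac}(\Lambda)$ a finite product of finite fields, the topological Nakayama lemma reduces the assertion to showing that $H^1_{Iw}(L_\infty/L,M)$ is finitely generated over $\Lambda$ for $M$ of finite length. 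For such $M$, Remark~\ref{TatelocalIw}, Lemma~\ref{phi-version}.ii and the duality of Remark~\ref{Pontrjagin-dual-inner} provide $\Lambda$-linear isomorphisms $H^1_{Iw}(L_\infty/L,M)\cong H^1(L_\infty,M^\vee(1))^\vee\cong\bigl(D_{LT}(M^\vee(1))/(\varphi-1)D_{LT}(M^\vee(1))\bigr)^\vee\cong D_{LT}(W)^{\psi_L=1}$, where $W=M(\chi_{LT}\chi_{cyc}^{-1})$; one is thereby reduced to showing that $D^{\psi_L=1}$ is finitely generated over $\Lambda$ for every finite-length étale $(\varphi_L,\Gamma_L)$-module $D$.

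This last point is the main obstacle; it is the Lubin–Tate counterpart of a theorem of Fontaine and of Cherbonnier–Colmez in the cyclotomic case. I would prove it using the theory of treillis set up above (Remark~\ref{treillis}, Lemma~\ref{M++}): exhibit a $\psi_L$-stable treillis on which $\psi_L-1$ is controlled through the automorphism $\varphi_L-1$ of $D^{++}$, deduce that $D/(\psi_L-1)D$ is finite and that $D^{\psi_L=1}$ is topologically finitely generated over $\Lambda$, and then conclude by topological Nakayama over $\Lambda$; alternatively one quotes the corresponding statement from \cite{GAL}.
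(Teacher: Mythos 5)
Parts (i) and (ii) are sound, and your argument for $\ast=0$ via the ascending chain $V^{G_{L_n}}$ stabilising to some $W_0$ on which the corestriction is multiplication by $q\in\pi_L o_L$ is a nice elementary alternative to the paper's route (the paper instead reduces to $V$ finite and $V$ free and exploits that $H^0_{Iw}$ would be simultaneously profinite and an $L$-vector space). For (ii) your dévissage is more elaborate than the paper's one-line computation $H^2_{Iw}=\bigl(\bigcup_K V^\vee(1)^{G_K}\bigr)^\vee$, but it works.

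The real divergence is in (iii). The paper splits into $V$ finite and $V$ $o_L$-free. In the finite case it passes to $H^1(L_\infty,V^\vee(1))^\vee$, applies the compact Nakayama lemma, and then bounds $H^1(L_\infty,V^\vee(1))^\Gamma$ by the Hochschild--Serre sequence together with finiteness of $H^2(\Gamma,\mathbb{F}_p)$ for $\Gamma\cong\mathbb{Z}_p^d$; in the free case it invokes Fukaya--Kato Prop.\ 1.6.5, which realises $H^*_{Iw}$ as the cohomology of a bounded complex of finitely generated projective $o_L[[\Gamma_L]]$-modules. You instead reduce — via Remark~\ref{TatelocalIw}, Lemma~\ref{phi-version}, and Remark~\ref{Pontrjagin-dual-inner} — to the assertion that $D^{\psi_L=1}$ is finitely generated over $o_L[[\Gamma_L]]$ for any finite-length \'etale $(\varphi_L,\Gamma_L)$-module $D$. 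That is the Lubin--Tate analogue of the Fontaine/Herr/Cherbonnier--Colmez finiteness theorem, and it is genuinely different from what the paper does; it would, if established, give a route through the $(\varphi_L,\Gamma_L)$-module side that bypasses Fukaya--Kato entirely.

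However, that crucial input is not available at this point of the paper, and your sketch of it is a gap. Lemma~\ref{M++} and Lemma~\ref{top-strict} only control $\varphi_M-1$ on the treillis $M^{++}$; what you need is the corresponding control of $\psi_M-1$ (that $D/(\psi_M-1)D$ is finite and $D^{\psi_M=1}$ is a finitely generated compact $o_L[[\Gamma_L]]$-module), which in the cyclotomic case is a nontrivial chain of lemmas in Colmez's treillis theory (or Herr's thesis) involving $M^+$, $M^{--}$, and the interplay between $\varphi$ and $\psi$ on them. Saying ``exhibit a $\psi_L$-stable treillis'' or ``quote the corresponding statement from \cite{GAL}'' doesn't close this: the paper neither proves it nor relies on it, precisely because the authors chose the Hochschild--Serre/compact Nakayama and Fukaya--Kato arguments instead. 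To make your version of (iii) complete you would have to develop the $\psi$-finiteness machinery from scratch (roughly the Lubin--Tate analogue of \cite{Her} Prop.\ 2.4 or \cite{Co3} \S II.5--6), which is a substantial piece of work well beyond what the surrounding text supplies.
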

\begin{proof}
i. In case $\ast > 2$ the assertion follows from the fact that the groups $G_K$ have cohomological $p$-dimension $2$ (\cite{Ser} II.4.3 Prop.\ 12). The vanishing of $H^0_{Iw}(L_\infty/L,V) = \varprojlim_K V^{G_K}$ is clear if $V$ is finite. Hence we may assume that $V$ is finitely generated free over $o_L$. Note that the identity $H^0_{Iw}(L_\infty/L,V) = \varprojlim_K V^{G_K}$ shows that $H^0_{Iw}(L_\infty/L,V)$ is a profinite $o_L$-module. On the other hand we then have the exact sequence
\begin{equation*}
  0 \longrightarrow H^0_{Iw}(L_\infty/L,V) \xrightarrow{\;\pi_L\cdot\;} H^0_{Iw}(L_\infty/L,V) \longrightarrow H^0_{Iw}(L_\infty/L,V/\pi_L V)
\end{equation*}
Since we observed already that the last term vanishes it follows that $H^0_{Iw}(L_\infty/L,V)$ is an $L$-vector space. Both properties together enforce the vanishing of $H^0_{Iw}(L_\infty/L,V)$.

ii. We have
\begin{equation*}
  H^2_{Iw}(L_\infty/L,V) = \varprojlim_K H^2(K,V) = \varprojlim_K H^0(K,V^\vee(1))^\vee
  = (\bigcup_K V^\vee(1)^{G_K})^\vee \ ,
\end{equation*}
which visible is a finitely generated $o_L$-module.

iii. \textit{Case 1: $V$ is finite.} By Remark \ref{TatelocalIw} $H_{Iw}^1(L_\infty/L,V) = H^1(L_\infty,V^\vee(1))^\vee$ is the Pontrjagin dual of a discrete torsion module and hence  is a compact $o_L[[\Gamma_L]]$-module. The compact Nakayama lemma (cf.\ \cite{NSW} Lemma 5.2.18) therefore reduces us to showing that the Pontrjagin dual $(H_{Iw}^1(L_\infty/L,V)_\Gamma)^\vee = H^1(L_\infty,V^\vee(1))^\Gamma$ of the $\Gamma$-coinvariants of $H_{Iw}^1(L_\infty/L,V)$ is cofinitely generated over $o_L$; here $\Gamma$ is a conveniently chosen open subgroup of $\Gamma_L$. The Hochschild-Serre spectral sequence for the extension $L_\infty/K $, where $K:= L_\infty^\Gamma$, gives us an exact sequence
\begin{equation*}
  H^1(K,V^\vee(1)) \longrightarrow H^1(L_\infty,V^\vee(1))^\Gamma \longrightarrow H^2(\Gamma, V^\vee(1)^{H_L}) \ .
\end{equation*}
The first group is finite by local Galois cohomology. At this point we choose $\Gamma$ to be isomorphic to $\mathbb{Z}_p^d$. Then $H^2(\Gamma,\mathbb{F}_p)$ is finite. Since $\mathbb{F}_p$ is the only simple $\mathbb{Z}_p[[\Gamma]]$-module it follows by devissage that $H^2(\Gamma, V^\vee(1)^{H_L})$ is finite.

\textit{Case 2: $V$ is $o_L$-free.} As pointed out above $o_L[[\Gamma_L]] \otimes_{o_L} V$ is a $o_L[[\Gamma_L]]$-module, which is finitely generated free and on which $G_L$ acts continuously and  $o_L[[\Gamma_L]]$-linearly. In view of Lemma \ref{limit} we therefore may apply \cite{FK} Prop.\ 1.6.5 and obtain that $H^*_{Iw}(L_\infty/L,V)$, as a $o_L[[\Gamma_L]]$-module, is isomorphic to the cohomology of a bounded complex of finitely generated projective $o_L[[\Gamma_L]]$-modules. The ring $o_L[[\Gamma_L]]$ is noetherian. Hence the cohomology of such a complex is finitely generated.

The general case follows by using Lemma \ref{delta-functor} and applying the above two special cases to the outer terms of the short exact sequence $0 \rightarrow V_{tor} \rightarrow V \rightarrow V/V_{tor} \rightarrow 0$, where $V_{tor}$ denotes the torsion submodule of $V$.
\end{proof}

\begin{theorem}\label{psi-version}
Let $V$  in $\Rep_{o_L}(G_L)$.   Then, with $\psi = \psi_{D_{LT}(V(\tau^{-1}))}$, we have a short exact sequence
\begin{multline}\label{f:psi-version}
  0 \longrightarrow H^1_{Iw}(L_\infty/L,V) \longrightarrow D_{LT}(V(\chi_{LT}\chi_{cyc}^{-1})) \xrightarrow{\;\psi-1\;}  D_{LT}(V(\chi_{LT}\chi_{cyc}^{-1})) \\ \longrightarrow H^2_{Iw}(L_\infty/L,V) \longrightarrow 0 \ ,
\end{multline}
which is functorial in $V$.
\end{theorem}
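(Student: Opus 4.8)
The plan is to reduce to the case where $V$ has finite length, to settle that case by combining Lemma \ref{phi-version} with local Tate duality and Pontrjagin duality, and then to recover the general case by passing to an inverse limit.

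\emph{The finite length case.} Suppose $\pi_L^n V = 0$ and put $W := V^\vee(1) = V^\vee(\chi_{cyc})$, which again has finite length, with $\pi_L^n W = 0$. Lemma \ref{phi-version}.ii applied to $W$ yields the exact sequence
\begin{equation*}
  0 \longrightarrow H^0(L_\infty,W) \xrightarrow{\;\iota\;} D_{LT}(W) \xrightarrow{\;\varphi_{D_{LT}(W)}-1\;} D_{LT}(W) \xrightarrow{\;\partial_\varphi\;} H^1(L_\infty,W) \longrightarrow 0 \ .
\end{equation*}
All four terms are locally compact linear-topological $o_L$-modules: $H^0(L_\infty,W)$ is finite, $D_{LT}(W)$ carries its weak topology and is annihilated by $\pi_L^n$, and $H^1(L_\infty,W)$ is discrete and torsion. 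Moreover $\varphi_{D_{LT}(W)}-1$ is open, hence topologically strict with open (thus closed) image, by Lemma \ref{top-strict}; $\iota$ is a closed embedding onto $\ker(\varphi_{D_{LT}(W)}-1)$; and $\partial_\varphi$ is a (strict) quotient map onto a discrete module. Splitting the sequence at the middle term into two short exact sequences and applying Pontrjagin duality (Proposition \ref{LPontrjagin-duality}) together with its exactness property (Remark \ref{LPontrjagin-exactness}) gives the exact sequence
\begin{equation*}
  0 \longrightarrow H^1(L_\infty,W)^\vee \longrightarrow D_{LT}(W)^\vee \xrightarrow{\;(\varphi_{D_{LT}(W)}-1)^\vee\;} D_{LT}(W)^\vee \longrightarrow H^0(L_\infty,W)^\vee \longrightarrow 0 \ .
\end{equation*}

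It remains to identify the terms. By Remark \ref{TatelocalIw} one has $H^1(L_\infty,W)^\vee = H^1_{Iw}(L_\infty/L,V)$ and $H^0(L_\infty,W)^\vee = H^2_{Iw}(L_\infty/L,V)$. By Remark \ref{Pontrjagin-dual-inner} (which rests on Lemma \ref{innerhom} and the pairing $[\;,\;\rangle$ of Remark \ref{pairing-ranglebAVersion}), for $n$ large enough there is a natural isomorphism $D_{LT}(W)^\vee \cong D_{LT}(W^\vee(\chi_{LT}))$ carrying $\varphi_{D_{LT}(W)}^\vee$ to $\psi_{D_{LT}(W^\vee(\chi_{LT}))}$; and by Remark \ref{twist}, $W^\vee(\chi_{LT}) = V(\chi_{cyc}^{-1})(\chi_{LT}) = V(\chi_{LT}\chi_{cyc}^{-1}) = V(\tau^{-1})$. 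Since the dual of the identity is the identity, $(\varphi_{D_{LT}(W)}-1)^\vee$ corresponds to $\psi-1$ with $\psi = \psi_{D_{LT}(V(\tau^{-1}))}$. This is exactly \eqref{f:psi-version} for finite length $V$, and it is functorial in $V$ because every step above is natural in $V$ (the twisting $V \mapsto W$ being contravariant, followed by the contravariant passage to Pontrjagin duals).

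\emph{The general case.} For arbitrary $V$ in $\Rep_{o_L}(G_L)$ I would form the inverse limit over $n$ of the sequences \eqref{f:psi-version} for $V/\pi_L^n V$. Here one uses $D_{LT}(V(\tau^{-1})) = \varprojlim_n D_{LT}\bigl((V/\pi_L^n V)(\tau^{-1})\bigr)$ (observation 3 in the proof of Proposition \ref{exact}), with compatible $\psi$-operators, and $H^i_{Iw}(L_\infty/L,V) = \varprojlim_n H^i_{Iw}(L_\infty/L,V/\pi_L^n V)$. The latter follows from Lemma \ref{limit}, writing $o_L[[\Gamma_L]]\otimes_{o_L}V = \varprojlim_n o_L[[\Gamma_L]]\otimes_{o_L}(V/\pi_L^n V)$ and invoking the Milnor $\varprojlim$-sequence for continuous cochain cohomology: the $\varprojlim^1$-terms vanish because the systems $\{H^0_{Iw}(L_\infty/L,V/\pi_L^n V)\}_n$ consist of finite $o_L$-modules and the systems $\{H^1_{Iw}(L_\infty/L,V/\pi_L^n V)\}_n$ consist of compact $o_L[[\Gamma_L]]$-modules (Lemma \ref{vanishing}). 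A last $\varprojlim^1$-vanishing then shows that the inverse limit of the four term exact sequences is again exact, producing \eqref{f:psi-version} in general.

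I expect the main obstacle to lie in this last step: setting up the inverse limit so that all $\varprojlim^1$-terms provably vanish and so that the operators $\psi-1$ commute with the limit. A secondary point requiring care is the verification, in the finite length case, that $\varphi_{D_{LT}(W)}-1$ and $\partial_\varphi$ are topologically strict with closed image, since this is precisely what makes Pontrjagin duality exact on the four term sequence.
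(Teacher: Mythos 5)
Your proof follows essentially the same route as the paper's: apply the $\varphi$-sequence from Lemma \ref{phi-version} to $W = V^\vee(1)$, establish topological strictness via Lemma \ref{top-strict} so that Remark \ref{LPontrjagin-exactness} yields an exact dual sequence, identify the terms using Remarks \ref{TatelocalIw}, \ref{Pontrjagin-dual-inner}, and \ref{twist}, and then pass to the inverse limit over $V/\pi_L^n V$. The $\varprojlim^1$-obstruction you flag at the end is resolved in the paper exactly as you anticipate: one splits the four-term sequences at the image $(\psi-1)D_{LT}(V_n(\tau^{-1}))$, observes that these form a projective system with surjective transition maps (since $D_{LT}(V_n(\tau^{-1})) = D_{LT}(V(\tau^{-1}))/\pi_L^n$), and invokes Jensen's Theorem 8.1 for $\{H^1_{Iw}(L_\infty/L,V_n)\}_n$, which by Lemma \ref{vanishing}.iii is a projective system of finitely generated modules over the noetherian ring $o_L[[\Gamma]]$ for an open pro-$p$ subgroup $\Gamma \subseteq \Gamma_L$.
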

\begin{proof}
In the sense of Remark \ref{twist} we take $T^*\otimes_{\mathbb{Z}_p}\mathbb{Z}_p(1)$ and $T\otimes_{\mathbb{Z}_p}\Hom_{\mathbb{Z}_p}(\mathbb{Z}_p(1),\mathbb{Z}_p)$ as representation module for $\tau$ and $\tau^{-1},$ respectively.

We first assume that $V$ has finite length. Then the exact sequence \eqref{f:partial-phi} is a sequence of locally compact linear-topological $o_L$-modules. In fact, the first term is finite and the last term is cofinitely generated over $o_L$. In particular, the first and the last term carry the discrete topology and the first map is a closed immersion. Moreover, the map $\varphi - 1$ in the middle, by Lemma \ref{top-strict}, is topologically strict with open image. The latter implies that $\partial_\varphi$ induces an isomorphism of discretely topologized $o_L$-modules $D_{LT}(V)/(\varphi - 1)D_{LT}(V) \xrightarrow{\cong} H^1(L_\infty,V)$. In particular, the map $\partial_\varphi$ is topologically strict as well.

Therefore, using Remark \ref{LPontrjagin-exactness}, we obtain that the dual sequence
\begin{equation*}
  0 \longrightarrow H^1(L_\infty,V)^\vee \longrightarrow D_{LT}(V)^\vee \xrightarrow{\;\varphi^\vee - 1\;} D_{LT}(V)^\vee \longrightarrow H^0(L_\infty,V)^\vee \longrightarrow 0
\end{equation*}
is exact. If we identify the terms in this latter sequence according to Remark \ref{TatelocalIw} and Remark \ref{Pontrjagin-dual-inner} then the result is the exact sequence in the assertion.

Now let $V$ be arbitrary and put $V_n := V/\pi_L^n V$. We have the exact sequence of projective systems
\begin{equation*}
  0 \rightarrow H^1_{Iw}(L_\infty/L,V_n) \rightarrow D_{LT}(V_n(\tau^{-1})) \xrightarrow{\;\psi-1\;}  D_{LT}(V_n(\tau^{-1})) \rightarrow H^2_{Iw}(L_\infty/L,V_n) \rightarrow 0 \ .
\end{equation*}
Since the functor $D_{LT}$ is exact (Prop.\ \ref{exact}.i) we have
\begin{equation*}
  \varprojlim_n D_{LT}(V_n(\tau^{-1})) = \varprojlim_n D_{LT}(V(\tau^{-1}))/\pi_L^n D_{LT}(V(\tau^{-1})) = D_{LT}(V(\tau^{-1})) \ .
\end{equation*}
Moreover,
\begin{align*}
  \varprojlim_n H^i_{Iw}(L_\infty/L,V_n) & = \varprojlim_n \varprojlim_K H^i(K,V_n) = \varprojlim_K \varprojlim_n H^i(K,V_n) = \varprojlim_K H^i(K,V) \\
   & = H^i_{Iw}(L_\infty/L,V) \ .
\end{align*}
Therefore it remains to show that passing to the projective limit in the above exact sequence of projective systems is exact. For this it suffices to show that $R^1 \varprojlim$ of the two projective systems $\{H^1_{Iw}(L_\infty/L,V_n)\}_n$ and $\{(\psi - 1)D_{LT}(V_n(\tau^{-1}))\}_n$ vanishes. Because of $D_{LT}(V_n(\tau^{-1})) = D_{LT}(V(\tau^{-1}))/\pi_L^n D_{LT}(V(\tau^{-1}))$ the transition maps in the second projective system are surjective, which guarantees the required vanishing. For the first projective system we choose an open pro-$p$ subgroup $\Gamma$ in $\Gamma_L$, so that $o_L[[\Gamma]]$ is a complete local noetherian commutative ring. From Lemma \ref{vanishing}.iii we know that $\{H^1_{Iw}(L_\infty/L,V_n)\}_n$ is a projective system of finitely generated $o_L[[\Gamma]]$-modules. Hence \cite{Jen} Thm.\ 8.1 applies and gives the required vanishing.
\end{proof}

\begin{remark}\label{Gamma-equivariance}
Each map in the exact sequence \eqref{f:psi-version} is continuous and $o_L[[\Gamma_L]]$-equivariant.
\end{remark}
\begin{proof}
Continuity and $\Gamma_L$-equivariance follow from the construction. Since the weak topology on $D_{LT}(V)$ is $o_L$-linear and complete we may apply \cite{Laz} Thm.\ II.2.2.6 (which is valid for any profinite group) and obtain that the continuous $\Gamma_L$-action extends, by continuity, uniquely to an $o_L[[\Gamma_L]]$-action on $D_{LT}(V)$.
\end{proof}

\section{The Kummer map}\label{sec:Kummer}

We consider the Kummer isomorphism
\begin{equation*}
  \kappa : A(L_\infty) := \varprojlim_{n,m} L_n^\times/{L_n^\times}^{p^m} \xrightarrow{\;\cong\;} H^1_{Iw}(L_\infty,\mathbb{Z}_p(1)) \ .
\end{equation*}
Recall that we have fixed a generator $t_0$ of the Tate module $T = o_L t_0$. Correspondingly we have the dual generator $t_0^*$ of the $o_L$-dual $T^* = o_L t_0^*$ of $T$.
This leads to the twisted Kummer isomorphism
\begin{equation*}
  A(L_\infty) \otimes_{\mathbb{Z}_p }T^* \xrightarrow[\cong]{\kappa \otimes_{\mathbb{Z}_p} T^*}  H^1_{Iw}(L_\infty,\mathbb{Z}_p(1)) \otimes_{\mathbb{Z}_p }T^* \cong H^1_{Iw}(L_\infty,o_L(\tau))
\end{equation*}
where the second isomorphism comes from Remark \ref{cohomology-twist}. On the other hand, by Thm.\ \ref{psi-version}, we have a natural  isomorphism
\begin{equation*}
  Exp^* : H^1_{Iw}(L_\infty,o_L(\tau)) \xrightarrow{\;\cong\;} D_{LT}(o_L)^{\psi=1} = \mathbf{A}_L^{\psi=1} \ .
\end{equation*}
%\footnote{Streng genommen m\"usste man f\"ur den rechten Iso in der vorletzten Formel auch noch eine Basis des zyklotomischen Tate-Moduls $\mathbb{Z}_p(1)$ w\"ahlen. Aber mir scheint unsere Behandlung der Twists (nach Remark 4.6, welche \"ubrigens den zyklotomischen Twist \"uberhaupt nicht abdeckt) sehr ungl\"ucklich. Wir betrachten nur Twists, die einen kanonischen Darstellungsmodul haben, n\"amlich $T$, $T^*$ und $\mathbb{Z}_p(1)$. Soweit m\"oglich h\"atten wir mit denen arbeiten sollen. Insbesondere scheint mir Thm. 5.13 mit $V \otimes T \otimes \mathbb{Z}_p(1)^{-1}$ statt $V(\chi_{LT}\chi_{cyc}^{-1})$ zu gelten -- siehe Remark 4.7. Folglich existiert die Abbildung $Exp^*$ kanonisch auf $H^1_{Iw}(L_\infty,\mathbb{Z}_p(1)) \otimes_{\mathbb{Z}_p }T^*$. Wenn ich nichts \"ubersehen habe, sind also alle drei Abbildungen in dem nachfolgenden Thm. 6.2 unabh\"angig von Wahlen.}

Finally we have the homomorphism
\begin{align*}
  \nabla : (\varprojlim_n L_n^\times) \otimes_\mathbb{Z} T^* & \longrightarrow  \mathbf{A}_L^{\psi=1} \\
       u \otimes at_0^* & \longmapsto a \frac{\partial_\mathrm{inv}(g_{u,t_0})}{g_{u,t_0}}(\iota_{LT}(t_0)) \ .
\end{align*}
It is well defined by Thm.\ \ref{Coleman}, the last sentence in section \ref{sec:Coleman}, and Remark \ref{psi}.ii.
%\footnote{$A(L_\infty)$ ist wohl strikt gr\"o{\ss}er als die $p$-adische Komplettierung von $\varprojlim_n L_n^\times$.}
%Viewed as a homomorphism into the $p$-adically complete group $\bA_L$ it extends uniquely to the $p$-adic completion $A(L_\infty) \otimes_{\mathbb{Z}_p }T^*$ of $\varprojlim_n L_n^\times \otimes_\mathbb{Z} T^*$. Since $\bA_L^{\psi_L = 1}$ is $p$-adically closed in $\bA_L$ this extension is still a homomorphism
%\footnote{Since $\bA_L$ is $p$-torsion free the induced topology on $\bA_L^{\psi_L = 1}$ coincides with the $p$-adic topology of $\bA_L^{\psi_L = 1}$. Hence $\bA_L^{\psi_L = 1}$ is $p$-adically complete.}
%\begin{equation*}
%  \nabla : A(L_\infty) \otimes_{\mathbb{Z}_p} T^*  \longrightarrow   \mathbf{A}_L^{\psi=1} \ .
%\end{equation*}

\begin{remark}\label{independencet0}
The map $\nabla$ is independent of the choice of $t_0$.
\end{remark}
\begin{proof}
Let $u \otimes at_0^* \in (\varprojlim_n L_n^\times) \otimes_\mathbb{Z} T^*$. We temporarily write $\nabla_{t_0}$ instead of $\nabla$, and we let $t_1$ be a second generator of $T$, so that $t_1 = c t_0$ for some $c \in o_L^\times$. Then $u \otimes at_0^* = u \otimes act_1^*$, and by inserting the definitions in the first line we compute
\begin{align*}
  \nabla_{t_1}(u \otimes at_0^*) & = \nabla_{t_1}(u \otimes act_1^*) = \frac{ac}{g_{LT}(\iota_{LT}(t_1))} \frac{g_{u,t_1}'(\iota_{LT}(t_1))}{g_{u,t_1}(\iota_{LT}(t_1))} \\
   & = \frac{ac}{g_{LT}(\iota_{LT}(ct_0))} \frac{g_{u,ct_0}'(\iota_{LT}(ct_0))}{g_{u,ct_0}(\iota_{LT}(ct_0))}  \\
   & = \frac{ac}{g_{LT}([c](\iota_{LT}(t_0)))} \frac{g_{u,ct_0}'([c](\iota_{LT}(t_0)))}{g_{u,ct_0}([c](\iota_{LT}(t_0)))}  \\
   & = \frac{a}{g_{LT}(\iota_{LT}(t_0))} \frac{(g_{u,ct_0} \circ [c])'(\iota_{LT}(t_0))}{(g_{u,ct_0}\circ [c])(\iota_{LT}(t_0))} \\
   & = \frac{a}{g_{LT}(\iota_{LT}(t_0))} \frac{g_{u,t_0}' (\iota_{LT}(t_0))}{g_{u,t_0} (\iota_{LT}(t_0))} \\
   & = \nabla_{t_0}(u \otimes at_0^*) \ ,
\end{align*}
where we use Lemma \ref{iota}.a for the fourth identity, \eqref{f:dlog} for the fifth one, and Remark \ref{change-generator}.ii for the sixth one.
\end{proof}

Generalizing \cite{CC} Prop.\ V.3.2.iii) (see also \cite{Co2} Thm.\ 7.4.1\footnote{Colmez has no minus but which seems to be a mistake.}) we will establish the following kind of reciprocity law.

\begin{theorem}\label{Kummer-commutative}
The diagram
  \begin{equation}\label{f:Kummer}
     \xymatrix{
         (\varprojlim_n L_n^\times) \otimes_{\mathbb{Z}} T^* \ar[dr]_-{\nabla} \ar[rr]^-{-\kappa \otimes T^*}_-{} &   &  H^1_{Iw}(L_\infty,o_L(\tau)) \ar[dl]^-{Exp^*}_-{\cong} \\
         & {\mathbf{A}_L^{\psi=1}}  &  }
\end{equation}
is commutative.
\end{theorem}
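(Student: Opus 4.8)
The plan is to follow the Coleman–Fontaine philosophy: reduce the identity to a purely $(\varphi_L,\Gamma_L)$-module-theoretic statement by carefully unwinding the three maps $\kappa\otimes T^*$, $Exp^*$ and $\nabla$, and then match them using the duality pairing $[\;,\;\rangle$ and the adjointness of $\psi$ and $\varphi$ established in Section \ref{sec:etale}. The essential point is that $Exp^*$ is, by Theorem \ref{psi-version}, the composite of the local-Tate-duality identification $H^1_{Iw}(L_\infty/L,o_L(\tau))\cong H^1(L_\infty,o_L^\vee(1)\otimes\tau^{-1}\cdots)^\vee$ with the Pontrjagin-dual of the connecting map $\partial_\varphi$ from \eqref{f:partial-phi}, read through the identification $D_{LT}(V)^\vee\cong D_{LT}(V^\vee(\chi_{LT}))$ of Remark \ref{Pontrjagin-dual-inner}. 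Concretely, for $y\in H^1_{Iw}(L_\infty/L,o_L(\tau))$ the element $Exp^*(y)\in\mathbf{A}_L^{\psi=1}$ is characterised by the formula
\begin{equation*}
  [Exp^*(y),\partial_\varphi(z)\rangle = \langle y, z\rangle_{\mathrm{Tate}} \qquad\text{for all }z\in D_{LT}(o_L^\vee(1)\otimes\cdots),
\end{equation*}
i.e. $Exp^*$ is adjoint to $\partial_\varphi$ under the residue/Tate pairings. So the proof reduces to computing, for a norm-coherent unit $u=(u_n)$, the cup product of the Kummer class $\kappa(u)$ with an arbitrary cohomology class coming from $D_{LT}$ via $\partial_\varphi$, and showing it equals the residue pairing of $-\tfrac{1}{g_{LT}}\tfrac{g_{u,t_0}'}{g_{u,t_0}}(\omega_{LT})$ against that class.

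First I would make the cup product explicit. Using Shapiro's lemma (Lemma \ref{limit}) and the exact sequence \eqref{f:phi-1}, a class in $H^1(L_\infty,\mathbf{A}\otimes o_L(\text{twist}))$ obtained as $\partial_\varphi(\bar d)$ is represented by a cocycle $\sigma\mapsto (\phi_q-1)^{-1}$-type coboundary; pairing it with $\kappa(u)$, which is represented by the cocycle $\sigma\mapsto \sigma(t_0^{(u)})/t_0^{(u)}$ where $t_0^{(u)}$ is a compatible system of $p^m$-th roots of $u_n$, produces via the Artin–Schreier/Kummer symbol an explicit element of $H^2(L_\infty,o_L(1))=o_L$. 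This is exactly the point at which one uses that $\log_{LT}$ and the Coleman series $g_{u,t_0}$ interact: the key computation is that the image of $\tfrac{d g_{u,t_0}}{g_{u,t_0}}$ under $\Delta_{LT}$-type manipulations (Lemma \ref{Delta-N-extended}, Remark \ref{psi-invariance}, \eqref{f:psi-invariancebA}) lands in $\mathbf{A}_L^{\psi_L=1}$ after dividing by $g_{LT}$, which is precisely $\nabla(u\otimes t_0^*)$; the invariance under $\psi_{\Omega^1}$ from Remark \ref{psi-invariance} is what guarantees the target is $\mathbf{A}_L^{\psi=1}$ and not just $\mathbf{A}_L$. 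I would then match the two descriptions by evaluating both sides against the generators $Z^{-i-1}dZ$ (as in the proof of Lemma \ref{dual}), reducing to a finite collection of residue identities.

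To organise the reduction cleanly I would, following the hint in the introduction ("Inspired by \cite{Fo1}, \cite{Fo2} we reduce its proof to an explicit reciprocity law, Proposition \ref{reduction-to-char-p}, in characteristic $p$"), first prove the commutativity modulo $\pi_L$, i.e. after reducing $\mathbf{A}_L\to k_L((Z))$ and using the isomorphism $\varprojlim_n L_n^\times\xrightarrow{\cong}k_L((Z))^\times$ of Remark \ref{change-generator}.i. In characteristic $p$ the Kummer map becomes the Artin–Schreier–Witt symbol and the cup product computation becomes a residue computation — this is the generalized Schmid–Witt residue formula (Theorem \ref{schmid-witt}) — and the commutativity of the reduced diagram is Proposition \ref{reduction-to-char-p}. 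Then I would bootstrap from the residue field to $\mathbf{A}_L$ itself by a dévissage in $n$: both $-\kappa\otimes T^*$ composed with $Exp^*$ and $\nabla$ are continuous (Remark \ref{Gamma-equivariance}, and continuity of $\nabla$ is clear from its formula), $o_L$-linear, and compatible with the exact sequences $0\to\pi_L^{n}/\pi_L^{n+1}\to (\cdot)/\pi_L^{n+1}\to(\cdot)/\pi_L^{n}\to 0$, so that agreement mod $\pi_L$ plus agreement of the difference on the graded pieces forces agreement mod $\pi_L^n$ for all $n$, hence on the $\pi_L$-adically complete modules. The graded pieces are themselves $k_L((Z))$-linear versions of the same statement, so the induction closes.

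The main obstacle I anticipate is the explicit cocycle-level identification of the cup product $\kappa(u)\cup\partial_\varphi(d)$ with the residue pairing $\pi_L^{-n}\mathrm{Res}\big(\tfrac{g_{u,t_0}'}{g_{LT}\,g_{u,t_0}}(\omega_{LT})\cdot(\text{class})\big)$, i.e. getting the normalisations, the twist by $\tau=\chi_{cyc}\chi_{LT}^{-1}$, and the sign exactly right. The twist by $\tau$ enters because $Exp^*$ passes through $o_L(\tau)$ whereas the Kummer map naturally lands in $o_L(1)$, and reconciling $o_L(1)=o_L(\tau)\otimes T$ with the Coleman series, which is built from $t_0$ (a generator of $T=o_L(\chi_{LT})$), is where the dual generator $t_0^*$ and Remark \ref{change-generator}.ii must be used consistently; this is also the source of the overall minus sign in $-\kappa\otimes T^*$. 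Once the characteristic-$p$ case (Proposition \ref{reduction-to-char-p}, via Theorem \ref{schmid-witt}) is in hand, the lift is formal, so the real content is concentrated in that residue-formula input and in pinning down the comparison of the two duality pairings — the Tate cup product on one side and the residue pairing $[\;,\;\rangle$ of Remark \ref{pairing-ranglebAVersion} on the other — which is exactly the content of Remark \ref{Pontrjagin-dual-inner} combined with local Tate duality.
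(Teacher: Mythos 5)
Your high-level framework (unwind $Exp^*$ via the residue/Tate duality pairings, invoke field of norms and reciprocity, conclude with the Schmid--Witt residue formula) is the same as the paper's, and your description of the opening reduction --- tracing the Kummer class through the big three-pairing diagram to the statement of Proposition~\ref{reduction-to-char-p} --- captures the paper's first step correctly. However, your proposed mechanism for passing from that point to the full statement is wrong in a way that would leave the argument incomplete.

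You describe Proposition~\ref{reduction-to-char-p} as ``the commutativity of the reduced diagram'' obtained by ``reducing $\mathscr{A}_L \to k_L((Z))$'', and then propose to ``bootstrap from the residue field to $\mathbf{A}_L$ itself by a d\'evissage in $n$.'' This misreads the proposition. Proposition~\ref{reduction-to-char-p} is not a mod-$\pi_L$ statement: its input $z$ ranges over $\mathbf{A}_L$ and its output lies in $o_L$. The phrase ``in characteristic $p$'' refers only to the field-of-norms identification $\varprojlim_n L_n^\times = \mathbf{E}_L^\times$ and the replacement of $rec(u)$ by $rec_{\mathbf{E}_L}(u)$ (via Laubie), not to a reduction of coefficients. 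Consequently there is no ``reduced diagram'' for you to bootstrap from, and the d\'evissage you propose on the target (graded pieces $\pi_L^n\mathbf{A}_L/\pi_L^{n+1}\mathbf{A}_L$) does not apply: the source $(\varprojlim_n L_n^\times)\otimes_{\mathbb{Z}} T^*$ carries no compatible $\pi_L$-adic filtration, so knowing that $\nabla$ and $-Exp^*\circ(\kappa\otimes T^*)$ agree modulo $\pi_L$ does not formally force agreement modulo $\pi_L^n$.

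The actual mechanism in the paper for the passage from $n=1$ to general $n$ is entirely different and cannot be skipped: one observes that $\frac{d\hat u}{\hat u}$ is $\psi_{\Omega^1}$-invariant (\eqref{f:psi-invariancebA}) --- you mention this invariance, but you use it only to check that $\nabla$ is well-defined --- and combines it with the $\varphi$/$\psi$-adjointness under the residue pairing (\eqref{f:adjunctionbA}) to get $\mathrm{Res}(\varphi_L^{n-1}(z)\frac{d\hat u}{\hat u}) = \mathrm{Res}(z\frac{d\hat u}{\hat u})$. This $\varphi_L^{n-1}$-shift is what makes the congruence modulo $\pi_L^n$ accessible, because $\varphi_L^{n-1}$ is exactly $w_{n-1}\circ\overline\alpha_n$ in the Witt-vector picture (\eqref{f:alphaphiq}), and it is then the Schmid--Witt formula at Witt length $n$, Theorem~\ref{schmid-witt} together with Corollary~\ref{Witt} and Lemma~\ref{lemma-Witt}, that yields \eqref{f:key-identity}. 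That formula is not a formal consequence of its length-one case: Proposition~\ref{aka} and Proposition~\ref{constants} require a genuine induction on Witt components that uses the Artin--Schreier--Witt extensions of $\mathbf{E}_L$, not a d\'evissage on graded pieces. In short, the missing idea is the $\varphi_L^{n-1}$-shift via $\psi$-invariance; without it, ``agree mod $\pi_L$'' gives no leverage on the general case.
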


In a first step we consider, for any $n \geq 1$, the diagram
\begin{equation*}
\xymatrix{
   \big((\mathbf{A}_L/\pi_L^n \mathbf{A}_L)(\chi_{LT}) \big)/\im(\varphi - 1) \ar[d]_{\partial_\varphi}^{\cong}   \ar@{}[r]|{\times} & (\mathbf{A}_L/\pi_L^n \mathbf{A}_L)^{\psi=1}  \ar[r]^-{[\ , \ \rangle} & L/o_L \ar@{=}[d] \\
   H^1(L_\infty, o_L/\pi_L^n o_L(\chi_{LT})) \ar@{=}[dd] \ar@{}[r]|{\times} & H^1_{Iw}(L_\infty/L, o_L/\pi_L^n o_L(\chi_{cyc}\chi_{LT}^{-1}))  \ar[u]_{Exp^*}^{\cong} \ar[r] & L/o_L \ar@{=}[dd] \\
     & (\varprojlim_n L_n^\times) \otimes_{\bZ} o_L/\pi_L^n o_L(\chi_{LT}^{-1}) \ar[d]^{rec \otimes_{\mathbb{Z}_p} \id} \ar[u]_{-\kappa \otimes_{\mathbb{Z}_p} \id}   &   \\
   {\Hom(H_L,o_L/\pi_L^n o_L)(\chi_{LT})} \ar@{}[r]|{\times} & H_L^{ab}(p)\otimes_{\mathbb{Z}_p} o_L/\pi_L^n o_L(\chi_{LT}^{-1}) \ar[r] & L/o_L,   }
\end{equation*}
where the second pairing is induced by local Tate duality and the third pairing is the obvious one. By $rec : (\varprojlim_n L_n^\times) \longrightarrow H_L^{ab}(p)$ we denote the map into the maximal abelian pro-$p$ quotient $H_L^{ab}(p)$ of $H_L$ induced by the reciprocity homomorphisms of local class field theory for the intermediate extensions $L_m$. Note that $\Gal(L_\infty^{ab}/L_\infty) = \varprojlim_m \Gal(L_m^{ab}/L_\infty) = \varprojlim_m \Gal(L_m^{ab}/L_m)$, where $L_?^{ab}$ denotes the maximal abelian extension of $L_?$. The upper half of the diagram is commutative by the construction of the map $Exp^*$. The commutativity of the lower half follows from \cite{NSW} Cor.\ 7.2.13. All three pairings are perfect in the sense of Pontrjagin duality.

In order to prove Thm.\ \ref{Kummer-commutative} we have to show that, for any $u \in \varprojlim_n L_n^\times$ and any $a \in o_L$, we have
\begin{equation*}
  [z \otimes t_0,- Exp^*(\kappa(u) \otimes a t_0^*) \rangle \equiv \mathrm{Res} (za \frac{\partial_\mathrm{inv}(g_{u,t_0})}{g_{u,t_0}} d\log_{LT})  \mod \pi_L^n
\end{equation*}
for any $z \in \mathbf{A}_L$ and any $n \geq 1$. Due to the commutativity of the above diagram the left hand side is equal to $a\partial_\varphi(z \otimes t_0)(rec(u) \otimes t_0^*)= a\partial_\varphi(z )(rec(u))$. On the other hand the right hand side, by \eqref{f:inv}, is equal to $\mathrm{Res}(za (\frac{g_{u,t_0}'}{g_{u,t_0}}dZ)_{|Z = \iota_{LT}(t_0)}) = \mathrm{Res}(za \frac{d(g_{u,t_0}(\iota_{LT}(t_0)))}{g_{u,t_0}(\iota_{LT}(t_0))})$. By the $o_L$-bilinearity of all pairings involved we may assume that $a=1$. Hence we are reduced to proving that
\begin{equation*}
   \mathrm{Res}(z \frac{d(g_{u,t_0}(\iota_{LT}(t_0)))}{g_{u,t_0}(\iota_{LT}(t_0))}) = \partial_\varphi(z )(rec(u))
\end{equation*}
holds true for any $z \in \bA_L$  and $u \in \varprojlim_n L_n^\times$. According to the theory of fields of norms we have the natural identification $\varprojlim_n L_n^\times = \mathbf{E}_L^\times$ (cf.\ \cite{KR} Lemma 1.4). Under this identification, by \cite{Lau} Thm.\ 3.2.2, $rec(u)$ coincides with the image $rec_{\mathbf{E}_L}(u)$ of $u$ under the reciprocity homomorphism $rec_{\mathbf{E}_L} : \mathbf{E}_L^\times \longrightarrow H_L^{ab}(p)$ in characteristic $p$. Furthermore, $g_{u,t_0}(\iota_{LT}(t_0)) \in  (\bA_L^\times)^{\mathcal{N} = 1}$,  is, by Remark \ref{change-generator}.i,  Remark \ref{psi}.vii, and \eqref{f:N-iso}, a lift of $u \in \mathbf{E}_L^\times$. This reduces the proof of Thm.\ \ref{Kummer-commutative} further to the following proposition which generalizes the explicit reciprocity law in \cite{Fo1} Prop.\ 2.4.3.

\begin{proposition}\label{reduction-to-char-p}
For any $z\in \mathbf{A}_L$  and any $u \in \mathbf{E}_L^\times$ with (unique) lift $\hat{u} \in (\mathbf{A}_L^\times)^{\mathcal{N} = 1}$ we have
\begin{equation*}
\mathrm{Res}(z\frac{d\hat{u}}{\hat{u}}) = \partial_\varphi(z)(rec_{\mathbf{E}_L}(u)) \ ,
\end{equation*}
where $\partial_\varphi $ is the connecting homomorphism in \eqref{f:partial-phi}.
\end{proposition}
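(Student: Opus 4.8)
The plan is to follow Fontaine's strategy in the cyclotomic case (\cite{Fo1} Prop.\ 2.4.3) and reduce the identity, via the explicit shape of the connecting map $\partial_\varphi$, to the generalized Schmid--Witt residue formula proved later in section~\ref{sec:Witt}. First I would make $\partial_\varphi$ explicit. Here the relevant representation is $V = (o_L/\pi_L^n o_L)(\chi_{LT}) = (o_L/\pi_L^n o_L)\otimes_{o_L}T$, so $D_{LT}(V) = (\mathbf{A}_L/\pi_L^n\mathbf{A}_L)(\chi_{LT})$, which I identify with $\mathbf{A}_L/\pi_L^n\mathbf{A}_L$ via $\otimes t_0$; since $\chi_{LT}$ is trivial on $H_L$, $H^1(L_\infty,V) = \mathrm{Hom}^{\mathrm{cont}}(H_L,o_L/\pi_L^n o_L)$, restricted to $H_L^{ab}(p)$ for the pairing against $rec_{\mathbf{E}_L}(u)$. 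The sequence \eqref{f:partial-phi} comes from \eqref{f:Artin-Schreier-sequence} tensored with $V$ together with the vanishing in Lemma~\ref{phi-version}.i, and on $D_{LT}(V)\subseteq\mathbf{A}\otimes_{o_L}V$ the operator $\varphi$ is $\phi_q\otimes\mathrm{id}$. Hence, by the standard description of a boundary map, $\partial_\varphi(z)$ is represented, for $z\in\mathbf{A}_L/\pi_L^n\mathbf{A}_L$, by the homomorphism $\sigma\mapsto\sigma(c)-c$, where $c\in\mathbf{A}/\pi_L^n\mathbf{A}$ is any solution of $(\phi_q-1)(c)=z$ (which exists since $\phi_q-1$ is onto $\mathbf{A}/\pi_L^n\mathbf{A}$, cf.\ the proof of Remark~\ref{phi-1}). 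Choosing $t_0$ identifies $\mathbf{A}/\pi_L^n\mathbf{A}$ with the length-$n$ ramified Witt vectors over $\mathbf{E}_L^{sep}$ (the separable closure of the characteristic-$p$ local field $\mathbf{E}_L$, the image of $k_L((Z))$), with $\phi_q$ inducing the Witt-vector Frobenius; so $c$ is an Artin--Schreier--Witt root of the length-$n$ ramified Witt vector $z$ over $\mathbf{E}_L$, and $\partial_\varphi(z)$ is exactly the associated Artin--Schreier--Witt character.

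Next I would identify $\partial_\varphi(z)(rec_{\mathbf{E}_L}(u))$ with a local symbol. By the theory of fields of norms one has $\varprojlim_n L_n^\times=\mathbf{E}_L^\times$ (\cite{KR} Lemma~1.4), and $rec_{\mathbf{E}_L}:\mathbf{E}_L^\times\to H_L^{ab}(p)$ is the reciprocity map of $\mathbf{E}_L$ (\cite{Lau} Thm.~3.2.2, already invoked just before the proposition). By the standard compatibility of the reciprocity map of a local field of characteristic $p$ with explicit Artin--Schreier--Witt theory --- the ramified-Witt-vector analogue of the classical description, cf.\ \cite{Ser} --- the value $rec_{\mathbf{E}_L}(u)(c)-c=\partial_\varphi(z)(rec_{\mathbf{E}_L}(u))$ is the Artin--Schreier--Witt local symbol $(u,z]_{\mathbf{E}_L}\in o_L/\pi_L^n o_L$ attached to $u\in\mathbf{E}_L^\times$ and the length-$n$ ramified Witt vector $z$.

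Finally I would invoke the generalized Schmid--Witt residue formula, Theorem~\ref{schmid-witt}, which computes this symbol as $(u,z]_{\mathbf{E}_L}\equiv\mathrm{Res}\!\big(z\,\tfrac{d\hat u}{\hat u}\big)\bmod\pi_L^n$, where $\hat u$ is the unique $\mathcal{N}=1$-lift of $u$. That the right-hand side is intrinsic uses that $\mathrm{Res}$ is independent of the coordinate (Remark~\ref{Res-variable}.ii) and that the $\mathcal{N}=1$ normalization is precisely what pins down $\tfrac{d\hat u}{\hat u}$ --- it is the condition making this form $\psi_{\Omega^1}$-invariant, cf.\ \eqref{f:psi-invariancebA}. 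Stringing the three steps together yields $\mathrm{Res}(z\,\tfrac{d\hat u}{\hat u})=\partial_\varphi(z)(rec_{\mathbf{E}_L}(u))$.

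The real work is hidden in Theorem~\ref{schmid-witt} itself: its proof in section~\ref{sec:Witt} requires developing the arithmetic of ramified Witt vectors over $o_L$ and a genuine residue computation compatible with length-$n$ Witt-vector operations, reworking Schmid's and Witt's classical arguments in the Lubin--Tate setting --- that is the main obstacle. Within the present reduction the two points needing care are: (a) the precise identification of $\mathbf{A}_L/\pi_L^n\mathbf{A}_L$ with the ramified Witt vectors of length $n$ over $\mathbf{E}_L$, compatibly with $\phi_q$, with $\mathrm{Res}$, and with the $G_L$-action, so that ``$z\bmod\pi_L^n$'' really is the Witt vector fed into the symbol; and (b) tracking the $\chi_{LT}$-twist and the Frobenius conventions (e.g.\ $\phi_q-1$ versus $1-\phi_q$, and the Witt Frobenius versus its inverse) so that the composite $\partial_\varphi(\,\cdot\,)\circ rec_{\mathbf{E}_L}$ comes out equal --- with the correct sign --- to the symbol appearing in Theorem~\ref{schmid-witt}.
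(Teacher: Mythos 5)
Your overall strategy is the paper's: recognize $\partial_\varphi(z)(rec_{\mathbf{E}_L}(u))$ as an Artin--Schreier--Witt local symbol on the characteristic-$p$ side and identify it with a residue via the generalized Schmid--Witt formula. But there is a concrete gap in how you connect Theorem~\ref{schmid-witt} to the claimed formula, and it is precisely at the place you flag as ``point (a) needing care.''

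Theorem~\ref{schmid-witt} asserts that the Artin--Schreier--Witt pairing $[\,\cdot\,,\,\cdot\,)$ coincides with the residue pairing $(\,\cdot\,,\,\cdot\,)$ of Lemma~\ref{pairing2}, both taking values in $W_n(k)_L$. The latter is \emph{not} defined by $\mathrm{Res}(z\,\tfrac{d\hat u}{\hat u})$: it is defined by lifting the Witt vector $x\in W_n(\mathbf{E}_L)_L$ to $f\in W_n(\mathscr{A}_L)_L$ and taking $\alpha_n\bigl(\mathrm{Res}(\Phi_{n-1}(f)\,d\log\hat u)\bigr)$. When $x=\alpha_n(z)$ with $z\in\mathbf{A}_L$, the diagram \eqref{f:A-omega} together with \eqref{f:alphaphiq} shows $\Phi_{n-1}(f)\equiv w_{n-1}(\alpha_n(z))=\varphi_L^{n-1}(z)\bmod\pi_L^n$, so what Corollary~\ref{Witt} hands you is
\[
  \mathrm{Res}\bigl(\varphi_L^{n-1}(z)\,\tfrac{d\hat u}{\hat u}\bigr)\equiv\partial_\varphi(z)(rec_{\mathbf{E}_L}(u))\bmod\pi_L^n o_L\ ,
\]
with a Frobenius power on the left, not $z$ itself. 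The missing (and essential) step is to kill this Frobenius power: since $\hat u\in(\mathbf{A}_L^\times)^{\mathcal{N}=1}$, the form $\tfrac{d\hat u}{\hat u}$ is $\psi_{\Omega^1}$-invariant by \eqref{f:psi-invariancebA}, and by the adjointness \eqref{f:adjunctionbA} of $\psi_{\Omega^1}$ and $\varphi_L$ one gets $\mathrm{Res}(\varphi_L^m(z)\,\tfrac{d\hat u}{\hat u})=\mathrm{Res}(z\,\tfrac{d\hat u}{\hat u})$ for all $m\geq 0$. You do cite the $\psi$-invariance, but you invoke it only to argue that the right-hand side is ``intrinsic''; its actual role in the paper's argument is this Frobenius-cancellation reduction, without which your appeal to Theorem~\ref{schmid-witt} does not yield the stated identity. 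The rest of your outline --- the Artin--Schreier--Witt description of $\partial_\varphi$ and the use of $\varprojlim_n L_n^\times=\mathbf{E}_L^\times$ and Laubie's theorem to identify $rec$ with $rec_{\mathbf{E}_L}$ --- matches the paper.
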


Obviously the connecting homomorphism $\partial_\varphi$ for $V = o_L$ induces, by reduction modulo $\pi^n o_L$, the corresponding connecting homomorphism for $V = o_L/\pi_L^n o_L$. Hence we may prove the identity in Prop.\ \ref{reduction-to-char-p} as a congruence modulo $\pi^n o_L$ for any $n \geq 1$. Recall that for $\hat{u} \in  (\mathbf{A}_L^\times)^{\mathcal{N} = 1}$ the differential form $\frac{d\hat{u}}{\hat{u}}$ is $\psi_{\Omega^1}$-invariant by \eqref{f:psi-invariancebA}. Hence, by the adjointness of $\psi_{\Omega^1}$ and $\varphi_L$ (cf.\   \ref{f:adjunctionbA}), we obtain the equality
\begin{equation*}
\mathrm{Res}(\varphi_L^m(z)\frac{d\hat{u}}{\hat{u}}) = \mathrm{Res}(z\frac{d\hat{u}}{\hat{u}})
\end{equation*}
for any   $m \geq 1$. This reduces Prop.\ \ref{reduction-to-char-p} and consequently Thm.\ \ref{Kummer-commutative} to proving the congruence
\begin{equation}\label{f:key-identity}
\mathrm{Res}(\varphi_L^{n-1}(z)\frac{d\hat{u}}{\hat{u}}) \equiv \partial_\varphi(z)(rec_{\mathbf{E}_L}(u)) \mod \pi_L^n o_L
\end{equation}
for all $n \geq 1$. This will be the content of the next section (cf.\ Lemma \ref{lemma-Witt}).

\section{The generalized Schmid-Witt formula}\label{sec:Witt}

The aim of this section is to generalize parts of Witt's seminal paper \cite{Wit} (see also the detailed accounts \cite{Tho} and \cite{Koe} of Witt's original article) to the case of {\it ramified} Witt vectors.

First of all we need to recall a few facts about ramified Witt vectors $W(B)_L$ for $o_L$-algebras $B$. Details of this construction can be found in \cite{Haz}. But we will use \cite{GAL} where a much more straightforward approach is fully worked out. We denote by $\Phi_B = (\Phi_0,\Phi_1,\ldots) : W(B)_L \longrightarrow B^{\mathbb{N}_0}$ the homomorphism of $o_L$-algebra, called the ghost map, given by the polynomials $\Phi_n(X_0,\ldots,X_n) = X_0^{q^n} + \pi_L X_1^{q^{n-1}} + \ldots \pi_L^n X_n$. On the other hand, the multiplicative Teichm\"uller map $B \longrightarrow W(B)_L$ is given by $b \mapsto [b] := (b,0,\ldots)$ (cf.\ \cite{GAL} Lemma \ref{GAL-teichmuller}). If $B$ is a $k_L$-algebra then the Frobenius endomorphism $F = \phi_q$ of $W(B)_L$ has the form $\phi_q (b_0,\ldots,b_n,\ldots) = (b_0^q,\ldots,b_n^q,\ldots)$ (cf.\ \cite{GAL} Prop.\ \ref{GAL-charp}.i).

For a perfect $k_L$-algebra $B$ we have $W_n(B)_L = W(B)_L/ \pi_L^n W(B)_L$ for any $n \geq 1$ and, in particular, $\ker(\Phi_0) = \pi_L W(B)_L$; moreover, any $b = (b_0,b_1,\ldots) \in W(B)_L$ has the unique convergent expansion $b = \sum_{m=1}^\infty \pi_L^m [b_m^{q^{-m}}]$ (cf.\ \cite{GAL} Prop.\ \ref{GAL-perfekt}).

\begin{proposition}\label{s-map}
Suppose that $\pi_L$ is not a zero divisor in $B$ and that $B$ has an endomorphism of $o_L$-algebras $\sigma$ such that $\sigma(b) \equiv b^q \bmod \pi_L B$ for any $b \in B$. Then there is a unique homomorphism of $o_L$-algebras
\begin{equation*}
  s_B : B \longrightarrow W(B)_L \quad\text{such that $\Phi_i \circ s_B = \sigma^i$ for any $i \geq 0$}.
\end{equation*}
Moreover, we have:
\begin{itemize}
  \item[i.] $s_B$ is injective;
  \item[ii.] for any $n \geq 1$ there is a unique homomorphism of $o_L$-algebras $s_{B,n} : B/\pi_L^n B \longrightarrow W_n(B/\pi_L B)_L$ such that the diagram
\begin{equation*}
  \xymatrix{
    B \ar[d]_{\pr} \ar[r]^-{s_B} & W(B)_L \ar[r]^-{W(\pr)_L} & W(B/ \pi_L B)_L \ar[d]^{\pr} \\
    B/\pi_L^n B \ar[rr]^{s_{B,n}} &  & W_n(B/\pi_L B)_L   }
\end{equation*}
      is commutative;
  \item[iii.] if $B/\pi_L B$ is perfect then $s_{B,n}$, for any $n \geq 1$, is an isomorphism.
\end{itemize}
\end{proposition}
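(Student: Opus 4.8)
The plan is to construct $s_B$ coordinate by coordinate and to deduce all the functorial properties formally from injectivity of the ghost map. Since $\pi_L$ is not a zero divisor in $B$, the ghost map $\Phi_B : W(B)_L \to B^{\mathbb{N}_0}$ is injective (inductively, $\Phi_n$ determines the $n$-th coordinate from the lower ones upon dividing by $\pi_L^n$; cf.\ \cite{GAL}). The map $(\sigma^i)_{i\geq 0} : B \to B^{\mathbb{N}_0}$ is a homomorphism of $o_L$-algebras, so it suffices to produce, for each $b\in B$, a vector $s_B(b)=(s_0,s_1,\ldots)\in W(B)_L$ with $\Phi_n(s_0,\ldots,s_n)=\sigma^n(b)$ for all $n\geq 0$; uniqueness of $s_B$ is then forced by injectivity of $\Phi_B$, and the identities $\Phi_B(s_B(ab))=\Phi_B(s_B(a))\Phi_B(s_B(b))$, $\Phi_B(s_B(a+b))=\Phi_B(s_B(a))+\Phi_B(s_B(b))$, $\Phi_B(s_B(\lambda b))=\lambda\Phi_B(s_B(b))$ for $\lambda\in o_L$, and $\Phi_B(s_B(1))=\Phi_B(1)$ — all of which hold because $(\sigma^i)_{i\geq0}$ is an $o_L$-algebra homomorphism and $\Phi_B$ is $o_L$-linear — transport back across the injection $\Phi_B$ to show that $s_B$ is a homomorphism of $o_L$-algebras. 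Part (i) is then immediate, since $\Phi_0\circ s_B=\sigma^0=\id_B$ exhibits $s_B$ as split injective.

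The construction of $s_B(b)$ is recursive: set $s_0:=b$, and, given $s_0,\ldots,s_{n-1}$ with $\Phi_i(s_0,\ldots,s_i)=\sigma^i(b)$ for $i<n$, the equation $\Phi_n(s_0,\ldots,s_n)=\sigma^n(b)$ reads
\begin{equation*}
  \pi_L^n s_n = \sigma^n(b) - \Phi_{n-1}(s_0^q,\ldots,s_{n-1}^q) ,
\end{equation*}
so one must check that the right-hand side lies in $\pi_L^n B$ and then divide by $\pi_L^n$ (legitimate, as $\pi_L$ is a non-zero divisor). This divisibility — Dwork's lemma for ramified Witt vectors — is the one genuinely delicate point. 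Using that $\sigma$ commutes with the $o_L$-coefficient polynomials $\Phi_j$ one rewrites $\sigma^n(b)=\sigma(\Phi_{n-1}(s_0,\ldots,s_{n-1}))=\Phi_{n-1}(\sigma(s_0),\ldots,\sigma(s_{n-1}))$, so the right-hand side equals $\sum_{j=0}^{n-1}\pi_L^{j}\bigl(\sigma(s_j)^{q^{n-1-j}}-(s_j^q)^{q^{n-1-j}}\bigr)$; each summand is divisible by $\pi_L^n$ because $\sigma(s_j)\equiv s_j^q \bmod \pi_L B$ and, by the elementary fact that $x\equiv y \bmod \pi_L^m B$ forces $x^q\equiv y^q \bmod \pi_L^{m+1}B$ (valid since $q$ is a power of $p$ and $\pi_L\mid p$), iterating $n-1-j$ times yields $\sigma(s_j)^{q^{n-1-j}}\equiv (s_j^q)^{q^{n-1-j}}\bmod \pi_L^{n-j}B$.

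For (ii), put $t_B:=W(\pr)_L\circ s_B : B \to W(B/\pi_L B)_L$, a homomorphism of $o_L$-algebras. If $b=\pi_L^n c\in\pi_L^n B$ then $t_B(b)=\pi_L^n t_B(c)\in\pi_L^n W(B/\pi_L B)_L$, which maps to $0$ in $W_n(B/\pi_L B)_L$ (recall that $\pi_L=VF$ on ramified Witt vectors of a $k_L$-algebra, so $\pi_L^n W(B/\pi_L B)_L$ lies inside the kernel $V^n W(B/\pi_L B)_L$ of the truncation). Hence $\pr\circ t_B$ annihilates $\pi_L^n B$, and since $B\twoheadrightarrow B/\pi_L^n B$ is surjective it factors through a unique $s_{B,n}$ making the square commute. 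Finally, for (iii), write $\overline B:=B/\pi_L B$ and $W:=W(\overline B)_L$; as $\overline B$ is perfect, $W$ is $\pi_L$-torsion free, $W_n(\overline B)_L=W/\pi_L^n W$, and $\Phi_0$ induces an isomorphism $W/\pi_L W\cong\overline B$ (as recalled in the text). From $\Phi_0\circ W(\pr)_L=\pr\circ\Phi_0$ and $\Phi_0\circ s_B=\id$ we get $\Phi_0\circ t_B=\pr$, so the map $\overline t_1 : B/\pi_L B \to W/\pi_L W$ induced by $t_B$ becomes, under the identification $W/\pi_L W\cong\overline B$, the identity of $\overline B$ — in particular an isomorphism. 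One then shows by induction on $n$ that the map $\overline t_n : B/\pi_L^n B \to W/\pi_L^n W$ induced by $t_B$, which is exactly $s_{B,n}$ under $W_n(\overline B)_L=W/\pi_L^n W$, is an isomorphism: apply the five lemma to the morphism induced by $t_B$ between the short exact sequences $0 \to \pi_L^n(-)/\pi_L^{n+1}(-)\to (-)/\pi_L^{n+1}(-)\to (-)/\pi_L^n(-)\to 0$ for $B$ and for $W$, using that $\pi_L$ is a non-zero divisor in both (so that multiplication by $\pi_L^n$ identifies the left-hand vertical map with $\overline t_1$, already known to be an isomorphism) together with the inductive hypothesis for the right-hand vertical map. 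Apart from the $\pi_L^n$-divisibility of the second paragraph, every step here is a formal dévissage.
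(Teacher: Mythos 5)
Your proof is correct. The paper itself gives no argument for this proposition but simply cites \cite{GAL} Prop.\ \ref{GAL-s-map}, so there is nothing internal to compare against; your self-contained construction is the standard one. The key points — injectivity of the ghost map $\Phi_B$ because $\pi_L$ is a non-zero divisor; transporting the $o_L$-algebra homomorphism property of $(\sigma^i)_i$ back through $\Phi_B$; and the ramified Dwork/Cartier divisibility obtained by iterating the elementary congruence $x\equiv y \bmod \pi_L^m B \Rightarrow x^q\equiv y^q \bmod \pi_L^{m+1}B$ — are exactly what one expects the cited reference to contain, and your computation of the right-hand side $\sum_{j=0}^{n-1}\pi_L^{j}\bigl(\sigma(s_j)^{q^{n-1-j}}-(s_j^q)^{q^{n-1-j}}\bigr)$ with the resulting $\pi_L^n$-divisibility is verified. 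The dévissage for (ii) and (iii) is also sound. One small streamlining for (ii): rather than quoting the identity $\pi_L=V\circ F$ on $W$ of a $k_L$-algebra (true, but itself requiring a universal-polynomial argument), it is perhaps cleaner to observe directly that the zeroth Witt coordinate of $\pi_L\cdot w$ is $\overline{\pi}_L\, w_0 = 0$ in $B/\pi_L B$, so $\pi_L W(B/\pi_L B)_L\subseteq V W(B/\pi_L B)_L$, and then iterate using the projection formula $\pi_L\, V(y)=V(\pi_L\, y)$ to get $\pi_L^n W(B/\pi_L B)_L\subseteq V^n W(B/\pi_L B)_L$; the conclusion is the same. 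Your five-lemma induction for (iii), with the left-hand vertical map identified with $\overline{t}_1$ via multiplication by $\pi_L^n$ (legitimate on both sides since $\pi_L$ is a non-zero divisor in $B$ and, by perfectness of $B/\pi_L B$, in $W(B/\pi_L B)_L$), is correct.
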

\begin{proof}
See \cite{GAL} Prop.\ \ref{GAL-s-map}.
\end{proof}

\begin{lemma}\label{phi-maps}
For any perfect $k_L$-algebra $B$ we have:
\begin{itemize}
  \item[i.] The diagram
\begin{equation*}
  \xymatrix{
    W_n(W(B)_L)_L \ar[d]_{W_n(\mathrm{pr})_L} \ar[r]^-{\Phi_{n-1}} & W(B)_L \ar[d]^{\mathrm{pr}} \\
    W_n(B)_L \ar[r]^{\phi_q^{n-1}} & W_n(B)_L   }
\end{equation*}
is commutative for any $n \geq 1$.
  \item[ii.] The composite map
\begin{equation*}
  W(B)_L \xrightarrow{\; s_{W(B)_L} \;} W(W(B)_L)_L \xrightarrow{W(\pr)_L} W(B)_L
\end{equation*}
is the identity.
\end{itemize}
\end{lemma}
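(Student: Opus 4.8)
The plan is to prove part (ii) first, and then deduce part (i) by a more explicit computation resting on the same facts.

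\smallskip
\noindent\emph{Part (ii).} Since $B$ is perfect it is reduced, so $W(B)_L$ is $\pi_L$-torsion free and its ghost map $W(W(B)_L)_L\to\prod_{i\ge0}W(B)_L$ is injective; moreover $\phi_q$ is an $o_L$-algebra endomorphism of $W(B)_L$ with $\phi_q(x)\equiv x^q\bmod\pi_L W(B)_L$. Hence Prop.\ \ref{s-map} applies with $\sigma=\phi_q$, and $s_{W(B)_L}$ is characterized by $\Phi_i\circ s_{W(B)_L}=\phi_q^i$ for all $i\ge0$. I would first observe that for $b\in B$ the Teichm\"uller representative $([b],0,0,\dots)\in W(W(B)_L)_L$ of $[b]\in W(B)_L$ has $i$-th ghost component $[b]^{q^i}=[b^{q^i}]=\phi_q^i([b])$, so by injectivity of the ghost map $s_{W(B)_L}([b])=([b],0,0,\dots)$. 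Since $W(\mathrm{pr})_L$ takes Teichm\"uller representatives to Teichm\"uller representatives and $\mathrm{pr}([b])=b$, this yields $W(\mathrm{pr})_L(s_{W(B)_L}([b]))=[b]$. As $s_{W(B)_L}$ and $W(\mathrm{pr})_L$ are $o_L$-linear, hence $\pi_L$-adically continuous, and every element of $W(B)_L$ is a convergent sum $\sum_{m\ge0}\pi_L^m[c_m]$ (perfectness of $B$), the identity $W(\mathrm{pr})_L\circ s_{W(B)_L}=\mathrm{id}$ follows.

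\smallskip
\noindent\emph{Part (i).} All four arrows of the square are $o_L$-algebra homomorphisms ($\Phi_{n-1}$ as a ghost component, the other three by functoriality), hence additive, and $W_n(W(B)_L)_L$ is generated as an abelian group by the elements $V^i([y])$ with $y\in W(B)_L$ and $0\le i\le n-1$, because $(r_0,\dots,r_{n-1})=\sum_i V^i([r_i])$. So it suffices to compare the two composites on $V^i([y])$. One has $\Phi_{n-1}(V^i([y]))=\pi_L^i\,y^{q^{n-1-i}}$ in $W(B)_L$, and using $\pi_L\cdot x=V(\phi_q(x))$ on $W(B)_L$ (valid as $B$ has characteristic $p$) together with the compatibility of the reduction $\mathrm{pr}\colon W(B)_L\to W_n(B)_L$ with $V$ and with products, the composite $\mathrm{pr}\circ\Phi_{n-1}$ sends $V^i([y])$ to $V^i\bigl((\mathrm{pr}(\phi_q^i(y)))^{q^{n-1-i}}\bigr)$, where $\mathrm{pr}(\phi_q^i(y))\in W_n(B)_L$ has $0$-th component $y_0^{q^i}$ ($y=(y_0,y_1,\dots)$). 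On the other side $W_n(\mathrm{pr})_L(V^i([y]))=V^i([y_0])$, whence $\phi_q^{n-1}(V^i([y_0]))=V^i([y_0^{q^{n-1}}])$. Thus commutativity of the square amounts to $V^i\bigl((\mathrm{pr}(\phi_q^i(y)))^{q^{n-1-i}}\bigr)=V^i([y_0^{q^{n-1}}])$ in $W_n(B)_L$.

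\smallskip
\noindent For this I would prove that $z^{q^m}\equiv[z_0^{q^m}]\bmod\pi_L^{m+1}W_n(B)_L$ for every $z\in W_n(B)_L$ and every $m\ge0$, where $z_0$ is the $0$-th component of $z$. For $m=0$ this holds since $z-[z_0]\in V(W_n(B)_L)=\pi_L W_n(B)_L$ (perfectness). For the step, writing $z^{q^m}=[z_0^{q^m}]+\pi_L^{m+1}t$ and expanding $(z^{q^m})^q$ binomially, each non-leading term carries a coefficient $\binom qj$ with $0<j<q$ (divisible by $p$, hence by $\pi_L$) times $\pi_L^{m+1}$, or a factor $\pi_L^{(m+1)j}$ with $j\ge2$, and for $j=1$ one uses $v_L(q)\ge1$; so every such term lies in $\pi_L^{m+2}W_n(B)_L$. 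Applying this with $z=\mathrm{pr}(\phi_q^i(y))$ and $m=n-1-i$ (so $z_0^{q^m}=y_0^{q^{n-1}}$) gives $(\mathrm{pr}(\phi_q^i(y)))^{q^{n-1-i}}-[y_0^{q^{n-1}}]\in\pi_L^{m+1}W_n(B)_L=V^{m+1}(W_n(B)_L)$, and applying $V^i$ carries this into $V^{i+m+1}(W_n(B)_L)=V^n(W_n(B)_L)=0$. This proves the reduced identity, and by additivity the square commutes.

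\smallskip
\noindent The hard part is the bookkeeping in (i): tracking the interplay of $V^i$, of the reduction maps, and of $\pi_L=FV$ and $\phi_q=(\,\cdot\,)^q$ on components, and in particular pinning down the congruence modulo $\pi_L^{m+1}$ rather than merely $\pi_L$, which is what makes the vanishing $V^n=0$ on $W_n(B)_L$ usable. Perfectness of $B$ enters repeatedly — to get $\pi_L W_n(B)_L=V(W_n(B)_L)$, to invert $\phi_q$, and to work over a reduced coefficient ring. Part (ii) is essentially formal once $s_{W(B)_L}([b])=([b],0,0,\dots)$ is established.
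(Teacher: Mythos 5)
Your proof is correct, and it arrives at the same key congruence as the paper, but by a genuinely different route with reversed logical dependencies. The paper proves part~(i) by working entirely inside $W(B)_L$: from $\ker(\Phi_0)=\pi_L W(B)_L$ one has $\mathbf{b}_j\equiv[b_{j,0}]\bmod\pi_L W(B)_L$, and the cited $q$-congruence lemma then gives $\mathbf{b}_j^{q^m}\equiv[b_{j,0}^{q^m}]\bmod\pi_L^{m+1}W(B)_L$; substituting into $\Phi_{n-1}=\sum_m\pi_L^m\mathbf{b}_m^{q^{n-1-m}}$ and invoking the unique $\pi_L$-adic Teichm\"uller expansion over a perfect ring yields commutativity, with no decomposition into Verschiebung shifts. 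Part~(ii) is then deduced from the ghost-component computation already carried out in~(i). You instead establish~(ii) independently via $s_{W(B)_L}([b])=([b],0,\dots)$ (by injectivity of the ghost map on $W(W(B)_L)_L$, which uses that $W(B)_L$ is $\pi_L$-torsion free) together with $\pi_L$-adic density, and you prove~(i) by writing $W_n(W(B)_L)_L$ as sums of $V^i([y])$, passing to $W_n(B)_L$ at the outset, and reproving the $q$-congruence $z^{q^m}\equiv[z_0^{q^m}]\bmod\pi_L^{m+1}W_n(B)_L$ by a direct binomial induction. Both arguments turn on the same congruence; the paper's is shorter because it cites it and only reduces modulo $\pi_L^n$ at the last step, while yours is self-contained and makes the use of $V^n=0$ in $W_n(B)_L$ explicit. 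One small stylistic remark on your proof of~(ii): the density argument works because $W(\pr)_L\circ s_{W(B)_L}$ is an $o_L$-algebra homomorphism, hence $\pi_L$-adically continuous, and $W(B)_L$ is $\pi_L$-adically Hausdorff and complete for $B$ perfect; you use both facts implicitly, and it is worth saying so.
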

\begin{proof}
i. Let $(\mathbf{b}_0,\ldots,\mathbf{b}_{n-1}) \in W_n(W(B)_L)_L$ with $\mathbf{b}_j = (b_{j,0}, b_{j,1},\ldots)$. As $\ker(\Phi_0) = \pi_L W(B)_L$ we have $\mathbf{b}_j \equiv [b_{j,0}] \bmod \pi_L W(B)_L$. Hence \cite{GAL} Lemma  \ref{GAL-q-congruence} implies that $\mathbf{b}_j^{q^m} \equiv [b_{j,0}^{q^m}] \bmod \pi_L^{m+1} W(B)_L$ for any $m \geq 0$. Using this as well as \cite{GAL} Lemma \ref{GAL-sum}.i we now compute
\begin{align*}
  \Phi_{n-1}(\mathbf{b}_0,\ldots,\mathbf{b}_{n-1}) & = \sum_{m=0}^{n-1} \pi_L^m \mathbf{b}_m^{q^{n-1-m}} \\
   & \equiv \sum_{m=0}^{n-1} \pi_L^m [b_{m,0}^{q^{n-1-m}}]  \mod \pi_L^n W(B)_L  \\
   & = (b_{0,0}^{q^{n-1}}, \ldots , b_{n-1,0}^{q^{n-1}}, 0,\ldots)  \\
   & = \phi_q^{n-1}(b_{0,0} , \ldots ,b_{n-1,0}, 0,\ldots) = \phi_q^{n-1} \circ W_n(\mathrm{pr})_L (\mathbf{b}_0,\ldots,\mathbf{b}_{n-1}) \ .
\end{align*}

ii. First of all we note that the Frobenius on $W(B)_L$ is the $q$th power map modulo $\pi_L$. Hence the homomorphism $s_{W(B)_L}$ exists.

Let $\mathbf{b} = (\mathbf{b}_0,\ldots,\mathbf{b}_j,\ldots) \in W(W(B)_L)_L$ with $\mathbf{b}_j = (b_{j,0}, b_{j,1},\ldots) \in W(B)_L$ be the image under $s_{W(B)_L}$ of some $b = (b_0,b_1,\ldots) \in W(B)_L$. We have to show that $b_i = b_{i,0}$ for any $i \geq 0$. By the characterizing property of $s_{W(B)_L}$ we have $\Phi_i(\mathbf{b}) = (b_0^{q^i},\ldots,b_j^{q^i},\ldots)$. On the other hand the computation in the proof of i. shows that $\Phi_i(\mathbf{b}) = (b_{0,0}^{q^i}, \ldots , b_{i,0}^{q^i},\ldots)$. Hence $b_i^{q^i} = b_{i,0}^{q^i}$ and therefore $b_i = b_{i,0}$.
\end{proof}

By construction (and \cite{GAL} Prop.\ \ref{GAL-comparison}) we have $\mathbf{A}_L \subseteq W(\widetilde{\mathbf{E}})_L$. But there is the following observation.

\begin{remark}\label{contained}
Let $A \subseteq W(\widetilde{\mathbf{E}})_L$ be a $\phi_q$-invariant $o_L$-subalgebra such that $A/\pi_L A \subseteq \widetilde{\mathbf{E}}$; we then have $A \subseteq W(A/\pi_L A)_L$.
\end{remark}
\begin{proof}
We consider the diagram
\begin{equation*}
  \xymatrix{
     A \ar[d]_{\subseteq} \ar[r]^-{s_A} & W(A)_L \ar[d]_{\subseteq} \ar[r]^-{W(\pr)_L} & W(A/\pi_L A)_L \ar[d]_{\subseteq} \\
    W(\widetilde{\mathbf{E}})_L \ar[r]^-{s_{W(\widetilde{\mathbf{E}})_L}} & W(W(\widetilde{\mathbf{E}})_L)_L \ar[r]^-{W(\pr)_L} & W(\widetilde{\mathbf{E}})_L .   }
\end{equation*}
For the commutative left square we apply Prop.\ \ref{s-map} (with $\sigma := \phi_q$). The right hand square is commutative by naturality. By Lemma \ref{phi-maps}.ii the composite map in the bottom row is the identity. Hence the composite map in the top row must be an inclusion.
\end{proof}

This applies to $\mathbf{A}_L$ and shows that
\begin{equation*}
  \mathbf{A}_L \subseteq W(\mathbf{E}_L)_L \subseteq W(\widetilde{\mathbf{E}})_L
\end{equation*}
holds true. In particular, we have the commutative diagram (cf.\ Prop.\ \ref{s-map}.ii)
\begin{equation}\label{f:alpha-n}
  \xymatrix{
    \mathbf{A}_L \ar[d]_{\pr} \ar[rr]^{\subseteq} \ar@{-->}[drr]^{\alpha_n} & & W(\mathbf{E}_L)_L \ar[d]^{\pr} \\
    \mathbf{A}_L/\pi_L^n \mathbf{A}_L \ar[rr]^{\overline{\alpha}_n := s_{\mathbf{A}_L,n}} & & W_n(\mathbf{E}_L)_L   }
\end{equation}
for any $n \geq 1$, where $\alpha_n$ by definition is the composite of the outer maps. For later use before Lemma \ref{lemma-Witt} we note that Remark \ref{contained} also applies to $\mathbf{A}$ showing that $\mathbf{A} \subseteq W(\mathbf{E}_L^{sep})_L$.

\begin{lemma}\label{phi-maps2}
For any $n \geq 1$ the diagram
\begin{equation*}
  \xymatrix{
    W_n(\mathbf{A}_L)_L \ar[d]_{W_n(\pr)_L} \ar[r]^-{\Phi_{n-1}} & \mathbf{A}_L \ar[d]^{\alpha_n} \\
    W_n(\mathbf{E}_L)_L \ar[r]^{\phi_q^{n-1}} & W_n(\mathbf{E}_L)_L   }
\end{equation*}
is commutative.
\end{lemma}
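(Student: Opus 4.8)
The plan is to deduce Lemma \ref{phi-maps2} from Lemma \ref{phi-maps}.i applied to the \emph{perfect} $k_L$-algebra $B = \widetilde{\mathbf{E}}$, exploiting the chain of ring inclusions $\mathbf{A}_L \subseteq W(\mathbf{E}_L)_L \subseteq W(\widetilde{\mathbf{E}})_L$ established just before the statement. All four maps occurring in the square — the ghost component $\Phi_{n-1} : W_n(\mathbf{A}_L)_L \to \mathbf{A}_L$, the map $W_n(\pr)_L : W_n(\mathbf{A}_L)_L \to W_n(\mathbf{E}_L)_L$, the map $\alpha_n$, and $\phi_q^{n-1}$ on $W_n(\mathbf{E}_L)_L$ — are induced by ring-theoretic operations (Witt polynomials, truncation, componentwise $q$-th power) that are functorial, hence compatible with the functorial inclusion $W_n(\mathbf{E}_L)_L \hookrightarrow W_n(\widetilde{\mathbf{E}})_L$. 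Since this inclusion of truncated Witt rings is injective, it suffices to verify $\alpha_n(\Phi_{n-1}(\mathbf{w})) = \phi_q^{n-1}(W_n(\pr)_L(\mathbf{w}))$ after composing with it, i.e. for $\mathbf{w} = (\mathbf{b}_0,\ldots,\mathbf{b}_{n-1}) \in W_n(\mathbf{A}_L)_L$ regarded, via $\mathbf{b}_i \in \mathbf{A}_L \subseteq W(\widetilde{\mathbf{E}})_L$, as an element of $W_n(W(\widetilde{\mathbf{E}})_L)_L$.

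Two identifications make this reduction precise, and I would spell them out. First, by the very definition \eqref{f:alpha-n}, $\alpha_n$ is the composite of $\mathbf{A}_L \hookrightarrow W(\mathbf{E}_L)_L$ with the truncation $W(\mathbf{E}_L)_L \to W_n(\mathbf{E}_L)_L$; therefore, for $x \in \mathbf{A}_L \subseteq W(\widetilde{\mathbf{E}})_L$, the element $\alpha_n(x)$ is just the truncation of the Witt vector $x$, which coincides with $\pr(x)$ under $\pr : W(\widetilde{\mathbf{E}})_L \to W_n(\widetilde{\mathbf{E}})_L$. Second, the reduction $\pr : \mathbf{A}_L \to \mathbf{E}_L = \mathbf{A}_L/\pi_L\mathbf{A}_L$ agrees with the ghost component $\Phi_0$ on $\mathbf{A}_L \subseteq W(\widetilde{\mathbf{E}})_L$: by the proof of Remark \ref{contained} the inclusion $\mathbf{A}_L \hookrightarrow W(\mathbf{E}_L)_L$ equals $W(\pr)_L \circ s_{\mathbf{A}_L}$, while $\Phi_0 \circ s_{\mathbf{A}_L} = \id$ by Prop.\ \ref{s-map} and $\Phi_0 \circ W(\pr)_L = \pr \circ \Phi_0$ by naturality of $\Phi_0$; composing these gives $\Phi_0 \circ (\mathbf{A}_L \hookrightarrow W(\mathbf{E}_L)_L) = \pr$. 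Consequently $W_n(\pr)_L(\mathbf{w})$ agrees with $W_n(\Phi_0)_L(\mathbf{w})$ once $\mathbf{w}$ is viewed in $W_n(W(\widetilde{\mathbf{E}})_L)_L$, and the Frobenius on $W_n(\mathbf{E}_L)_L$ is the restriction of the componentwise $q$-th power map $\phi_q$ on $W_n(\widetilde{\mathbf{E}})_L$. With these identifications in hand, the sought identity is exactly the commutativity of Lemma \ref{phi-maps}.i for $B = \widetilde{\mathbf{E}}$ (recalling that $\pr : W(\widetilde{\mathbf{E}})_L \to \widetilde{\mathbf{E}}$ is the ghost component $\Phi_0$), so no further computation is required.

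The genuinely delicate point, and what I expect to be the main obstacle in a clean write-up, is precisely that one cannot apply Lemma \ref{phi-maps}.i directly with $B = \mathbf{E}_L$: its proof uses $\ker(\Phi_0) = \pi_L W(B)_L$, which holds for a perfect algebra but fails for the non-perfect field $\mathbf{E}_L$. So one is forced to route everything through $W(\widetilde{\mathbf{E}})_L$, and the slightly fiddly part is to argue carefully that $\alpha_n$ — which is defined in \eqref{f:alpha-n} in terms of the intermediate ring $W(\mathbf{E}_L)_L$ — becomes literally the truncation $\pr$ on $W(\widetilde{\mathbf{E}})_L$ after composing with the compatible inclusions of (truncated) Witt rings, and likewise that $W_n(\pr)_L$ becomes $W_n(\Phi_0)_L$ there. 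Once these compatibilities are recorded, the rest is a pure invocation of Lemma \ref{phi-maps}.i.
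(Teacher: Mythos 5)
Your proof is correct and takes essentially the same approach as the paper: both reduce to Lemma \ref{phi-maps}.i for the perfect algebra $B = \widetilde{\mathbf{E}}$ by embedding everything into $W(\widetilde{\mathbf{E}})_L$, correctly noting that Lemma \ref{phi-maps}.i cannot be applied directly with $B = \mathbf{E}_L$. The paper organizes this as a commutative cube (the square in the statement is the back face, the Lemma \ref{phi-maps}.i square for $B = \widetilde{\mathbf{E}}$ is the front face, and naturality plus the inclusions give the remaining four faces), whereas you spell out the required identifications --- $\alpha_n$ as truncation and $\pr$ as $\Phi_0$ on $\mathbf{A}_L$ --- elementwise; the mathematical content is the same.
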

\begin{proof}
We consider the diagram
\begin{equation*}
  \xymatrix{
  W_n(\mathbf{A}_L)_L \ar[rr]^{\Phi_{n-1}} \ar[dd]_{W_n(\pr)_L} \ar[dr]_{\subseteq}
      & & \mathbf{A}_L \ar'[d][dd]^{\alpha_n} \ar[dr]^{\subseteq}       \\
  & W_n(W(\widetilde{\mathbf{E}})_L)_L \ar[rr]^(0.4){\Phi_{n-1}} \ar[dd]_(0.3){W_n(\mathrm{pr})_L}
      &  & W(\widetilde{\mathbf{E}})_L \ar[dd]^{\mathrm{pr}} \\
  W_n(\mathbf{E}_L)_L \ar'[r][rr]^{\phi_q^{n-1}} \ar[dr]_{\subseteq} & & W_n(\mathbf{E}_L)_L \ar[dr]^{\subseteq}
                 \\
  & W_n(\widetilde{\mathbf{E}})_L \ar[rr]^{\phi_q^{n-1}}
      &  &  W_n(\widetilde{\mathbf{E}})_L .      }
\end{equation*}
The front square is commutative by Lemma \ref{phi-maps}.i. The top and bottom squares are commutative by the naturality of the involved maps and the side squares for trivial reasons. Hence the back square is commutative as claimed.
\end{proof}

\begin{lemma}\label{overlinealpha}
For any $n \geq 1$ the map $\overline{\alpha}_n : \mathbf{A}_L /\pi_L^n \mathbf{A}_L \longrightarrow W_n(\mathbf{E}_L)_L$ is injective.
\end{lemma}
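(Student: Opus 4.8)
The plan is to exploit that the source $\mathbf{A}_L/\pi_L^n\mathbf{A}_L$ is a quotient of a discrete valuation ring and hence has an almost trivial lattice of ideals, together with the fact that $\overline{\alpha}_n=s_{\mathbf{A}_L,n}$ is a homomorphism of $o_L$-algebras (Prop.\ \ref{s-map}.ii). First I would observe that $\ker(\overline{\alpha}_n)$ is therefore an ideal of $\mathbf{A}_L/\pi_L^n\mathbf{A}_L$. Since $\mathbf{A}_L$ is a discrete valuation ring with prime element $\pi_L$, the only ideals of $\mathbf{A}_L/\pi_L^n\mathbf{A}_L$ are the $\pi_L^i\mathbf{A}_L/\pi_L^n\mathbf{A}_L$ for $0\le i\le n$, and the smallest nonzero one is $\pi_L^{n-1}\mathbf{A}_L/\pi_L^n\mathbf{A}_L$, generated by the class of $\pi_L^{n-1}$. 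So it suffices to check that $\overline{\alpha}_n$ does not annihilate this class: a nonzero ideal would have to contain the unique minimal nonzero ideal, and this would be excluded, forcing $\ker(\overline{\alpha}_n)=0$.

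The remaining step is to see that $\overline{\alpha}_n(\pi_L^{n-1}\bmod\pi_L^n\mathbf{A}_L)=\pi_L^{n-1}\cdot\overline{\alpha}_n(1)=\pi_L^{n-1}\cdot 1_{W_n(\mathbf{E}_L)_L}$ is nonzero in $W_n(\mathbf{E}_L)_L$. This is the one place where something beyond formal manipulation enters: one has to know that the target $W_n(\mathbf{E}_L)_L$ is not too degenerate, i.e.\ that $\pi_L^{n-1}\neq 0$ in it. I would deduce this from the structure of ramified Witt vectors: the inclusion $k_L\subseteq\mathbf{E}_L$ induces, by functoriality, an injection $o_L/\pi_L^n o_L=W_n(k_L)_L\hookrightarrow W_n(\mathbf{E}_L)_L$, and $\pi_L^{n-1}\not\equiv 0\bmod\pi_L^n o_L$. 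Both ingredients — functoriality/injectivity of $W_n(-)_L$ on injections and the identification $W_n(k_L)_L=o_L/\pi_L^n o_L$ — are immediate from the construction in \cite{GAL}, so I expect this to be the "main obstacle" only in the bookkeeping sense, not a genuine difficulty.

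As an alternative route, one may argue through the inclusion $\mathbf{A}_L\subseteq W(\mathbf{E}_L)_L$ exhibited in \eqref{f:alpha-n}: there $\alpha_n$ is the composite of this inclusion with the truncation $W(\mathbf{E}_L)_L\to W_n(\mathbf{E}_L)_L$, so $\overline{\alpha}_n$ is injective exactly when $\ker(\alpha_n)=\pi_L^n\mathbf{A}_L$. Since both $\mathbf{A}_L$ and $W(\mathbf{E}_L)_L$ carry the $\pi_L$-adic valuation with $\pi_L$ as uniformiser, and the inclusion respects them, $\alpha_n(x)=0$ for $x\in\mathbf{A}_L\setminus\{0\}$ would, if $v_{\mathbf{A}_L}(x)=k<n$, let one write $x=\pi_L^k u$ with $u\in\mathbf{A}_L^\times$ and then, applying the truncation to $W_{k+1}(\mathbf{E}_L)_L$ and multiplying by $\alpha_{k+1}(u)^{-1}$, conclude $\pi_L^k=0$ in $W_{k+1}(\mathbf{E}_L)_L$ — a contradiction by the same Witt-vector fact. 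The only input particular to this variant is that $\overline{\alpha}_1$ is injective, which is clear since it is a nonzero $k_L$-algebra map out of the field $\mathbf{A}_L/\pi_L\mathbf{A}_L=\mathbf{E}_L$. I would present the ideal-theoretic version as the main proof because it is the shortest and most robust.
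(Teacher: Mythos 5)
Your main argument is correct and takes a genuinely different route from the paper. The paper's proof works with the chain of inclusions $\mathbf{A}_L \subseteq W(\mathbf{E}_L)_L \subseteq W(\widetilde{\mathbf{E}})_L$, uses perfection of $\widetilde{\mathbf{E}}$ to identify the $n$-th Verschiebung ideal with $\pi_L^n W(\widetilde{\mathbf{E}})_L$, and then appeals to the fact that $\mathbf{A}_L \hookrightarrow W(\widetilde{\mathbf{E}})_L$ is an extension of discrete valuation rings sharing the same uniformizer $\pi_L$, so that $\pi_L^n W(\widetilde{\mathbf{E}})_L \cap \mathbf{A}_L = \pi_L^n \mathbf{A}_L$. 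You instead argue entirely inside the source and target of $\overline{\alpha}_n$: since $\overline{\alpha}_n = s_{\mathbf{A}_L,n}$ is an $o_L$-algebra map and the ideals of $\mathbf{A}_L/\pi_L^n\mathbf{A}_L$ form a chain with unique minimal nonzero member $(\pi_L^{n-1})$, it is enough that $\pi_L^{n-1}\cdot 1 \neq 0$ in $W_n(\mathbf{E}_L)_L$, which you read off from the componentwise injection $W_n(k_L)_L \hookrightarrow W_n(\mathbf{E}_L)_L$ together with the identification $W_n(k_L)_L \cong o_L/\pi_L^n o_L$ recorded just before the lemma. This is a bit more elementary — it never passes through $\widetilde{\mathbf{E}}$ and does not need to know that $W(\widetilde{\mathbf{E}})_L$ is a DVR — at the cost of invoking the ring-homomorphism property of $\overline{\alpha}_n$ explicitly (which the paper's proof only uses through the commutative diagram). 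One small point you leave implicit and should spell out: that the structure map $o_L \to W_n(\mathbf{E}_L)_L$ factors through $W_n(k_L)_L$, so that $\pi_L^{n-1}\cdot 1_{W_n(\mathbf{E}_L)_L}$ really is the image of $\pi_L^{n-1}\cdot 1_{W_n(k_L)_L}$; this is routine functoriality of $W_n(-)_L$ applied to $k_L \hookrightarrow \mathbf{E}_L$, but it is the point where the nondegeneracy of the target actually enters. Your alternative route is essentially a valuation-theoretic reformulation close in spirit to the paper's, though the remark about $\overline{\alpha}_1$ being injective is a red herring there — the operative input in both of your variants is the same nonvanishing of $\pi_L^{k}$ in $W_{k+1}(\mathbf{E}_L)_L$.
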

\begin{proof}
We have to show that $V_n(\mathbf{E}_L)_L \cap \mathbf{A}_L = \pi_L^n \mathbf{A}_L$. We know already that $\pi_L^n \mathbf{A}_L \subseteq V_n(\mathbf{E}_L)_L \cap \mathbf{A}_L = V_n(\widetilde{\mathbf{E}})_L \cap \mathbf{A}_L = \pi_L ^n W(\widetilde{\mathbf{E}})_L \cap \mathbf{A}_L$. But $\mathbf{A}_L \subseteq W(\widetilde{\mathbf{E}})_L$ both are discrete valuation rings with the prime element $\pi_L$. Therefore we must have equality.
\end{proof}

The above two lemmas together with the surjectivity of $W_n(\alpha_1)_L$  imply that, for any $n \geq 1$, there is a unique homomorphism of $o_L$-algebras $w_{n-1} : W_n(\mathbf{E}_L)_L \longrightarrow \mathbf{A}_L / \pi_L^n \mathbf{A}_L$ such that the diagram
\begin{equation}\label{f:A-omega}
  \xymatrix{
    W_n(\mathbf{A}_L)_L \ar[d]_{W_n(\pr)_L} \ar[r]^-{\Phi_{n-1}} & \mathbf{A}_L \ar[d]^{\mathrm{pr}} \\
    W_n(\mathbf{E}_L)_L \ar[r]^{w_{n-1}} &  \mathbf{A}_L / \pi_L^n \mathbf{A}_L  }
\end{equation}
is commutative. Furthermore, we have
\begin{equation}\label{f:alphaphiq}
\overline{\alpha}_n \circ w_{n-1} = {\phi_q^{n-1}}_{ | W_n(\mathbf{E}_L)_L}  \qquad\text{and}\qquad  w_{n-1} \circ \overline{\alpha}_n = {\phi_q^{n-1}}_{ | \mathbf{A}_L / \pi_L^n \mathbf{A}_L} \ .
\end{equation}

We also may apply Prop.\ \ref{s-map} to $o_L$ itself (with $\sigma := \id$) and obtain analogous commutative diagrams as well as the corresponding maps
\begin{equation*}
  \xymatrix{
    o_L \ar[d]_{\pr} \ar[dr]^-{\alpha_n}  &  & \\
    o_L/ \pi_L^n o_L \ar[r]^-{\overline{\alpha}_n} & W_n(k_L)_L \ar[r]^-{w_{n-1}} & o_L/ \pi_L^n o_L \ .   }
\end{equation*}
But here $\overline{\alpha}_n$ and $w_{n-1}$ are isomorphisms which are inverse to each other (cf.\ Prop.\ \ref{s-map}.iii). Of course these maps for $o_L$ and $\mathbf{A}_L$ are compatible with respect to the inclusions $o_L \subseteq \mathbf{A}_L$ and $k_L \subseteq \mathbf{E}_L$.\\

For the rest of this section let $K$ denote any local field isomorphic to $k((Z))$ with $k=k_L$ (such an isomorphism depending on the choice of an uniformizing element $Z$ of $K$), $K^{sep}$ any separable closure of it and $H = \Gal(K^{sep}/K)$ its Galois group. Furthermore we write $\overline{K}$ for an algebraic closure of $K^{sep}$, $\phi_q$ for the $q$th power Frobenius, and $\wp:= \phi_q -1$ for the corresponding Artin-Schreier operator. By induction with respect to $n$ one easily proves the following fact.
%\footnote{Vgl.\ Lorenz, Einf\"uhrung in die Algebra II, \S26.5 F2}

\begin{lemma}\label{Artin-Schreier}
We have the short exact sequences
\begin{equation*}
  0 \longrightarrow W_n(k)_L \longrightarrow W_n(K^{sep})_L \xrightarrow{\;\wp\;} W_n(K^{sep})_L \longrightarrow 0
\end{equation*}
and
\begin{equation*}
  0 \longrightarrow W_n(k)_L \longrightarrow W_n(\bar{ K})_L \xrightarrow{\;\wp\;}   W_n(\overline{K})_L \longrightarrow 0 \ .
\end{equation*}
\end{lemma}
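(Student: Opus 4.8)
The plan is a short induction on $n$, carried out once and for all with $\Omega$ denoting \emph{either} $K^{sep}$ \emph{or} $\overline{K}$, so that both exact sequences are obtained simultaneously. The only properties of $\Omega$ I will use are that it is a field containing $k$, that $\{a\in\Omega:a^q=a\}=k=\mathbb{F}_q$, and that every separable polynomial over $\Omega$ splits in $\Omega$ — in particular $X^q-X-a$ for $a\in\Omega$, which is separable since its derivative is $-1$. Because $\Omega$ is a $k_L$-algebra, the Frobenius $\phi_q$ on $W_n(\Omega)_L$ is the componentwise $q$-power map (\cite{GAL} Prop.\ \ref{GAL-charp}.i); hence $\wp=\phi_q-1$ acts componentwise, and a vector $(b_0,\dots,b_{n-1})$ lies in $\ker(\wp)$ exactly when every $b_i$ satisfies $b_i^q=b_i$, i.e.\ when every $b_i\in k$. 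Thus $\ker(\wp\mid W_n(\Omega)_L)=W_n(k)_L$ for all $n\ge 1$ with no induction at all, and the entire content of the lemma is the surjectivity of $\wp$ on $W_n(\Omega)_L$.

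For $n=1$ one has $W_1(\Omega)_L=\Omega$ and $\wp$ is the map $x\mapsto x^q-x$, which is surjective because $X^q-X-a$ has a root in $\Omega$; this is the base case. For the inductive step I would invoke the standard truncation exact sequence of additive groups
\begin{equation*}
  0\longrightarrow \Omega \xrightarrow{\ \iota\ } W_{n+1}(\Omega)_L \xrightarrow{\ R\ } W_n(\Omega)_L \longrightarrow 0
\end{equation*}
(see \cite{Haz}), where $R$ forgets the last coordinate and $\iota(b)=(0,\dots,0,b)$; both maps are $o_L$-linear and, since $\phi_q$ operates componentwise, they commute with $\phi_q$ and hence with $\wp$, and moreover $\wp\circ\iota=\iota\circ(x\mapsto x^q-x)$. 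Given $\mathbf{c}\in W_{n+1}(\Omega)_L$, the inductive hypothesis yields $\mathbf{x}\in W_n(\Omega)_L$ with $\wp(\mathbf{x})=R(\mathbf{c})$; lifting $\mathbf{x}$ to $\tilde{\mathbf{x}}\in W_{n+1}(\Omega)_L$ one gets $R(\mathbf{c}-\wp(\tilde{\mathbf{x}}))=0$, so $\mathbf{c}-\wp(\tilde{\mathbf{x}})=\iota(b)$ for a unique $b\in\Omega$; choosing $x\in\Omega$ with $x^q-x=b$ (base case) gives $\wp(\tilde{\mathbf{x}}+\iota(x))=\mathbf{c}$. Equivalently, one may apply the snake lemma to $\wp$ acting on the truncation sequence: the cokernel of $\wp$ on $W_{n+1}(\Omega)_L$ is squeezed between those on $\Omega$ and on $W_n(\Omega)_L$, both of which vanish. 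Specializing $\Omega$ to $K^{sep}$ and to $\overline{K}$ produces the two stated sequences.

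There is essentially no obstacle; the points deserving a moment's care are the description of $\phi_q$ on ramified Witt vectors over a $k_L$-algebra (so that the kernel computation is immediate and the maps $\iota,R$ are $\wp$-equivariant) and the fact that the $q$-power Frobenius is surjective on $K^{sep}$ — which holds because $X^q-X-a$ is separable, even though $X^q-a$ is not, so the Artin–Schreier solutions already lie in the \emph{separable} closure.
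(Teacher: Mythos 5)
Your proof is correct and matches the paper's intended approach (an induction on $n$ that the paper leaves to the reader), using the standard Verschiebung truncation sequence together with the Artin--Schreier base case and the fact that, over a $k_L$-algebra, $\phi_q$ is the componentwise $q$-power map. One small wording slip worth fixing: $\wp=\phi_q-1$ does \emph{not} act componentwise on $W_n(\Omega)_L$, since Witt vector subtraction is not componentwise; the kernel computation nevertheless stands because $\wp(b)=0$ is equivalent to $\phi_q(b)=b$, which \emph{is} checked component by component, and the identity $\wp\circ\iota=\iota\circ(x\mapsto x^q-x)$ is correctly justified (as you then do) by additivity of $\iota$ and its $\phi_q$-equivariance rather than by any componentwise action of $\wp$.
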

From the $H$-group cohomology long exact sequence  associated with the first sequence above we obtain a homomorphism
\begin{equation*}
    W_n(K)_L=(W_n(K^{sep})_L)^H \xrightarrow{\ \partial\ } H^1(H,W_n(k)_L)  \xrightarrow{rec_K^*} \Hom^{cont}(K^\times,W_n(k)_L) \ ,
\end{equation*}
which induces the generalized, bilinear Artin-Schreier-Witt pairing
\begin{align*}
   [\;,\;):= [\;,\;)_K: W_n(K)_L \times K^\times & \longrightarrow  W_n(k)_L  \\
          (x,a) & \longmapsto [x,a):=\partial(x)(rec_K(a)) \ ,
\end{align*}
i.e., $[x,a)=rec_K(a)(\alpha)-\alpha$ for any $\alpha \in W_n(K^{sep})_L$ with $\wp(\alpha)=x$. It is bilinear in the sense that it si $o_L$-linear in the first and additive in the second variable.

\begin{remark}\label{Krad}
Let $K^{rad}$ be the perfect closure of $K$ in $\overline{K}$. Then one can use the second exact sequence to extend the above pairing to $ W_n(K^{rad})_L \times K^\times$.
\end{remark}

For a separable extension  $F$ of $K$ we obtain similarly by taking  $\mathrm{Gal}(K^{sep}/F)$-invariants (instead of $H$-invariants) an Artin-Schreier-Witt-pairing for $F$
\begin{align*}
   [\;,\;)_F : W_n(F)_L \times F^\times & \longrightarrow  W_n(k)_L  \\
          (x,a) & \longmapsto [x,a):=\partial(x)(rec_F(a)) \ ,
\end{align*} (with respect to the same $q$!) satisfying
\begin{equation*}
  [x,a)_F = [x,\mathrm{Norm}_{F/K}(a))_K  \qquad\text{for $x \in W_n(K)_L$ and $a \in F^\times$}
\end{equation*}
- and similarly for any pair of separable extensions $F$ and $F'$ - by the functoriality of class field theory.

Although $W_n(k)_L$ is not a cyclic group in general, many aspects of Kummer/Artin-Schreier theory still work. In particular, for any $\alpha = (\alpha_0,\ldots,\alpha_{n-1}) \in W_n(K^{sep})_L$ with $\wp(\alpha) = x \in W_n(K)_L$ the extension $K(\alpha) := K(\alpha_0,\ldots,\alpha_{n-1}) = (K^{sep})^{H_x} = K(\wp^{-1}(x))$ of $K$ is Galois with abelian Galois group $\Gal(K(\alpha)/K)$ contained in $W_n(k)_L$ via sending $\sigma$ to $\chi_x(\sigma) := \sigma(\alpha)-\alpha$; here $H_x \subseteq H$ denotes the stabilizer of $\alpha$, which also is the stabilizer of $\wp^{-1}(x)$.

We also need the injective additive map
\begin{align*}
  \tau : \qquad\  W_n(B)_L & \longrightarrow W_{n+1}(B)_L \\
  (x_0,\ldots,x_{n-1}) & \longmapsto (0,x_0,\ldots,x_{n-1})
\end{align*}
induced in an obvious way by the additive Verschiebung $V$ (cf.\ \cite{GAL} Prop.\ \ref{GAL-F-V}). If $B$ is a $k_L$-algebra then
\begin{equation}\label{f:tau-additivity}
\tau \circ \phi_q = \phi_q \circ \tau \quad\text{and, in particular,}\quad  \wp \circ \tau = \tau \circ \wp
\end{equation}
(cf.\ \cite{GAL} Prop.\ \ref{GAL-charp}.i).

\begin{lemma}\label{properties[)}
Let $K \subseteq F\subseteq K^{sep}$ be a finite extension. Then, for any $a \in F^\times$, $x\in W_n(F)_L$, and $\alpha \in W_n(K^{sep})_L$ with $\wp(\alpha) = x$ we have:
 \begin{enumerate}
 \item[i.] $[\tau x, a)_F=\tau[x,a)_F$ (where we use the same notation for the pairing at level $n+1$ and $n$, respectively!);
 \item[ii.]  if  $a$ belongs to $(F^\times)^{p^n}$, then $[x,a)_F=0$;
 \item[iii.] $[x,a)_F=0$ if and only if $a \in \mathrm{Norm}_{F(\alpha)/F}(F(\alpha)^\times)$.
 \end{enumerate}
\end{lemma}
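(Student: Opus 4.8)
The plan is to prove the three assertions of Lemma \ref{properties[)} by reducing each to classical Artin--Schreier--Witt theory, exploiting the description of the pairing via the connecting homomorphism $\partial$ of the Artin--Schreier sequence in Lemma \ref{Artin-Schreier} together with the reciprocity map.

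\medskip
\emph{Proof sketch.}
For i., recall that by definition $[\tau x,a)_F = rec_F(a)(\beta) - \beta$ for any $\beta \in W_{n+1}(K^{sep})_L$ with $\wp(\beta) = \tau x$. Since $\wp \circ \tau = \tau \circ \wp$ by \eqref{f:tau-additivity}, the element $\beta := \tau(\alpha)$ is such a choice, where $\wp(\alpha) = x$. Now $rec_F(a)$ acts on $W_{n+1}(K^{sep})_L$ componentwise via its action on $W(K^{sep})_L$, and $\tau = V^{\,?}$ (the shift induced by Verschiebung) is $\Gal$-equivariant because it is a functorial operation on Witt vectors of $K^{sep}$; hence $rec_F(a)(\tau\alpha) - \tau\alpha = \tau\bigl(rec_F(a)(\alpha) - \alpha\bigr) = \tau[x,a)_F$, which is the claim. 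The only point requiring care is the compatibility of $\tau$ with the $\Gal(K^{sep}/F)$-action, which is immediate from the functoriality of $V$ in the ring argument.

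\medskip
For ii., we use that by Lemma \ref{Artin-Schreier} the cohomology class $\partial(x) \in H^1(\Gal(K^{sep}/F), W_n(k)_L)$ is killed by $p^n$: indeed $W_n(k)_L$ is annihilated by $\pi_L^n$, hence by $p^n$ (even by $p^{\lceil n/e\rceil}$ for $e$ the ramification index, but $p^n$ suffices). Equivalently, $rec_F^*(\partial(x)) \in \Hom^{\mathrm{cont}}(F^\times, W_n(k)_L)$ factors through $F^\times / (F^\times)^{p^n}$. Therefore $[x,a)_F = rec_F^*(\partial(x))(a)$ vanishes whenever $a \in (F^\times)^{p^n}$. (One can also argue directly: if $a = b^{p^n}$ then $rec_F(a) = rec_F(b)^{p^n}$, and $rec_F(b)^{p^n}$ acts trivially on $\wp^{-1}(x)$ since $\Gal(F(\wp^{-1}(x))/F) \hookrightarrow W_n(k)_L$ has exponent dividing $p^n$.)

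\medskip
For iii., this is the standard local class field theory criterion adapted to the Artin--Schreier--Witt setting, and it is the step I expect to be the main obstacle to write cleanly. The extension $F(\alpha)/F$ is abelian with $\Gal(F(\alpha)/F) \hookrightarrow W_n(k)_L$ via $\sigma \mapsto \chi_x(\sigma) = \sigma(\alpha) - \alpha$, as recorded just before the lemma. By the functoriality of the reciprocity map and the fact that $rec_F$ induces an isomorphism $F^\times/\mathrm{Norm}_{F(\alpha)/F}(F(\alpha)^\times) \xrightarrow{\;\cong\;} \Gal(F(\alpha)/F)$ (local class field theory, using that $F(\alpha)/F$ is finite abelian), the map $a \mapsto rec_F(a)|_{F(\alpha)}$ has kernel exactly $\mathrm{Norm}_{F(\alpha)/F}(F(\alpha)^\times)$. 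Composing with the injection $\chi_x : \Gal(F(\alpha)/F) \hookrightarrow W_n(k)_L$ gives precisely $a \mapsto [x,a)_F$; hence $[x,a)_F = 0$ if and only if $rec_F(a)$ fixes $\alpha$, i.e. fixes $F(\alpha)$, i.e. $a$ is a norm from $F(\alpha)$. The subtlety is to make sure that $F(\alpha) = (K^{sep})^{H_x}$ is really the fixed field cut out by the stabilizer of $\alpha$ and that $\chi_x$ is well defined and injective on $\Gal(F(\alpha)/F)$ — both follow from $\wp(\alpha - \sigma\alpha) = 0$, so $\sigma\alpha - \alpha \in \ker(\wp|_{W_n(K^{sep})_L}) = W_n(k)_L$, combined with the observation that $\sigma$ fixes $\alpha$ exactly when $\sigma \in H_x$.

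\medskip
All three parts are thus formal consequences of the definition of the pairing, the $\Gal$-equivariance of $\tau$, the annihilation of $W_n(k)_L$ by $p^n$, and the norm-residue property of $rec_F$; no new computation beyond tracking these compatibilities is needed.
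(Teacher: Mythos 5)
Your proof is correct and follows essentially the same route as the paper's own (quite terse) argument: part i.\ uses $\wp\circ\tau=\tau\circ\wp$ together with the naturality of $\tau$ with respect to the Galois action, part ii.\ reduces to the fact that $p^n$ annihilates $W_n(k)_L$ (your cohomological reformulation is equivalent to the bilinearity argument the paper invokes), and part iii.\ is the norm-residue property of $rec_F$ applied to the character $\chi_x$. The extra detail you supply for iii.\ is a faithful unwinding of what the paper compresses into one sentence.
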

\begin{proof}
i. By \eqref{f:tau-additivity} we have $\wp(\tau\alpha)=\tau x$. Therefore $[\tau x,a) = rec_K(a)(\tau\alpha)-\tau\alpha = \tau(rec_K(a)(\alpha)-\alpha) = \tau[x,a)$.
ii. Since $p^nW_n(k)_L=0$ (this is not sharp with regard to $p^n$!) this is immediate from the bilinearity of the pairing. iii. Because of $[x,a)_F = \chi_x(rec_F(a))$ this is clear from local class field theory.
\end{proof}

For any subset $S$ of an $o_L$-algebra $R$ we define the \textit{subset}
\begin{equation*}
  W_n(S)_L := \{(s_0,\ldots,s_{n-1})\in W_n(R)_L : s_i \in S\ \text{for all $i$}\}
\end{equation*}
of $W_n(R)_L$ as well as $V W_n(S)_L := V(W_n(S)_L)$. If $I \subseteq R$ is an ideal then $W_n(I)_L$ is an ideal in $W_n(R)_L$, and we have the exact sequence
\begin{equation}\label{f:exact-ideal}
  0 \longrightarrow W_n(I)_L \longrightarrow W_n(R)_L \longrightarrow W_n(R/I)_L \longrightarrow 0 \ .
\end{equation}
If $R' \subseteq R$ is an $o_L$-subalgebra (not necessarily with a unit), then $W_n(R')_L\subseteq W_n(R)_L$ forms a subgroup and there is an exact sequence of abelian groups
\begin{equation}\label{f:exactR'}
    0 \longrightarrow VW_n(R')_L \longrightarrow W_n(R')_L \xrightarrow{\;\Phi_0\;} R' \longrightarrow 0 \ .
\end{equation}
We apply this to $R'=ak[a]$ for $a\in K^\times$.

\begin{proposition}\label{aka}
For any $x\in W_n(ak[a])_L$ we have $[x,a)=0$.
\end{proposition}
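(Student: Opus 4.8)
The plan is to establish the vanishing of $[x,a)$ via the norm criterion of Lemma~\ref{properties[)}.iii: choosing $\alpha\in W_n(K^{sep})_L$ with $\wp(\alpha)=x$ and writing $K(\alpha):=K(\wp^{-1}(x))$, it suffices to prove that $a\in\mathrm{Norm}_{K(\alpha)/K}(K(\alpha)^\times)$.

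First I would reduce to a Teichm\"uller vector. The exact sequence \eqref{f:exactR'} for $R'=ak[a]$ says precisely that every $x\in W_n(ak[a])_L$ can be written Witt-additively as $[\Phi_0(x)]+_W\tau(x')$ with $\Phi_0(x)\in ak[a]$ and $x'\in W_{n-1}(ak[a])_L$. Since $[\,\cdot\,,a)$ is $o_L$-linear, in particular additive for the Witt addition, in the first variable, and $[\tau x',a)=\tau[x',a)$ by Lemma~\ref{properties[)}.i, an induction on $n$ reduces the claim to $[[f],a)=0$ for every $f\in ak[a]$. Now I would split according to $v:=v_K(a)$. If $v\ge 0$ the components of $[f]$ lie in $o_K$, so $K(\alpha)/K$ is unramified; in fact when $v>0$ they lie in $\mathfrak m_K$, and then $\wp(\alpha)=x$ can be solved recursively inside $W_n(K)_L$ — at each stage one solves an equation $\beta^q-\beta=(\text{element of }\mathfrak m_K)$, which has a solution in $K$ — so that $K(\alpha)=K$ and the symbol vanishes; when $v=0$ the element $a$ is a unit, and units are norms from unramified extensions. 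The remaining case $v<0$ is the essential one.

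For $v<0$ the extension $K(\alpha)/K$ is genuinely ramified. I would first replace $f$ by a representative modulo $\wp(W_n(K)_L)$ — which changes neither $K(\alpha)$ nor the pairing — to put the pole order in standard form, and then organise $K(\alpha)/K$ as a tower of classical Artin--Schreier steps, using the factorisation $\phi_q-1=(\phi_p-1)\circ(\phi_p^{j-1}+\dots+\phi_p+1)$, where $q=p^j$, together with the Witt-level filtration. On each such degree-$p$ step the explicit norm identity $\mathrm{Norm}(\gamma)=\gamma^p-\gamma$ holds (the analogue of $\prod_{c\in\FF_q}(X+c)=X^q-X$), and from this one deduces that $f$ itself lies in $\mathrm{Norm}_{K(\alpha)/K}(K(\alpha)^\times)$. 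Writing $f=a\cdot g(a)$ with $g\in k_L[a]$, this reduces the claim to showing that the residual factor $g(a)$ is a norm, which I would treat by an induction on $\deg_a f$, using that $k_L^\times$ has order prime to $p$ and hence lies in every norm group (the quotient $K^\times/\mathrm{Norm}$ being a $p$-group), together with the fact that a uniformizer of $K$ is a norm from the totally ramified extension $K(\alpha)$ (via the $\gcd$ argument on valuations, valid once the pole order is coprime to $p$).

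The main obstacle is precisely this case $v<0$: exhibiting $a$ — with its unit part, not merely up to the subgroup of $p$-th powers — inside the norm group of a ramified Artin--Schreier--Witt extension, and in particular the sub-case $p\mid v_K(a)$, where $a$ is not a $p^n$-th power even after tame base change, so that one cannot shortcut via Lemma~\ref{properties[)}.ii but must carry out the explicit Witt-vector norm computation while controlling the ramification filtration.
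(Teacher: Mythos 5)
Your proposal does not actually prove the proposition: you reduce to Teichm\"uller vectors $[f]$, you observe that the case $v_K(a)\geq 0$ is easy, and then for $v_K(a)<0$ you sketch a plan and explicitly concede in the last paragraph that you cannot carry it out when $p\mid v_K(a)$. That unresolved case is not a corner case; it is the heart of the matter. Moreover, even the sketch is shaky: the factorisation $\phi_q-1=(\phi_p-1)\circ(\phi_p^{j-1}+\dots+1)$ does not present $K(\alpha)/K$ as a tower of Artin--Schreier steps, because the second factor is not an Artin--Schreier operator, and you never explain how the Witt-level filtration and this $p$-adic factorisation interact. Your strategy of first showing $f$ is a norm and then peeling off the cofactor $g(a)$ by separate means (tame part plus a gcd argument on valuations) is exactly where the assumption ``pole order coprime to $p$'' sneaks in and cannot be removed.

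The paper avoids all of this by two ideas you do not have. First, after reducing to Teichm\"uller vectors, it does not try to handle an arbitrary $f=ag(a)$: writing the exponent of $a$ as $l=l'p^m$ with $l'$ coprime to $p$, it uses bilinearity (in the second variable), Lemma~\ref{properties[)}.ii, and the fact that $l'$ is a unit in $W_n(k)_L$ to reduce to the single family $x=[a^{p^m}]$. Second, and more importantly, the induction on $n$ is stated for \emph{all} finite separable extensions $F/K$ and \emph{all} $a\in F^\times$ simultaneously. This allows one to change the base: solving $\wp(\alpha_0)=a$, taking $\beta$ with $\wp(\beta)=[a^{p^m}]$ and $\beta_0=\alpha_0^{p^m}$, one checks $\wp(\beta-[\beta_0])\in VW_n(\alpha_0 k[\alpha_0])_L$, so the induction hypothesis applies over $F'=F(\alpha_0)$ with the new element $a'=\alpha_0$. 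The norm identity $\mathrm{Norm}_{F(\alpha_0)/F}\bigl(\prod_{\xi\in k/\im\chi_a}(\alpha_0+\xi)\bigr)=a$ (a partial product, carefully designed to work even when $\chi_a$ is not surjective) then transports this back down to show $a\in\mathrm{Norm}_{F(\beta)/F}(F(\beta)^\times)$. No case split on $v_K(a)$, no ramification filtration, and no estimate on pole orders are needed. Without the base change in the induction and the reduction to $[a^{p^m}]$, I do not see how to complete your argument in the case $p\mid v_K(a)$, and you yourself flag exactly that gap.
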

\begin{proof}
We prove by induction on $n$ that for any finite separable extension $F$ of $K$ and for any $a \in F^\times$ the corresponding statement holds true with $R' = ak[a]$.

For both, $n=1$ (trivially) and $n>1$ (by induction hypothesis and  Lemma \ref{properties[)}.i), we know the implication
\begin{equation*}
  x \in VW_n(ak[a])_L \Longrightarrow [x,a)=0 \ .
\end{equation*}
Therefore, for arbitrary $x \in W_n(ak[a])_L$ we have $[x,a)=[[x_0],a)$ by the bilinearity of the pairing and Lemma    \ref{GAL-sum}.i in \cite{GAL} - we constantly will make use of the fact that the first component of Witt vectors behaves additively. Moreover, again by the additivity of the pairing and using \eqref{f:exactR'} it suffices to prove (for all $n>0$) that
\begin{equation*}
  [[ra^l],a)=0 \mbox{ for all }r\in k^\times, l\geq 1 \ .
\end{equation*}
Writing $l=l'p^m$ with $l'$ and $p$ coprime and denoting by $r'$ the $p^m$th root of $r$ we see that
\begin{equation*}
  l'[[ra^l],a) = [[(r'a^{l'})^{p^m}],a^{l'}) = [[(r'a^{l'})^{p^m}],a^{l'})+[[(r'a^{l'})^{p^m}],r') = [[(r'a^{l'})^{p^m}],r'a^{l'})
\end{equation*}
by Lemma \ref{properties[)}.ii because $r'\in (k^\times)^{p^n}$. Noting that $l'$ is a unit in $W_n(k)_L$ we are reduced to the case $x=[a^{p^m}]$ for $m\geq 0$.

To this end let $\alpha_0\in F^{sep}$ be in $\wp^{-1}(a)$ and define $\tilde{\alpha_0}:=\prod_{\xi \in k/\im(\chi_a)}(\alpha_0 +\xi).$ Then
\begin{align}\label{f:norm}
   \mathrm{Norm}_{F(\alpha_0)/F}(\tilde{\alpha_0}) & = \prod_{\sigma\in \mathrm{Gal}(F(\alpha_0)/F)}\sigma\tilde{\alpha_0} \nonumber   \\
   & = \prod_{\xi'\in \im(\chi_a)}\prod_{\xi\in k/im(\chi_a)}(\alpha_0+\xi'+\xi) \\
   & = \prod_{\xi\in k}(\alpha_0+\xi)=a \ ,   \nonumber
\end{align}
since $\alpha_0+\xi,$ $\xi\in k,$ are precisely the zeros of $X^q-X-a$.

Now let $\beta$ be in $\wp^{-1}([a^{p^m}])$ with $\beta_0=\alpha_0^{p^m}$. Then $F(\alpha_0)(\beta) = F(\alpha_0,\beta_0,\ldots,\beta_{n-1}) = F(\alpha_0)(\beta-[\beta_0])$ and
\begin{equation*}
  \wp(\beta-[\beta_0])=[a^{p^m}]-\wp([\alpha_0^{p^m}])
\end{equation*}
belongs to $VW_n(\alpha_0k[\alpha_0])_L$ because $a^{p^m}=(\alpha_0^q-\alpha_0)^{p^m}$ belongs to $ \alpha_0k[\alpha_0]$ as well as $\wp([\alpha_0^{p^m}])=[(\alpha_0^{p^m})^q]-[\alpha_0^{p^m}]\in W_n(\alpha_0k[\alpha_0])_L$ and $[a^{p^m}]_0=a^{p^m}=\wp([\alpha_0^{p^m}])_0$.

Note that for $n=1$ we have $F(\alpha_0)(\beta) = F(\alpha_0)$ (i.e., the last consideration is not needed) and since $a$ is a norm with respect to the extension $F(\alpha_0)/F$ the claim follows from Lemma \ref{properties[)}.iii.

Now let $n>1$. Then, the induction hypothesis for $F' := F(\alpha_0)$ and $a' := \alpha_0$ implies that $[[a^{p^m}]-\wp([\alpha_0^{p^m}]),\alpha_0)=0$, i.e.,  that $\alpha_0 \in \mathrm{Norm}_{F(\alpha_0)(\beta)/F(\alpha_0)}(F(\alpha_0)(\beta)^\times)$ by Lemma \ref{properties[)}.iii.

Replacing $\alpha_0$ by $\alpha_0+\xi,$ $\xi\in k$, we see that also $\alpha_0+\xi \in \mathrm{Norm}_{F(\alpha_0)(\beta)/F(\alpha_0)}(F(\alpha_0)(\beta)^\times)$ (note that $F(\alpha_0) = F(\alpha_0+\xi)$ and the composite $F(\alpha_0)(\beta) = F(\alpha_0)F(\beta)$ does not depend on the choices involved above). By the multiplicativity of the norm we obtain that $\tilde{\alpha_0}$ lies in $\mathrm{Norm}_{F(\alpha_0)(\beta)/F(\alpha_0)}(F(\alpha_0)(\beta)^\times)$, whence by transitivity of the norm and \eqref{f:norm} $a$ belongs to $\mathrm{Norm}_{F(\alpha_0)(\beta)/F}(F(\alpha_0)(\beta)^\times)$ and thus also to $\mathrm{Norm}_{F(\beta)/F}(F(\beta)^\times)$ because $F(\beta) \subseteq F(\alpha_0)(\beta)$. Thus $[[a^{p^m}],a)=0$, again by Lemma \ref{properties[)}.iii, as had to be shown.
\end{proof}

Now we will define a second bilinear pairing
\begin{equation*}
   (\;,\;) : W_n(K)_L \times K^\times \longrightarrow  W_n(k)_L
\end{equation*}
by using the residue pairing (cf.\ \eqref{f:residue-pairing})
\begin{equation*}
   \mathrm{Res} : \rA_L \times \Omega^1_{\rA_L} \longrightarrow   o_L \ .
\end{equation*}
To this end we choose an isomorphism $\rA_L/\pi_L\rA_L=k((Z)) \cong K$ and remark that our construction will not depend on this choice of a prime element of $K$ by Remark \ref{Res-variable}. Consider the map $d\log : o_L((Z))^\times \longrightarrow \Omega^1_{\rA_L},$ sending $f$ to $\frac{df}{f}.$  We define the upper pairing in
\begin{equation*}
\xymatrix{
   W_n(\rA_L)_L \ar[d]_{\Phi_{n-1} }^{ }   \ar@{}[r]|{\times} & {o_L((Z))^\times} \ar[d]_{  d\log }\ar[r]^-{\{\ , \ \}} & o_L \ar@{=}[d]_{}^{} \\
   {\rA_L  } \ar@{}[r]|{\times} & {\Omega^1_{\rA_L}}\ar[r]^-{\mathrm{Res}} & o_L }
\end{equation*}
via the commutativity of the diagram.
%{\color{blue} Kann man $o_L((Z))^\times$ hier nicht auch durch $\rA_L^\times$ ersetzen? Insbesondere: gilt dann unten noch \eqref{f:claim} bzw. ist $h$ immer noch eine Potenzreihe?}

\begin{lemma}\label{pairing2}
There is a unique well defined bilinear pairing $(\ ,\ )$   such that the diagram
\begin{equation*}
\xymatrix{
   W_n(\rA_L)_L \ar@{->>}[d]_{W_n(\alpha_1)_L }   \ar@{}[r]|{\times} & {o_L((Z))^\times} \ar@{->>}[d]_{ \mod \pi_L }\ar[r]^-{\{\ , \ \}} & o_L \phantom{ } \ar@{->>}[d]^{\alpha_n } \\
   {W_n(K)_L} \ar@{}[r]|{\times} & {K^\times}\ar[r]^-{(\ ,\ )} & W_n(k)_L, }
\end{equation*}
is commutative.
\end{lemma}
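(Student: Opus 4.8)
The plan is to \emph{define} $(\;,\;)$ by the only formula that can possibly work and then verify that it descends through the three surjections. The upper pairing in the diagram is
\[
\{x,f\}:=\mathrm{Res}\!\Big(\Phi_{n-1}(x)\,\tfrac{df}{f}\Big),\qquad x\in W_n(\rA_L)_L,\ f\in o_L((Z))^\times,
\]
which is legitimate since $\tfrac{df}{f}\in\Omega^1_{\rA_L}$, and which is $o_L$-linear in $x$ (because $\Phi_{n-1}$ is a homomorphism of $o_L$-algebras) and additive in $f$ (because $f\mapsto\tfrac{df}{f}$ is a homomorphism). The maps $W_n(\alpha_1)_L$, reduction modulo $\pi_L$, and $\alpha_n$ are surjective and $\alpha_n$ has kernel $\pi_L^no_L$; hence there is at most one bilinear $(\;,\;)$ making the square commute, and such a pairing exists --- and is then automatically bilinear --- precisely when $\alpha_n(\{x,f\})$ depends only on $W_n(\alpha_1)_L(x)$ and on $f\bmod\pi_L$. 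By biadditivity this reduces to: (a) $\{x,f\}\in\pi_L^no_L$ whenever all components of $x$ lie in $\pi_L\rA_L$; and (b) $\{x,f\}\in\pi_L^no_L$ whenever $f\equiv 1\bmod\pi_L$.

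Statement (a) is immediate: if $W_n(\alpha_1)_L(x)=W_n(\mathrm{pr})_L(x)=0$, the commutative diagram \eqref{f:A-omega} gives $\Phi_{n-1}(x)\bmod\pi_L^n\rA_L=w_{n-1}(0)=0$, so $\Phi_{n-1}(x)\in\pi_L^n\rA_L$ and $\{x,f\}\in\pi_L^n\mathrm{Res}(\Omega^1_{\rA_L})\subseteq\pi_L^no_L$. For (b) expand the ghost polynomial, $\Phi_{n-1}(x)=\sum_{k=0}^{n-1}\pi_L^k x_k^{q^{n-1-k}}$, so that $\{x,f\}=\sum_{k=0}^{n-1}\pi_L^k\mathrm{Res}\big(x_k^{q^{n-1-k}}\tfrac{df}{f}\big)$. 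As $f\equiv 1\bmod\pi_L$ we have $\tfrac{df}{f}\in\pi_L\Omega^1_{\rA_L}$, so the $k$-th summand lies in $\pi_L^{k+1}o_L$; in particular the term $k=n-1$ is already in $\pi_L^no_L$. For $k<n-1$ set $j:=n-1-k\geq 1$; it then suffices to prove, by induction on $j$, the uniform claim that for all $m\geq 0$, $y\in\rA_L$ and $f\in 1+\pi_L^{m+1}o_L((Z))$ one has $\mathrm{Res}(y^{q^j}\tfrac{df}{f})\in\pi_L^{j+m+1}o_L$. The case $j=0$ is clear. For the step write $y^{q^j}=\varphi_L(y^{q^{j-1}})+\pi_L^j\delta$ with $\delta\in\rA_L$ (congruence below); then
\[
\mathrm{Res}\big(y^{q^j}\tfrac{df}{f}\big)=\mathrm{Res}\big(\varphi_L(y^{q^{j-1}})\tfrac{df}{f}\big)+\pi_L^j\,\mathrm{Res}\big(\delta\tfrac{df}{f}\big),
\]
the last term lying in $\pi_L^{j+m+1}o_L$. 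For the first term, Cor.~\ref{phi-psi-dual} gives $\mathrm{Res}\big(\varphi_L(y^{q^{j-1}})\tfrac{df}{f}\big)=\mathrm{Res}\big(y^{q^{j-1}}\psi_{\Omega^1}(\tfrac{df}{f})\big)$, and the computation of Remark~\ref{psi-invariance} (i.e.\ $\tfrac{df}{f}=\Delta_{LT}(f)\,d\log_{LT}$ together with $\psi_L\circ\Delta_{LT}=\Delta_{LT}\circ\mathcal{N}_L$ from Lemma~\ref{Delta-N-extended} and Remark~\ref{psi}.ii) identifies $\psi_{\Omega^1}(\tfrac{df}{f})$ with $\tfrac{d\mathcal{N}_L(f)}{\mathcal{N}_L(f)}$; since $\mathcal{N}_L(f)\in 1+\pi_L^{m+2}o_L((Z))$ by parts v and vi of Remark~\ref{psi}, the inductive hypothesis applied with $j-1$ and $m+1$ yields $\mathrm{Res}\big(y^{q^{j-1}}\tfrac{d\mathcal{N}_L(f)}{\mathcal{N}_L(f)}\big)\in\pi_L^{(j-1)+(m+1)+1}o_L=\pi_L^{j+m+1}o_L$, finishing the induction.

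The remaining input --- and the only genuinely computational point --- is the congruence $y^{q^j}\equiv\varphi_L(y^{q^{j-1}})\bmod\pi_L^j\rA_L$ for $y\in\rA_L$, $j\geq 1$, which is a form of the $q$-power congruence for ramified Witt vectors. Since $\varphi_L$ reduces modulo $\pi_L$ to the $q$-Frobenius of $\mathbf{E}_L$ (cf.\ the proof of Remark~\ref{psi}.i), we may write $y^q=\varphi_L(y)+\pi_Lz$ with $z\in\rA_L$; raising to the $q^{j-1}$-th power and using that $\varphi_L$ is a ring homomorphism,
\[
y^{q^j}=\varphi_L(y^{q^{j-1}})+\sum_{l=1}^{q^{j-1}}\binom{q^{j-1}}{l}\,\varphi_L(y)^{q^{j-1}-l}\,\pi_L^{\,l}z^{\,l},
\]
and each summand lies in $\pi_L^j\rA_L$: for $l\geq j$ this is clear, and for $1\leq l<j$ one has $v_{\pi_L}\!\binom{q^{j-1}}{l}=e\big((j-1)f-v_p(l)\big)\geq (j-1)-v_p(l)\geq j-l$, using $l-v_p(l)\geq 1$. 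I expect this short estimate to be the technical heart of the proof; everything else is formal from the material of Sections~\ref{sec:Coleman}--\ref{sec:Witt}.
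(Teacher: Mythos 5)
Your proof is correct and establishes the same two kernel inclusions as the paper, with the first one handled essentially identically. For the second inclusion, however, you and the paper diverge genuinely. The paper observes that $Z':=Z(1+\pi_L h)$ is another admissible variable, invokes the invariance of the residue under change of variable (Remark~\ref{Res-variable}.ii) to rewrite the quantity as $\mathrm{Res}_Z\bigl([\Phi_{n-1}(g(Z))-\Phi_{n-1}(f(Z))]\,d\log Z\bigr)$, and finishes with the ghost congruence $\Phi_{n-1}(g)\equiv\Phi_{n-1}(f)\bmod\pi_L^n$ coming from $g_i\equiv f_i\bmod\pi_L$. You instead expand the ghost polynomial termwise and run a nested induction built on the $\psi_{\Omega^1}/\varphi_L$-adjunction (Cor.~\ref{phi-psi-dual}), the identity $\psi_{\Omega^1}(\tfrac{df}{f})=\tfrac{d\mathcal{N}_L(f)}{\mathcal{N}_L(f)}$ assembled from Lemma~\ref{Delta-N}.ii and Remark~\ref{psi}.ii, the congruence-improving property of $\mathcal{N}_L$ (Remark~\ref{psi}.vi), and a direct $\pi_L$-adic estimate for binomial coefficients. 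Both routes work: the paper's change-of-variable argument is shorter and lets residue invariance carry the whole second inclusion, whereas yours sidesteps Remark~\ref{Res-variable}.ii and the externally cited $\Phi$-congruence at the cost of the norm-operator apparatus and a hands-on Witt-vector computation. One minor repair: your auxiliary claim should quantify $f\in 1+\pi_L^{m+1}o_L[[Z]]$ rather than $1+\pi_L^{m+1}o_L((Z))$; the kernel of $o_L((Z))^\times\to K^\times$ is $1+\pi_L o_L[[Z]]$ and $\mathcal{N}_L$ preserves $o_L[[Z]]^\times$, so the induction in fact never leaves $o_L[[Z]]^\times$ -- and this matters because a general element of $1+\pi_L^{m+1}o_L((Z))$ need not lie in $o_L((Z))^\times$, while the identity $\psi_{Col}\circ\Delta_{LT}=\pi_L\Delta_{LT}\circ\mathcal{N}$ on which your $\psi_{\Omega^1}$-computation depends is only established in the paper on $o_L[[Z]]^\times$ and $o_L((Z))^\times$ (Lemmas~\ref{Delta-N} and~\ref{Delta-N-extended}).
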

\begin{proof}
(Note that the reduction map $o_L((Z))^\times \rightarrow K^\times$ indeed is surjective.) We need to show that
\begin{equation*}
  \{\ker(W_n(\pr)_L),o_L((Z))^\times\} \subseteq \pi_L^n o_L  \qquad\text{and}\qquad   \{W_n(\rA_L)_L, \ker(o_L((Z))^\times \rightarrow K^\times)\} \subseteq \pi_L^n o_L \ .
\end{equation*}
For $a = (a_0,\ldots,a_{n-1}) \in W_n(\pr)_L$ such that $a_i \in \pi_L \rA_L$ we obviously have $\Phi_{n-1}(a) \in \pi_L^n \rA_L$. Hence $\{a, o_L((Z))^\times\} \subseteq \pi_L^n o_L$.

For the second inclusion we first observe that $\ker(o_L((Z))^\times \rightarrow K^\times) = 1 + \pi_L o_L[[Z]]$. Hence we have to prove that
\begin{equation}\label{f:claim}
  \mathrm{Res}(\Phi_{n-1}(f)d\log(1+\pi_L h)) \in \pi_L^no_L
\end{equation}
holds true for all $f = (f_0,\ldots,f_{n-1}) \in \rA_L^n$ and $h \in o_L[[Z]]$. We observe that sending $Z$ to $Z' := Z(1 + \pi_L h)$ defines a ring automorphism first of $o_L[[Z]]$, then by localization of $o_L((Z))$, and finally by $\pi_L$-adic completion of $\mathscr{A}_L$. We write $f_i(Z) = g_i(Z')$ and $g := (g_0,\ldots,g_{n-1})$, and we compute
\begin{align*}
  \mathrm{Res}_Z(\Phi_{n-1}(f)d\log(1+\pi_L h)) & = \mathrm{Res}_Z(\Phi_{n-1}(g(Z'))d\log(Z'))-\mathrm{Res}_Z(\Phi_{n-1}(f(Z))d\log(Z)) \\
   & = \mathrm{Res}_{Z'}(\Phi_{n-1}(g(Z'))d\log(Z'))-\mathrm{Res}_Z(\Phi_{n-1}(f(Z))d\log(Z)) \\
   & = \mathrm{Res}_{Z}([\Phi_{n-1}(g(Z))-\Phi_{n-1}(f(Z))]d\log(Z)) \ .
\end{align*}
Here the second equality uses the fact that the residue does not depend on the choice of the variable (cf.\ Remark \ref{Res-variable}) while in the third equality we just rename the variable $Z'$ into $Z$ both in the argument and the index of $\mathrm{Res}$, which of course does not change the value. Now note that, since $Z' \equiv Z \bmod \pi_L o_L[[Z]]$, we have the congruences
\begin{equation*}
  f_i(Z)=g_i(Z')\equiv g_i(Z) \mod \pi_L \mathscr{A}_L  \qquad\text{for any $0 \leq i \leq n-1$}.
\end{equation*}
This implies that
\begin{equation*}
  \Phi_{n-1}(g(Z)) - \Phi_{n-1}(f(Z)) \equiv 0 \mod \pi_L^n o_L
\end{equation*}
(cf.\ \cite{GAL} Lemma \ref{GAL-Phi-q-congruence}.i) whence the claim \eqref{f:claim} by the $o_L$-linearity of the residue.
\end{proof}

\begin{remark}
Alternatively, one can define similarly a pairing by using the full ghost map $\Phi=(\Phi_{0},\ldots,\Phi_{n-1})$ via commutativity of the diagram
\begin{equation*}
\xymatrix{
   W_n(\rA_L)_L \ar@{^(->}[d]_{\Phi_{} }^{ }   \ar@{}[r]|{\times} & {o_L((Z))^\times} \ar[d]_{  d\log }\ar[r] & W_n(o_L)_L \ar@{^(->}[d]_{\Phi_{}}^{} \\
   {\rA_L^n} \ar@{}[r]|{\times} & {\Omega^1_{\rA_L}}\ar[r]^-{\mathrm{Res}} & o_L^n }
\end{equation*}
and by showing that for all $f \in W_n(\rA_L)_L $ and $h \in o_L((Z))^\times $ the residue vector $ (\mathrm{Res}(\Phi_{i}(f)\frac{dh}{h}))_i$ belongs to the image of (the right hand) $\Phi$. We leave it to the interested reader to check that this induces the same pairing as $(\;,\; )$ above by applying $W_n(\pr)_L$ to the target. For unramified Witt vectors this is done  in \cite{Tho} Prop.\ 3.5.
\footnote{Another alternative formulation for the definition of $(\; ,\; )$ goes as follows: The residue pairing
\begin{align*}
\mathrm{Res} : \rA_L/\pi_L^n\rA \times \Omega^1_{\rA_L/\pi_L^n\rA} & \longrightarrow o_L/\pi_L^no_L
\end{align*}
induces the pairing
\begin{equation*}
\xymatrix{
   {\im(\Phi_{n-1})} + \pi_L^n\rA_L/\pi_L^n\rA   \ar@{}[r]|{\times} & {\phantom{mmm}\Omega^1_{\rA_L/\pi_L^n\rA}/d\log(1+\pi_L o_L[[Z]])} \ar[r] & o_L/\pi_L^no_L \phantom{ }   \\
   {W_n(K)_L}\ar[u]_{w_{n-1}} \ar@{}[r]|{\times} & {\phantom{mmmmm}K^\times\phantom{mmmmm}}\ar[u]\ar[r]^-{(\ ,\ )} & W_n(k)_L\ar[u]_{w_{n-1}}, }
\end{equation*}
where the middle vertical map is induced by $d\log$ and the inverse of the isomorphism $o_L((Z))^\times/(1+\pi_Lo_L[[Z]])\cong K^\times$.  }
\end{remark}

Our aim is to show that the two pairings $[\ ,\ )$ and $(\ ,\ )$, in fact, coincide. This generalizes a result of Witt (\cite{Wit} Satz 18), which we learned from \cite{Fo1,Fo2}. The strategy is to reduce this to the comparison of the restrictions of the two pairings to $W_n(k)_L\times K^\times$.

For an element $x = \sum_j x_{j}Z^j\in K$ with $x_{j} \in k$ and $x_{j}=0$ for $j<v_Z(x)$ (the valuation of $K$) we set $x^+ := \sum_{j\geq 1} x_{j}Z^j$ and $x^- := \sum_{j<0} x_{j}Z^j$. Then, for $x = (x_0,\ldots,x_{n-1}) \in W_n(K)_L$, with arbitrary $n\geq 1$, we define iteratively elements (the 'constant term' and the plus and negative parts of $x$ with respect to the variable $Z$)
\begin{equation*}
  \Omega_Z^n(x) \in W_n(k)_L,\ x^+ \in W_n(Zk[[Z]])_L,\ \text{and}\ x^- \in W_n(Z^{-1}k[Z^{-1}])_L
\end{equation*}
such that
\begin{equation*}
  x = \Omega_Z^n(x)+x^+ +x^- \ .
\end{equation*}
For $n = 1$ put
\begin{equation*}
  \Omega_Z^1(x) := [x_0-x_0^+-x_0^-],\ x^+ := [x_0^+],\ \text{and}\ x^- := [x_0^-]
\end{equation*}
and for $n > 1$ define
\begin{equation*}
  \Omega^n_Z(x) := [x_0-x_0^+-x_0^-] + \tau\Omega^{n-1}_Z(y),\ x^+ := [x_0^+] + \tau y^+,\  \text{and}\ x^- := [x_0^-] + \tau y^- \ ,
\end{equation*}
where $y\in W_{n-1}(K)_L$ satisfies $\tau y = x-[x_0-x_0^+-x_0^-] - [x_0^+] - [x_0^-]$.

\begin{remark}\label{unique}
$x = \Omega_Z^n(x)+x^+ +x^-$ is the unique decomposition of $x \in W_n(K)_L$ such that the three summands lie in $W_n(k)_L$, $W_n(Zk[[Z]])_L$, and $W_n(Z^{-1}k[Z^{-1}])_L$, respectively.
\end{remark}
\begin{proof}
Let $x = a + a_+ + a_-$ be any decomposition such that $a \in W_n(k)_L$, $a_+ \in W_n(Zk[[Z]])_L$, and $a_- \in W_n(Z^{-1}k[Z^{-1}])_L$. Since the projection onto the zeroth component is additive we immediately obtain that $a = [x_0 - x_0^+ - x_0^-] + \tau b$, $a_+ = [x_0^+] + \tau b_+$, and $x_- = [x_0^-] + \tau b_-$ for (uniquely determined) elements $b \in W_{n-1}(k)_L$, $b_+ \in W_{n-1}(Zk[[Z]])_L$, and $b_- \in W_{n-1}(Z^{-1}k[Z^{-1}])_L$. We put $y := b + b_+ + b_-$ and obtain $x = [x_0 - x_0^+ - x_0^-] + [x_0^+] + [x_0^-] + \tau y$. Hence $y$ is the element in the above inductive construction for $x$. By induction with respect to $n$ we have $b = \Omega_Z^{n-1}(y)$, $b_+ = y^+$ and $b_- = y^-$. It follows that $a = \Omega^n_Z(x)$, $a_+ = x^+$, and $a_- = x^-$.
\end{proof}

\begin{lemma}\label{omega}
For any prime element $Z$ in $K$ and any $x\in W_n(K)_L$ we have
\begin{equation*}
  (x,Z) = \Omega_Z^n(x) \ .
\end{equation*}
\end{lemma}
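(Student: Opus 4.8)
The plan is to compute $(x,Z)$ directly from the defining diagram of Lemma \ref{pairing2}, then to split off the ``constant part'' of $x$ using additivity in the first variable together with the decomposition $x=\Omega_Z^n(x)+x^++x^-$ of Remark \ref{unique}. Concretely, Lemma \ref{pairing2} says that for any lift $\tilde x\in W_n(\rA_L)_L$ of $x$ --- and lifting $Z\in K^\times$ to the variable $Z\in o_L((Z))^\times$ --- one has
\[
  (x,Z)=\alpha_n\!\left(\mathrm{Res}\big(\Phi_{n-1}(\tilde x)\,\tfrac{dZ}{Z}\big)\right),
\]
i.e.\ $(x,Z)$ is $\alpha_n$ applied to the constant Laurent coefficient of $\Phi_{n-1}(\tilde x)$. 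Since $\Phi_{n-1}$ is a homomorphism of $o_L$-algebras, the pairings $\{\;,\;\}$ and $(\;,\;)$ are additive in the first variable, so applying the decomposition of Remark \ref{unique} reduces the claim to the three identities $(x^+,Z)=0$, $(x^-,Z)=0$, and $(\Omega_Z^n(x),Z)=\Omega_Z^n(x)$.

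For the first two I would lift $x^+$ coefficientwise to a Witt vector with entries in $Zo_L[[Z]]$ and $x^-$ to one with entries in $Z^{-1}o_L[Z^{-1}]$; both lie in $W_n(\rA_L)_L$. Since $Zo_L[[Z]]$ and $Z^{-1}o_L[Z^{-1}]$ are multiplicatively closed, $\Phi_{n-1}$ of such a lift lands in $Zo_L[[Z]]$, resp.\ $Z^{-1}o_L[Z^{-1}]$; multiplying by $\tfrac{dZ}{Z}$ then gives a form in $o_L[[Z]]\,dZ$, resp.\ $Z^{-2}o_L[Z^{-1}]\,dZ$, so the residue is $0$ and $(x^\pm,Z)=\alpha_n(0)=0$. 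For the third, lift $c:=\Omega_Z^n(x)\in W_n(k_L)_L$ to a constant vector $\tilde c\in W_n(o_L)_L$; then $\Phi_{n-1}(\tilde c)\in o_L$, hence $\mathrm{Res}\big(\Phi_{n-1}(\tilde c)\tfrac{dZ}{Z}\big)=\Phi_{n-1}(\tilde c)$ and $(\Omega_Z^n(x),Z)=\alpha_n(\Phi_{n-1}(\tilde c))$. Finally the $o_L$-analogue of \eqref{f:A-omega} gives $\mathrm{pr}(\Phi_{n-1}(\tilde c))=w_{n-1}(c)$ in $o_L/\pi_L^no_L$, and since for $o_L$ the maps $\overline\alpha_n$ and $w_{n-1}$ are mutually inverse isomorphisms (Prop.\ \ref{s-map}.iii) with $\alpha_n=\overline\alpha_n\circ\mathrm{pr}$, we obtain $\alpha_n(\Phi_{n-1}(\tilde c))=\overline\alpha_n(w_{n-1}(c))=c$, which finishes the argument.

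Thus the lemma is essentially formal once the definition of $(\;,\;)$ is unwound; the only point demanding care is the bookkeeping with lifts --- one must use that the construction in Lemma \ref{pairing2} is independent of the chosen lift, and that the decomposition of Remark \ref{unique} is compatible with reduction modulo $\pi_L$, so that its three summands may be lifted separately into the three relevant subsets of $W_n(\rA_L)_L$. I expect this well-definedness juggling, rather than any of the residue computations, to be the only place where one could slip.
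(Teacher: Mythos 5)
Your proof is correct and follows essentially the same route as the paper: reduce via additivity in the first variable and the decomposition of Remark \ref{unique} to the three cases $x\in W_n(Zk[[Z]])_L$, $x\in W_n(Z^{-1}k[Z^{-1}])_L$, and $x\in W_n(k_L)_L$; handle the first two by choosing coefficientwise lifts in $W_n(Zo_L[[Z]])_L$, resp.\ $W_n(Z^{-1}o_L[Z^{-1}])_L$, where the ghost component $\Phi_{n-1}$ stays in the same multiplicatively closed set and the residue against $dZ/Z$ vanishes; and for the constant part compute $\alpha_n(\Phi_{n-1}(\tilde c))=\overline\alpha_n\circ w_{n-1}(c)=c$ using \eqref{f:A-omega} and that $\overline\alpha_n, w_{n-1}$ are mutually inverse over $o_L$ (equivalently $\phi_q^{n-1}=\id$ on $W_n(k_L)_L$). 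The only cosmetic difference is that the paper first disposes of the vanishing cases and then reduces to $W_n(k_L)_L$, whereas you decompose $x$ at the outset; the substance is the same.
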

\begin{proof}
For $x \in W_n(Zk[[Z]])_L \cup W_n(Z^{-1}k[Z^{-1}])_L$ we may choose the lift $f$ of $x$ to lie in $W_n(Zo_L[[Z]])_L$ and $W_n(Z^{-1}o_L[Z^{-1}])_L$, respectively. It is straightforward to see that then $\{f,Z\} = \mathrm{Res}(\Phi_{n-1}(f)d\log Z) = \mathrm{Res}(\Phi_{n-1}(f) \frac{dZ}{Z}) = 0$. By Remark \ref{unique} we have $\Omega_Z^n(x) = 0$ as well.

By the additivity of $(\ ,\ )$ in the first component it therefore remains to treat the case that $x \in W_n(k)_L$. Let $\tilde{x} \in W_n(W(k)_L)_L \subseteq W_n(\rA_L)_L$ be any lift of $x$. Then we have that $\{\tilde{x},Z\} = Res(\Phi_{n-1}(\tilde{x})\frac{dZ}{Z}) = \Phi_{n-1}(\tilde{x})$. But $\alpha_n(\Phi_{n-1}(\tilde{x}))= \overline{\alpha}_n \circ w_{n-1}(x) = \phi_q^{n-1}(x) = x$  by \eqref{f:A-omega} and \eqref{f:alphaphiq} for $o_L$. Hence $(x,Z) = \alpha_n(\{\tilde{x},Z\}) = \alpha_n(\Phi_{n-1}(\tilde{x})) = x = \Omega_Z^n(x)$, the last identity again by Remark \ref{unique}.
\end{proof}

\begin{lemma}\label{constant}
For any prime element $Z$ in $K$ and any $x\in W_n(k)_L$ we have $[x,Z)=(x,Z)$.
\end{lemma}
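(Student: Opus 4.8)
The plan is to reduce the identity $[x,Z)=(x,Z)$ to the behaviour of the reciprocity map on \emph{unramified} extensions. First I would dispose of the right‑hand side: by Lemma \ref{omega} we have $(x,Z)=\Omega_Z^n(x)$, and since for $x\in W_n(k)_L$ the decomposition $x=x+0+0$ has its three summands in $W_n(k)_L$, $W_n(Zk[[Z]])_L$, $W_n(Z^{-1}k[Z^{-1}])_L$ respectively, Remark \ref{unique} forces $\Omega_Z^n(x)=x$. Hence $(x,Z)=x$, and the whole content of the lemma is the assertion $[x,Z)=x$.

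Next I would make the Artin--Schreier--Witt extension attached to $x$ explicit. Choose $\alpha=(\alpha_0,\ldots,\alpha_{n-1})\in W_n(K^{sep})_L$ with $\wp(\alpha)=x$; this exists by Lemma \ref{Artin-Schreier}. Since over a field of characteristic $p$ the operator $\wp=\phi_q-1$ acts on Witt coordinates by $\wp(\alpha)_0=\alpha_0^q-\alpha_0$ and, for $i\ge 1$, $\alpha_i^q-\alpha_i$ is a polynomial expression in $x_0,\ldots,x_i,\alpha_0,\ldots,\alpha_{i-1}$, one reads off inductively that every $\alpha_i$ is algebraic over $k$. As $k=k_L$ is finite, $k':=k(\alpha_0,\ldots,\alpha_{n-1})$ is a finite field extension of $k$ and $\alpha\in W_n(k')_L$. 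Consequently $K(\alpha)=K(\alpha_0,\ldots,\alpha_{n-1})=K\cdot k'\cong k'((Z))$ is the unramified extension of $K$ with residue field $k'$, so $\Gal(K(\alpha)/K)\cong\Gal(k'/k)$ is generated by the Frobenius $\phi_q\colon a\mapsto a^q$.

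Finally I would invoke local class field theory. With the normalization of the reciprocity map used throughout this paper (the one compatible with Lubin--Tate theory), a prime element of $K$ is sent by $rec_K$ to an automorphism whose restriction to the maximal unramified extension is the arithmetic Frobenius. In particular $rec_K(Z)$ acts on $K(\alpha)$, hence on $W_n(k')_L$, by $(\beta_0,\ldots,\beta_{n-1})\mapsto(\beta_0^q,\ldots,\beta_{n-1}^q)=\phi_q$, so that $[x,Z)=rec_K(Z)(\alpha)-\alpha=\phi_q(\alpha)-\alpha=\wp(\alpha)=x=(x,Z)$. (This is independent of the chosen solution $\alpha$: two solutions of $\wp(\alpha)=x$ differ by an element of the kernel of $\wp$ on $W_n(K^{sep})_L$, which is $W_n(\mathbb{F}_q)_L=W_n(k)_L\subseteq W_n(K)_L$, and such an element is fixed by $rec_K(Z)$.)

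The only genuine input is the normalization statement for $rec_K$; everything else is formal. I would highlight that this convention is in fact forced by Lemma \ref{omega}: had $rec_K(Z)$ restricted to the geometric Frobenius $\phi_q^{-1}$ on $K(\alpha)$, the same computation would give $\phi_q^{-1}(\alpha)-\alpha=-x$, contradicting $(x,Z)=x$. So I expect no real obstacle, merely the need to state the class‑field‑theoretic normalization precisely.
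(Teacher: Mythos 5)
Your proof is correct and follows essentially the same route as the paper: compute $(x,Z)=\Omega_Z^n(x)=x$ via Lemma \ref{omega}, observe that $K(\alpha)/K$ is unramified, and use that $rec_K(Z)$ restricts to $\phi_q$ on the unramified part so that $[x,Z)=\wp(\alpha)=x$. The only cosmetic difference is that the paper chooses $\alpha\in W_n(k^{sep})_L$ from the start (possible since $\wp$ is surjective there), whereas you pick $\alpha\in W_n(K^{sep})_L$ and then argue inductively on coordinates that it already lies in $W_n(k')_L$ for a finite extension $k'/k$; this reaches the same point with a bit more work.
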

\begin{proof}
We choose $\alpha \in W_n(k^{sep})_L$ such that $\wp(\alpha) = x$. Then $K(\alpha) \subseteq k(\alpha)((Z))$ is an unramified extension of $K=k((Z))$. From local class field theory we therefore obtain that $rec_K(Z) = \phi_q$. It follows that $[x,Z) = rec_K(Z)(\alpha)-\alpha = \wp(\alpha) = x$. On the other hand Lemma \ref{omega} implies that $(x,Z) = \Omega_Z^n(x) = x$ as well.
\end{proof}

\begin{proposition}\label{constants}
For any prime element $Z$ in $K$, any $a\in K^\times$, and any $x\in W_n(K)_L$ we have
\begin{equation*}
  [x,a)=[(x,a),Z) \qquad\text{and}\qquad (x,a)=((x,a),Z) \ .
\end{equation*}
\end{proposition}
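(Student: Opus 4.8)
The plan is to derive the first identity from the second by decomposing $x$ with respect to $Z$ (Remark~\ref{unique}) and feeding in the two substantive inputs already available: the vanishing statement of Proposition~\ref{aka} and the ``constant'' computation Lemma~\ref{constant}. The second identity is immediate: $(x,a)$ lies in $W_n(k)_L$, so $\Omega_Z^n\big((x,a)\big)=(x,a)$ by Remark~\ref{unique} and $\big((x,a),Z\big)=\Omega_Z^n\big((x,a)\big)$ by Lemma~\ref{omega}. For the first identity note that for every $y\in W_n(k)_L$ one has $[(y,a),Z)=(y,a)$: indeed $(y,a)\in W_n(k)_L$, and on $W_n(k)_L$ both $[\,\cdot\,,Z)$ and $(\,\cdot\,,Z)$ are the identity, by Lemma~\ref{constant} resp.\ Lemma~\ref{omega}, together with Remark~\ref{unique}. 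Since $[\,,\,)$ and $(\,,\,)$ are additive in the first argument and $(x,a)\in W_n(k)_L$, writing $x=\Omega_Z^n(x)+x^++x^-$ as in Remark~\ref{unique} reduces the first identity to the equality $[y,a)=(y,a)$ for $y$ each of the three summands.

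For the \emph{constant part} $y=\Omega_Z^n(x)\in W_n(k)_L$ the field $K(\wp^{-1}(y))$ is unramified over $K$, so $rec_K(a)$ acts on it through the $v_Z(a)$-th power of Frobenius; as $\phi_q$ is the identity on $W_n(k)_L$ (here $k=k_L$ has $q$ elements), from $\phi_q(\alpha)=\alpha+y$ one obtains $[y,a)=\phi_q^{v_Z(a)}(\alpha)-\alpha=v_Z(a)\,y$, while, lifting $y$ to a Witt vector with constant coefficients, its $\Phi_{n-1}$-ghost is a constant and $d\log\tilde a=v_Z(a)\,dZ/Z+(\text{a regular form})$, so $(y,a)=v_Z(a)\,y$ by the computation in the proof of Lemma~\ref{omega}. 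For the \emph{positive part} $y=x^+\in W_n(Zk[[Z]])_L$ the operator $\wp=\phi_q-1$ is bijective on $W_n(Zk[[Z]])_L$ (solve $\wp(w)=y$ by $Z$-adic successive approximation, using that $\phi_q$ raises $Z$-orders by a factor $q$), hence $y\in\wp(W_n(K)_L)$ and $[y,a)=0$; and $(y,a)=0$ because a lift of $y$ has all entries in $Zo_L[[Z]]$, so $\Phi_{n-1}$ of it again lies in $Zo_L[[Z]]$, which multiplied by the at-worst-simple-pole form $d\log\tilde a$ stays in $o_L[[Z]]\,dZ$ and has residue $0$.

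The \emph{negative part} $y=x^-\in W_n(Z^{-1}k[Z^{-1}])_L$ is the essential case, where one must establish $[x^-,a)=(x^-,a)$ directly. By bilinearity and continuity of both pairings it suffices to take $a$ among $Z$, the elements of $k^\times$, and the $1+cZ^m$ with $c\in k$, $m\geq1$. For $a=Z$, and indeed $a=Z^j$ with $j\in\ZZ$, Proposition~\ref{aka} applied to the element $Z^{-1}\in K^\times$ (note $W_n(Z^{-1}k[Z^{-1}])_L=W_n\big((Z^{-1})k[Z^{-1}]\big)_L$) gives $[x^-,Z^{-1})=0$, hence $[x^-,Z^j)=0=(x^-,Z^j)$, the last because $(x^-,Z^j)=j\,\Omega_Z^n(x^-)=0$ by Lemma~\ref{omega}. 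For $a=r\in k^\times$ one has $r=r^{q^j}=(r^{q^j/p^n})^{p^n}\in(K^\times)^{p^n}$ as soon as $p^n\mid q^j$, so $[x^-,r)=0$ by Lemma~\ref{properties[)}.ii, while $(x^-,r)=0$ since $d\log\tilde r=0$. Finally, for $a=1+cZ^m$: $k$ being perfect, $(1+c'Z^{m'})^{p^n}=1+c'^{p^n}Z^{m'p^n}$, so modulo $(K^\times)^{p^n}$ (which pairs trivially, Lemma~\ref{properties[)}.ii) and by continuity one is reduced to finitely many exponents $m$; the Verschiebung relation $[\tau z,a)=\tau[z,a)$ of Lemma~\ref{properties[)}.i, an analogous relation for $(\,,\,)$, and additivity of the zeroth Witt component then reduce, by induction on $n$, to $x^-=[rZ^{-\ell}]$ a Teichm\"uller monomial; this remaining case is treated exactly as in the proof of Proposition~\ref{aka}, by realising the relevant Artin--Schreier radical as a norm, which is the generalization of \cite{Wit} Satz~18 (the base case $n=1$ being the classical Schmid residue formula).

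The main obstacle is precisely this last situation --- the negative part paired with a unit congruent to $1$ modulo $Z$ --- where the residue genuinely intervenes and an explicit Artin--Schreier--Witt extension and norm computation is unavoidable; all the other contributions are either formal (the decomposition of $x$ and additivity) or follow at once from Proposition~\ref{aka} and from $k$ being a finite field. (Combining the proposition with Lemma~\ref{constant} then yields the coincidence $[x,a)=(x,a)$ of the two pairings at once.)
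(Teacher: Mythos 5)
Your handling of the second identity and of the positive and constant parts of $x$ agrees with the paper's, but your route to the first identity is genuinely different and it has a real gap. The paper does \emph{not} try to prove $[y,a)=(y,a)$ for each of the three summands $y$ of $x$ against an arbitrary $a$. Instead it first proves only the special case $a=Z$ a prime element (by the decomposition of $x$ you use, noting $[x^+,Z)=0$ and, via $[-x^-,Z^{-1})$, invoking Prop.~\ref{aka}), and then for a general $a$ factors $a=Z^\nu Z'$ with $Z'$ a second prime element and applies the special case to both $Z$ and $Z'$, together with Lemma~\ref{constant} and bilinearity. This factorization trick means that one only ever pairs against prime elements, so the negative part is always killed by Prop.~\ref{aka}, and the ``$x^-$ versus $1+cZ^m$'' case never arises.

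Your approach, by contrast, does run into that case, and your treatment of it is not a proof. You propose to handle $y=x^-$ paired with $a=1+cZ^m$ ``exactly as in the proof of Proposition~\ref{aka}, by realising the relevant Artin--Schreier radical as a norm''. But Prop.~\ref{aka} is a \emph{vanishing} statement, and here the two sides are typically nonzero: for instance with $n=1$, $x^-=rZ^{-\ell}$, $a=1+cZ^m$ and $\ell=(j+1)m$ one gets
\begin{equation*}
(x^-,a)=\mathrm{Res}\bigl(rZ^{-\ell}\,d\log(1+cZ^m)\bigr)=(-1)^{j}\,r\,m\,c^{\,j+1}\neq 0
\end{equation*}
in general. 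So there is no norm argument showing the symbol vanishes; what you would actually need here is precisely the Schmid (and for $n>1$ Schmid--Witt) residue formula for this type of symbol, which is the content of the theorem being proved. In other words this step is circular as written. To repair it you should adopt the paper's two-step structure: prove the identity first for $a$ a prime element (your three-summand analysis does give exactly this, since $[x^-,Z)=-[x^-,Z^{-1})=0$ by Prop.~\ref{aka} and $(x^-,Z)=0$ by Lemma~\ref{omega}), and then reduce general $a$ to two prime elements via $a=Z^{\nu}Z'$ and Lemma~\ref{constant}, rather than attempting to pair each summand against every $a$.
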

\begin{proof}
As $\Omega_Z^n((x,a)) = (x,a)$ the second identity is a consequence of Lemma \ref{omega}.

For the fist identity we first consider the \textit{special case} $a=Z$. We will compare the decompositions
\begin{align*}
  [x,Z) & = [\Omega_Z^n(x),Z) + [x^+,Z) + [-x^-,Z^{-1})  \quad\text{and}\\
  [(x,Z),Z) & = [(\Omega_Z^n(x),Z),Z) + [(x^+,Z),Z) + [(-x^-,Z^{-1}),Z)
\end{align*}
term by term. By Lemma \ref{omega} the two first terms coincide and the remaining terms in the second decomposition vanish. The last term in the first decomposition vanishes by Prop.\ \ref{aka}. Hence it remains to show that $[x^+,Z) = 0$. For this it suffices to check that $W_n(Zk[[Z]])_L \subseteq \wp(W_n(K)_L)$. Indeed, we claim that for $y \in W_n(Zk[[Z]])_L$ the series $\sum_{i=0}^\infty \phi_q^i(y)$ converges in $W_n(k[[Z]])_L$ (componentwise in the $Z$-adic topology). We observe that, for $x \in W_n(k[[Z]])_L$ and $z \in W_n(Z^lk[[Z]])_L$ with $l\geq 0$, one has, by \eqref{f:exact-ideal}, the congruence
\begin{equation*}
  (x+z)_i\equiv x_i \mod Z^lk[[Z]]
\end{equation*}
for the components of the respective Witt vectors. It follows that each component of the sequence of partial sums $\sum_{i=0}^m\phi_q^i(y)$ forms a Cauchy sequence. Since $\phi_q$, being the componentwise $q$th power map, obviously is continuous for the topology under consideration we obtain $\wp(-\sum_{i=0}^\infty\phi_q^i(y)) = y$.

For a \textit{general} $a$ we find a $\nu \in \mathbb{Z}$ and another prime element $Z' \in K$ such that $a = Z^\nu Z'$. Using bilinearity and the special case (for $Z$ as well as $Z'$) we compute
\begin{align*}
   [x,a) & = \nu[x,Z)+[x,Z') = \nu [(x,Z),Z) + [(x,Z'),Z') \\
         & = [(x,Z),Z^\nu)+[(x,Z'),Z) = [(x,Z^\nu Z'),Z)   \\
         & = [(x,a),Z) \ ;
\end{align*}
the third equality uses Lemma \ref{constant}.
\end{proof}

\begin{theorem}[Schmid-Witt formula]\label{schmid-witt}
The pairings $[\ ,\ )$ and $(\ ,\ )$ coincide.
\end{theorem}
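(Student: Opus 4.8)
The plan is to derive the theorem as a purely formal consequence of the reduction statements already at our disposal, namely Proposition~\ref{constants}, Lemma~\ref{constant} and Lemma~\ref{omega}; no new estimate or Witt-vector computation will be required. I would fix $n \geq 1$, choose a prime element $Z$ of $K$, and fix $a \in K^\times$ together with $x \in W_n(K)_L$, the goal being to prove $[x,a) = (x,a)$.

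The one thing to notice is that, by the construction of $(\ ,\ )$ in Lemma~\ref{pairing2}, the element $(x,a)$ already lies in $W_n(k)_L$, so it is a permissible argument for the constant-coefficient comparison of Lemma~\ref{constant}. I would then simply run the chain of identities
\[
  [x,a) \;=\; [(x,a),Z) \;=\; ((x,a),Z) \;=\; \Omega_Z^n\bigl((x,a)\bigr) \;=\; (x,a) \ ,
\]
in which the first equality is the first identity of Proposition~\ref{constants}; the second is Lemma~\ref{constant}, applied to the element $(x,a) \in W_n(k)_L$; the third is Lemma~\ref{omega}; and the last holds because, for any $y \in W_n(k)_L$, the uniqueness of the decomposition in Remark~\ref{unique} forces $\Omega_Z^n(y) = y$ (alternatively, one may simply quote the second identity of Proposition~\ref{constants} read in the other direction). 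This gives the asserted equality of the two pairings, for every $n$ and independently of the auxiliary choice of $Z$.

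Thus I expect the proof of the theorem itself to be essentially a single line. The genuine obstacle of the section has already been dealt with: it was the reduction $[x,a) = [(x,a),Z)$ furnished by Proposition~\ref{constants}, which in turn hinges on the vanishing $[x,a) = 0$ for $x \in W_n(ak[a])_L$ of Proposition~\ref{aka}, and ultimately on the norm identity~\eqref{f:norm}. Granting those, nothing further is needed, and I would only add a sentence noting that the legitimacy of feeding $(x,a)$ back into the first argument of the pairing rests on $(\ ,\ )$ being $W_n(k)_L$-valued, as recorded in Lemma~\ref{pairing2}.
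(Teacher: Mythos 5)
Your proof is correct and is exactly the paper's argument, which states that the theorem is an immediate consequence of Lemma~\ref{constant} and Prop.~\ref{constants}; you have merely spelled out the chain $[x,a) = [(x,a),Z) = ((x,a),Z) = (x,a)$ that the paper leaves implicit. The side remark that $(x,a) \in W_n(k)_L$ justifies feeding it back into the first slot of the pairing is a helpful clarification but does not change the route.
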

\begin{proof}
This now is an immediate consequence of Lemma \ref{constant} and Prop.\ \ref{constants}.
\end{proof}

\begin{corollary}\label{Witt}
For all $z\in W_n(K)_L$ and $\hat{u} \in o_L((Z))^\times$ any lift of $u \in K^\times$ we have
\begin{equation*}
  Res(w_{n-1}(z)\frac{d\hat{u}}{\hat{u}}) = w_{n-1}(\partial(z)(rec_K(u))).
\end{equation*}
\end{corollary}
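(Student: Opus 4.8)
The plan is to read the statement off from the Schmid--Witt formula (Theorem \ref{schmid-witt}) by unwinding the definition of the residue pairing $(\;,\;)$ given in Lemma \ref{pairing2} and invoking the comparison diagram \eqref{f:A-omega}. All the substantive content is already contained in Theorem \ref{schmid-witt}; what remains is a careful translation through the maps $\alpha_n$, $\overline{\alpha}_n$, $w_{n-1}$ and their $o_L$-analogues.

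Concretely, using that $W_n(\rA_L)_L$ surjects onto $W_n(K)_L$ (componentwise reduction modulo $\pi_L$, under the identification $\rA_L/\pi_L\rA_L\cong K$ fixed in Lemma \ref{pairing2}), I would choose a lift $f\in W_n(\rA_L)_L$ of $z$; the given $\hat u\in o_L((Z))^\times$ is a lift of $u$ by hypothesis. By the definition of $(\;,\;)$ in Lemma \ref{pairing2},
\[
  (z,u)=\alpha_n(\{f,\hat u\})=\alpha_n\!\left(\mathrm{Res}\!\left(\Phi_{n-1}(f)\tfrac{d\hat u}{\hat u}\right)\right)\in W_n(k)_L .
\]
Now apply $w_{n-1}$. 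For $o_L$ (with $\sigma=\mathrm{id}$) the maps $\overline{\alpha}_n$ and $w_{n-1}$ are mutually inverse isomorphisms by Proposition \ref{s-map}.iii, and $\alpha_n$ is $\overline{\alpha}_n$ precomposed with reduction modulo $\pi_L^n$; hence $w_{n-1}(\alpha_n(c))\equiv c\bmod\pi_L^n o_L$ for every $c\in o_L$, so
\[
  w_{n-1}((z,u))\equiv \mathrm{Res}\!\left(\Phi_{n-1}(f)\tfrac{d\hat u}{\hat u}\right)\bmod \pi_L^n o_L .
\]
On the other hand, the commutativity of \eqref{f:A-omega} gives $\mathrm{pr}(\Phi_{n-1}(f))=w_{n-1}(z)$ in $\rA_L/\pi_L^n\rA_L$; since $\tfrac{d\hat u}{\hat u}\in o_L((Z))\,dZ\subseteq\Omega^1_{\rA_L}$ and $\mathrm{Res}$ is $o_L$-linear, multiplying by $\tfrac{d\hat u}{\hat u}$ and taking residues yields
\[
  \mathrm{Res}\!\left(\Phi_{n-1}(f)\tfrac{d\hat u}{\hat u}\right)\equiv \mathrm{Res}\!\left(w_{n-1}(z)\tfrac{d\hat u}{\hat u}\right)\bmod\pi_L^n o_L .
\]
Combining these congruences with the equality $(z,u)=[z,u)=\partial(z)(rec_K(u))$ from Theorem \ref{schmid-witt} gives $w_{n-1}(\partial(z)(rec_K(u)))=\mathrm{Res}(w_{n-1}(z)\tfrac{d\hat u}{\hat u})$ in $o_L/\pi_L^n o_L$, as claimed.

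The only thing requiring care is the bookkeeping of identifications: the isomorphisms $\rA_L\cong\mathbf{A}_L$ and $k((Z))\cong K$ underlying \eqref{f:alpha-n}, the coincidence of the map called $\alpha_1$ with reduction modulo $\pi_L$, and above all the contrast between $w_{n-1}\circ\alpha_n$ being reduction modulo $\pi_L^n$ on $o_L$ while $w_{n-1}\circ\overline{\alpha}_n=\phi_q^{n-1}$ on $\mathbf{A}_L/\pi_L^n\mathbf{A}_L$ by \eqref{f:alphaphiq}. One should also check that the chosen lifts $f$ of $z$ and $\hat u$ of $u$ feed consistently into the well-definedness established in Lemma \ref{pairing2}; but this is exactly the content of the two inclusions $\{\ker W_n(\mathrm{pr})_L,\,o_L((Z))^\times\}\subseteq\pi_L^n o_L$ and $\{W_n(\rA_L)_L,\,1+\pi_L o_L[[Z]]\}\subseteq\pi_L^n o_L$ proved there, so nothing genuinely new is needed. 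In short, there is no real obstacle beyond this chasing of maps, the hard analytic input having been absorbed into Theorem \ref{schmid-witt}.
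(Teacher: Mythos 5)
Your proof is correct and follows essentially the same route as the paper's: choose a lift $f\in W_n(\rA_L)_L$ of $z$, use Theorem \ref{schmid-witt} together with the definition of $(\;,\;)$ from Lemma \ref{pairing2}, apply $w_{n-1}$ (for $o_L$), and finish by identifying $\Phi_{n-1}(f)\bmod\pi_L^n$ with $w_{n-1}(z)$ via \eqref{f:A-omega}. The extra bookkeeping you spell out (that $w_{n-1}\circ\alpha_n$ is reduction mod $\pi_L^n$ on $o_L$, and the well-definedness inclusions from Lemma \ref{pairing2}) is implicit in the paper's shorter argument, but it is the same proof.
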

\begin{proof}
Let $f \in W_n(\rA_L)_L$ be a lift of $z$. Thm.\ \ref{schmid-witt} implies that
\begin{equation*}
  \partial(z)(rec_K(u)) = \alpha_n(\mathrm{Res}(\Phi_{n-1}(f)\frac{d\hat{u}}{\hat{u}}))
\end{equation*}
holds true. Applying $w_{n-1}$ (for $o_L$) we obtain
\begin{equation*}
  w_{n-1}(\partial(z)(rec_K(u))) = \mathrm{Res}(\Phi_{n-1}(f)\frac{d\hat{u}}{\hat{u}})  \mod \pi_L^n o_L \ .
\end{equation*}
On the other hand, by \eqref{f:A-omega}, the element $\Phi_{n-1}(f)$ module $\pi_L^n$ is equal to $w_{n-1}(z)$ (with $w_{n-1}$ for { $\mathscr{A}_L \cong \mathbf{A}_L$}).
\end{proof}

We finally are able to establish the congruence {\eqref{f:key-identity}. First note that since $\bA \subseteq W(\bE_L^{sep})_L$  we obtain the commutative diagram
 \begin{equation*}
 \xymatrix{
   0 \ar[r] & o_L/\pi_L^no_L \ar[d]^{w_{n-1}^{-1}=\overline{\alpha_n}}_{\cong}\ar[rr]^{ } && {\bA/\pi_L^n\bA} \ar[d]\ar[rr]^{\phi_q -1} && {\bA/\pi_L^n\bA}\ar[d] \ar[r] & 0  \\
   0\ar[r]^{} & W_n(k)_L \ar[rr]^{ } & & W_n(\bE_L^{sep})_L \ar[rr]^{\phi_q - 1 } && W_n(\bE_L^{sep})_L\ar[r]^{ } & 0.   }
\end{equation*}
We recall that $\partial_\varphi$ and $\partial$ denote the connecting homomorphisms arising from the upper and lower exact sequence, respectively. We obviously have the identity $(\overline{\alpha}_n)_*\circ\partial_\varphi=\partial \circ \overline{\alpha}_n$ for the map $\overline{\alpha}_n$ which was defined in \eqref{f:alpha-n}.

\begin{lemma}\label{lemma-Witt}
For any $z\in \bA_L$ and $\hat{u} \in o_L((\omega_{LT}))^\times \subseteq \bA_L^\times$ any lift of $u\in \bE_L^\times$
%\footnote{For $z = 1$ we have seen already in the proof of Remark \ref{Res-variable}.ii that the left hand side of the above identity only depends on $u$ and not on the lift $\hat{u}$.}
we have
\begin{equation}
  Res(\varphi_L^{n-1}(z)\frac{d\hat{u}}{\hat{u}}) \equiv \partial_\vp(z)(rec_{\bE_L}(u))  \mod \pi_L^n o_L \ .
\end{equation}
\end{lemma}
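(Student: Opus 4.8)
The plan is to read the statement off from the Schmid--Witt formula (Theorem \ref{schmid-witt}), in the packaged form of Cor.\ \ref{Witt}, applied to the local field $K=\mathbf{E}_L$, together with the translation dictionary provided by the maps $\alpha_n,\overline{\alpha}_n,w_{n-1}$ (recall $\mathbf{A}_L\subseteq W(\mathbf{E}_L)_L$ and $\mathbf{A}\subseteq W(\mathbf{E}_L^{sep})_L$) and by the morphism of Artin--Schreier--Witt sequences displayed immediately before the lemma. First I would observe that, since $\varphi_L$ preserves the $\pi_L$-adic filtration and since the connecting map $\partial_\varphi$ for $V=o_L$ reduces modulo $\pi_L^n$ to the one for $V=o_L/\pi_L^n o_L$ in \eqref{f:partial-phi}, both sides of the asserted congruence depend only on $\overline{z}:=z\bmod\pi_L^n\mathbf{A}_L$, so one may pass freely between $z$ and $\overline{z}$.

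Next I would set $\zeta:=\alpha_n(z)=\overline{\alpha}_n(\overline{z})\in W_n(\mathbf{E}_L)_L$ and apply Cor.\ \ref{Witt} with $K=\mathbf{E}_L$ (a local field isomorphic to $k_L((Z))$, with $\omega_{LT}\bmod\pi_L$ a uniformizer), $K^{sep}=\mathbf{E}_L^{sep}$, Witt vector $\zeta$, and the given lift $\hat u\in o_L((\omega_{LT}))^\times\subseteq\mathbf{A}_L^\times$ of $u\in\mathbf{E}_L^\times$ — this is legitimate independently of the chosen lifted uniformizer by Remark \ref{Res-variable}.ii. It yields, in $o_L/\pi_L^n o_L$,
\[
  \mathrm{Res}\!\left(w_{n-1}(\zeta)\,\tfrac{d\hat u}{\hat u}\right)=w_{n-1}\!\left(\partial(\zeta)(rec_{\mathbf{E}_L}(u))\right).
\]
On the left, the identity $w_{n-1}\circ\overline{\alpha}_n=\phi_q^{n-1}$ on $\mathbf{A}_L/\pi_L^n\mathbf{A}_L$ from \eqref{f:alphaphiq} (with $\phi_q=\varphi_L$) gives $w_{n-1}(\zeta)=\varphi_L^{n-1}(z)\bmod\pi_L^n$, so the left side is $\mathrm{Res}(\varphi_L^{n-1}(z)\,\tfrac{d\hat u}{\hat u})\bmod\pi_L^n$, as wanted. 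On the right, the commutativity $\partial\circ\overline{\alpha}_n=(\overline{\alpha}_n)_*\circ\partial_\varphi$ recorded before the lemma, together with the fact that $(\overline{\alpha}_n)_*$ is induced by the isomorphism $\overline{\alpha}_n\colon o_L/\pi_L^n o_L\xrightarrow{\cong}W_n(k_L)_L$, gives $\partial(\zeta)(rec_{\mathbf{E}_L}(u))=\overline{\alpha}_n\big(\partial_\varphi(\overline{z})(rec_{\mathbf{E}_L}(u))\big)$; applying $w_{n-1}$ and using that $\overline{\alpha}_n$ and $w_{n-1}$ are mutually inverse isomorphisms for $o_L$ (Prop.\ \ref{s-map}.iii, where $\phi_q=\id$), the right side collapses to $\partial_\varphi(z)(rec_{\mathbf{E}_L}(u))\bmod\pi_L^n$. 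Comparing the two computations would give the congruence.

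The only substantive ingredient is Cor.\ \ref{Witt}, hence Theorem \ref{schmid-witt}; everything else is a matching of identifications, and I expect the main obstacle to be precisely this bookkeeping. The delicate point is the interplay of the two incarnations of $\overline{\alpha}_n$: the one attached to $\mathbf{A}_L$, for which $w_{n-1}\circ\overline{\alpha}_n=\varphi_L^{n-1}$ and which produces the twist $\varphi_L^{n-1}$ on the left of \eqref{f:key-identity}, versus the one attached to $o_L$, for which $w_{n-1}\circ\overline{\alpha}_n=\id$, so that no twist appears on the right. One must check that these two are intertwined by exactly the displayed morphism of short exact sequences — equivalently, that $\partial$ in the characteristic-$p$ Artin--Schreier--Witt setting is compatible with $\partial_\varphi$ through $\overline{\alpha}_n$ — and that the symbol $rec_{\mathbf{E}_L}(u)$ really is the class field theory element entering \eqref{f:partial-phi} (via the field-of-norms identification $\varprojlim_n L_n^\times=\mathbf{E}_L^\times$ and \cite{Lau}, as recalled before Prop.\ \ref{reduction-to-char-p}); but all of this is already assembled in the preceding pages, so the argument is short.
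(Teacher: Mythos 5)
Your argument is correct and is essentially the paper's own proof: you apply Cor.~\ref{Witt} with $K=\mathbf{E}_L$ to $\zeta=\alpha_n(z)$, identify the left-hand side via $w_{n-1}\circ\overline{\alpha}_n=\varphi_L^{n-1}$ on $\mathbf{A}_L/\pi_L^n\mathbf{A}_L$ from \eqref{f:alphaphiq}, and collapse the right-hand side via $\partial\circ\overline{\alpha}_n=(\overline{\alpha}_n)_*\circ\partial_\varphi$ together with $w_{n-1}\circ\overline{\alpha}_n=\id$ on $o_L/\pi_L^n o_L$. The preliminary remark that both sides only depend on $z\bmod\pi_L^n$ and the careful attention to the two incarnations of $\overline{\alpha}_n$ are good hygiene but do not change the structure of the argument.
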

\begin{proof}
We use the identity in Cor.\ \ref{Witt} for the element $z' := \alpha_n(z)$ with respect to $K = \bE_L$. Since $w_{n-1} \circ \overline{\alpha}_n = \varphi_L^{n-1}$ by \eqref{f:alphaphiq} its left hand side becomes the left hand side of the assertion. For the right hand sides we compute
\begin{equation*}
 w_{n-1}(\partial(\alpha_n(z))(rec_{\bE_L}(u))) = w_{n-1} \circ \alpha_n (\partial_\varphi(z)(rec_{\bE_L}(u))) = \partial_\varphi(z)(rec_{\bE_L}(u)) \ .
\end{equation*}
%{\color{red} In der Behauptung sind manche Terme f\"ur $\mathscr{A}_L$ und andere f\"ur $\mathbf{A}_L$ definiert $\longrightarrow$ basic inconsistency.}
\end{proof}

As explained at the end of section \ref{sec:Kummer} this last lemma implies Prop.\ \ref{reduction-to-char-p}. The proof of Thm.\ \ref{Kummer-commutative} therefore now is complete.

\section{Bloch and Kato's as well as Kato's explicit reciprocity law revisited}\label{sec:CoatesWiles}

In this section we give a proof of a generalization of the explicit reciprocity law of Bloch and Kato (\cite{BK} Thm. 2.1) as well as of (a special case of) Kato's explicit reciprocity law (\cite{Kat} Thm.\ II.2.1.7) replacing his method of syntomic cohomology by generalizing the method of Fontaine in \cite{Fo2} from the cyclotomic to the general Lubin-Tate case.

First we recall some definitions and facts from \cite{Co1}. (This reference assumes that the power series $[\pi_L](Z)$ is a polynomial. But, by some additional convergence considerations, the results can be seen to hold in general (cf.\ \cite{GAL} \S\ref{GAL-sec:coeff} for more details).) The ideal $\mathbb{I}_L \subseteq W(\widetilde{\mathbf{E}}^+)_L$ is defined to be the preimage of $\pi_L o_{\mathbb{C}_p}$ under the surjective homomorphism of $o_L$-algebras $\theta : W(\widetilde{\mathbf{E}}^+)_L \longrightarrow o_{\mathbb{C}_p}$ (cf.\ \cite{GAL} Lemma \ref{GAL-adjunction2}). \cite{Co1} \S8.5 introduces the $\pi_L$-adic completion $A_{max,L}$ of the subalgebra $W(\widetilde{\mathbf{E}}^+)_L[\frac{1}{\pi_L}\mathbb{I}_L] \subseteq W(\widetilde{\mathbf{E}}^+)_L[\frac{1}{\pi_L}]$ as well as $B_{max,L}^+ := A_{max,L}[\frac{1}{\pi_L}] \subseteq B_{dR}^+$. The important point is that the Frobenius $\phi_q$ naturally extends to $A_{max,L} \subseteq B_{max,L}^+$ (but not to $B_{dR}^+$). Let $\overline{\bE}_L$ denote the algebraic closure of $\bE_L$ in $\widetilde{\bE}$ and $\overline{\bE}_L^+$ its ring of integers. Setting $\omega_L := \omega_{LT}$, $\omega_1 := \phi_q^{-1}(\omega_{LT}) \in
W(\overline{\mathbf{E}}^+)_L$, $\xi_L := \omega_{L}\omega_1^{-1} \in W(\overline{\bE}_L^+)_L$ and $t_L := \log_{LT}(\omega_L) \in A_{max,L}$ we have the following properties by Prop.s 9.10 and 9.6 in loc.\  cit.:
\begin{equation}\label{f:invBmax}
     (B_{max,L}^+)^{G_L}=L,
\end{equation}
\begin{equation}\label{f:unitAmax}
     \phi_q(t_L) = \pi_L t_L \quad\text{and}\quad \frac{t_L}{\omega_L} = 1 + \sum_{k\geq 2} e_k\omega_L^{k-1} \in A_{max,L}^\times \quad\text{with $e_k \in \pi_L^{-l_q(k)}o_L$},
\end{equation}
where $l_q(k)$ denotes the maximal integer $l$ such that $q^l\leq k$,
\begin{equation}\label{f:xi}
      W(\widetilde{\mathbf{E}}^+)_L\xi_L = \ker\left(W(\widetilde{\mathbf{E}}^+)_L \xrightarrow{\;\theta\;} o_{\mathbb{C}_p}\right),
\end{equation}
\begin{equation*}
      W(\widetilde{\mathbf{E}}^+)_L\omega_L = \{x \in W(\widetilde{\mathbf{E}}^+)_L :  \theta(\phi_q^i(x))=0\ \text{for all $i \geq 0$}\},
\end{equation*}
\begin{equation}\label{f:tLBmax}
      B_{max,L}^+t_L = \{x \in B_{max,L}^+ : \theta(\phi_q^i(x))=0\ \text{for all $i\geq 0$}\}.
\end{equation}
Since $\omega_L$ is a unit in $W(\mathbf{E}_L)_L$ we may define a $\phi_q$- and $\Gamma_L$-stable ring $A_{suf} \subseteq W(\overline{\mathbf{E}}_L)_L$ as the localization $\Sigma^{-1}W(\overline{\bE}_L^+)_L$ where $\Sigma$ denotes the multiplicatively closed subset generated by $\{\phi_q^m(\omega_L)\}_{m\geq 0}$.  Setting $B_{max,L} := B_{max,L}^+[\frac{1}{t_L}] \subseteq B_{dR}$ we obtain the following fact.

\begin{lemma}\label{suf}
$A_{suf} \subseteq B_{max,L}$.
\end{lemma}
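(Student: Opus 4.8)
The plan is to derive the inclusion formally from the two properties of $t_L$ recorded in \eqref{f:unitAmax}, together with the canonical chain $W(\overline{\mathbf{E}}_L^+)_L \subseteq W(\widetilde{\mathbf{E}}^+)_L \subseteq A_{max,L} \subseteq B_{max,L}^+ \subseteq B_{max,L} \subseteq B_{dR}$. Since $A_{suf} = \Sigma^{-1} W(\overline{\mathbf{E}}_L^+)_L$ is by definition a localization, the universal property of localization reduces the statement to two assertions: that $W(\overline{\mathbf{E}}_L^+)_L$ embeds into $B_{max,L}$ (immediate from the displayed chain, each inclusion being the canonical embedding of period rings realized inside $B_{dR}^+$), and that the image in $B_{max,L}$ of every element of the multiplicative set $\Sigma$ is a unit.

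For the second point, as $\Sigma$ is generated multiplicatively by $\{\phi_q^m(\omega_L)\}_{m \ge 0}$, it suffices to see that each $\phi_q^m(\omega_L)$ is invertible in $B_{max,L}$. First I would rewrite the second identity in \eqref{f:unitAmax} as $\omega_L = t_L \cdot w$ with $w := (t_L/\omega_L)^{-1} \in A_{max,L}^\times$. The Frobenius $\phi_q$ is an $o_L$-algebra endomorphism of $A_{max,L}$, hence so is $\phi_q^m$; iterating the first identity in \eqref{f:unitAmax} gives $\phi_q^m(t_L) = \pi_L^m t_L$, and a unital ring homomorphism sends units to units, so $\phi_q^m(w) \in A_{max,L}^\times$. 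Applying $\phi_q^m$ to $\omega_L = t_L w$ therefore yields $\phi_q^m(\omega_L) = \pi_L^m\, t_L\, \phi_q^m(w)$. Now $\pi_L$ is invertible already in $B_{max,L}^+ = A_{max,L}[\tfrac1{\pi_L}]$, the element $t_L$ is invertible in $B_{max,L} = B_{max,L}^+[\tfrac1{t_L}]$, and $\phi_q^m(w) \in A_{max,L}^\times \subseteq B_{max,L}^\times$; hence $\phi_q^m(\omega_L) \in B_{max,L}^\times$, as required.

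Combining the two points, the inclusion $W(\overline{\mathbf{E}}_L^+)_L \hookrightarrow B_{max,L}$ maps $\Sigma$ into $B_{max,L}^\times$ and therefore extends uniquely to a ring homomorphism $A_{suf} = \Sigma^{-1} W(\overline{\mathbf{E}}_L^+)_L \longrightarrow B_{max,L}$ restricting to the identity on $W(\overline{\mathbf{E}}_L^+)_L$. This homomorphism is injective: if $x/\sigma$ with $\sigma \in \Sigma$ maps to $0$, then, $\sigma$ being sent to a unit, $x$ maps to $0$ in $B_{max,L}$, whence $x = 0$ in $W(\overline{\mathbf{E}}_L^+)_L$ and $x/\sigma = 0$. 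Thus $A_{suf}$ is identified with a subring of $B_{max,L}$, which is the claim; the $\phi_q$- and $\Gamma_L$-stability of $A_{suf}$ plays no role here. I do not anticipate a genuine difficulty: the only care needed is bookkeeping — checking that $\omega_L$ and $t_L$ denote the same elements under all the identifications and that the localization map is really injective — while the entire substance sits in \eqref{f:unitAmax}.
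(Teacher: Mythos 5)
Your argument is correct and is essentially the paper's own proof: reduce via the localization's universal property to the inclusion $W(\overline{\mathbf{E}}_L^+)_L \subseteq A_{max,L}$ and the invertibility of each $\phi_q^m(\omega_L)$, then use $\omega_L = t_L\cdot w$ with $w\in A_{max,L}^\times$ together with $\phi_q(t_L)=\pi_L t_L$ to get $\phi_q^m(\omega_L)=\pi_L^m t_L\,\phi_q^m(w)\in B_{max,L}^\times$. The only addition is your explicit check that the induced map from the localization is injective, which the paper leaves implicit.
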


\begin{proof}
Since $W(\overline{\bE}_L^+)_L \subseteq W(\widetilde{\bE}_L^+)_L \subseteq A_{max,L}$ it suffices to prove that $\phi_q^m(\omega_L)$ is invertible in $B_{max,L}$. As $\omega_L = t_L a$ for some $a \in A_{max,L}^\times$ by \eqref{f:unitAmax} this is clear for $m=0$ and follows for $m\geq 1$ because
\begin{equation*}
  \phi_q^m(\omega_L) = \phi_q^m(t_L)\phi_q^m(a) = \pi_L^m t_L \phi_q^m(a) \ .
\end{equation*}
\end{proof}

By \eqref{f:unitAmax} we see that
\begin{equation*}
  \left(\frac{t_L}{\omega_L}\right)^{-r }=: \sum_{m\geq 0} \lambda_{m,r} \omega_L^m
\end{equation*}
belongs to $L[[\omega_L]] \subseteq B_{dR}^+$ for $r \geq 0$.

\begin{lemma}\phantomsection\label{convergenceBmax+}
\begin{itemize}
  \item[i.] $\sum_{m\geq 0} \lambda_{m,r} \omega_L^m$ converges in $A_{max,L}$.
  \item[ii.] $\sum_{m\geq 1} \lambda_{m+r,r} \omega_L^{m-1}$ converges in $B^+_{max,L}$.
\end{itemize}
\end{lemma}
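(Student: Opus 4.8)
The plan is to reduce both parts to a single $\pi_L$-adic valuation estimate for the terms $\lambda_{m,r}\omega_L^m$. Recall that $A_{max,L}$ is $\pi_L$-torsion free and $\pi_L$-adically complete and separated, so that $v_{\pi_L}(x):=\max\{n:x\in\pi_L^nA_{max,L}\}$ defines a valuation on $A_{max,L}$ which extends to $B_{max,L}^+=A_{max,L}[\tfrac1{\pi_L}]$, and a series converges in $A_{max,L}$ (resp.\ $B_{max,L}^+$) as soon as its terms tend $\pi_L$-adically to $0$ (resp.\ lie in a common $\pi_L^{-N}A_{max,L}$ and, after multiplication by $\pi_L^N$, tend to $0$). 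By \eqref{f:unitAmax}, $\tfrac{t_L}{\omega_L}=1+w$ with $w=\sum_{k\geq2}e_k\omega_L^{k-1}$ and $e_k\in\pi_L^{-l_q(k)}o_L$; expanding $(1+w)^{-r}=\sum_{j\geq0}\binom{-r}{j}w^j$ and collecting by total $\omega_L$-degree exhibits, for $m\geq1$, the number $\lambda_{m,r}$ as a finite $o_L$-linear combination — with integer coefficients $\binom{-r}{j}$ — of products $e_{k_1}\cdots e_{k_j}$ ranging over $j\geq1$ and $k_i\geq2$ with $(k_1-1)+\dots+(k_j-1)=m$ (and $\lambda_{0,r}=1$).

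Next I would supply two inputs. First, a divisibility property of $\omega_L$ in $A_{max,L}$: since $\theta(\omega_L)=0$ — immediate from the characterisation of $W(\widetilde{\mathbf{E}}^+)_L\,\omega_L$ recalled just before the lemma — we have $\omega_L\in\mathbb{I}_L$, hence $\tfrac{\omega_L}{\pi_L}\in A_{max,L}$, i.e.\ $v_{\pi_L}(\omega_L)\geq1$; and if $q=2$, then $\theta(\omega_1)$ lies in the maximal ideal of $o_{\mathbb{C}_p}$ (as $\omega_1\in\mathbb{M}_L$) and is annihilated by $[\pi_L]$ — because $[\pi_L](\omega_1)=\omega_L$ (from Lemma \ref{iota}.b) and $\theta(\omega_L)=0$ — so it is either $0$ or a primitive $\pi_L$-torsion point of $LT$; as all nonzero $\pi_L$-torsion points of $LT$ have valuation $\tfrac1{q-1}=1$, in either case $\theta(\omega_1)\in\pi_Lo_{\mathbb{C}_p}$, so $\omega_1\in\mathbb{I}_L$ and $\tfrac{\omega_L}{\pi_L^2}=\tfrac{\xi_L}{\pi_L}\cdot\tfrac{\omega_1}{\pi_L}\in A_{max,L}$, i.e.\ $v_{\pi_L}(\omega_L)\geq2$. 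Second, the elementary bound $l_q(k)\leq\tfrac{k-1}{q-1}$ for $k\geq2$ (for $l_q(k)=n$ one has $k-1\geq q^n-1=(q-1)(1+q+\dots+q^{n-1})\geq(q-1)n$). Writing $v:=v_{\pi_L}(\omega_L)$, each of the products above has $\pi_L$-valuation $\geq mv-\sum_i l_q(k_i)\geq mv-\tfrac{m}{q-1}$ (using $\sum_i(k_i-1)=m$), hence
\begin{equation*}
  v_{\pi_L}\bigl(\lambda_{m,r}\,\omega_L^{m}\bigr)\ \geq\ \Bigl(v-\tfrac1{q-1}\Bigr)m\qquad(m\geq1);
\end{equation*}
since $v-\tfrac1{q-1}\geq\tfrac{q-2}{q-1}>0$ for $q\geq3$ and $v-\tfrac1{q-1}\geq2-1=1$ for $q=2$, we get $v_{\pi_L}(\lambda_{m,r}\omega_L^m)\to\infty$, and in particular each $\lambda_{m,r}\omega_L^m$ already lies in $A_{max,L}$.

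Part (i) then follows at once: the terms of $\sum_{m\geq0}\lambda_{m,r}\omega_L^m$ lie in $A_{max,L}$ and tend $\pi_L$-adically to $0$. That its sum equals $\bigl(\tfrac{t_L}{\omega_L}\bigr)^{-r}$ comes from rearranging the double family $\bigl(\binom{-r}{j}e_{k_1}\cdots e_{k_j}\omega_L^{\sum(k_i-1)}\bigr)_{j;k_1,\dots,k_j}$, which is $\pi_L$-adically summable in the unordered sense — by the displayed bound, for each $N$ only finitely many members (those with $\sum(k_i-1)$ below a threshold) have valuation $<N$ — so summing it first over $j$ gives $\sum_j\binom{-r}{j}w^j=(1+w)^{-r}$ while summing first over the value of $m=\sum(k_i-1)$ gives $\sum_m\lambda_{m,r}\omega_L^m$. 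For part (ii), reindexing $M=m+r$ turns the series into $\sum_{M>r}\lambda_{M,r}\,\omega_L^{M-r-1}$, whose $M$-th term satisfies
\begin{equation*}
  v_{\pi_L}\bigl(\lambda_{M,r}\,\omega_L^{M-r-1}\bigr)\ \geq\ -\tfrac{M}{q-1}+(M-r-1)v\ =\ \Bigl(v-\tfrac1{q-1}\Bigr)M-(r+1)v ,
\end{equation*}
which is bounded below (the right-hand side increases in $M$ and at $M=r+1$ equals $-\tfrac{r+1}{q-1}\geq-(r+1)$) and tends to $\infty$; hence these terms all lie in $\pi_L^{-(r+1)}A_{max,L}$ and $\pi_L^{r+1}$ times the $M$-th term tends to $0$ in $A_{max,L}$, so the series converges in $B_{max,L}^+$.

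The step I expect to be the real obstacle is the case $q=2$. There the generic estimate only yields $v_{\pi_L}(\lambda_{m,r}\omega_L^m)\geq0$ — the extremal products being $e_2^m\in\pi_L^{-m}o_L$, which is not negligible against $\omega_L^m\in\pi_LA_{max,L}^{m}$ — so one genuinely needs the sharper divisibility $\omega_L\in\pi_L^2A_{max,L}$; this in turn rests on the characteristic-free fact that the $\pi_L$-torsion points of $LT$ have valuation $\tfrac1{q-1}$, which for $q=2$ forces $\omega_1\in\mathbb{I}_L$. Everything else — the binomial expansion, the elementary $l_q$-bound, the identification $[\pi_L](\omega_1)=\omega_L$, and the bookkeeping with $v_{\pi_L}$ — is routine.
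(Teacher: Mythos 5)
Your proof is correct, and it takes a genuinely different route from the paper's. The paper leans on the \emph{proof} of \cite{Co1} Prop.\ 9.10 to produce a $g(Z)$ in the Tate algebra $R$ with $(t_L/\omega_L)^r = 1 - \pi_L g(u)$, and then obtains the $b_m$'s as the Taylor coefficients of $(1-\pi_L g)^{-1}\in R$; in particular, the needed integrality $u=\omega_L/\pi_L\in A_{max,L}$ (resp.\ $\omega_L/\pi_L^2$ for $q=2$) is imported wholesale from Colmez, and the convergence then follows from $1+\pi_L R\subseteq R^\times$. You instead work directly with the coefficients $\lambda_{m,r}$: you make explicit that they are integer combinations of monomials in the $e_k$'s of total $\omega_L$-degree $m$, combine the cited bound $e_k\in\pi_L^{-l_q(k)}o_L$ from \eqref{f:unitAmax} with the elementary inequality $l_q(k)\leq (k-1)/(q-1)$, and obtain the linear lower bound $v_{\pi_L}(\lambda_{m,r}\omega_L^m)\geq(v-\tfrac1{q-1})m$. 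This is slightly longer but more transparent: it exposes exactly where the positivity comes from and, in the case $q=2$, recovers the divisibility $\omega_L\in\pi_L^2A_{max,L}$ (equivalently $\omega_1\in\mathbb{I}_L$) from first principles via the Newton polygon of $[\pi_L]$, where the paper simply cites Colmez's normalization $u=\omega_L/\pi_L^2$. The two small points worth emphasizing, which you handle correctly, are (a) the use of the superadditive $\pi_L$-adic filtration $v_{\pi_L}$ on $A_{max,L}$ (well-defined because $A_{max,L}$ is $\pi_L$-adically separated and $\pi_L$-torsion-free inside $B_{dR}^+$), and (b) the identification $[\pi_L](\theta(\omega_1))=\theta(\omega_L)=0$ from Lemma~\ref{iota}.b, which reduces the $q=2$ divisibility to the fact that $\pi_L$-torsion of $LT$ has valuation $\tfrac1{q-1}$.
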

\begin{proof}
First of all we note that the $\pi_L$-adic completion $R$ of the polynomial ring $o_L[Z]$ is the subring of all power series in $o_L[[Z]]$ whose coefficients tend to zero. Using the geometric series we see that $1 + \pi_L R \subseteq R^\times$.

According to \eqref{f:unitAmax} and the proof of \cite{Co1} Prop.\ 9.10 there exists a $g(Z) \in R$ such that
\begin{equation*}
  \left(\frac{t_L}{\omega_L}\right)^r = 1 - \pi_L g(u) \ ,
\end{equation*}
where $u = \frac{\omega_L}{\pi_L}\in \pi_L^{-1}\mathbb{I}_L$ for $q \neq 2$ and $u=\frac{\omega_L}{\pi_L^2}\in \pi_L^{-2}\mathbb{I}_L^2$ for $q=2$, respectively. By the initial observation we have
\begin{equation*}
  (1 - \pi_L g(Z))^{-1} = \sum_{m \geq 0} b_m Z^m \in R \ .
\end{equation*}
Thus
\begin{equation*}
  \left(\frac{t_L}{\omega_L}\right)^{-r} = \sum_{m \geq 0} b_m u^m = \sum_{m\geq 0} \lambda_{m,r} \omega_L^m
\end{equation*}
converges in $A_{max,L}$. For the second part of the assertion it remains to note that
\begin{equation*}
  \sum_{m\geq 1} \lambda_{m+r,r} \omega_L^{m-1} = \pi_L^{-(r+1)} \sum_{m \geq 0} b_{m+r+1} u^m \ .
\end{equation*}
\end{proof}

Setting $\tau_r' := \sum_{m=0}^r \lambda_{m,r}\omega_L^m$ and $\tau_r := \omega_L^{-r}\tau_r' \in L[\frac{1}{\omega_L}] \subseteq W(\overline{\bE}_L)_L[\tfrac{1}{\pi_L}]$ we have
\begin{equation}\label{f:taur}
      \tau_r - t_L^{-r} \in L[[\omega_L]] \subseteq B_{dR}^+ \ .
\end{equation}
By \cite{Co1} Prop.\ 9.25 (SEF 3E) we have the exact sequence
\begin{equation}\label{f:BmaxBdR}
  0 \longrightarrow L \longrightarrow (B_{max,L})^{\phi_q=1} \longrightarrow B_{dR}/B_{dR}^+ \longrightarrow 0 \ .
\end{equation}
We define
\begin{equation*}
  Fil^r B_{max,L}^+ := B_{max,L}^+ \cap t_L^rB_{dR}^+ \qquad\text{for $r \geq 0$}.
\end{equation*}

\begin{lemma}\phantomsection\label{FilBmaxexact}
\begin{itemize}
  \item[i.] For $r \geq 1$ the sequence
\begin{equation*}
  0  \longrightarrow Lt_L^r \longrightarrow Fil^r B_{max,L}^+ \xrightarrow{\pi_L^{-r}\phi_q-1} B_{max,L}^+ \longrightarrow 0
\end{equation*}
is exact.
  \item[ii.] For $r = 0$ the sequence
\begin{equation*}
  0  \longrightarrow L \longrightarrow B_{max,L}^+ \xrightarrow{\phi_q-1} (\phi_q - 1)B_{max,L}^+ \longrightarrow 0
\end{equation*}
is exact, and $((\phi_q - 1)B_{max,L}^+)^{G_L} = L$.
  \item[iii.] $\phi_q - 1$ is bijective on $\omega_L B_{max,L}^+$.
\end{itemize}
\end{lemma}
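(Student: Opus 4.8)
The plan is to derive all three exactness statements from two inputs: the fundamental exact sequence \eqref{f:BmaxBdR} together with $\phi_q(t_L)=\pi_L t_L$ (from \eqref{f:unitAmax}) for the injectivity/kernel claims, and a Frobenius‑orbit summation argument, made convergent by the $\pi_L$‑adic contractivity of $\phi_q$ on the ideals $\omega_L^N B^+_{max,L}$, for the surjections. Throughout I would use that $\phi_q$ stabilises the $\pi_L$‑adically complete ring $A_{max,L}$, that $\pi_L$ is a unit in $B^+_{max,L}=A_{max,L}[\tfrac1{\pi_L}]$, and — writing $a:=\omega_L/t_L\in A_{max,L}^\times$ by \eqref{f:unitAmax}, so $\omega_L=at_L$ — that $\phi_q(\omega_L)=\pi_L u\,\omega_L$ with $u:=\phi_q(a)/a\in A_{max,L}^\times$. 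Since $a\in A_{max,L}^\times\subseteq (B_{dR}^+)^\times$, the element $\omega_L$ is a uniformiser of the discrete valuation ring $B_{dR}^+$; hence $\omega_L^r B^+_{max,L}\subseteq Fil^r B^+_{max,L}$ and $\omega_L^r B^+_{max,L}\cap L=0$ for $r\ge 1$.

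\emph{Kernels.} As $\pi_L^{-r}\phi_q$ fixes $t_L^r$, one has $Lt_L^r\subseteq\ker(\pi_L^{-r}\phi_q-1)$ on $Fil^r B^+_{max,L}$; conversely, if $x\in Fil^r B^+_{max,L}$ with $\phi_q(x)=\pi_L^r x$, then $y:=t_L^{-r}x\in B_{max,L}$ satisfies $\phi_q(y)=y$ and $y\in B_{dR}^+$ (because $x\in t_L^rB_{dR}^+$), so $y\in L$ by \eqref{f:BmaxBdR}, i.e. $x\in Lt_L^r$. Taking $r=0$ yields $(B^+_{max,L})^{\phi_q=1}=L$, the kernel in ii.; and in iii., a $\phi_q$‑fixed vector in $\omega_L B^+_{max,L}$ lies in $(B^+_{max,L})^{\phi_q=1}\cap\omega_L B^+_{max,L}=L\cap\omega_L B^+_{max,L}=0$, so $\phi_q-1$ is injective there.

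\emph{Surjectivity in iii., and contractivity.} For $x=\omega_L z\in\omega_L B^+_{max,L}$ set $y:=-\sum_{i\ge 0}\phi_q^i(x)$. From $\phi_q^i(x)=\pi_L^{\,i}\,t_L\,\phi_q^i(a)\,\phi_q^i(z)=\omega_L\cdot\bigl(\pi_L^{\,i}a^{-1}\phi_q^i(a)\phi_q^i(z)\bigr)$ and the fact that the bracketed terms lie in $\pi_L^{\,i}$ times a bounded subset of $A_{max,L}$, the series converges in $\omega_L B^+_{max,L}$ and $(\phi_q-1)(y)=x$; with the injectivity above this proves iii. The same estimate shows that $(\pi_L^{-r}\phi_q)^j$ carries $\omega_L^{rN}B^+_{max,L}$ into $\pi_L^{\,jr(N-1)}\omega_L^{rN}B^+_{max,L}$, so that for $N\ge 2$ the Frobenius orbit of every element of $\omega_L^{rN}B^+_{max,L}$ tends to zero and $\pi_L^{-r}\phi_q-1$ is bijective on $\omega_L^{rN}B^+_{max,L}$.

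\emph{Surjectivity in i.\ (the main obstacle), and the rest of ii.} Here the naive attempt $x=t_L^r y$, giving $(\pi_L^{-r}\phi_q-1)(t_L^ry)=t_L^r(\phi_q-1)(y)$, does not reach all of $B^+_{max,L}$ because $Fil^r B^+_{max,L}$ is strictly larger than $t_L^r B^+_{max,L}$; one must exploit the ``polar'' elements of $Fil^r B^+_{max,L}$. I would proceed in two steps. First, over $B_{max,L}$: using the fundamental exact sequences of \cite{Co1} recalled above — in particular the surjectivity of $\phi_q-1$ on $B_{max,L}$ — solve $(\pi_L^{-r}\phi_q-1)\tilde x=z$ with $\tilde x=t_L^r\tilde y\in Fil^r B_{max,L}$, $\tilde y\in B_{dR}^+$. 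Second, push $\tilde x$ into $B^+_{max,L}$ using the explicit approximant $\tau_r$ of \eqref{f:taur}: unwinding the definition gives $\sum_{m\ge 0}\lambda_{m,r}\omega_L^{m-r}=t_L^{-r}$, whose tail $\sum_{m\ge r+1}\lambda_{m,r}\omega_L^{m-r}$ converges in $B^+_{max,L}$ by Lemma \ref{convergenceBmax+}, so $\tau_r-t_L^{-r}\in\omega_L B^+_{max,L}$ and hence $\pi_L^{\,r}\phi_q(\tau_r)-\tau_r\in\omega_L B^+_{max,L}$; feeding this relation into $\tilde x$ and absorbing the error by a correction in $Lt_L^r$ together with a convergent Frobenius sum (supplied by the bijectivity of $\pi_L^{-r}\phi_q-1$ on high powers of $\omega_L$ from the previous paragraph) produces a genuine preimage of $z$ in $Fil^r B^+_{max,L}$. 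The main difficulty is this bookkeeping: controlling simultaneously the $\omega_L$‑adic order (to stay in $Fil^r$) and the $\pi_L$‑adic convergence, i.e.\ showing that the polar part of $\tilde x$ can be cleared by a finite combination of $\tau_r$‑multiples. Finally, for ii.: since $(\phi_q-1)B^+_{max,L}$ is a $G_L$‑submodule of $B^+_{max,L}$, one has $\bigl((\phi_q-1)B^+_{max,L}\bigr)^{G_L}\subseteq (B^+_{max,L})^{G_L}=L$ by \eqref{f:invBmax}; the reverse inclusion reduces to realising $1$ as a coboundary $(\phi_q-1)b$ with $b\in B^+_{max,L}$, which one obtains by choosing via \eqref{f:BmaxBdR} a $\phi_q$‑fixed element congruent to $t_L^{-1}$ modulo $B_{dR}^+$ and clearing its pole with $\tau_1$ exactly as in the second step above.
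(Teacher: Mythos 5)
Your treatment of the kernels and of part iii.\ is correct and essentially matches the paper: identifying $\ker(\pi_L^{-r}\phi_q-1)$ on $Fil^r B^+_{max,L}$ with $Lt_L^r$ via \eqref{f:BmaxBdR} and $\phi_q(t_L)=\pi_L t_L$, and obtaining surjectivity (and injectivity) on $\omega_LB^+_{max,L}$ from the $\pi_L$-adic contraction $\phi_q^n(\omega_L a)\in\omega_L\pi_L^nA_{max,L}$, is exactly what the paper does.

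The surjectivity argument for part i., however, has a genuine gap: you correctly flag it as the ``main obstacle'' but then do not actually close it. Your plan is to first solve over $B_{max,L}$ producing some $\tilde x=t_L^r\tilde y$ with $\tilde y\in B_{dR}^+$, and then to ``clear the pole'' using $\tau_r$; but you never establish that such a $\tilde y$ can be taken in $B_{max,L}\cap B_{dR}^+$ rather than merely in $B_{dR}^+$, nor that the correction process you sketch terminates or converges in $Fil^rB^+_{max,L}$ --- these are precisely the bookkeeping issues you acknowledge are unresolved. The paper sidesteps this entirely by invoking two facts from \cite{Co1}: Prop.\ 9.22 gives, for every $r\ge 0$, the exact sequence $0\to Lt_L^r\to (B^+_{max,L})^{\phi_q=\pi_L^r}\to B^+_{dR}/t_L^rB^+_{dR}\to 0$, from which both $Lt_L^r=(Fil^rB^+_{max,L})^{\phi_q=\pi_L^r}$ and the decomposition $B^+_{max,L}=(B^+_{max,L})^{\phi_q=\pi_L^r}+Fil^rB^+_{max,L}$ follow; and the proof of Prop.\ 9.25 shows that $\pi_L^{-r}\phi_q-1$ is already surjective on all of $B^+_{max,L}$ for $r\ge 1$. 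Combining the two, any $z\in B^+_{max,L}$ has a preimage $x\in B^+_{max,L}$, which one then writes as $x=a+b$ with $a\in(B^+_{max,L})^{\phi_q=\pi_L^r}$ and $b\in Fil^rB^+_{max,L}$, and $(\pi_L^{-r}\phi_q-1)b=z$ since $a$ is killed. No pole-clearing or $\tau_r$-manipulation is needed.

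The reverse inclusion $L\subseteq\bigl((\phi_q-1)B^+_{max,L}\bigr)^{G_L}$ in ii.\ is also handled in a roundabout and incomplete way: you propose to pick a $\phi_q$-fixed lift of $t_L^{-1}$ modulo $B_{dR}^+$ and subtract $\tau_1$, but you have not shown that the resulting element lies in $B^+_{max,L}$ (the intersection $B_{max,L}\cap B_{dR}^+$ is not obviously $B^+_{max,L}$). The paper's argument is much shorter: for $a\in o_L\subseteq W(\overline{\bE}_L^+)_L$, the map $\phi_q-1$ is surjective on $W(\overline{\bE}_L^+)_L$ because $\overline{\bE}_L^+$ is integrally closed, and $W(\overline{\bE}_L^+)_L\subseteq B^+_{max,L}$, so one gets a preimage directly.
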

\begin{proof}
i. and ii. By \cite{Co1} Prop.\ 9.22 we have, for any $r \geq 0$, the exact sequence
\begin{equation*}
  0 \longrightarrow Lt_L^r \longrightarrow (B^+_{max,L})^{\phi_q=\pi_L^r} \longrightarrow B^+_{dR}/t_L^rB^+_{dR} \longrightarrow 0 \ .
\end{equation*}
First we deduce that $Lt_L^r = (B^+_{max,L})^{\phi_q=\pi_L^r} \cap t_L^rB^+_{dR} = (Fil^r B_{max,L}^+)^{\phi_q=\pi_L^r}$. Secondly it implies that $B^+_{dR} = (B^+_{max,L})^{\phi_q=\pi_L^r} + t_L^rB^+_{dR}$ and hence $B_{max,L}^+ = (B^+_{max,L})^{\phi_q=\pi_L^r} + Fil^r B_{max,L}^+$. In the proof of \cite{Co1} Prop.\ 9.25 it is shown that, for $r \geq 1$, the map $B_{max,L}^+ \xrightarrow{\pi_L^{-r}\phi_q-1} B_{max,L}^+$ is surjective. It follows that $B_{max,L}^+ = (\pi_L^{-r}\phi_q-1) Fil^r B_{max,L}^+$ for $r \geq 1$.

It remains to verify the second part of ii. By \eqref{f:invBmax} we have $((\phi_q - 1)B_{max,L}^+)^{G_L} \subseteq L$. For the reverse inclusion it suffices to consider any $a \in o_L \subseteq W(\overline{\mathbf{E}}_L^+)_L$. Since $\overline{\mathbf{E}}_L^+$ is integrally closed the map $\phi_q - 1$ on $W(\overline{\mathbf{E}}_L^+)_L$ is surjective. Hence we find a $y \in W(\overline{\mathbf{E}}_L^+)_L \subseteq B_{max,L}^+$ such that $(\phi_q - 1)y = a$.

iii. First of all we note that $\phi_q(\omega_L B_{max,L}^+) \subseteq \phi_q(\omega_1 \xi) B_{max,L}^+ \subseteq \omega_L B_{max,L}^+$, so that, indeed, $\phi_q - 1$ restricts to an endomorphism of $\omega_L B_{max,L}^+$. By ii. we have $(\omega_L B_{max,L}^+)^{\phi_q = 1} \subseteq (B_{max,L}^+)^{\phi_q = 1} = L$. But $\omega_L B_{max,L}^+ \cap L = 0$ by \eqref{f:xi}. This proves the injectivity. It suffices to establish surjectivity on $\omega_L A_{max,L}$. Let $\omega_L a \in \omega_L A_{max,L}$. Similarly as in the proof of Lemma \ref{suf} we let $\omega_L = t_L u$ with $u \in A_{max,L}^\times$ and compute
\begin{equation*}
  \phi_q^n(\omega_L a) = \phi_q^n(t_L a u) \in \pi_L^n t_L A_{max,L} = \omega_L \pi_L^n A_{max,L} \ .
\end{equation*}
It follows that the series $- \sum_{n\geq 0} \phi_q^n(\omega_L a)$ converges ($\pi_L$-adically) to some element $\omega_L c \in \omega_L A_{max,L}$ such that $(\phi_q-1)(\omega_L c) = \omega_L a$.
\end{proof}

The sequences in Lemma \ref{FilBmaxexact}.i/ii induce, for any $r \geq 0$, the connecting homomorphism in continuous Galois cohomology
\begin{equation*}
  L = (B_{max,L}^+)^{G_L} \xrightarrow{\;\partial^r\;} H^1(L, L t_L^r) \ ,
\footnote{Setting $L^r_{adm} := L \cap (\pi_L^{-r}\phi_q-1)(Fil^rB_{max,L}^+ )$ we still may define
\begin{equation*}
  L^r_{adm} \xrightarrow{\partial^r} H^1(L, L t_L^r)
\end{equation*}
without knowing the right hand surjectivity in Lemma \ref{FilBmaxexact}.i and define $\partial^r$ with source $L^r_{adm}$ instead. In the course of the next Proposition one can then shown that $L^r_{adm} = L$.}
\end{equation*}
 Note that as a $G_L$-representation $L t_L^r$ is isomorphic to $V := L \otimes_{o_L} T$ (cf.\ Lemma \ref{iota}.c and \eqref{f:dlog}). We introduce the composite homomorphism
\begin{equation*}
  \delta^r : L \xrightarrow{\;\partial^r\;} H^1(L, L t_L^r) \xrightarrow{\;\mathrm{res}\;} \Hom_{\Gamma_L}(H_L, L t_L^r) \ .
\end{equation*}
By Lemma \ref{Artin-Schreier} we also have the connecting homomorphism
\begin{equation*}
  \partial_{\varphi} : W(\overline{\bE}_L^{H_L})_L[\tfrac{1}{\pi_L}] \longrightarrow H^1(H_L,L) \ .
\end{equation*}

\begin{proposition}
$\delta^r(a) = \partial_{\varphi}(\tau_r a)  t_L^r$ for any $a \in L$.
\end{proposition}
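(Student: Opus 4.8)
The plan is to make both sides explicit as continuous homomorphisms $H_L\to Lt_L^r$ and compare them. Since $H_L=\ker(\chi_{LT})$ and $\sigma(t_L)=\chi_{LT}(\sigma)\,t_L$ for $\sigma\in G_L$ by \eqref{f:dlog}, the group $H_L$ acts trivially on $Lt_L^r$ and on $L$; hence $H^1(H_L,Lt_L^r)=\Hom^{cont}(H_L,Lt_L^r)$ and $H^1(H_L,L)=\Hom^{cont}(H_L,L)$ have no coboundaries, and $\delta^r(a)$, $\partial_\varphi(\tau_r a)$ are genuine homomorphisms. So it suffices to prove $\delta^r(a)(\sigma)=\partial_\varphi(\tau_r a)(\sigma)\,t_L^r$ for every $\sigma\in H_L$.

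First I would make $\delta^r(a)$ explicit. By the surjectivity in Lemma \ref{FilBmaxexact}.i (and in \ref{FilBmaxexact}.ii when $r=0$) choose $x\in Fil^r B_{max,L}^+$ with $(\pi_L^{-r}\phi_q-1)x=a$; then $\partial^r(a)$ is the class of the cocycle $\sigma\mapsto(\sigma-1)x$, whose values lie in $Lt_L^r$ by exactness, and $\delta^r(a)$ is its restriction to $H_L$. Put $\delta:=\tau_r-t_L^{-r}$. By \eqref{f:taur}, the computation $\tau_r-t_L^{-r}=-\omega_L\sum_{m\geq 1}\lambda_{m+r,r}\omega_L^{m-1}$, and Lemma \ref{convergenceBmax+}.ii, we have $\delta\in\omega_L B_{max,L}^+$, so Lemma \ref{FilBmaxexact}.iii provides a unique $z\in\omega_L B_{max,L}^+$ with $(\phi_q-1)z=a\delta$. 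Set $\alpha:=t_L^{-r}x+z\in B_{max,L}$. From $\phi_q(t_L^{-r})=\pi_L^{-r}t_L^{-r}$ we get $(\phi_q-1)(t_L^{-r}x)=t_L^{-r}(\pi_L^{-r}\phi_q-1)x=a\,t_L^{-r}$, hence $(\phi_q-1)\alpha=a\,t_L^{-r}+a\delta=a\tau_r$; thus $\alpha$ is an Artin--Schreier preimage of $\tau_r a$.

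Three observations then finish the argument. (i) $z$ is $H_L$-invariant: $a\in L$ is $G_L$-fixed and $\omega_L$ is $H_L$-fixed (as $[\chi_{LT}(\sigma)]=[1]$ on $H_L$), so $a\delta$ is $H_L$-fixed; since $\phi_q$ and $\omega_L B_{max,L}^+$ are $H_L$-stable, the uniqueness in Lemma \ref{FilBmaxexact}.iii forces $\sigma(z)=z$. (ii) $\alpha\in B_{dR}^+$: indeed $t_L^{-r}x\in B_{dR}^+$ because $x\in Fil^r B_{max,L}^+\subseteq t_L^r B_{dR}^+$, and $z\in B_{max,L}^+\subseteq B_{dR}^+$. (iii) $\alpha$ computes $\partial_\varphi(\tau_r a)$: if $\alpha_0\in W(\overline{\bE}_L)_L[\tfrac1{\pi_L}]$ is a preimage of $\tau_r a$ of the kind used to define $\partial_\varphi$, then, viewing everything inside $B_{max,L}$ via the natural $\phi_q$- and $H_L$-equivariant inclusions of period rings, $\alpha-\alpha_0\in(B_{max,L})^{\phi_q=1}$, and since $\alpha_0\in B_{dR}^+$ as well, by (ii) the difference lies in $(B_{max,L})^{\phi_q=1}\cap B_{dR}^+$, which equals $L$ by the exact sequence \eqref{f:BmaxBdR}. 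Hence $(\sigma-1)\alpha=(\sigma-1)\alpha_0=\partial_\varphi(\tau_r a)(\sigma)$ for $\sigma\in H_L$. Combining, for $\sigma\in H_L$ (using $\sigma(t_L^{-r})=t_L^{-r}$ and (i)):
\begin{equation*}
  \partial_\varphi(\tau_r a)(\sigma)=(\sigma-1)\alpha=t_L^{-r}(\sigma-1)x+(\sigma-1)z=t_L^{-r}\,\delta^r(a)(\sigma),
\end{equation*}
which rearranges to the assertion. When $r=0$ one has $\tau_0=1$, $\delta=0$, $z=0$, $\alpha=x$, and the same computation goes through with Lemma \ref{FilBmaxexact}.ii in place of \ref{FilBmaxexact}.i.

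The step I expect to be the main obstacle is observation (iii): identifying with care the right ambient ring and checking that the passage from $W(\overline{\bE}_L)_L[\tfrac1{\pi_L}]$ into $B_{max,L}$ is compatible with $\phi_q$, the $H_L$-action and the operator $\phi_q-1$, so that the connecting homomorphism $\partial_\varphi$ built from the Artin--Schreier--Witt sequence of Lemma \ref{Artin-Schreier} is genuinely computed by an Artin--Schreier preimage taken inside $B_{max,L}$. Once the two short exact sequences are linked by such a map, the decisive reduction --- the difference of any two preimages lies in $(B_{max,L})^{\phi_q=1}\cap B_{dR}^+$, which is just $L$ by \eqref{f:BmaxBdR} --- is immediate; the remaining verifications (the surjectivity pulled from $Fil^r B_{max,L}^+$, the $H_L$-invariance of $z$, the elementary filtration inclusions) are routine, relying only on Lemmas \ref{FilBmaxexact} and \ref{convergenceBmax+} together with $\phi_q(t_L)=\pi_L t_L$.
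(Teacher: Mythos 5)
Your observations (i) and (ii) are sound, and your overall scheme --- add an $H_L$-fixed corrector $z$ with $(\phi_q-1)z=a(\tau_r-t_L^{-r})$, obtained from Lemma \ref{convergenceBmax+}.ii and Lemma \ref{FilBmaxexact}.iii, to pass between the two cocycles --- is exactly the device the paper uses, just run in the opposite direction. But observation (iii), which you correctly single out as the crux, has a genuine gap that your argument does not close.

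The problem is the phrase ``viewing everything inside $B_{max,L}$ via the natural $\phi_q$- and $H_L$-equivariant inclusions of period rings'' together with the assertion $\alpha_0\in B_{dR}^+$. There is \emph{no} natural embedding $W(\overline{\bE}_L)_L[\tfrac{1}{\pi_L}]\hookrightarrow B_{max,L}$ (nor into $B_{dR}$): the ring $W(\overline{\bE}_L)_L$ is the Witt vectors over the whole \emph{field} $\overline{\bE}_L$, so it contains Teichm\"uller lifts of elements of negative valuation, and these do not live in $W(\widetilde{\mathbf{E}}^+)_L$, in $A_{max,L}$, or in $B_{dR}^+$. Lemma \ref{suf} tells you precisely which piece does embed into $B_{max,L}$: the subring $A_{suf}=\Sigma^{-1}W(\overline{\bE}_L^+)_L$ with $\Sigma$ generated by the $\phi_q^m(\omega_L)$. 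So for a generic $\partial_\varphi$-preimage $\alpha_0$, the difference $\alpha-\alpha_0$ is not a well-defined element of $B_{max,L}$, and the appeal to $(B_{max,L})^{\phi_q=1}\cap B_{dR}^+=L$ via \eqref{f:BmaxBdR} does not get off the ground. Your concluding sentence is a restatement of the difficulty, not a proof of it.

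What the paper does to bridge the two worlds --- and what your proposal omits --- is to \emph{construct} one explicit Artin--Schreier--Witt preimage of $\tau_r a$ that lands inside $A_{suf}\cap t_L^{-r}B_{max,L}^+\cap B_{dR}^+$. After first reducing to the case $\tau_r'a\in o_L[\omega_L]\subseteq W(\overline{\bE}_L^+)_L$ (harmless, by $o_L$-linearity), one solves the $\xi_L^r$-twisted equation $\phi_q(y)-\xi_L^r y=\tau_r'a$ with $y\in W(\overline{\bE}_L^+)_L$ (possible because $\overline{\bE}_L^+$ is integrally closed) and sets $\beta_0:=\omega_1^{-r}y$. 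Then $(\phi_q-1)\beta_0=\tau_r a$, $\beta_0$ lies in $A_{suf}$ --- hence in $B_{max,L}$ by Lemma \ref{suf} --- and also in $B_{dR}^+$, because $y\in W(\overline{\bE}_L^+)_L\subseteq B_{dR}^+$ and $\omega_1$ is a unit there. Only with such a $\beta_0$ in hand does your comparison make sense: the $\partial_\varphi$- and $\delta^r$-preimages then differ by the $H_L$-fixed corrector $\gamma$ (your $z$), and $(g-1)\gamma=0$ finishes the proof. In short, the missing ingredient is the pair (integral $\xi_L$-twisted solve over $W(\overline{\bE}_L^+)_L$, Lemma \ref{suf}), which produces a common model for the two cocycles; without it, observation (iii) is precisely the unproved assertion you set out to establish.
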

\begin{proof}
Let $n\geq 0$ such that $\pi_L^n\tau_r\in o_L[\frac{1}{\omega_L}]$, and assume without loss of generality that $a$ belongs to $\pi_L^n o_L$, i.e., that $\tau_r' a \in o_L[\omega_L] \subseteq W(\overline{\bE}_L^+)_L$. In order to compute $\delta^r(a)$ we choose any $\alpha \in Fil^rB_{max,L}^+$ such that $(\pi_L^{-r} \phi_q -1)(\alpha) = a$. Then
\begin{equation*}
  \delta^r(a)(g) = (g-1)\alpha \quad\text{for all $g\in H_L$}.
\end{equation*}
In fact, choosing $\alpha$ is equivalent to choosing $\beta := t_L^{-r}\alpha \in t_L^{-r}B_{max,L}^+ \cap B_{dR}^+$ such that
\begin{equation*}
  (\phi_q -1)(\beta) = b := t_L^{-r} a \ .
\end{equation*}
On the other hand, to compute $\partial_{\varphi}(\tau_r a)$ we note that $\tau_r'a$ and $\xi_L^r$ belong to $W(\overline{\bE}_L^+)_L$ and that, since $\overline{\bE}_L^+$ is integrally closed, the map $\phi_q - \xi_L^r : W(\overline{\bE}_L^+)_L \rightarrow W(\overline{\bE}_L^+)_L$ is surjective (argue inductively with respect to the length of Witt vectors). Hence we find a $y \in W(\overline{\bE}_L^+)_L$ such that
\begin{equation*}
  \varphi_L(y) - \xi_L^r y = \tau'_ra \ .
\end{equation*}
By using \eqref{f:unitAmax} we see that $\omega_1^{-r} = \xi_L^r \omega_L^{-r} \in  t_L^{-r}B_{max,L}^+$. It follows that the element $\beta_0 := \omega_1^{-r}y$ belongs to $A_{suf} \cap t_L^{-r}B_{max,L}^+ \subseteq W(\overline{\bE}_L)_L$ and satisfies
\begin{align*}
    (\phi_q-1)(\beta_0) & = \phi_q(\omega_{1})^{-r}\phi_q(y) - \omega_1^{-r}y  \\
        & = \omega_L^{-r}(\phi_q(y )- \xi_L^ry)   \\
        & = \omega_L^{-r}\tau'_r a = \tau_r a \ .
\end{align*}
Hence
\begin{equation*}
  \partial_\varphi(\tau_r a)(g) = (g-1)\beta_0 \quad\text{for all $g\in H_L$}.
\end{equation*}
We observe that $y \in W(\overline{\bE}_L^+)_L \subseteq B_{dR}^+$, that $\omega_1$ is a unit in $B_{dR}^+$ (since $\theta(\omega_1) \neq 0$ by the first sentence in the proof of \cite{Co1} Prop.\ 9.6), and hence that $\beta_0 \in t_L^{-r}B_{max,L}^+ \cap B_{dR}^+$. At this point we are reduced to finding an element $\gamma \in (\omega_L B_{max,L}^+)^{H_L} \subseteq B_{max,L}^+ \subseteq t_L^{-r}B_{max,L}^+ \cap B_{dR}^+$ such that $(\phi_q -1)(\gamma) = (t_L^{-r} - \tau_r)a$. We then put $\beta := \beta_0 + \gamma$ and obtain
\begin{equation*}
  \delta^r(a)(g) = (g-1)\beta \otimes t_L^r  = (g-1)(\beta_0+\gamma) \otimes t_L^r
   = (g-1)\beta_0 \otimes t_L^r = \partial_\varphi(\tau_r a)(g) \otimes t_L^r
\end{equation*}
for any $g \in H_L$. In order to find $\gamma$ it suffices, because of Lemma \ref{FilBmaxexact}.iii, to observe that
\begin{equation*}
  t_L^{-r} - \tau_r = \omega_L (\sum_{m\geq 1} \lambda_{m+r,r} \omega_L^{m-1}) \in (\omega_LB_{max,L}^+)^{H_L}
\end{equation*}
by Lemma \ref{convergenceBmax+}.ii.
\end{proof}

Now we define the Coates-Wiles homomorphisms in this context for $r \geq 1$ and $m \geq 0$ by\footnote{For $m>0$  one can extend the definition to $\varprojlim_n L_n^\times $ while for $m=0$ one cannot evaluate at $t_{0,0}=0$!}
\begin{align*}
   \psi_{CW,m}^r :  \varprojlim_n o_{L_n}^\times & \longrightarrow L_m \\
                                   u & \longmapsto
    \frac{1}{r!\pi_L^{rm}}\left(\partial_{\mathrm{inv}}^{r-1}\Delta_{LT} g_{u,t_0}\right)_{|Z=t_{0,m}} \ .
\end{align*}
Then the map
\begin{align*}
   \Psi_{CW,m}^r :  \varprojlim_n o_{L_n}^\times & \longrightarrow L_mt_L^r \\
                                   u & \longmapsto \psi_{CW,m}^r(u)t_L^r\ .
\end{align*}
is $G_L$-equivariant (it depends on the choice of $t_0$). In the following we abbreviate $\psi_{CW}^r := \psi_{CW,0}^r$ and $\Psi_{CW}^r := \Psi_{CW,0}^r$ . One might think about these maps in terms of the formal identity
\begin{equation*}
  \log g_{u,t_0} (\omega_{LT}) = \sum_r \psi_{CW}^r(u)t_L^r = \sum_r \Psi_{CW}^r(u)  \qquad\text{in $L[[t_L]] \subseteq B_{dR}$}.
\end{equation*}
But instead of justifying in which sense me may insert $g_{u,t_0} (\omega_{LT}(t_L))$ \footnote{This power series has a constant term: see \cite{Fo2} for a technical solution.} into the logarithm series, we shall only explain (and below  use) the following identity
\begin{equation*}d\log g_{u,t_0}{(\omega_{LT})}=\frac{dg_{u,t_0}{(\omega_{LT})}}{g_{u,t_0}{(\omega_{LT})}}= \sum_{r\geq 1}r\psi_{CW}^r(u) t_L^{r-1}dt_L \ .
\end{equation*}
Indeed, $t_L = \log_{LT}(\omega_{LT})$ implies $\frac{d}{dt_L} \omega_{LT} = g_{LT}(\omega_{LT})^{-1}$ and hence
\begin{equation*}
  \frac{d}{dt_L}f(\omega_{LT}) = g_{LT}(\omega_{LT})^{-1}\frac{d}{d\omega_{LT}}f(\omega_{LT}) = \partial_{\mathrm{inv}}(f)(Z)_{|Z=\omega_{LT}} \ .
\end{equation*}
We calculate
\begin{align*}
  \tfrac{1}{(r-1)!} \left((\tfrac{d}{dt_L} )^{r-1}   \frac{1}{g_{u,t_0}{(\omega_{LT})}}\frac{dg_{u,t_0}{(\omega_{LT})}}{dt_L}\right)_{|''t_L = 0''} & = \tfrac{1}{(r-1)!} ((\partial_{\mathrm{inv}}^{r-1}\Delta_{LT} g_{u,t_0}(Z))_{|Z=\omega_{LT}})_{|''t_L = 0''}   \\
  & = \tfrac{1}{(r-1)!} (\partial_{\mathrm{inv}}^{r-1}\Delta_{LT} g_{u,t_0}(Z))_{|Z=0} \\
  & = r\psi_{CW}^{r}(u) \ .
\end{align*}

\begin{proposition}
For all $a\in L$, $r\geq 1$,   and $u \in \varprojlim_n o_{L_n}^\times$ we have
\begin{equation*}
  ar \psi_{CW}^r(u) = \partial_\varphi(\tau_ra)(rec(u)) \ .
\end{equation*}
\end{proposition}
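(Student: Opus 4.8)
The plan is to combine the previous proposition (which computes $\delta^r(a) = \partial_\varphi(\tau_r a)\,t_L^r$) with the generalized Kummer/reciprocity machinery of section \ref{sec:Kummer}, adapted to the $r$-th layer. First I would recall that $\delta^r : L \to \Hom_{\Gamma_L}(H_L, Lt_L^r)$ was defined as the composite of $\partial^r$ with restriction to $H_L$, and that $Lt_L^r \cong V = L\otimes_{o_L}T$ as $G_L$-representation via \eqref{f:dlog} and Lemma \ref{iota}.c. The immediate consequence of the previous proposition is therefore
\begin{equation*}
  \delta^r(a)(g) = \partial_\varphi(\tau_r a)(g)\,t_L^r \qquad\text{for all $g \in H_L$}.
\end{equation*}
So it suffices to identify, for $a\in L$, the value $\delta^r(a)(rec(u))$ with $ar\,\psi_{CW}^r(u)\,t_L^r$, i.e. to show $\delta^r(a)(rec(u)) = ar\psi_{CW}^r(u)t_L^r$ under the reciprocity map $rec : \varprojlim_n L_n^\times \to H_L^{ab}(p)$ coming from local class field theory for the layers $L_m$.

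The core of the argument will be a Bloch--Kato-style computation in $B_{dR}$. Concretely, I would use the explicit description of the connecting map $\partial^r$ attached to the exact sequence in Lemma \ref{FilBmaxexact}.i: given $a\in L$, choose $\alpha\in Fil^rB_{max,L}^+$ with $(\pi_L^{-r}\phi_q-1)(\alpha) = a$, so that $\delta^r(a)(g) = (g-1)\alpha$ for $g\in H_L$. Passing to $\beta := t_L^{-r}\alpha \in B_{dR}^+$, this reads $(\phi_q-1)(\beta) = t_L^{-r}a$, and $\delta^r(a)(g) = (g-1)\beta \otimes t_L^r$. The plan is then to compare this with the Coleman power series of $u$: the norm-coherent unit $u=(u_n)$ has Coleman series $g_{u,t_0}$ with $g_{u,t_0}(t_{0,n}) = u_n$, and (heuristically) $\log g_{u,t_0}(\omega_{LT})$ plays the role of a $B_{dR}$-element whose $\phi_q$-twisted difference encodes the Kummer class of $u$. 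The rigorous substitute for this heuristic is the identity already displayed just before the statement,
\begin{equation*}
  d\log g_{u,t_0}(\omega_{LT}) = \sum_{r\geq 1} r\,\psi_{CW}^r(u)\,t_L^{r-1}\,dt_L \ ,
\end{equation*}
together with the fact, from the theory of fields of norms and Lemma \ref{lemma-Witt}/Cor.\ \ref{Witt}, that $rec(u) = rec_{\mathbf{E}_L}(u)$ and that $g_{u,t_0}(\omega_{LT}) \in (\mathbf{A}_L^\times)^{\mathcal{N}=1}$ is a lift of $u\in\mathbf{E}_L^\times$.

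Putting the pieces together: I would first reduce to $a\in\pi_L^no_L$ so that $\tau_r'a\in W(\overline{\mathbf{E}}_L^+)_L$ (exactly as in the proof of the preceding proposition), and then exhibit the element $\beta_0 := \omega_1^{-r}y$ with $\varphi_L(y)-\xi_L^ry = \tau_r'a$, which satisfies $(\phi_q-1)(\beta_0) = \tau_r a$ and hence $\partial_\varphi(\tau_r a)(g) = (g-1)\beta_0$. The previous proposition already tells us $\delta^r(a)(g) = (g-1)\beta_0 \otimes t_L^r$. So the remaining task is purely to evaluate $(g-1)\beta_0 \otimes t_L^r$ at $g = rec(u)$ and recognize the answer as $ar\psi_{CW}^r(u)t_L^r$; this is where one must feed in the explicit reciprocity law in characteristic $p$ (Prop.\ \ref{reduction-to-char-p} / the Schmid--Witt formula, Theorem \ref{schmid-witt}) applied to the element $z$ obtained by expanding $\tau_r a$ against $d\log g_{u,t_0}(\omega_{LT})$, matching coefficients of $t_L^{r-1}dt_L$. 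The main obstacle I anticipate is precisely this matching step: making sense of "$\log g_{u,t_0}(\omega_{LT})$" and its interaction with the twist $\tau_r \sim t_L^{-r}$ requires care with the filtration on $B_{dR}$ and with the constant-term ambiguity of the logarithm (the footnote points to \cite{Fo2} for the technical fix), and one must verify that the residue computation underlying Cor.\ \ref{Witt} indeed extracts the coefficient $r\psi_{CW}^r(u)$ rather than some neighboring term. Once that coefficient extraction is justified, the $G_L$-equivariance of $\Psi_{CW}^r$ and bilinearity in $a$ close the argument.
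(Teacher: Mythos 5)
Your key ingredients match the paper's: Proposition \ref{reduction-to-char-p} to convert $\partial_\varphi(\tau_r a)(rec(u))$ into a residue, the expansion $d\log g_{u,t_0}(\omega_{LT}) = \sum_{n\geq 1} n\psi_{CW}^n(u)\,t_L^{n-1}dt_L$, the relation $\tau_r - t_L^{-r} \in L[[t_L]]$ from \eqref{f:taur}, and coefficient extraction. But the detour through $\delta^r$ and the element $\beta_0$ is circular rather than helpful: the previous proposition identifies $\delta^r(a)(g)$ with $\partial_\varphi(\tau_r a)(g)\,t_L^r$, so evaluating $\delta^r(a)$ at $rec(u)$ is literally the same task as evaluating $\partial_\varphi(\tau_r a)$ there, and nothing about $\beta_0$'s description as $\omega_1^{-r}y$ helps you apply $rec(u)$ to it. The paper simply applies Proposition \ref{reduction-to-char-p} (extended by $L$-linearity, since $\tau_r a \in L[\omega_L,\omega_L^{-1}]$) directly to get $\partial_\varphi(\tau_r a)(rec(u)) = \mathrm{Res}_{\omega_L}(\tau_r a\,d\log g_{u,t_0}(\omega_{LT}))$, then uses coordinate-invariance of the residue to pass from $\omega_L$ to $t_L$, replaces $\tau_r$ by $t_L^{-r}$ because their difference is a power series in $t_L$ and hence contributes nothing when paired against the holomorphic form $d\log g_{u,t_0}(\omega_{LT})$, and reads off the coefficient $ar\psi_{CW}^r(u)$. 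Your worry about ``making sense of $\log g_{u,t_0}(\omega_{LT})$'' is also a red herring here: only $d\log$ is used in this proof, so no constant-term regularization enters. You also do not explicitly mention the change of variable $\mathrm{Res}_{\omega_L}=\mathrm{Res}_{t_L}$, which is the step that makes the $d\log$ expansion usable; that is a small but essential link. Net effect: the proposal lands on the correct computation, but the $\delta^r$/$\beta_0$ scaffolding adds circularity without gain, and a couple of the technical hinges (coordinate-invariance of the residue, working with $d\log$ rather than $\log$) should be named explicitly.
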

\begin{proof}
Using $L[[t_L]] { =} L[[\omega_L]] { \subseteq B_{dR}^+}$ we obtain from \eqref{f:taur} that $\tau_r - t_L^{-r} \in L[[t_L]]$. By the discussion before Prop.\  \ref{reduction-to-char-p} we therefore obtain
\begin{align*}
   \partial_\varphi(\tau_ra)(rec(u)) & = \mathrm{Res}_{\omega_L}(\tau_r a d\log g_{u,t_0}{(\omega_{LT})})
    = \mathrm{Res}_{t_L}(\tau_r a d\log g_{u,t_0}{(\omega_{LT})})   \\
    & = \mathrm{Res}_{t_L}( a t_L^{-r}d\log g_{u,t_0}{(\omega_{LT})})
    = \mathrm{Res}_{t_L}( a t_L^{-r}\sum_{n\geq 1}n\psi_{CW}^n(u) t_L^{n-1}dt_L)  \\
    & = ar\psi_{CW}^r(u) \ .
\end{align*}
\end{proof}

With \eqref{f:BmaxBdR} also the sequence
\begin{equation}\label{f:BmaxplusBdR}
   0 \longrightarrow L \xrightarrow{\;diag\;} B_{max,L}^{\phi_q =1} \oplus B_{dR}^+ \xrightarrow{(x,y) \mapsto x-y} B_{dR} \longrightarrow 0
\end{equation}
is exact. Tensoring with $V = L \otimes_{o_L} T$ over $L$ gives the upper exact sequence in the commutative diagram
\begin{equation}\label{f:Ltr}
  \xymatrix{
    0  \ar[r] & V^{\otimes r} \ar[r] & (B_{max,L}^{\phi_q = 1} \oplus B_{dR}^+) \otimes_L V^{\otimes r} \ar[rrr] & & & B_{dR} \, \otimes_L \, V^{\otimes r} \ar[r] & 0 \\
    0 \ar[r] & Lt_L^r \ar[r]^-{diag} \ar[u]_{\cong}^{j(at_L^r) := a t_0^{\otimes r}} & B_{max,L}^{\phi_q =\pi_L^r} \oplus t_L^rB_{dR}^+ \ar[u]^{\cong}_{(x,y) \mapsto (x t_L^{-r}, y t_L^{-r}) \otimes t_0^{\otimes r}} \ar[rrr]^-{(x,y) \mapsto (x-y)t_L^{-r} \otimes t_0^{\otimes r}} & & &  B_{dR} \, \otimes_L \, V^{\otimes r} \ar[r] \ar@{=}[u] & 0.   }
\end{equation}
Passing to continuous $G_L$-cohomology gives rise to the connecting isomorphism
\begin{equation*}
   \xymatrix@R=0.5cm{
                &           H^1(L,V^{\otimes r})      \\
  (B_{dR} \otimes_L V^{\otimes r})^{G_L} = tan_L(V^{\otimes r}) \ar[ur]^{\exp := \exp_{L,V^{\otimes r},\id} \qquad} \ar[dr]                 \\
                &         H^1(L,L t_L^r), \ar[uu]^{\cong}                }
\end{equation*}
which is the identity component (see Appendix) of the Bloch-Kato exponential map over $L$ for $V^{\otimes r}$. We introduce the composite map
\begin{equation*}
  \exp_r : L \xrightarrow[\cong]{a \mapsto a t_L^{-r} \otimes t_0^{\otimes r}} tan_L(V^{\otimes r}) \xrightarrow{\exp_{L,V^{\otimes r},\id} } H^1(L,V^{\otimes r}) \xrightarrow{\;\mathrm{res}\;} \Hom_{\Gamma_L}(H_L,V^{\otimes r}) \ .
\end{equation*}

\begin{proposition}
For all $a\in L$ we have
\begin{equation*}
   j^{-1} \circ\exp_r(a) = -\delta^r((\pi_L^{-r}-1)a) \ .
\end{equation*}

\end{proposition}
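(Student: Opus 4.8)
The plan is to compute both sides as explicit continuous $1$-cocycles on $H_L$ from the two exact sequences that define them, and to observe that one and the same lift $\alpha$ serves simultaneously for $\exp_r$ and for $\delta^r$; the comparison then reduces to tracking a single sign.

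First I would unwind $\exp_r$. By the commutativity of the diagram \eqref{f:Ltr}, the left vertical isomorphism there restricts on $Lt_L^r$ to $at_L^r\mapsto at_0^{\otimes r}$, which is exactly $j$; hence the isomorphism $H^1(L,Lt_L^r)\xrightarrow{\cong}H^1(L,V^{\otimes r})$ appearing implicitly in the definition of $\exp_{L,V^{\otimes r},\id}$ is $j_*$. Consequently $j^{-1}\circ\exp_r$ equals the connecting homomorphism of the \emph{bottom} row of \eqref{f:Ltr}, evaluated at the tangent vector $at_L^{-r}\otimes t_0^{\otimes r}$, followed by restriction to $H_L$. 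To compute that connecting homomorphism one lifts $at_L^{-r}\otimes t_0^{\otimes r}$ along $(x,y)\mapsto (x-y)t_L^{-r}\otimes t_0^{\otimes r}$, i.e.\ one picks $x\in B_{max,L}^{\phi_q=\pi_L^r}$ and $y\in t_L^rB_{dR}^+$ with $x-y=a$; the cocycle value at $g$ is then $(g-1)(x,y)$, which lies in the diagonal copy of $Lt_L^r$.

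Now I would produce such a lift out of the data defining $\delta^r$. By Lemma \ref{FilBmaxexact}.i the operator $\pi_L^{-r}\phi_q-1$ maps $Fil^r B_{max,L}^+$ onto $B_{max,L}^+$, so we may choose $\alpha\in Fil^r B_{max,L}^+$ with $(\pi_L^{-r}\phi_q-1)(\alpha)=(\pi_L^{-r}-1)a$, equivalently $\phi_q(\alpha)=\pi_L^r(\alpha-a)+a$. Put $x:=a-\alpha$ and $y:=-\alpha$. Since $\phi_q$ is $o_L$-linear and hence fixes $L$, one computes $\phi_q(x)=a-\phi_q(\alpha)=\pi_L^r(a-\alpha)=\pi_L^r x$, while $y=-\alpha\in Fil^r B_{max,L}^+\subseteq t_L^rB_{dR}^+$ and $x-y=a$; thus $(x,y)$ is a legitimate lift. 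Because $(\pi_L^{-r}-1)a\in L=(B_{max,L}^+)^{G_L}$ is $G_L$-fixed by \eqref{f:invBmax}, for $g\in H_L$ we get $(\pi_L^{-r}\phi_q-1)((g-1)\alpha)=0$, so $(g-1)\alpha\in\ker\big(\pi_L^{-r}\phi_q-1\big|_{Fil^r B_{max,L}^+}\big)=Lt_L^r$ (again Lemma \ref{FilBmaxexact}.i). Therefore the $\exp_r$-cocycle, viewed in $Lt_L^r$, sends $g$ to $(g-1)x=(g-1)(a-\alpha)=-(g-1)\alpha$, i.e.\ $j^{-1}\circ\exp_r(a)(g)=-(g-1)\alpha$ for all $g\in H_L$.

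Finally, by the very definition of $\partial^r$ as the connecting homomorphism of $0\to Lt_L^r\to Fil^r B_{max,L}^+\xrightarrow{\pi_L^{-r}\phi_q-1}B_{max,L}^+\to 0$, the element $\alpha$ chosen above is precisely the lift computing $\partial^r((\pi_L^{-r}-1)a)$, so $\delta^r((\pi_L^{-r}-1)a)(g)=(g-1)\alpha$ for $g\in H_L$. Comparing the two formulas yields $j^{-1}\circ\exp_r(a)=-\delta^r((\pi_L^{-r}-1)a)$. The only genuinely delicate points are bookkeeping ones: identifying the left vertical arrow of \eqref{f:Ltr} with $j$, keeping the sign introduced by the choice $y=-\alpha$, using $\phi_q|_L=\id$ to see $x\in B_{max,L}^{\phi_q=\pi_L^r}$, and invoking Lemma \ref{FilBmaxexact}.i twice — once for surjectivity of $\pi_L^{-r}\phi_q-1$ (existence of $\alpha$) and once for the description of its kernel (so that both cocycles land in $Lt_L^r$). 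For $r\geq 1$ this is exactly what is available; the case $r=0$ is vacuous since $\pi_L^{-r}-1=0$.
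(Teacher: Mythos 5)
Your proof is correct and shares the paper's overall blueprint---both compute the cocycle $j^{-1}\circ\exp_r(a)$ by producing a lift $(x,y)\in B_{max,L}^{\phi_q=\pi_L^r}\oplus t_L^rB_{dR}^+$ with $x-y=a$ and then relate $(g-1)y$ to $\delta^r$ via Lemma \ref{FilBmaxexact}.i---but you traverse the construction in the opposite direction, which is a genuine and worthwhile simplification. The paper begins with an \emph{arbitrary} lift $(x,y)$ and must then prove that $y$ lies in $Fil^r B_{max,L}^+$; this costs a nontrivial minimality argument (write $y\in t_L^{-s}B_{max,L}^+$ for minimal $s$, check $\phi_q^i(t_L^sy)\in\ker\theta$ for all $i$, and use \eqref{f:tLBmax} to force $s=0$). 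You sidestep this step entirely: choosing $\alpha\in Fil^r B_{max,L}^+$ with $(\pi_L^{-r}\phi_q-1)(\alpha)=(\pi_L^{-r}-1)a$ first (surjectivity from Lemma \ref{FilBmaxexact}.i), and then manufacturing the lift $(x,y)=(a-\alpha,-\alpha)$, makes $y\in Fil^r B_{max,L}^+$ hold by construction and the eigenvalue relation $\phi_q(x)=\pi_L^rx$ falls out of a one-line computation. Both routes invoke Lemma \ref{FilBmaxexact}.i twice---once for surjectivity, once for the kernel $Lt_L^r$---and the sign bookkeeping is identical ($(g-1)x=-(g-1)\alpha$ in your version versus $(\pi_L^{-r}\phi_q-1)(y)=-(\pi_L^{-r}-1)a$ in the paper's). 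One point you should make explicit when invoking the kernel identification: $(g-1)\alpha$ lies in $Fil^r B_{max,L}^+$ because $g(t_L)=\chi_{LT}(g)t_L$, so $G_L$ (hence $H_L$) preserves $t_L^rB_{dR}^+$ and therefore $Fil^r B_{max,L}^+$; with that observation in place, the argument is complete.
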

\begin{proof}
By \eqref{f:Ltr} we find $(x,y)\in B_{max,L}^{\phi_q =\pi_L^r} \oplus t_L^rB_{dR}^+$ such that  $x-y=a$. Then
\begin{equation*}
   (j^{-1} \circ \exp_r(a))(g) = (g-1)x = (g-1)y  \qquad\text{for all $g \in H_L$}.
\end{equation*}
We {\it claim} that $y$ belongs to $Fil^r B_{max,L}^+$. For this it suffices to prove that $y$ lies in $B_{max,L}^+$ (because it is contained in $t_L^{r}B_{dR}^+$ by assumption). We know that $y = x-a \in B_{max,L} = \bigcup_{s\geq 0} t_L^{-s}B_{max,L}^+$. Let $s$ be minimal with respect to the property that $y \in t_L^{-s}B_{max,L}^+,$ i.e., that $t_L^sy \in B_{max,L}^+$. We want to show that $s=0$. Assume to the contrary that $s>0$. Then $B_{max,L}^+ \ni \phi_q^i(t_L^sy) = \pi_L^{is}t_L^s\phi_q^i(y)$, for any $i \geq 0$, belongs to $Fil^s B_{max,L}^+ \subseteq \ker(\theta)$ because
\begin{equation*}
   \phi_q^i(y) = \phi_q^i(x-a) = \pi_L^{ri}x - a = \pi_L^{ri}y + \pi_L^{ri}a - a \in B_{dR}^+ \ .
\end{equation*}
By \eqref{f:tLBmax} we obtain $t_L^sy = t_Ly'$ for some $y' \in B_{max,L}^+$. Hence $t_L^{s-1}y$ already belongs to $B_{max,L}^+$, which is a contradiction. The above claim follows.

In particular, by the definition of ${\delta}^r$ and using that $y=x-a$ we see that
\begin{equation*}
  (j^{-1} \circ \exp_r(a))(g) = (g-1)y = { \delta}^r ((\pi_L^{-r}\phi_q -1)(y))(g)
 = - { \delta}^r ((\pi_L^{-r} -1)a)(g)  \quad\text{for $g \in H_L$},
\end{equation*}
because $(\pi_L^{-r}\phi_q -1)(x) = 0$ as $x$ belongs to $B_{max,L}^{\phi_q =\pi_L^r}$.
\end{proof}

Putting the previous results together we obtain the following generalization of the explicit reciprocity law of Bloch and Kato (\cite{BK} Thm.\ 2.1) from the cyclotomic to the general Lubin-Tate case. In particular, this confirms partly the speculations in \cite{dS} \S 11: de Shalit had suggested to find a replacement for $B_{max,\mathbb{Q}_p}$ (or rather $B_{cris}$ which was used at that time) in the context of general Lubin-Tate formal groups and it is precisely Colmez' $B_{max,L}$ which has this function (although the path in (loc.\ cit.) is slightly different from the one chosen here).

\begin{theorem}\label{BK}
For all $u \in \varprojlim_n o_{L_n}^\times$, $a \in L$, and $r \geq 1$ we have the identities
\begin{equation*}
  {\delta}^r(a)(rec(u)) = ar \Psi_{CW}^r(u)
\end{equation*}
and
\begin{align*}
   (j^{-1} \circ \exp_r(a))(rec(u)) & = -(\pi_L^{{ -r}} - 1)ar \Psi_{CW}^r(u) \\
   & = \tfrac{1}{(r-1)!}(1 - \pi_L^{{ -r}})a \partial_{\mathrm{inv}}^r \log g_{u,t_0}(Z)_{|Z=0} t_L^r \ .
\end{align*}
\end{theorem}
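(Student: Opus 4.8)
The plan is to assemble Theorem \ref{BK} from the three Propositions that precede it, so that virtually all the content is already in place and only a bookkeeping of signs and an unwinding of definitions remains. First I would record the two master identities: from the Proposition immediately before Prop.\ \ref{reduction-to-char-p} we have $\delta^r(a) = \partial_\varphi(\tau_r a)t_L^r$ for $a\in L$, and from the subsequent one $ar\psi_{CW}^r(u) = \partial_\varphi(\tau_r a)(rec(u))$ for $u\in\varprojlim_n o_{L_n}^\times$. Combining these gives $\delta^r(a)(rec(u)) = \partial_\varphi(\tau_r a)(rec(u))\, t_L^r = ar\psi_{CW}^r(u)\, t_L^r = ar\Psi_{CW}^r(u)$, which is the first asserted identity. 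One should be slightly careful here that the "$\mathrm{Res}$" computation in the proof of the $ar\psi_{CW}^r(u)$-Proposition is really evaluating $\partial_\varphi(\tau_r a)(rec(u))$ in the sense of the pairing from Prop.\ \ref{reduction-to-char-p} applied to the Coleman lift $\hat u = g_{u,t_0}(\omega_{LT})$, so I would add a sentence citing the discussion before Prop.\ \ref{reduction-to-char-p} (and $\mathcal{N}$-invariance of $d\hat u/\hat u$) to license the substitution.

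Next I would derive the second identity. From the Proposition just before Theorem \ref{BK} we have $j^{-1}\circ\exp_r(a) = -\delta^r((\pi_L^{-r}-1)a)$. Evaluating at $rec(u)$ and using the first identity (with $(\pi_L^{-r}-1)a$ in place of $a$) yields
\begin{equation*}
  (j^{-1}\circ\exp_r(a))(rec(u)) = -\delta^r((\pi_L^{-r}-1)a)(rec(u)) = -(\pi_L^{-r}-1)ar\Psi_{CW}^r(u) = -(\pi_L^{-r}-1)ar\psi_{CW}^r(u)\,t_L^r,
\end{equation*}
which is the first line of the second display (after rewriting $-(\pi_L^{-r}-1) = 1-\pi_L^{-r}$). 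For the final expression I would unwind $\psi_{CW}^r(u)$ via its definition $\psi_{CW}^r(u) = \psi_{CW,0}^r(u) = \frac{1}{r!}(\partial_{\mathrm{inv}}^{r-1}\Delta_{LT}g_{u,t_0})_{|Z=0}$ together with the computation already carried out after the definition of $\Psi_{CW}^r$, namely that $\frac{1}{(r-1)!}(\partial_{\mathrm{inv}}^{r-1}\Delta_{LT}g_{u,t_0})_{|Z=0} = r\psi_{CW}^r(u)$ and that $\partial_{\mathrm{inv}}^r\log g_{u,t_0}(Z)_{|Z=0} = \partial_{\mathrm{inv}}^{r-1}\Delta_{LT}g_{u,t_0}(Z)_{|Z=0}$ by the very definition of $\log g_{u,t_0}$ recalled in the introduction. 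Thus $r\psi_{CW}^r(u) = \frac{1}{(r-1)!}\partial_{\mathrm{inv}}^r\log g_{u,t_0}(Z)_{|Z=0}$, and multiplying by $(1-\pi_L^{-r})a\,t_L^r$ and using $\Psi_{CW}^r(u) = \psi_{CW}^r(u)t_L^r$ gives exactly the claimed closed form.

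I do not expect a genuine obstacle here: the theorem is explicitly advertised as "putting the previous results together", and each ingredient — the three Propositions, Coleman's Theorem \ref{Coleman}, the reduction discussion before Prop.\ \ref{reduction-to-char-p}, and the elementary identity $\partial_{\mathrm{inv}}^r\log g = \partial_{\mathrm{inv}}^{r-1}\Delta_{LT}g$ — is already available. The only place demanding care is the chain of sign conventions: $\delta^r$ versus $\partial^r$, the minus sign in $j^{-1}\circ\exp_r(a) = -\delta^r((\pi_L^{-r}-1)a)$, the factor $(\pi_L^{-r}-1)$ versus $(1-\pi_L^{-r})$, and the fact that $\Psi_{CW}^r$ and hence $\exp_r$, $j$ all depend on the choice of $t_0$ (so the identities are genuinely statements for a fixed generator $t_0$). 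I would therefore write the proof as a short, linear computation, being explicit at each equality about which earlier result is invoked, rather than attempting any new estimates or constructions; the substance is entirely in the Propositions already proved, and this final step is pure assembly.
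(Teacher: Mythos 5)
Your proposal is correct and follows exactly the route the paper takes: the theorem is stated immediately after the phrase ``Putting the previous results together,'' and the paper gives no further proof beyond assembling the three preceding Propositions (namely $\delta^r(a)=\partial_\varphi(\tau_r a)\,t_L^r$, then $ar\psi_{CW}^r(u)=\partial_\varphi(\tau_r a)(rec(u))$, then $j^{-1}\circ\exp_r(a)=-\delta^r((\pi_L^{-r}-1)a)$) and unwinding $r\psi_{CW}^r(u)=\tfrac{1}{(r-1)!}\partial_{\mathrm{inv}}^r\log g_{u,t_0}(Z)_{|Z=0}$. Your bookkeeping of signs and the remark on the $t_0$-dependence are accurate, so nothing needs to be added.
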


 Finally we consider the following commutative diagram
\begin{equation*}
\xymatrix{
   H_L^{ab}(p)\otimes_{\mathbb{Z}_p} V^{\otimes -r}   \ar@{}[r]|{\times} & \Hom^c(H_L,V^{\otimes r}) \ar[r] & L \ar@{=}[dd] \\
   {\varprojlim_n L_n^\times \otimes_{\mathbb{Z}_p}V^{\otimes -r}} \ar[u]^{rec\otimes \id} \ar[d]_{\mathrm{cores}(\kappa\otimes \id)} & &\\
   H^1(L, V^{\otimes -r}(1)) \ar[d]_{\exp^*} \ar@{}[r]|{\times} & H^1(L, V^{\otimes r})  \ar[uu]_{\mathrm{res}} \ar[r]^-{\cup} & H^2(L,L(1)) = L \ar[d]_{\cong}^{c \mapsto c t_{\mathbb{Q}_p}^{-1} \otimes t_0^{cyc}} \\
   D^0_{dR,L}(V^{\otimes -r}(1)) \ar@{}[r]|{\times} & tan_L(V^{\otimes r})  \ar[r] \ar[u]_{\exp} & D_{dR,L}(L(1))   \\
  L \ar[u]_{\cong}^{a \mapsto a t_L^r t_{\mathbb{Q}_p}^{-1} \otimes (t_0^{\otimes -r} \otimes t_0^{cyc})} \ar@{}[r]|{\times} & L \ar[u]^{\cong}_{b \mapsto b t_L^{-r} \otimes t_0^{\otimes r}} \ar[r]^{(a,b) \mapsto ab} &  L. \ar[u]^{\cong}_{c \mapsto c t_{\mathbb{Q}_p}^{-1} \otimes t_0^{cyc}}           }
\end{equation*}
Here $t_0^{cyc}$ is a generator of the cyclotomic Tate module $\mathbb{Z}_p(1)$, and $t_{\mathbb{Q}_p} := \log_{\mathbb{G}_m}([\iota (t_0^{cyc})+1]-1)$. The commutativity of the upper part can be shown by taking inverse limits (on both sides) of a similar diagram with appropriate torsion coefficients and afterwards tensoring with $L$ over $o_L$. Its middle part is the definition of the dual exponential map $\exp^*$. The commutativity of the lower part is easily checked. Note also that the composite of the middle maps going up is nothing else than $\exp_r$ by definition. Thus setting $\mathbf{d}_r := t_L^rt_{\mathbb{Q}_p}^{-1} \otimes (t_0^{\otimes -r} \otimes t_0^{cyc})$  we obtain the following consequence.

\begin{corollary}[A special case of Kato's explicit reciprocity law] \label{Kato}
For $r\geq 1$ the diagram
\begin{equation*}
  \xymatrix{
  {\varprojlim_n  o_{L_n}^\times} \otimes_{\mathbb{Z}} T^{\otimes -r} \ar[d]_{\kappa\otimes \id} \ar[rrdd]^{\qquad ''(1-\pi_L^{-r})r\psi_{CW}^r({_-}) \mathbf{d}_r\, ''} &   \\
  H^1_{Iw}(L_\infty,T^{\otimes -r}(1)) \ar[d]_{\mathrm{cores}}  &   \\
  H^1(L, T^{\otimes -r}(1)) \ar[rr]^-{\exp^*} & & D^0_{dR,L}(V^{\otimes -r}(1)) = L \mathbf{d}_r ,  }
\end{equation*}
commutes, i.e., the diagonal map sends $u\otimes a t_0^{\otimes -r}$ to
\begin{equation*}
  a(1-\pi_L^{-r})r\psi_{CW}^r(u) \mathbf{d}_r = a \frac{1-\pi_L^{-r}}{(r-1)!} \partial_{\mathrm{inv}}^r \log g_{u,t_0}(Z)_{| Z=0} \mathbf{d}_r \ .
\end{equation*}
\end{corollary}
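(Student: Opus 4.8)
The plan is to assemble Corollary \ref{Kato} by chasing the large commutative diagram displayed just before the statement, feeding into it the two identities already obtained in Theorem \ref{BK}. Concretely, I would first fix $u \in \varprojlim_n o_{L_n}^\times$ and $a \in L$, and trace the element $u \otimes a t_0^{\otimes -r}$ through the left column: $\kappa \otimes \id$ followed by corestriction lands it in $H^1(L, V^{\otimes -r}(1))$, and then $\exp^*$ (the dual Bloch-Kato exponential, defined as the middle portion of the diagram) sends it into $D^0_{dR,L}(V^{\otimes -r}(1))$. The point of the diagram is that this image can be computed by the adjointness of $\exp$ and $\exp^*$ under the cup-product pairing $H^1(L, V^{\otimes -r}(1)) \times H^1(L, V^{\otimes r}) \to H^2(L, L(1)) = L$: pairing $\exp^*(\mathrm{cores}(\kappa(u) \otimes a t_0^{\otimes -r}))$ against an arbitrary class of the form $\exp_r(b)$, $b \in L$, equals (via $\mathrm{res}$ and the upper pairing $\Hom^c(H_L, V^{\otimes r})$ against $H_L^{ab}(p) \otimes V^{\otimes -r}$) the quantity $(j^{-1} \circ \exp_r(b))(rec(u))$ paired suitably with $a t_0^{\otimes -r}$.

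The key input is then the second identity of Theorem \ref{BK}, which evaluates exactly this: $(j^{-1}\circ\exp_r(b))(rec(u)) = -(\pi_L^{-r}-1)br\,\Psi_{CW}^r(u) = \tfrac{1}{(r-1)!}(1-\pi_L^{-r})\, b\, \partial_{\mathrm{inv}}^r \log g_{u,t_0}(Z)_{|Z=0}\, t_L^r$. Substituting this into the diagram chase and keeping track of the explicit isomorphisms at the bottom row --- namely $a \mapsto a t_L^r t_{\mathbb{Q}_p}^{-1} \otimes (t_0^{\otimes -r}\otimes t_0^{cyc})$ on the left, $b \mapsto b t_L^{-r}\otimes t_0^{\otimes r}$ in the middle, and $c \mapsto c t_{\mathbb{Q}_p}^{-1}\otimes t_0^{cyc}$ on the right, together with the identification $H^2(L,L(1))=L$ via $c \mapsto c t_{\mathbb{Q}_p}^{-1}\otimes t_0^{cyc}$ --- produces the asserted formula: the diagonal map carries $u \otimes a t_0^{\otimes -r}$ to $a(1-\pi_L^{-r})r\psi_{CW}^r(u)\mathbf{d}_r$, where $\mathbf{d}_r = t_L^r t_{\mathbb{Q}_p}^{-1}\otimes(t_0^{\otimes -r}\otimes t_0^{cyc})$, and the second expression in terms of $\partial_{\mathrm{inv}}^r \log g_{u,t_0}$ follows from the already-recorded computation $r\psi_{CW}^r(u) = \tfrac{1}{(r-1)!}(\partial_{\mathrm{inv}}^{r-1}\Delta_{LT} g_{u,t_0})_{|Z=0}$ combined with the definition of $\partial_{\mathrm{inv}}^r \log$.

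The bulk of the work is therefore bookkeeping: verifying that the claimed commutativities in the diagram hold (the upper part by passing to inverse limits of the analogous diagram with torsion coefficients and then tensoring with $L$ over $o_L$ --- this is where local Tate duality at finite level, Proposition \ref{Tate-local}, and the compatibility of $rec$, $\kappa$ and cup products enter --- the middle part being the definition of $\exp^*$, and the lower part being a direct check on one-dimensional $L$-vector spaces) and that the various generator choices $t_0$, $t_0^{cyc}$, $t_{\mathbb{Q}_p}$, $t_L$ are threaded consistently through all the twists $V^{\otimes r}$, $V^{\otimes -r}(1)$, $L(1)$. I expect the main obstacle to be the upper rectangle: one has to show that the pairing of $H^1_{Iw}(L_\infty, T^{\otimes -r}(1))$ with $\Hom^c(H_L, V^{\otimes r})$ induced by $rec$ and the natural evaluation agrees, after corestriction and the Bloch-Kato adjunction, with the cup product $\cup : H^1(L, V^{\otimes -r}(1))\times H^1(L,V^{\otimes r}) \to H^2(L,L(1))$ composed with the local invariant; this is a compatibility between class field theory over the tower and local Tate duality, and in the cyclotomic case it is the content of results like \cite{NSW} Cor.\ 7.2.13 (already invoked for the diagram preceding Proposition \ref{reduction-to-char-p}), so here one reduces to that statement applied over each layer $L_n$ and passes to the limit. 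Once those compatibilities are in place, Corollary \ref{Kato} is an immediate formal consequence of Theorem \ref{BK}.
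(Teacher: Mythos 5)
Your proposal is correct and takes essentially the same route as the paper: the paper's own proof of Corollary \ref{Kato} is exactly the diagram chase you describe, with the upper rectangle's commutativity established by passing to inverse limits of torsion-coefficient analogues (via \cite{NSW} Cor.\ 7.2.13 on each layer), the middle being the definition of $\exp^*$, the bottom being a direct check, and the numerical evaluation coming from the second identity in Theorem \ref{BK}. You have merely spelled out what the paper leaves implicit in the sentence preceding the corollary, including the correct observation that the nondegeneracy of the bottom pairing lets one recover $\exp^*(\mathrm{cores}(\kappa(u)\otimes at_0^{\otimes -r}))$ by pairing against $\exp(bt_L^{-r}\otimes t_0^{\otimes r})$ for $b\in L$.
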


\appendix

\section{$p$-adic Hodge theory}

For a continuous representation of $G_K$ on a finite dimensional $\mathbb{Q}_p$-vector space $V$ we write as usual
\begin{align*}
   & D_{dR,K}(V) := (B_{dR} \otimes_{\mathbb{Q}_p}V)^{G_K} \supseteq D_{dR,K}^0(V):= (B_{dR}^+ \otimes_{\mathbb{Q}_p}V)^{G_K} \quad\text{and}  \\
   & D_{cris,K}(V) := (B_{max,\mathbb{Q}_p}\otimes_{\mathbb{Q}_p}V)^{G_K} \ .
\end{align*}
The quotient $tan_K(V) := D_{dR,K}(V)/D_{dR,K}^0(V)$ is called the tangent space of $V$.

Henceforth we assume that $V$ is de Rham. Then the usual Bloch-Kato exponential map $\exp_{K,V}:tan_K(V)\to H^1(K,V)$ can  be defined as follows. Apply the tensor functor $-\otimes_{\mathbb{Q}_p} V$ to the  exact sequence
\begin{equation}\label{f:FESQp}
    0 \to \mathbb{Q}_p \to B_{max,\mathbb{Q}_p}^{\phi_p = 1} \to B_{dR}/B^+_{dR} \to 0
\end{equation}
and take the (first) connecting homomorphism in the associated $G_K$-cohomology sequence.\footnote{\label{ctssection} It follows from \cite[Prop.\ III.3.1]{Co4} that this sequence splits in the category of topological $\bQ_p$-vector spaces. Since the $p$-adic topology on $\bQ_p$ coincides with the induced topology from $B_{max,\bQ_p}$ the existence of the transition map is granted by \cite[Lem.\ 2.7.2]{NSW}.} Note that by \cite{BK} Lemma 3.8.1 we have $tan_K(V)=(B_{dR} /B_{dR}^+ \otimes_{\mathbb{Q}_p}V)^{G_K} .$ } Furthermore, the dual exponential map $exp^*_{K,V}$ is defined by the commutativity of the following diagram
\begin{equation}\label{f:dualexp}
\xymatrix{
  H^1(K,V) \ar[d]_{\cong} \ar[rrr]^{\exp^*_{K,V}} & & & D_{dR,K}^0(V) \ar[d]^{\cong} \\
  H^1(K,V^*(1))^* \ar[rrr]^-{(\exp_{K,V^*(1)})^*} & & & (D_{dR,K}(V^*(1))/D_{dR,K}^0(V^*(1)))^* ,  }
\end{equation}
where the left, resp.\ right, perpendicular isomorphism comes local Tate duality, resp.\ from the perfect pairing
\begin{equation}\label{f:pairingDdR}
  D_{dR,K}(V) \times D_{dR,K}(\Hom_{\mathbb{Q}_p}(V,\mathbb{Q}_p(1)))  \longrightarrow D_{dR,K}(\mathbb{Q}_p(1)) \cong K ,
\end{equation}
in which the $D^0_{dR,K}$-subspaces are orthogonal to each other. Note that the isomorphism $K\cong D_{dR,K}(\mathbb{Q}_p(1))$ sends $a$ to $at^{-1}_{\mathbb{Q}_p} \otimes t_0^{cyc}$. Also, $(-)^*$ here means the $\mathbb{Q}_p$-dual.

Now assume that $V$ is in $Rep_L(G_K)$ and consider $K = L$ in the following. Tensoring \eqref{f:different} with $\mathbb{Q}_p$ gives the isomorphism of $L$-vector spaces
\begin{equation*}
  \tilde{\Xi}: L \cong \Hom_{\mathbb{Z}_p}(o_L,\mathbb{Z}_p)\otimes_{\mathbb{Z}_p}\mathbb{Q}_p \cong \Hom_{\mathbb{Q}_p}(L,\mathbb{Q}_p ) \ .
\end{equation*}
Since $\Hom_{\mathbb{Q}_p}(L,-)$ is right adjoint to scalar restriction from $L$ to $\mathbb{Q}_p$, and by using $\tilde{\Xi}^{-1}$ in the second step, we have a natural isomorphism
\begin{equation}\label{f:Ldual}
  \Hom_{\mathbb{Q}_p}(V,\mathbb{Q}_p) \cong \Hom_{L}(V,\Hom_{\mathbb{Q}_p}(L,\mathbb{Q}_p) \cong \Hom_{L}(V,L) \ .
\end{equation}
Combined with \eqref{f:pairingDdR} we obtain the perfect pairing
\begin{equation}\label{f:pairingDdRoverL}
   D_{dR,L}(V) \times D_{dR,L}(\Hom_{L}(V,L(1))) \longrightarrow L
\end{equation}
with an analogous orthogonality property. Furthermore, similarly as in Prop.\ \ref{Tate-local} local Tate duality can be seen as a perfect pairing  of finite dimensional $L$-vector spaces
\begin{equation}\label{f:Tate-local}
  H^i(K,V) \times H^{2-i}(K,\Hom_{L}(V,L(1)) ) \longrightarrow H^2(K, L(1)) = L \ .
\end{equation}
Altogether we see that, for such a $V$, the dual Bloch-Kato exponential map can also be defined by an analogous diagram as \eqref{f:dualexp} involving the pairings \eqref{f:pairingDdRoverL} and \eqref{f:Tate-local} and in which $(-)^*$ means taking the $L$-dual.

Since $B_{dR}$ contains the algebraic closure $\overline{L}$ of $L$ we have the isomorphism
\begin{equation*}
  B_{dR} \otimes _{\mathbb{Q}_p} V = (B_{dR} \otimes _{\mathbb{Q}_p} L) \otimes_L V  \xrightarrow{\;\cong\;} \prod_{\sigma \in G_{\mathbb{Q}_p}/G_L} B_{dR} \otimes_{\sigma,L} V
\end{equation*}
which sends $b \otimes v$ to $(b \otimes v)_\sigma$. The tensor product in the factor $B_{dR} \otimes_{\sigma,L} V$ is formed with respect to $L$ acting on $B_{dR}$ through $\sigma$. With respect to the $G_L$-action the right hand side decomposes according to the double cosets in $G_L \backslash G_{\mathbb{Q}_p}/G_L$. It follows, in particular, that $D_{dR,L}^{\id}(V) := ( B_{dR} \otimes_L V)^{G_L}$ is a direct summand of $D_{dR,L}(V)$. Similarly, $tan_{L,\id}(V) := ( B_{dR}/B^+_{dR} \otimes_L V)^{G_L}  $ is a direct summand of $tan_L(V)$. We then have the composite map
\begin{equation*}
  \widetilde{\exp}_{L,V,\id} :  tan_{L,\id}(V)  \xrightarrow{\subseteq}  tan_L(V) \xrightarrow{\exp_{L,V}} H^1(L,V), \
\end{equation*}
the identity component of $\exp_{L,V}$. On the other hand, applying the tensor functor $-\otimes_{L} V$ to the  exact sequence \eqref{f:BmaxBdR}
\begin{equation*}
  0 \to L \to B_{max,L}^{\phi_q=1} \to B_{dR}/B^+_{dR} \to 0
\end{equation*}
and taking the (first) connecting homomorphism\footnote{Analogous arguments as in footnote \ref{ctssection} grant the existence of this connecting homomorphism. } in the associated $G_L$-cohomology sequence gives rise to a map
\begin{equation*}
  \exp_{L,V,\id} : tan_{L,\id}(V) \to H^1(L,V) \ .
\end{equation*}
Suppose that $V$ is even $L$-analytic, i.e., that the Hodge-Tate weights of $V$ at all embeddings $\id \neq \sigma : L \rightarrow \overline{L}$ are zero. We then have $tan_L(V) = tan_{L,\id}(V)$ and the following fact.

\begin{proposition}\label{id-component}
If $V$ is $L$-analytic, the maps $ {\exp}_{L,V},$ $\widetilde{\exp}_{L,V,\id} $ and $\exp_{L,V,\id} $ coincide. %with the composed map
%\begin{equation*}
%  tan_{L}(V) \xrightarrow{\pr} tan_{L,id}(V) \xrightarrow{\exp_{L,V,id}} H^1(L,V) \ .
%\end{equation*}
\end{proposition}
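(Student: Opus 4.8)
The plan is to reduce the assertion to a compatibility between the two fundamental exact sequences \eqref{f:FESQp} (over $\mathbb{Q}_p$, which defines $\exp_{L,V}$) and \eqref{f:BmaxBdR} (over $L$, which defines $\exp_{L,V,\id}$), mediated by the decomposition $B_{dR}\otimes_{\mathbb{Q}_p}V = (B_{dR}\otimes_{\mathbb{Q}_p}L)\otimes_L V\cong\prod_{\sigma\in G_{\mathbb{Q}_p}/G_L}B_{dR}\otimes_{\sigma,L}V$ recalled just before the Proposition. First I would dispatch the easy half: since $V$ is $L$-analytic we have $tan_L(V) = tan_{L,\id}(V)$, so the inclusion $tan_{L,\id}(V)\hookrightarrow tan_L(V)$ entering the definition of $\widetilde{\exp}_{L,V,\id}$ is the identity, whence $\widetilde{\exp}_{L,V,\id} = \exp_{L,V}$ at once. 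It then remains to show $\exp_{L,V} = \exp_{L,V,\id}$, and in fact I would establish, for arbitrary $V$ in $\Rep_L(G_L)$, the relation $\exp_{L,V} = \exp_{L,V,\id}\circ\pi$, where $\pi\colon tan_L(V)\twoheadrightarrow tan_{L,\id}(V)$ is the projection onto the identity direct summand; for $L$-analytic $V$ one has $\pi = \id$, which then concludes the proof.

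To obtain this relation I would produce a morphism of short exact sequences of topological $G_L$-modules from the sequence \eqref{f:FESQp} tensored over $\mathbb{Q}_p$ with $V$ to the sequence \eqref{f:BmaxBdR} tensored over $L$ with $V$ (both exact, $V$ being a vector space over the respective coefficient field), equal to $\id_V$ on the kernels. On the middle terms I would use that $B_{max,\mathbb{Q}_p}\subseteq B_{max,L}$ inside $B_{dR}$ and that $\phi_q$ restricts on $B_{max,\mathbb{Q}_p}$ to $\phi_p^f$ (with $q = p^f$), so that $B_{max,\mathbb{Q}_p}^{\phi_p=1}\subseteq B_{max,L}^{\phi_q=1}$; multiplication inside $B_{max,L}$, with $L$ acting through $L\subseteq B_{max,L}$, then yields a continuous $\phi$- and $G_L$-equivariant map $B_{max,\mathbb{Q}_p}^{\phi_p=1}\otimes_{\mathbb{Q}_p}L\to B_{max,L}^{\phi_q=1}$, hence after applying $-\otimes_L V$ a map between the two middle terms. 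On the cokernels I would use the identity-component projection $(B_{dR}/B_{dR}^+)\otimes_{\mathbb{Q}_p}V\twoheadrightarrow(B_{dR}/B_{dR}^+)\otimes_L V$ coming from the decomposition above (which carries $B_{dR}^+\otimes_{\mathbb{Q}_p}L$ onto the $\id$-factor $B_{dR}^+$), and which is $G_L$-equivariant because $G_L$ fixes $L\subseteq B_{dR}$ pointwise and so stabilizes the trivial double coset. Commutativity of the two squares is then a direct check: the left one because both embeddings of $V$ are $v\mapsto 1\otimes v$, the right one because the $B_{max}$-multiplication followed by $B_{max,L}\hookrightarrow B_{dR}$ is precisely the restriction of the identity-component projection of $B_{dR}\otimes_{\mathbb{Q}_p}L$ to $B_{max,\mathbb{Q}_p}\otimes_{\mathbb{Q}_p}L$.

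Passing to continuous $G_L$-cohomology --- the relevant connecting homomorphisms exist by the argument of footnote \ref{ctssection} --- and invoking naturality of the long exact cohomology sequence, I would read off a commutative square whose horizontal arrows are the two connecting maps: the top one is $\exp_{L,V}$ (since $H^0(G_L,(B_{dR}/B_{dR}^+)\otimes_{\mathbb{Q}_p}V) = tan_L(V)$), the bottom one is $\exp_{L,V,\id}$ (since $H^0(G_L,(B_{dR}/B_{dR}^+)\otimes_L V) = tan_{L,\id}(V)$), the right vertical arrow is $\id_{H^1(L,V)}$ (because the morphism of sequences is $\id_V$ on kernels), and the left vertical arrow is exactly the projection $\pi$. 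This gives $\exp_{L,V} = \exp_{L,V,\id}\circ\pi$, and the Proposition follows. The main obstacle is not a hard estimate but the bookkeeping with the two coefficient fields, in particular making the morphism of exact sequences well defined and $G_L$-equivariant; the two points deserving genuine care are the identity $\phi_q|_{B_{max,\mathbb{Q}_p}} = \phi_p^f$ (the defining compatibility of the ramified Frobenius) and the $G_L$-equivariance of the identity-component projection.
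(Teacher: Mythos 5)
Your "easy half" is correct: for $L$-analytic $V$ the inclusion $tan_{L,\id}(V) \hookrightarrow tan_L(V)$ is the identity, so $\widetilde{\exp}_{L,V,\id} = \exp_{L,V}$ trivially. The paper does not even record this step. The gap is in the second half, and it lies precisely where you anticipated the "genuine care" would be needed.

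You want a morphism of short exact sequences from \eqref{f:FESQp}$\otimes_{\mathbb{Q}_p}V$ to \eqref{f:BmaxBdR}$\otimes_L V$ whose middle component is the multiplication map $B_{max,\mathbb{Q}_p}^{\phi_p=1}\otimes_{\mathbb{Q}_p}L \to B_{max,L}^{\phi_q=1}$. That map is not well defined. The only containment available (and the only one the paper invokes, via \cite{Co1} \S9.7) is $B_{max,L}\subseteq B_{max,\mathbb{Q}_p}\otimes_{L_0}L$, i.e.\ it runs in the \emph{opposite} direction; there is no reason for $B_{max,\mathbb{Q}_p}$ to sit inside $B_{max,L}$, and indeed for ramified $L$ the $\pi_L$-adic convergence defining $A_{max,L}$ is strictly stronger than the $p$-adic convergence defining $A_{max,\mathbb{Q}_p}$, so a typical element of $B_{max,\mathbb{Q}_p}^{\phi_p=1}$ does not even lie in $B_{max,L}$, let alone in $B_{max,L}^{\phi_q=1}$. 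One can also see the obstruction structurally from the two fundamental exact sequences themselves: $B_{max,L}^{\phi_q=1}$ is an extension of $B_{dR}/B_{dR}^+$ by $L$, while $B_{max,\mathbb{Q}_p}^{\phi_p=1}\otimes_{\mathbb{Q}_p}L$ is an extension of $(B_{dR}/B_{dR}^+)\otimes_{\mathbb{Q}_p}L$ by $L$. Under $B_{max,L}^{\phi_q=1}\subseteq B_{max,\mathbb{Q}_p}^{\phi_q=1}\otimes_{L_0}L = B_{max,\mathbb{Q}_p}^{\phi_p=1}\otimes_{\mathbb{Q}_p}L$ the first is therefore a \emph{proper} subspace of the second, and the ring-theoretic multiplication map you propose goes the wrong way for your purposes. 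So the morphism of complexes you rely on does not exist, and hence neither does the conclusion $\exp_{L,V} = \exp_{L,V,\id}\circ\pi$ "for arbitrary $V$" that you want to deduce from it.

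The absence of a direct morphism is exactly why the paper's argument has a second step that your proposal dispenses with. The paper compares both \eqref{f:FESQp}$\otimes_{\mathbb{Q}_p}L$ and \eqref{f:BmaxBdR} to a \emph{third} exact sequence, built on the intermediate ring $B_{max,\mathbb{Q}_p}^{\phi_q=1}\otimes_{L_0}L$ with kernel called $C$: the top sequence maps to it \emph{isomorphically} on the middle term (after Hilbert 90 identifies $B_{max,\mathbb{Q}_p}^{\phi_q=1}$ with $B_{max,\mathbb{Q}_p}^{\phi_p=1}\otimes_{\mathbb{Q}_p}L_0$), and the bottom sequence embeds into it via $B_{max,L}\subseteq B_{max,\mathbb{Q}_p}\otimes_{L_0}L$. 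This only proves that the composites of $\exp_{L,V}$ and $\exp_{L,V,\id}$ with the map $H^1(L,V)\to H^1(L,C\otimes_L V)$ agree, and the crux is then to show that this latter map is \emph{injective}. That is where $L$-analyticity enters a second time: from the snake lemma one gets $0 \to V \to C\otimes_L V \to \prod_{\sigma\neq\id} B_{dR}/B_{dR}^+\otimes_{\sigma,L}V \to 0$, and the term obstructing injectivity on $H^1$ is $H^0$ of the right-hand object, which equals $\ker\bigl(tan_L(V)\to tan_{L,\id}(V)\bigr)$ and vanishes exactly because $V$ is $L$-analytic. Your argument, were it valid, would make this injectivity step unnecessary and would in particular prove the stronger statement for all $V\in\Rep_L(G_L)$ with $L$-analyticity playing no role beyond identifying $\pi$ with the identity; that the paper needs $L$-analyticity in the cohomological step is itself a signal that the one-step morphism you are after does not exist.
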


Because of this fact we call also $\exp_{L,V,\id}$ the identity component of $\exp_{L,V}$ in the situation of the Proposition. We remark that by \cite{Se0} III.A4 Prop.\ 4 and Lemma 2(b) the character $V = L(\chi_{LT})$ is $L$-analytic.\\

\noindent\textit{Proof of Prop.\ \ref{id-component}:} Let $L_0 \subseteq L$ be the maximal unramified subextension and let $f := [L_0 : \mathbb{Q}_p]$. As explained at the beginning of \S 9.7 in \cite{Co1} we have $B_{max,L} \subseteq B_{max,\mathbb{Q}_p} \otimes_{L_0} L$ and hence $B_{max,L}^{\phi_q = 1} \subseteq B_{max,\mathbb{Q}_p}^{\phi_p^f =1} \otimes_{L_0} L$. We claim that
\begin{equation*}
 B_{max,\mathbb{Q}_p}^{\phi_q =1} = B_{max,\mathbb{Q}_p}^{\phi_p^f =1} = B_{max,\mathbb{Q}_p}^{\phi_p =1} \otimes_{\mathbb{Q}_p} L_0 \ .
\end{equation*}
The left hand side contains $L_0$. Let $\Delta := \Gal(L_0/\mathbb{Q}_p)$ with Frobenius generator $\delta$. For any $x \in B_{max,\mathbb{Q}_p}^{\phi_p^f =1}$ we have the finite dimensional $L_0$-vector space $V_x := \sum_{i=0}^{f-1} L_0 \phi_p^i(x)$ on which the Galois group $\Delta$ acts semilinearly by sending $\delta$ to $\phi_p$. Hilbert 90 therefore implies that $V_x = L_0 \otimes_{\mathbb{Q}_p} V_x^\Delta$. This proves the claim.

It follows that we have the commutative diagram
\begin{equation*}
  \xymatrix{
    0 \ar[r] & L \ar[d] \ar[r] & B_{max,\mathbb{Q}_p}^{\phi_p = 1} \otimes_{\mathbb{Q}_p} L \ar[d]^{=} \ar[r] & B_{dR}/B^+_{dR} \otimes_{\mathbb{Q}_p} L \ar[d]^{mult} \ar[r] & 0  \\
    0 \ar[r] & C  \ar[r] & B_{max,\mathbb{Q}_p}^{\phi_q = 1} \otimes_{L_0} L \ar[r] & B_{dR}/B^+_{dR} \ar[r] & 0 \\
    0 \ar[r] & L \ar[u] \ar[r] & B_{max,L}^{\phi_q = 1} \ar[u]_{\subseteq} \ar[r] & B_{dR}/B^+_{dR} \ar[u]_{=} \ar[r] & 0,   }
\end{equation*}
%\begin{equation*}
%  \xymatrix{
%  0 \ar[r]^{ } & L \ar@{=}[d]_{} \ar[r]^{ } & B_{max,L}^{\phi_q = 1} \ar@{^(->}[d] \ar[r]^{ } & B_{dR}/B_{dR}^+ \ar@{^(->}[d]_{ } \ar[r]^{ } & 0 \\
% 0 \ar[r]& L \ar[r]^{ } & B_{max,\bQ_p}^{\phi_p = 1}\otimes_{\bQ_p}L \ar[r]^{ } & B_{dR}/B_{dR}^+ \otimes_{\bQ_p}L \ar[r]^{ } & 0     }
%\end{equation*}
in which the upper exact sequence is induced by tensoring \eqref{f:FESQp}  by $L$ over $\bQ_p$ while the lower one is \eqref{f:BmaxBdR}. $C$ is defined to be the kernel in the middle horizontal sequence which is therefore also exact. Note that the two vertical maps $L\to C$ both coincide as their composites into the middle term each sends $l\in L$ to $1\otimes l.$ By tensoring this diagram with $V$ over $L$ and forming the cohomology sequences we conclude that the composites of $ {\exp}_{L,V }$ and $\exp_{L,V,\id} $  with $ H^1(G_L,V) \to H^1(G_L,C\otimes_L V)$ coincide whence the claim shall follow from the injectivity of the latter map. The snake lemma applied to the upper part of the diagram (tensored with $V$) leads to the exact sequence\footnote{Using the facts from footnote \ref{ctssection} one checks that this sequence again satisfies the conditions of \cite[Lem.\ 2.7.2]{NSW} whence the existence of the long exact cohomology sequence below is granted.}
\begin{equation*}
  0\to V\to C\otimes_L V \to \prod_{\sigma\neq \id} B_{dR}/  B_{dR}^+\otimes_{\sigma,L} V\to 0 \ ,
\end{equation*}
which in turn, by forming continuous $G_L$-cohomology, induces the exact sequence
\begin{equation*}
  0 = \ker( tan_L(V) \rightarrow tan_{L,\id}(V)) \to H^1(G_L,V) \to H^1(G_L,C\otimes_L V) \ ,
\end{equation*}
i.e., we obtain the desired injectivity.

\end{document}